\crefname{enumi}{Case}{Cases}
\Crefname{enumi}{Case}{Cases}
\crefname{hypothesis}{Hypothesis}{Hypotheses}
\title{On parameterized nonlocal-fractional transmission problems and associated function spaces 
\thanks{\textbf{Funding:} Z.~Han and X.~Tian were supported in part by NSF DMS-2240180 and the Alfred P. Sloan Fellowship. Q.~Du and J.~Scott  were supported in part by NSF DMS-2309245 and DMS-1937254. T. Mengesha is supported in part by NSF DMS-2206252.}
}
\author{Qiang Du\thanks{Applied Physics and Applied Mathematics, Columbia University, 500 W. 120th St, New York, NY 10027 (\email{qd2125@columbia.edu}).}
\and Zhaolong Han\thanks{Department of Mathematics, University of California San Diego, 9500 Gilman Drive, La Jolla, CA 92093 (\email{zhhan@ucsd.edu}, \email{xctian@ucsd.edu}).} 
\and Tadele Mengesha\thanks{Department of Mathematics, The University of Tennessee, 227 Ayres Hall, 1403 Circle Drive, Knoxville TN 37996 (\email{mengesha@utk.edu}).}
\and James M. Scott\thanks{Department of Mathematics and Statistics, Auburn University, 221 Parker Hall, Auburn, AL 36849 (\email{james.m.scott@auburn.edu}).}
\and Xiaochuan Tian\footnotemark[3]}
\DeclareMathOperator*{\argmin}{arg\,min}
\begin{document}

\maketitle

\begin{abstract}
In this paper, we consider a family of seamlessly coupled nonlocal models associated with transmission conditions across an interface. The models are derived from the variation of a parameterized family of energies consisting of a fractional type Dirichlet energy on one subdomain and a nonlocal Dirichlet energy involving a finite range of interactions on another subdomain. We present the rigorous mathematical formulation and its well-posedness. We also investigate the behavior of the model in various limiting regimes.
\end{abstract}

\begin{keywords}
    Nonlocal function spaces; fractional integro-differential equations; nonlocal transmission problems; weighted Sobolev spaces
\end{keywords}

\begin{MSCcodes}
45K05, 35R11, 46E35
\end{MSCcodes}

\section{Introduction}

 In this paper we present an analytical investigation of a transmission problem that seamlessly couples two 
 distinct models, with one  model based on the regional fractional Laplacian and another model employing a nonlocal operator characterized by a position-dependent interaction kernel. Both operators are nonlocal and act on functions that are defined within their respective spatial domains, with the coupling facilitated through a transmission condition across a hypersurface interface in between the domains. This presents a very unique setting of coupled models on non-overlapping spatial subdomains, which leads to many interesting modeling and analysis questions. 

\subsection{Motivation}\label{subsec:motivation}
Coupling of different models on different parts of a spatial domain has been an extensively studied subject of research with the aim of combining the computational efficiency of some models with the accuracy of others. Popular approaches, in the context of models of mechanics, include energy-based and force-based formulations. Coupled local and nonlocal models are analyzed in \cite{acosta2022local,acosta2023domain,JuanPablo-Ciarlet2025,DosSantos2021, capodaglio2020energy,Delia2015a,DLLT18,Garriz2020,KiMa10,kriventsov2015regularity,Seleson2013a,Seleson2015a,TaTiDu19}, focusing on various mathematical aspects such as well-posedness, qualitative properties of solutions, solution regularity, and asymptotic behavior as functions of the parameters. See the review paper \cite{DLST21} for additional references. Diffusion models that couple different fractional models have also attracted attention, see \cite{Gal2017,Garriz-Ignat2021}.

The mathematical formulation of the particular coupling problem under consideration in this work adopts the energy-based approach, providing a physically consistent analogue of the classical PDE-based local interface problem. While the nonlocal transmission problem studied in \cite{capodaglio2020energy} serves as a motivation for our work, their approach makes use of a volumetric-interface region to couple the models, in contrast to the hypersurface interface typically used in local interface problems.

To briefly describe the problem of interest, we let $\Omega \subset \bbR^d$ be a bounded  Lipschitz domain. Let $\Gamma \subset \bbR^d$ be a hypersurface that is locally the graph of a Lipschitz function, and such that $\Sigma := \Omega \cap \Gamma$ partitions $\Omega$ into two disjoint Lipschitz domains $\Omega_1$ and $\Omega_2$. Illustrations of possible configurations of such domains are shown in \Cref{fig:domains}.
Notice that $\Omega = \Omega_1 \cup \Omega_2 \cup \Sigma$.

\begin{figure}[htbp]
\vspace{-1cm}
  \centering
\begin{minipage}{0.46\textwidth}
	\begin{tikzpicture}[scale=0.50]
		\draw[thin] (0, 0) ellipse (5cm and 3cm) node at (3, 3){$\Omega$};
		
		\fill[blue!10] (-1.5, 0) circle (1.5cm);
		\node at (-1.5, 0) {$\Omega_1$};
		
		\node at (-3.2, -0.65) {$\Sigma$};
		
		
		
		\tikzset{highlight/.style={fill opacity=.4,red}}
		
		
		
		\clip (-6,-6)
		rectangle (6cm ,6 cm );
		\node [below right]at(3,-2.3){$\partial \Omega_2$};
		\node at (2, -1) {$\Omega_2$};
	\end{tikzpicture}
\end{minipage}\hspace{.3cm}
\begin{minipage}{0.46\textwidth}
	\begin{tikzpicture}[scale=0.70]
		\draw[thin] (0, 0) rectangle (7, 4);
		\node at (3, 4.4) {$\Omega$};
		\node at (1.5, 2.25) {$\Omega_1$};
		\node at (4.5, 2.75) {$\Omega_2$};
		\draw[line width=0.2mm] (2, 0) to[out=15, in=200] (4.75, 4);
		\node[left] at (3.4, 2) {$\Sigma$};
	\end{tikzpicture}
\end{minipage}
\vspace{-1cm}
\caption{Illustration of two domain configurations.}
\label{fig:domains}
\end{figure}
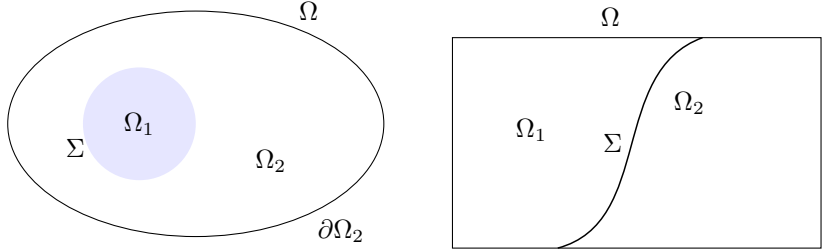


Given a function $f : \Omega \to \mathbb{R} $, we analyze the weak form of the transmission problem 
\begin{equation*}
	\begin{aligned}
		L^{s,\delta} u_1 &= f\quad \text{in $\Omega_1$},\\
		(-\Delta)_{\Omega_2}^s u_2  &= f\quad \text{in $\Omega_2$},\\
		\text{a specified transmission} &\text{ condition}  \text{ on $\Sigma$},\\
		\text{boundary } &\text{conditions }
		\text{on $\partial \Omega$},
	\end{aligned}
\end{equation*}
where $L^{s,\delta} $ is a specific nonlocal operator, $(-\Delta)_{\Omega_2}^s$ is the regional fractional Laplacian,  and the functions $u_1 : \Omega_1 \to \bbR$, $u_2 : \Omega_2 \to \bbR$ are regular enough for the operators to make sense. The transmission condition on $\Sigma$ refers to a continuity condition on $u(x) = u_1(x)\mathds{1}_{\Omega_1}(x) + u_2(x)\mathds{1}_{\Omega_2}$ in $\Omega$, in an appropriate sense, as well as a matching of suitably defined flux from $\Omega_1$ to $\Omega_2$ and vice verse. The nonlocal operator $L^{s,\delta}$ and the solution space of the transmission problem are defined in such a way that prescribing a transmission condition while using nonlocal operators is coherent and meaningful.
In addition to showing the existence of solutions to the transmission problem in a suitable space of functions, we will also study the behavior of the pair of solutions as a function of the parameter $s$  that determines the order of the differentiability and the parameter $\delta$ that measures the range of nonlocality.

\subsection{Statement of the transmission problem}
The specific class of transmission problems we study is derived as the first variation of a well-defined class of energies which will be detailed below. We take $1 < p < \infty$, $s \in (0,1)$ with $sp > 1$, $\delta\in (0, 1)$. Consider the following parameterized  energies 
\begin{equation}\label{energy}
	\begin{split}
		\cE_{s,\delta}(u_1,u_2) &:= \frac{\overline{C}_{d,p}}{p} \int_{\Omega_1} \int_{ B(\bx,\delta \sigma(\bx)) } \alpha(\bx)\frac{|u_1(\bx)-u_1(\by)|^p}{ \delta^{d+p} \sigma(\bx)^{d+sp} } \, \rmd \by \, \rmd \bx \\
		&\qquad + \frac{\kappa_{d,s,p}}{p} \int_{\Omega_2} \int_{\Omega_2}\beta(\bx) \frac{|u_2(\bx)-u_2(\by)|^p}{|\bx-\by|^{d+sp}} \, \rmd \by \, \rmd \bx.
	\end{split}
\end{equation}
where $\alpha$ and $\beta$ serve as coefficients and are bounded from below and above by positive constants. The normalizing constants $\overline{C}_{d,p}$ and   $\kappa_{d,s,p}$ will be specified later. The function $\sigma$ is the distance function from $\partial \Omega_1$, defined as
\[
\sigma(\bx) := \dist(\bx,\p \Omega_1).
\]
For any $\bx$ and $r>0$, $B(\bx, r)$ represents a ball centered at $\bx$ with radius $r.$ Note that the two terms of the energy are decoupled and model different processes in their respective domain. Indeed, for the first term of  $\cE_{s,\delta}(u_1,u_2)$, for $\bx\in \Omega_1$, the domain of interaction is $B(\bx, \delta\sigma(\bx))\subset\Omega_1$. Meanwhile, for the second term, the domain of interactions is given by the whole subdomain $\Omega_2$.  We minimize the energy $ \cE_{s,\delta}(u_1,u_2)$ over the class of pair of functions $(u_1, u_2)$ subject to a boundary condition on $\partial \Omega$
and 
a transmission condition on $\Sigma$ that introduces the coupling 
through a transmission condition across the interface $\Sigma$.
We begin by noting that for any admissible pair $(u_1, u_2)$, each of the terms of the energy $\cE_{s,\delta}(u_1,u_2)$ must be finite. As a consequence, since  $\alpha$ and $\beta$ have positive bounds,  $u_1$ belongs to the function space $\mathfrak{W}^{s,p}[\delta](\Omega_1)$ defined by 
{\small\[
\mathfrak{W}^{s,p}[\delta](\Omega_1) := \left\{ u \in L^p(\Omega_1) \, : [u]^{p}_{\mathfrak{W}^{s,p}[\delta](\Omega_1)} < \infty \right\}\]}where $$[u]^{p}_{\mathfrak{W}^{s,p}[\delta](\Omega_1)}:= \overline{C}_{d,p} \intdm{\Omega_1}{ \int_{B(\bx,\delta \sigma(\bx))}   \frac{\big| u(\bx)-u(\by) \big|^p}{ \delta^{d+p}  \sigma(\bx)^{d+sp} }  \, \rmd \by}{\bx} $$ serves as its seminorm,
and $u_2$ belongs to the classical fractional Sobolev space $W^{s, p}(\Omega_2)$,  given by 
\[
W^{s, p}(\Omega_2)=\left\{ u \in L^p(\Omega_2) \, : \,|u|^{p}_{W^{s,p}(\Omega_2)}<\infty\right\},
\]
where $$[u]^{p}_{W^{s,p}(\Omega_2)}:=\kappa_{d,s,p} \int_{\Omega_2} \int_{\Omega_2} \frac{|u(\bx)-u(\by)|^p}{|\bx-\by|^{d+sp}} \, \rmd \by \, \rmd \bx $$ serves as its seminorm. 

In addition, admissible pairs need to be regular enough to allow effective prescription of transmission and boundary conditions on $\Gamma$ and $\partial\Omega$ respectively. The latter can be facilitated by,  
as we will discuss in the sequel, choosing the interaction kernel comparable to $\mathds{1}_{B(\bx,\delta \sigma(\bx))}(\by) \frac{1}{\delta^{d+p}  \sigma(\bx)^{d+sp}}$ for the seminorm of $\mathfrak{W}^{s,p}[\delta](\Omega_1)$ and working with $s\in(0,1)$ such that $sp>1$.

The proper setup and analysis of the variational problem related to the minimization of the energy $\cE_{s,\delta}$ heavily rely on the functional and analytical properties of the functions in the spaces $\mathfrak{W}^{s,p}[\delta](\Omega_1)$ and $W^{s, p}(\Omega_2)$. While the structural properties of the fractional Sobolev space $W^{s, p}(\Omega_2)$ are well known, the space $\mathfrak{W}^{s,p}[\delta](\Omega_1)$ is nonstandard. The majority of the paper will be devoted to the careful study of this space.  As we will show in later sections the space $\mathfrak{W}^{s,p}[\delta](\Omega_1)$ is a Banach space equipped with the norm
$$
\Vnorm{u}_{\mathfrak{W}^{s,p}[\delta](\Omega_1)}^p := \Vnorm{u}_{L^p(\Omega_1)}^p + [u]_{\mathfrak{W}^{s,p}[\delta](\Omega_1)}^p\,.
$$
It is not difficult to see that functions in $\mathfrak{W}^{s,p}[\delta](\Omega_1)$ could be irregular in the interior of $\Omega_1$, required to be no better than being in $L^{p}_{loc}(\Omega_1)$. However, given the blowup of the effective interaction kernel $\mathds{1}_{B(\bx,\delta \sigma(\bx))}(\by) {1\over \delta^{d+p}  \sigma(\bx)^{d+sp}}$ as $\bx \to \partial \Omega_1$,  functions in $\mathfrak{W}^{s,p}[\delta](\Omega_1)$ need to behave regularly on the boundary to have a finite seminorm $[u]_{\mathfrak{W}^{s,p}[\delta](\Omega_1)}^p$.   In fact, for $sp>1$, functions in $\mathfrak{W}^{s,p}[\delta](\Omega_1)$ have a well-defined trace on the boundary of $\partial \Omega_1$ that belong to $W^{s-{1/p}, p}(\partial \Omega_1)$. Indeed, the  trace operator 
\[
T_1: \mathfrak{W}^{s,p}[\delta](\Omega_1) \to W^{s-{1/p}, p}(\partial \Omega_1)
\]
is well-defined, linear and bounded (see \Cref{subsec:trace} for precise statements and details). This should be compared with the well-known result on the existence of a linear and bounded trace operator on fractional Sobolev spaces that have similar differentiability and integrability exponents. To fix notation, for $sp>1$, we denote the usual trace operator on $W^{s,p}(\Omega_2)$  
by $$
T_2: W^{s,p}(\Omega_2) \to W^{s-{1/p}, p}(\partial \Omega_2).$$ 
With this background we are now ready to specify the admissible class of functions for the minimization of the energy $\cE_{s,\delta}$ given in \eqref{energy}.  As we indicated earlier, the minimization will be over a class of pair of functions $(u_1, u_2) \in \mathfrak{W}^{s,p}[\delta](\Omega_1) \times W^{s,p}(\Omega_2)$ subject to a transmission condition on $\Sigma$ and a boundary condition on $\partial \Omega$. For $\delta>0$ and $sp>1$, we then have the tool to express the transmission condition across $\Sigma$, namely that 
\begin{equation}\label{eq:TransmissionContinuity}
	(T_1 u_1 - T_2 u_2) \mathds{1}_{\Sigma} = 0 \quad \scH^{d-1}\text{-a.e. on } \Sigma,
\end{equation}
where $T_1$ and $T_2$ are the respective trace operators and $\scH^{d-1}$ denotes the Hausdorff measure of dimension $d-1$. We also require that $u_1$ and $u_2$ satisfy a homogeneous Dirichlet boundary condition, namely that $T_1u_{1}$ and $T_2u_{2}$ vanish on $\partial \Omega \cap \partial \Omega_1$  and $\partial \Omega \cap \partial \Omega_2$ respectively. The admissible function space is therefore
\begin{equation} 
	\resizebox{0.9\textwidth}{!}{$ \label{SPACES}
		\mathfrak{X}_{s,\delta} := \left\{ (u_1, u_2) \in \mathfrak{W}^{s,p}[\delta](\Omega_1) \times W^{s,p}(\Omega_2)\, : \, 
		\begin{gathered}
			\eqref{eq:TransmissionContinuity} \text{ holds, and for $i=1, 2$ } \\
			T_i u_i = 0 \,\, \scH^{d-1}\text{-a.e. on } \p \Omega_i \setminus \Sigma 
		\end{gathered}
		\right\},$}
\end{equation}
where we suppress the dependence of the space on $p$. 
Relying on the trace theorems on $\mathfrak{W}^{s,p}[\delta](\Omega_1)$ and $W^{s,p}(\Omega_2)$ that will be established later,  the space $\mathfrak{X}_{s,\delta} $ equipped with the norm
\begin{eqnarray*}
	\Vnorm{(u_1,u_2)}_{\mathfrak{X}}^p := \vnorm{u_1}_{L^p(\Omega_1)}^p  + \vnorm{u_2}_{L^p(\Omega_2)}^p + [(u_1,u_2)]_{\mathfrak{X}}^p,
\end{eqnarray*}
with the seminorm $[(u_1,u_2)]_{\mathfrak{X}}^p := [u_1]^{p}_{\mathfrak{W}^{s,p}[\delta](\Omega_1)}+[u_2]^{p}_{W^{s,p}(\Omega_2)}$ will be shown to be a separable reflexive Banach space, see \Cref{prop:X-separable-reflexive}. We are now ready to state the main problem in a proper way. 

\subsection{{Variational formulation of the transmission problem and main results}}
Given $\delta, s \in (0,1)$, $p \in (1,\infty)$ such that $sp>1$, and 
$\mathfrak{f}\in \mathfrak{X}'_{s,\delta}$, the dual space of $\mathfrak{X}_{s,\delta}$,
the problem of interest is to find $(u_1,u_2) \in \mathfrak{X}_{s,\delta}$ satisfying
\begin{equation}\label{eq:Minprob:Transmiss}
	\begin{gathered}
		(u_1,u_2) = \argmin_{(v_1,v_2) \in \mathfrak{X}_{s,\delta}} \cF_{s,\delta}(v_1,v_2), \\
		\text{ where } \cF_{s, \delta}(v_1,v_2) := \cE_{s,\delta}(v_1,v_2) - \langle\mathfrak{f}, (v_1, v_2)\rangle_{\mathfrak{X}'_{s, \delta}, \mathfrak{X}_{s, \delta}},
	\end{gathered}
\end{equation}
and where $\langle\cdot, \cdot\rangle_{\mathfrak{X}'_{s, \delta}, \mathfrak{X}_{s, \delta}}  $ is the duality pairing on $\mathfrak{X}_{s, \delta}$.  
The first objective of this work,
discussed in detail in \Cref{sec:variational-problem}, is to establish the well-posedness of the problem. 
\begin{theorem}\label{thm:wellposedness}
	For a given $\delta>0$ sufficiently small, $s\in (0, 1)$, $p\in (1, \infty)$ such that $sp>1,$ and  $\mathfrak{f}\in \mathfrak{X}'_{s,\delta}$, 
	there exists a unique $(u_1,u_2) \in \mathfrak{X}_{s,\delta}$ satisfying \eqref{eq:Minprob:Transmiss}.
\end{theorem}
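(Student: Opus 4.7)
The strategy is the classical direct method of the calculus of variations applied to $\cF_{s,\delta}$ on the reflexive Banach space $\mathfrak{X}_{s,\delta}$. The three ingredients needed are (i) coercivity of $\cF_{s,\delta}$, (ii) weak sequential lower semicontinuity, and (iii) strict convexity. Since $\mathfrak{X}_{s,\delta}$ is separable and reflexive (by the forthcoming \Cref{prop:X-separable-reflexive}) and the admissibility conditions \eqref{eq:TransmissionContinuity} and the homogeneous Dirichlet condition $T_i u_i = 0$ on $\partial\Omega_i\setminus\Sigma$ are linear constraints preserved under weak limits, a minimizing sequence will admit a weakly convergent subsequence whose limit lies in $\mathfrak{X}_{s,\delta}$ and inherits the minimization property.

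First I would establish a coupled Poincar\'e-type inequality of the form
\begin{equation*}
  \Vnorm{u_1}_{L^p(\Omega_1)}^p + \Vnorm{u_2}_{L^p(\Omega_2)}^p \;\le\; C\Bigl([u_1]^p_{\mathfrak{W}^{s,p}[\delta](\Omega_1)} + [u_2]^p_{W^{s,p}(\Omega_2)}\Bigr)
\end{equation*}
valid on $\mathfrak{X}_{s,\delta}$ for $\delta$ sufficiently small. The natural route is to glue $u_1$ and $u_2$ into $u = u_1\mathds{1}_{\Omega_1} + u_2\mathds{1}_{\Omega_2}$, use the matching of traces across $\Sigma$ together with the vanishing traces on $\partial\Omega$ to place $u$ in a function class on $\Omega$ amenable to a fractional Poincar\'e inequality (controlling $\Vnorm{u_2}_{L^p}$ is standard since $T_2 u_2=0$ on a portion of $\partial\Omega_2$ of positive measure, while the bound on $\Vnorm{u_1}_{L^p}$ can be obtained by a trace-lifting argument using the boundary behavior encoded in $[u_1]_{\mathfrak{W}^{s,p}[\delta](\Omega_1)}$). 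Combined with the lower bounds on $\alpha$ and $\beta$, this yields $\cE_{s,\delta}(v_1,v_2) \ge c\Vnorm{(v_1,v_2)}_{\mathfrak{X}}^p$, and Young's inequality applied to the linear term $\langle \mathfrak{f},(v_1,v_2)\rangle$ then gives coercivity of $\cF_{s,\delta}$ on $\mathfrak{X}_{s,\delta}$.

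Next, weak lower semicontinuity of $\cE_{s,\delta}$ follows from the fact that each of its two integrals is convex and continuous with respect to the seminorm topology (equivalently, it is the $p$-th power of a seminorm, hence convex and strongly continuous, therefore weakly lower semicontinuous by Mazur's lemma). The duality pairing $\langle\mathfrak{f},\cdot\rangle$ is weakly continuous by definition of the weak topology. Picking a minimizing sequence $\{(u_1^n,u_2^n)\}$, coercivity implies boundedness in $\mathfrak{X}_{s,\delta}$, reflexivity yields a weakly convergent subsequence with limit $(u_1,u_2)$, and the linear constraints defining $\mathfrak{X}_{s,\delta}$ pass to the weak limit because traces and the restriction maps are continuous linear operators, so $(u_1,u_2)\in \mathfrak{X}_{s,\delta}$ and $\cF_{s,\delta}(u_1,u_2) \le \liminf_n \cF_{s,\delta}(u_1^n,u_2^n) = \inf\cF_{s,\delta}$.

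Finally, uniqueness follows from strict convexity of $\cE_{s,\delta}$: since $p>1$, the map $t\mapsto |t|^p$ is strictly convex, and writing any convex combination in the integrand shows strict inequality unless $u_1(\bx)-u_1(\by)$ and $u_2(\bx)-u_2(\by)$ agree a.e., which together with the Poincar\'e estimate forces equality of two competing minimizers. The main obstacle is the coercivity step: it hinges on a nonlocal Poincar\'e inequality that simultaneously controls $u_1$ near $\partial\Omega_1$ (where the kernel $\sigma(\bx)^{-(d+sp)}$ blows up and supplies effective boundary information via the trace $T_1$) and couples this control to $u_2$ through the matching condition on $\Sigma$; this coupling is where the trace theory for $\mathfrak{W}^{s,p}[\delta](\Omega_1)$ developed in the paper must do the real work.
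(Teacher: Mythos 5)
Your proposal follows essentially the same route as the paper: the direct method on the separable reflexive space $\mathfrak{X}_{s,\delta}$, with coercivity obtained from a coupled Poincar\'e-type inequality (fractional Poincar\'e for $u_2$ via its vanishing trace, then lifting $T_1u_1=T_2u_2$ from $\Sigma$ into $\mathfrak{W}^{s,p}[\delta](\Omega_1)$ by an extension operator and applying the nonlocal Poincar\'e inequality to the difference), weak lower semicontinuity from convexity plus strong continuity, weak closedness of the constraint set via weak continuity of the traces, and uniqueness from strict convexity of $t\mapsto|t|^p$. This matches the paper's proof (Theorem~\ref{thm:wellposedness} via Lemma~\ref{PI-onX}, Theorem~\ref{thm:PoincareDirichlet}, and Proposition~\ref{lma:Extension}) in both structure and key ingredients.
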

The theorem is a consequence of the direct method of calculus of variations; see a detailed proof in \Cref{sec:variational-problem}. To apply the method,  in
addition to establishing some structural properties of the space $\mathfrak{X}_{s,\delta}$, we need to prove the coercivity of $\mathcal{F}_{s, \delta}$, which will follow from a Poincar\'e-type inequality that we will need to state and prove. As stated earlier, all of these rely on intermediate results we prove via careful analysis of the nonstandard function space $\mathfrak{W}^{s,p}[\delta](\Omega_1)$, see details presented in \Cref{sec:function-space}.  


The second objective of this paper is to understand the behavior of the sequence of solutions as we vary the parameters $\delta$ and $s$ to extreme values in several regimes, see \Cref{sec:variational-convergence}. 
As minimizers of a family of parameterized energies $\cF_{s,\delta}$, one way to understand their limiting behavior is through the limiting behavior of the sequence of energies. To that end, we use a variational convergence -- $\Gamma$-convergence -- to find limiting energies that will be finite in appropriate function spaces.  Let us begin describing the energy functionals that will be obtained as a limit of each of the parts of $\cE_{s,\delta}$ as $\delta\to 0$ or $s\to 1$. We start with the second integral energy that defines $\cE_{s,\delta}$, as it depends only on the parameter $s$. Our interest is in the regime of convergence when $s\to 1$. It follows from the well-known result in \cite{bourgain2001} that for $u_2 \in W^{1,p}(\Omega_2)$, we have 
\[
\lim_{s\to 1}\frac{\kappa_{d,s,p}}{p} \int_{\Omega_2} \int_{\Omega_2} \beta(\bx)\frac{|u_2(\bx)-u_2(\by)|^p}{|\bx-\by|^{d+sp}} \, \rmd \by \, \rmd \bx =    \frac{1}{p} \int_{\Omega_2} \beta(\bx){|\nabla u_2(\bx)|^p}  \, \rmd \bx
\]
where
\begin{equation}\label{constant-A}
	\text{ $\kappa_{d,s,p} = \frac{p-sp}{A_{d,p}}$  with } A_{d,p} := \int_{\bbS^{d-1}} |\bsomega \cdot \be|^p \, \rmd \scH^{d-1}(\bsomega) = \frac{ 2 \pi^{\frac{d-1}{2}}\Gamma(\frac{p+1}{2} ) }{ \Gamma(\frac{d+p}{2} ) }.
\end{equation}
The first part of $\cE_{s,\delta}$ has two parameters. We are interested in the two convergence regimes: fix $\delta\in (0,1)$ and let $s\to 1$, and fix $s\in (0,1)$ and let $\delta\to 0$. On the one hand, using standard convergence theorems, it is not difficult to verify  that  fixing $\delta\in (0,1)$ and letting $s\to 1$, we obtain for smooth enough $u_1$ defined in $\Omega_1$ that 
\begin{align*}\lim_{s\to 1} \int_{\Omega_1} &\int_{ B(\bx,\delta \sigma(\bx)) } \alpha(\bx)\frac{|u_1(\bx)-u_1(\by)|^p}{ \delta^{d+p} \sigma(\bx)^{d+sp} } \, \rmd \by \, \rmd \bx\\
	&=  \int_{\Omega_1} \int_{ B(\bx,\delta \sigma(\bx)) } \alpha(\bx)\frac{|u_1(\bx)-u_1(\by)|^p}{ \delta^{d+p} \sigma(\bx)^{d+p} } \, \rmd \by \, \rmd \bx. 
\end{align*}

On the other hand, fixing $s\in (0,1)$, we will show later in the paper that for smooth enough functions $u_1$
\begin{align*}
	\lim_{\delta\to 0} \frac{\overline{C}_{d,p}}{p} \int_{\Omega_1}\alpha(\bx)& \int_{ B(\bx,\delta \sigma(\bx)) } \frac{|u_1(\bx)-u_1(\by)|^p}{ \delta^{d+p} \sigma(\bx)^{d+sp} } \, \rmd \by \, \rmd \bx\\ &= \frac{1}{p} \int_{\Omega_1} \alpha(\bx) |\grad u_1(\bx)|^p \sigma(\bx)^{p-sp} \, \rmd \bx,
\end{align*}
where \[\overline{C}_{d,p} = \frac{d+p}{A_{d,p}}\] 
and $A_{d, p}$ is given as in \eqref{constant-A}. In fact, we will demonstrate that the above convergence holds true for a class of $L^{p}$ functions in $\Omega_1$ with a weak derivative that has a finite weighted-$L^{p}$ norm. To be precise,  we define this weighted space as 
\begin{equation}\label{Def-weighted-space}
	W^{1,p}(\Omega_1;p-sp) := \left\{ u \in L^p(\Omega_1) \, :\, \int_{\Omega_1} |\grad u(\bx)|^p \sigma(\bx)^{p-sp} \, \rmd \bx < \infty \right\}. 
\end{equation}
To simplify notations, when $s=1$, the space $W^{1,p}(\Omega_1;p-sp) = W^{1,p}(\Omega_1;0)$ is understood to be $W^{1,p}(\Omega_1)$. Such weighted Sobolev spaces have been previously investigated in the literature, see for example \cite{gol2009weighted,gurka1988continuous}, and more general weighted spaces \cite{Fabes-Kenig-Serapioni,kufner1980weighted}.

We are now ready to state the second main result of the paper, which is the $\Gamma$-convergence of the sequence of functionals $\cF_{s, \delta}$ defined in \eqref{eq:Minprob:Transmiss} with respect to the strong $L^{p}(\Omega_1)\times L^{p}(\Omega_2)$-topology. For brevity,
such types of $\Gamma$-convergence are denoted by $\text{$\Gamma$-$L^{p}$},$  in various limits taken with respect to the parameters $\delta$ and $s$. Without introducing additional overbearing notation, we assume that $\cF_{s, \delta}(u_1, u_2) = \infty$ for all $(u_1, u_2)\in \left(L^{p}(\Omega_1)\times L^{p}(\Omega_2) \right)\setminus \mathfrak{X}_{s,\delta}$.

\begin{theorem}\label{Main-gammaconvergence}
	Let $1<p<\infty$ {and let $p' = \frac{p}{p-1}$ denote the H\"older conjugate}. Given $f_1\in L^{p'}(\Omega_1)$ and $f_2\in L^{p'}(\Omega_2)$, 
    consider the class of functionals $\cF_{s, \delta}(u_1, u_2)$ given in \eqref{eq:Minprob:Transmiss} parameterized by $(\delta, s)\in (0, \underline{\delta}_0)\times (0, 1)$ for sufficiently small but fixed $\underline{\delta}_0>0$.
	Then we have the following $\Gamma$-convergence results with respect to strong $L^{p}(\Omega_1)\times L^{p}(\Omega_2)$ convergence:
	
	\begin{enumerate}[label=\textbf{\upshape \alph*)}]
		\item \label{item:a} Fix $s\in (0, 1)$. Then  as  $\delta \to 0$, 
		\[
		\cF_{s,\delta}(u_1,u_2) \xrightarrow{\text{$\Gamma$-$L^{p}$}} \cF_{s, 0}(u_1, u_2)
		\]
		where the limiting energy energy functional $\cF_{s, 0} : L^p(\Omega_1) \times L^p(\Omega_2) \to \bbR \cup \{+\infty\}$ is given by \[\cF_{s, 0}(u_1, u_2) = \cE_{s, 0}(u_1, u_2) - \int_{\Omega_1} f_1(\bx) u_1(\bx) \rmd\bx - \int_{\Omega_2} f_2(\bx) u_2(\bx) \rmd\bx\]  
		with 
		\begin{equation}\label{defn-E(s,0)}
			\begin{split}
				\cE_{s, 0}(u_1, u_2) &:= \frac{1}{p}\int_{\Omega_1} \alpha(\bx)|\nabla u_1(\bx)|^{p} \sigma(\bx)^{p-sp} \, \rmd\bx \\
				&\qquad + \frac{\kappa_{d,s,p}}{p} \int_{\Omega_2} \int_{\Omega_2}\beta(\bx) \frac{|u_2(\bx)-u_2(\by)|^p}{|\bx-\by|^{d+sp}} \, \rmd \by \, \rmd \bx
			\end{split}
		\end{equation}
		for $(u_1, u_2)\in \mathfrak{X}_{s, 0}$ and $\cF_{s, 0}(u_1,u_2) = +\infty$ otherwise, where   
		\[\mathfrak{X}_{s, 0}= \left\{\begin{gathered}(u_{1}, u_2)\in W^{1, p}(\Omega_1;p-sp)\times W^{s, p}(\Omega_2): (T_1u_{1} - T_2u_{2})|_{ \Sigma } = 0,\\
			T_iu_{i} = 0\quad \scH^{d-1}\text{-a.e. on } \partial \Omega_i\setminus  \Sigma, \text{ for } i=1,2
		\end{gathered}\right \}. \]
		
		\item \label{item:b} Fix $\delta\in (0, \underline{\delta}_0). $ Then as $s\to 1$, we have
		\[
		\cF_{s,\delta}(u_1,u_2)  \xrightarrow{\text{$\Gamma$-$L^{p}$}}  \cF_{1, \delta}(u_{1}, u_{2})
		\]
		where $\cF_{1, \delta} : L^p(\Omega_1) \times L^p(\Omega_2) \to \bbR \cup \{+\infty\}$ is given by $$\cF_{1, \delta}(u_{1}, u_{2}) =  \cE_{1, \delta}(u_1, u_2) -\int_{\Omega_1} f_{1}(\bx)u_1(\bx) \rmd\bx - \int_{\Omega_2}f_2(\bx) u_2(\bx)\rmd\bx
		$$
		with 
		\begin{equation}\label{defn-E(1,delta)}
			\begin{split}
				\cE_{1, \delta}(u_1, u_2) &:=  {\overline{C}_{d,p} \over p}\int_{\Omega_1} \int_{ B(\bx,\delta \sigma(\bx)) } \alpha(\bx)\frac{|u_1(\bx)-u_1(\by)|^p}{ \delta^{d+p} \sigma(\bx)^{d+p} } \, \rmd \by \, \rmd \bx \\
				&\qquad + \frac{1}{p} \int_{\Omega_2} \beta(\bx)|\nabla u_{2}(\bx)|^{p} \, \rmd \bx
			\end{split}
		\end{equation}
		for $(u_1, u_2)\in \mathfrak{X}_{1, \delta}$ and $\cF_{1,\delta}(u_1,u_2) = + \infty$ otherwise, where
		\[
		\mathfrak{X}_{1, \delta} = \left\{
		\begin{gathered}(u_{1}, u_{2})\in \mathfrak{W}^{1, p}[\delta](\Omega_1) \times W^{1, p}(\Omega_2): (T_1u_{1} - T_2u_{2})|_{\Sigma} = 0,\\
			T_iu_{i} = 0,\quad \scH^{d-1}\text{-a.e. on } \partial \Omega_i\setminus \Sigma, \text{ for }i=1, 2
		\end{gathered}
		\right\}.
		\]
		\item \label{item:c} For $s\in (0, 1)$, let $\cE_{s, 0}(u_1, u_2)$ be as defined in \eqref{defn-E(s,0)}. Then as $s\to 1$, 
		\[
		\cF_{s,0}(u_1,u_2) \xrightarrow{\text{$\Gamma$-$L^{p}$}} \cF_{1, 0}(u)
		\]
		where $u$ is defined as in \Cref{subsec:motivation} and $\cF_{1,0}(u) : L^p(\Omega) \to \bbR \cup \{+\infty\}$ is defined as
		$$
		\cF_{1, 0}(u) = \cE_{1, 0}(u) - \int_{\Omega_1} f_{1}(\bx)u(\bx)\rmd\bx -\int_{\Omega_2}f_2(\bx) u(\bx)\rmd\bx
		$$
		where if $\Lambda(\bx) := \alpha(\bx)\mathds{1}_{\Omega_1}(\bx) + \beta(\bx) \mathds{1}_{\Omega_2}(\bx),$ then 
		\begin{equation}\label{defn-E(1,0)}
			\cE_{1, 0}(u) := \frac{1}{p}\int_{\Omega} \Lambda(\bx) |\nabla u (\bx) |^{p} \rmd\bx, \quad 
		\end{equation}
		for $u \in \mathfrak{X}_{1,0}:= W^{1, p}_0(\Omega)$ and $\cF_{1,0}(u) = +\infty$ otherwise.
        
		\item \label{item:d} For $ \delta\in (0, \underline{\delta}_0)$, let $\cE_{1, \delta}$ be defined as in \eqref{defn-E(1,delta)}. Then as $\delta\to0$, we have 
		\[
		\cF_{1, \delta}(u_1, u_2) \xrightarrow{\text{$\Gamma$-$L^{p}$}}  \cF_{1, 0}(u),
		\]
		where $\cE_{1,0}$ is as defined in \eqref{defn-E(1,0)}.
		\item \label{item:e} Finally, for any $(s_k, \delta_k) \to (1, 0)$ as $k\to \infty$, we have 
		\[
		\cF_{s_k, \delta_k}(u_{1}, u_{2})  \xrightarrow{\text{$\Gamma$-$L^{p}$}}  \cF_{1,0}(u).
		\]
	\end{enumerate}

\end{theorem}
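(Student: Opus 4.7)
The plan is to establish $\Gamma$-convergence in each of parts \ref{item:a}-\ref{item:e} by separately verifying the liminf inequality and constructing a recovery sequence. Two reductions simplify matters considerably. First, since $f_1 \in L^{p'}(\Omega_1)$ and $f_2 \in L^{p'}(\Omega_2)$, the linear term $\int f_1 u_1 + \int f_2 u_2$ is continuous under strong $L^p(\Omega_1)\times L^p(\Omega_2)$-convergence, so the $\Gamma$-limit of $\cF_{s,\delta}$ is obtained by adding the limit of this linear functional to the $\Gamma$-limit of the energy $\cE_{s,\delta}$. Second, $\cE_{s,\delta}$ decouples as the sum of a functional in $u_1$ alone and a functional in $u_2$ alone; the coupling appears only through the admissible class $\mathfrak{X}_{s,\delta}$ and must be carried along when the limit space enforces trace matching on $\Sigma$.

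For the liminf inequalities in parts \ref{item:a} and \ref{item:b}, the $u_2$-piece is treated by Fatou's lemma in \ref{item:a} (where $s$ is fixed) and by the standard $\Gamma$-$L^p$ version of the Bourgain-Brezis-Mironescu formula in \ref{item:b}. For the $u_1$-piece, the substitution $\by = \bx + \delta\sigma(\bx)\bz$ rewrites the nonlocal integrand as
\begin{equation*}
	\frac{\overline{C}_{d,p}}{p} \int_{\Omega_1} \alpha(\bx) \sigma(\bx)^{-sp} \int_{B(0,1)} \left| \frac{u_1(\bx + \delta\sigma(\bx)\bz) - u_1(\bx)}{\delta} \right|^p \rmd \bz \, \rmd \bx,
\end{equation*}
which in part \ref{item:a} is a rescaled difference-quotient seminorm whose $\delta\to 0$ liminf dominates the weighted Dirichlet energy (by Jensen's inequality on the inner average, Fatou, and an identification of the limit via Taylor expansion on smooth test functions), while in part \ref{item:b} the $s\to 1$ limit reduces to Fatou once the exponent of $\sigma(\bx)$ in the kernel is moved continuously. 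Recovery sequences in both cases can be taken to be the constant sequences $u_i^{(k)} = u_i$: the limsup reduces to verifying pointwise and dominated convergence of the integrands, which holds for smooth $u_1, u_2$ by Taylor expansion and extends to the full space by density of smooth functions in $\mathfrak{W}^{s,p}[\delta](\Omega_1)$, $W^{s,p}(\Omega_2)$, and $W^{1,p}(\Omega_1; p-sp)$ proved earlier in the paper, followed by a diagonal extraction.

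Parts \ref{item:c}, \ref{item:d}, and \ref{item:e} are analogous in spirit but now the limit admissible class is $W^{1,p}_0(\Omega)$, in which the transmission continuity is automatic. For the liminf, given $(u_1^{(k)}, u_2^{(k)}) \to (u|_{\Omega_1}, u|_{\Omega_2})$ in $L^p \times L^p$ with uniformly bounded energies, the single-piece liminfs from \ref{item:a}-\ref{item:b} imply $u|_{\Omega_i} \in W^{1,p}(\Omega_i)$, and continuity of the trace operators $T_1, T_2$ (combined with weak convergence of the traces, which follows from boundedness via the trace theorem) passes the identity \eqref{eq:TransmissionContinuity} to the limit. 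The classical matching-trace gluing principle then places $u = u|_{\Omega_1}\mathds{1}_{\Omega_1}+u|_{\Omega_2}\mathds{1}_{\Omega_2}$ in $W^{1,p}_0(\Omega)$ with distributional gradient $\nabla u|_{\Omega_1}\mathds{1}_{\Omega_1}+\nabla u|_{\Omega_2}\mathds{1}_{\Omega_2}$. For the recovery sequence, one first takes $u \in C^\infty_c(\Omega)$, defines $u_i = u|_{\Omega_i}$ (whose traces automatically match on $\Sigma$ and vanish on $\partial\Omega$), and invokes the pointwise energy convergences already established. Density of $C^\infty_c(\Omega)$ in $W^{1,p}_0(\Omega)$, together with a diagonal argument, extends the recovery to general $u$; part \ref{item:e} follows by a further diagonalization along $(s_k,\delta_k)\to(1,0)$, using \ref{item:c} or \ref{item:d} as an intermediate stepping stone and a suitable subsequence extraction based on uniform-in-parameter energy bounds.

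The main obstacle is the liminf for the $u_1$-piece in the singular kernel regime: the weight $\sigma(\bx)^{-d-sp}$ blows up at $\partial\Omega_1 \supset \Sigma$, so straightforward mollification or truncation risks destroying the trace information on $\Sigma$ that is essential for the coupling. Handling this cleanly relies on the weighted Hardy-type inequalities and the trace theory for $\mathfrak{W}^{s,p}[\delta](\Omega_1)$ developed in \Cref{subsec:trace}. A secondary difficulty, specific to \ref{item:c}-\ref{item:e}, is obtaining control of $T_1 u_1^{(k)} - T_2 u_2^{(k)}$ in the limit that is uniform in both parameters $(s,\delta)$; the expectation is that the Poincaré-type inequality underlying \Cref{thm:wellposedness} can be sharpened to provide such uniform trace bounds.
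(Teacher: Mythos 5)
Your overall architecture (split off the linear term, prove liminf and limsup case by case, use constant recovery sequences, and check that the transmission condition survives the limit) matches the paper's, but two steps that you treat as routine are exactly where the real work lies, and as sketched they do not go through. First, the liminf inequality for the $u_1$-part in the regimes $\delta\to 0$ (Cases \ref{item:a}, \ref{item:d}, \ref{item:e}): after your substitution $\by=\bx+\delta\sigma(\bx)\bz$, the integrand depends on $\delta$ both through the difference quotient and through the index of the sequence $u_1^{\delta}$, and mere $L^p$-convergence of $u_1^{\delta}$ gives no pointwise control of $\bigl(u_1^{\delta}(\bx+\delta\sigma(\bx)\bz)-u_1^{\delta}(\bx)\bigr)/\delta$; Jensen plus Fatou plus a Taylor expansion ``on smooth test functions'' therefore does not yield the inequality for arbitrary competitors, and ordinary mollification is unavailable because the kernel is not translation invariant and blows up at $\p\Omega_1$. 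The paper's resolution is the boundary-localized convolution: the estimates of \Cref{lem:with-phiepsilon}, the compactness result \Cref{thm:Compactness}, and then \Cref{Prop:LIMINF-INEQUALITY}, which also handles the variable coefficient $\alpha$ by a three-step approximation (indicator functions, simple functions, general bounded $\alpha$) that your sketch omits entirely. The tools you point to for this obstacle (Hardy-type inequalities and the trace theory of \Cref{subsec:trace}) are not the ones that resolve it.

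Second, passing the trace conditions to the limit is not a direct consequence of ``boundedness of traces plus continuity,'' because the spaces $\mathfrak{W}^{s,p}[\delta](\Omega_1)$ change with the parameter: uniform bounds give a weak limit of $T_1u_1^{\delta}$ in $W^{s-1/p,p}(\p\Omega_1)$, but identifying that limit with $T_1u_1$, where $u_1$ is only the $L^p$-limit, is the nontrivial point. In Case \ref{item:a} the paper does this by replacing $u_1^{\delta}$ with $K_{\delta}u_1^{\delta}$, which has the same trace (\Cref{cor:Trace:NonlocalSpace}), is bounded in the fixed space $W^{1,p}(\Omega_1;p-sp)$ by \Cref{thm:Convolution:DerivativeEstimate} with a $\delta$-independent constant, and has the same $L^p$-limit by \Cref{thm:diffuKu:Weighted}; then weak continuity of the trace (\Cref{rmk:StrongConvTrace}) applies in that fixed space. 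In Case \ref{item:b} the analogous step is a uniform embedding into a fixed $\mathfrak{W}^{s_0,p}[\delta](\Omega_1)$ with $s_0p>1$. Your ``sharpened Poincar\'e inequality'' is not what is needed here. Finally, a smaller point: for the limsup in Case \ref{item:a} your density-plus-diagonalization detour risks producing approximants whose traces no longer match on $\Sigma$ (hence $\cF_{s,\delta}=+\infty$); the paper avoids this by taking the constant sequence directly and invoking \Cref{thm:LocalizationOfSeminorm}, which is valid for every $u_1\in W^{1,p}(\Omega_1;p-sp)$, not only smooth ones.
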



Notice that, in the above theorem, $p'$ is the H\"older conjugate of $p$, and for a given $(f_1, f_2)\in L^{p'}(\Omega_1) \times L^{p'}(\Omega_2)$ the functional $\mathfrak{f}$ given by  \[\langle\mathfrak{f}, (v_1, v_2)\rangle := \int_{\Omega_1} f_{1}(\bx) v_{1}(\bx) \rmd\bx +\int_{\Omega_2} f_{2}(\bx) v_{2}(\bx) \rmd\bx\]
defines an object in the dual space of  $\mathfrak{X}_{s, \delta}$ for all $s\in (0, 1]$, and $\delta\in [0, 1)$.

We remark that the above theorem, proved in \Cref{sec:variational-convergence}, shows that the diagram \Cref{fig:variational_convergence} commutes where the arrow represents $\Gamma$-convergence in the respective topology. 
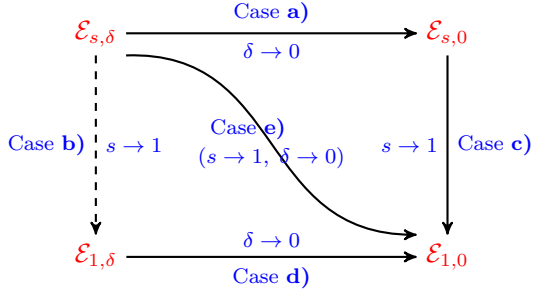
\begin{figure}
	\centering
	\begin{tikzpicture}[scale=0.85]
		\tikzset{to/.style={->,>=stealth',line width=.8pt}}   
		\node(v1) at (0,3.5) {\textcolor{red}{$\mathcal{E}_{s,\delta}$}};
		\node (v2) at (5.5,3.5) {\textcolor{red}{$\mathcal{E}_{s,0}$}};
		\node (v3) at (0,0) {\textcolor{red}{$\mathcal{E}_{1,\delta}$}};
		\node (v4) at (5.5,0) {\textcolor{red}{$\mathcal{E}_{1,0}$}};
		\draw[to] (v1.east) -- node[midway,above] {\footnotesize{\textcolor{blue}{  Case \ref{item:a}}}}  node[midway,below] {\footnotesize{\textcolor{blue}{$\delta\to0$}}}      
		(v2.west);
		\draw[to,dashed] (v1.south) -- node[midway,left] {\footnotesize{\textcolor{blue}{ Case \ref{item:b}}}} node[midway,right] {\footnotesize{\textcolor{blue}{$s\to1$}}} (v3.north);
		\draw[to] (v3.east) -- node[midway,below] {\footnotesize{\textcolor{blue}{ Case \ref{item:d}}}} node[midway,above] {\footnotesize{\textcolor{blue}{$\delta\to0$}}} (v4.west);
		\draw[to] (v2.south) -- node[midway,right] {\footnotesize{\textcolor{blue}{ Case \ref{item:c}}}} node[midway,left] {\footnotesize{\textcolor{blue}{$s\to1$}}}(v4.north);
		\draw[to] (v1.south east) to[out = 2, in = 180, looseness = 1.2] node[midway] {\footnotesize\hbox{\shortstack[l]{ {\textcolor{blue}{ $\;\;$Case \ref{item:e}}}\\ {\textcolor{blue}{($s\to1,\;\delta\to0$)}} }}} (v4.north west);
	\end{tikzpicture}
	\caption{Variational convergence of parameterized functionals 
	}
	\label{fig:variational_convergence}
\end{figure}
A corollary of the $\Gamma$-convergence of the parameterized energies is the convergence of the minimizers of the energies in the event 
where the energies are equicoercive. We 
thus have
\begin{corollary}\label{Convergence-minimizer}
	Under the assumptions of \Cref{Main-gammaconvergence}, the sequence of minimizers $(u^{s,\delta}_1, u^{s,\delta}_2) = \argmin_{\mathfrak{X}_{s, \delta}} \cF_{s,\delta}(v_1, v_2)$ converges in the strong $L^p(\Omega_1) \times L^p(\Omega_2)$ topology to a minimizer of the limiting energy functional in Case \ref{item:a}, Case \ref{item:c}, Case \ref{item:d} and Case \ref{item:e}.
\end{corollary}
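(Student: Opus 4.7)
The plan is to invoke the fundamental theorem of $\Gamma$-convergence, which states that if a sequence of functionals $\Gamma$-converges and is equi-coercive, then (cluster points of) their minimizers converge to a minimizer of the $\Gamma$-limit. Since \Cref{Main-gammaconvergence} already supplies the $\Gamma$-convergence in Cases \ref{item:a}, \ref{item:c}, \ref{item:d}, and \ref{item:e}, the whole task reduces to showing equi-coercivity of the family $\{\cF_{s,\delta}\}$ along the appropriate parameter path, and then identifying the limit via uniqueness of the minimizer of $\cF_{s,0}$, $\cF_{1,0}$, resp.\ $\cF_{1,0}$.

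For equi-coercivity, I would start from the minimizer bound $\cF_{s,\delta}(u_1^{s,\delta},u_2^{s,\delta}) \leq \cF_{s,\delta}(0,0) = 0$, which rearranges to
\[
	\cE_{s,\delta}(u_1^{s,\delta},u_2^{s,\delta}) \leq \langle \mathfrak{f},(u_1^{s,\delta},u_2^{s,\delta})\rangle_{\mathfrak{X}'_{s,\delta},\mathfrak{X}_{s,\delta}} \leq \|f_1\|_{L^{p'}(\Omega_1)} \|u_1^{s,\delta}\|_{L^p(\Omega_1)} + \|f_2\|_{L^{p'}(\Omega_2)}\|u_2^{s,\delta}\|_{L^p(\Omega_2)}.
\]
Combined with the Poincaré-type inequality on $\mathfrak{X}_{s,\delta}$ (whose existence is precisely what underlies \Cref{thm:wellposedness}, and which I would need to be uniform in $(s,\delta)$ along the chosen path, up to the small parameter $\underline{\delta}_0$), an application of Young's inequality yields a uniform bound
\[
	\|u_1^{s,\delta}\|_{L^p(\Omega_1)}^p + \|u_2^{s,\delta}\|_{L^p(\Omega_2)}^p + [u_1^{s,\delta}]_{\mathfrak{W}^{s,p}[\delta](\Omega_1)}^p + [u_2^{s,\delta}]_{W^{s,p}(\Omega_2)}^p \leq C,
\]
with $C$ independent of the parameters. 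The second component $u_2^{s,\delta}$ is then precompact in $L^p(\Omega_2)$ by the standard compact embedding $W^{s,p}(\Omega_2) \hookrightarrow\hookrightarrow L^p(\Omega_2)$ (uniform in $s$ bounded away from $0$) in Cases \ref{item:a} and \ref{item:d}, and by Rellich in Cases \ref{item:c}, \ref{item:e}; the first component $u_1^{s,\delta}$ is precompact in $L^p(\Omega_1)$ by the corresponding compactness results for $\mathfrak{W}^{s,p}[\delta](\Omega_1)$ and $W^{1,p}(\Omega_1;p-sp)$ developed earlier in the paper.

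Given any subsequence, extract a further subsequence converging strongly in $L^p(\Omega_1)\times L^p(\Omega_2)$ to some $(u_1^\ast,u_2^\ast)$. The $\Gamma$-$\liminf$ inequality identifies this limit as a minimizer of the limit functional, and the $\Gamma$-$\limsup$ inequality (tested against any competitor) combined with the $\liminf$ inequality shows $\cF_{s,\delta}(u_1^{s,\delta},u_2^{s,\delta}) \to \min \cF_{\infty}$. Since each limit functional in Cases \ref{item:a}, \ref{item:c}, \ref{item:d}, \ref{item:e} is strictly convex and coercive on its admissible class, the minimizer is unique, so the full sequence (not just a subsequence) converges, which gives the claim.

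The main obstacle I expect is ensuring the Poincaré inequality and compactness estimates hold \emph{with constants uniform} across the parameter regime — in particular uniformly as $\delta \to 0$ in Cases \ref{item:d} and \ref{item:e}, where the kernel underlying $\mathfrak{W}^{s,p}[\delta](\Omega_1)$ localizes near the diagonal and the space transitions to a weighted or classical Sobolev space. The remainder is essentially bookkeeping: checking that each of the four limit problems indeed has a unique minimizer (using strict convexity and the fact that a transmission-continuous pair vanishing on $\partial\Omega$ with zero seminorm is identically zero), and observing that, in Case \ref{item:e}, the trace-based constraint $T_1u_1 = T_2u_2$ on $\Sigma$ together with the vanishing trace on $\partial\Omega$ correctly passes to the single-function space $W_0^{1,p}(\Omega)$.
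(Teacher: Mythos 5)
Your proposal is correct and follows essentially the same route as the paper: convergence of minimizers via the fundamental theorem of $\Gamma$-convergence (the paper cites Dal Maso, Corollary 7.24), with equicoercivity obtained from the Poincar\'e inequality of \Cref{PI-onX}, whose constant is uniform in $(s,\delta)$, together with the compactness results (\Cref{thm:Compactness} for $\mathfrak{W}^{s,p}[\delta](\Omega_1)$ as $\delta\to 0$, the compact embedding of $W^{1,p}(\Omega_1;p-sp)$ for Case \ref{item:c}, and the fractional compact embeddings for the $\Omega_2$ component), and identification of the limit through uniqueness of the limiting minimizers. The only quibble is bookkeeping: in Cases \ref{item:c} and \ref{item:e} the $\Omega_2$-compactness must be uniform as $s\to 1$ (via the embedding $W^{s,p}(\Omega_2)\hookrightarrow W^{s',p}(\Omega_2)$ for fixed $s'<s$), while plain Rellich suffices for Case \ref{item:d} where $s=1$.
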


In Case \ref{item:b}, on the one hand, although we have $\Gamma$-convergence of the sequence of energy functionals $\cF_{s, \delta}$  in the strong $L^{p}(\Omega_1) \times L^{p}(\Omega_2)$ topology, it is not clear whether sequence of minimizers converge to a minimizer in the same topology. Part of the difficulty is that it is not possible to show that the sequence of functionals is equicoercive, which is related to the compact embedding of  $\mathfrak{X}_{s,\delta}$ in $L^{p}(\Omega_1)\times L^{p}(\Omega_2)$. For a fixed $\delta,$ we do not necessarily have compact embedding for the function space $\mathfrak{W}^{s,p}[\delta](\Omega_1)$ in $L^{p}(\Omega_1)$
as it contains all functions in $L^{p}_{loc}(\Omega_1)$. 
On the other hand,  by weakening the topology, we can show convergence of minimizers in the weak $L^{p}$-topology  to the respective minimizer. 
The following proposition states this result. 

	\begin{proposition}[Case \ref{item:b}']\label{Convergence-minimizer-weakLp}
		Under the assumptions of \Cref{Main-gammaconvergence}, for a fixed $\delta\in (0, \underline{\delta}_0), $   we have that as $s\to 1$,
		\[
		\cF_{s,\delta}(u_1,u_2)  \xrightarrow{\text{$\Gamma$}}  \cF_{1, \delta}(u_{1}, u_{2})
		\]
		with respect to the weak topology in $L^{p}(\Omega_1)\times L^{p}(\Omega_2)$. Moreover, the sequence of minimizers $(u^{s,\delta}_1, u^{s,\delta}_2) = \argmin_{\mathfrak{X}_{s, \delta}} \cF_{s,\delta}(v_1, v_2)$ converges in the weak $L^p(\Omega_1) \times L^p(\Omega_2)$ topology, as $s\to 1$, to a minimizer of the limiting energy functional, $(u^{1,\delta}_1, u^{1,\delta}_2) = \argmin_{\mathfrak{X}_{1, \delta}} \cF_{1, \delta}(v_1, v_2)$.
\end{proposition}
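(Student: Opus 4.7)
The plan is to upgrade the strong-$L^p$ $\Gamma$-convergence established in Case~\ref{item:b} of \Cref{Main-gammaconvergence} to the weak-$L^p$ topology, and then use equi-coercivity to deduce weak convergence of the minimizers. The weak $\Gamma$-limsup inequality is inherited for free: the strong recovery sequence from Case~\ref{item:b} also converges weakly in $L^p(\Omega_1) \times L^p(\Omega_2)$ and hence serves as a weak recovery sequence.

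The crux is the weak $\Gamma$-liminf: given $(u_1^s, u_2^s) \rightharpoonup (u_1, u_2)$ in $L^p(\Omega_1) \times L^p(\Omega_2)$, since $\mathfrak{f}$ is represented by $L^{p'}$ data the linear term $\langle \mathfrak{f}, (u_1^s, u_2^s) \rangle$ passes to the limit, and the two pieces of the energy are treated separately. For the $\Omega_1$-energy
\[
\Phi_s(u) := \int_{\Omega_1} \int_{B(\bx,\delta \sigma(\bx))} \alpha(\bx) \frac{|u(\bx) - u(\by)|^p}{\delta^{d+p} \sigma(\bx)^{d+sp}} \rmd \by\, \rmd \bx,
\]
I would decompose $\Omega_1$ into $\{\sigma \leq 1\}$ and $\{\sigma > 1\}$. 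For every fixed $r \in (0,1)$ the functional $\Phi_r$ is convex and strongly $L^p$-lower semicontinuous (via Fatou), hence weakly $L^p$-lower semicontinuous. On $\{\sigma \leq 1\}$ the kernel $\sigma(\bx)^{-d-sp}$ is non-decreasing in $s$, so the contribution of $\Phi_r$ on this region is dominated by that of $\Phi_s$ for $s \geq r$; applying weak-lsc at level $r$ and then monotone convergence as $r \uparrow 1$ yields the required liminf bound. On $\{\sigma > 1\}$ the kernel is non-increasing in $s$ and in particular dominates its $s = 1$ value, so weak-lsc of the $s = 1$ convex functional closes the estimate; together these give $\liminf_{s \to 1} \Phi_s(u_1^s) \geq \Phi_1(u_1)$.

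For the $\Omega_2$-energy $H_s(u_2^s)$, the standard BBM lower semicontinuity is available only under strong $L^p$ convergence, so a direct appeal fails. My plan is to linearize via convexity: for a smooth test function $v$ satisfying the same transmission and homogeneous Dirichlet conditions as $u_2$, the pointwise inequality $|a|^p \geq |b|^p + p|b|^{p-2} b(a-b)$ with $a = u_2^s(\bx) - u_2^s(\by)$, $b = v(\bx) - v(\by)$, integrated against $\kappa_{d,s,p} \beta(\bx) |\bx - \by|^{-d-sp}$ and symmetrized, yields
\[
H_s(u_2^s) \geq H_s(v) + \int_{\Omega_2} (u_2^s - v)(\bx)\, L_s v(\bx)\, \rmd \bx,
\]
where $L_s$ is the associated nonlocal $p$-Laplace-type operator. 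The weighted BBM identity (cf.~\cite{bourgain2001}) gives $H_s(v) \to \frac{1}{p} \int \beta |\nabla v|^p$, and $L_s v \to L_1 v := -\nabla \cdot (c \beta |\nabla v|^{p-2} \nabla v)$ strongly in $L^{p'}(\Omega_2)$ for smooth $v$ and bounded $\beta$, with an explicit constant $c$. Coupling this strong $L^{p'}$ convergence with the weak $L^p$ convergence of $u_2^s - v$ passes the bilinear integral to the limit. Integration by parts followed by optimization over admissible $v$ via Young's inequality produces $\liminf_{s \to 1} H_s(u_2^s) \geq \frac{1}{p} \int_{\Omega_2} \beta |\nabla u_2|^p$, in particular forcing $u_2 \in W^{1,p}(\Omega_2)$ when the liminf is finite. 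Continuity of the trace operators $T_1$, $T_2$ then guarantees that the transmission and Dirichlet conditions survive in the weak limit, placing $(u_1, u_2) \in \mathfrak{X}_{1,\delta}$.

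For weak convergence of the minimizers, I would establish a uniform Poincar\'e-type inequality on $\mathfrak{X}_{s,\delta}$ for $s$ in a neighborhood of $1$ with constants independent of $s$; combined with $\cF_{s,\delta}(u_1^{s,\delta}, u_2^{s,\delta}) \leq \cF_{s,\delta}(0, 0) = 0$, this forces a uniform $L^p \times L^p$ bound on the minimizing sequence. Reflexivity then extracts a weakly convergent subsequence, and the combination of the weak $\Gamma$-convergence above with uniqueness of the minimizer of $\cF_{1,\delta}$ (a special case of \Cref{thm:wellposedness}) identifies the weak limit and promotes subsequential convergence to convergence of the full sequence. The main obstacle is the weak-$L^p$ liminf of the $\Omega_2$-part, because classical BBM-type arguments presuppose strong convergence and the lack of compact embedding $\mathfrak{W}^{s,p}[\delta](\Omega_1) \hookrightarrow L^p(\Omega_1)$ prevents bootstrapping weak into strong convergence. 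The convexity-plus-duality approach above circumvents this by reducing the question to the nonlocal-to-local convergence of a $p$-Laplace-type operator on smooth functions, which is classical.
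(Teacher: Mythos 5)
Your treatment of the $\Omega_1$-part of the weak liminf is correct and is a genuinely different, more elementary route than the paper's: the paper mollifies with the boundary-localized convolution $K_\veps$, uses Fatou together with \Cref{lem:with-phiepsilon} (with $\varrho=0$), and lets $\veps\to 0$ with $\phi(\veps)\to 1$, whereas your split of $\Omega_1$ into $\{\sigma\le 1\}$ and $\{\sigma>1\}$, monotonicity of $\sigma(\bx)^{-d-sp}$ in $s$ on each piece, weak lower semicontinuity of the frozen-exponent convex functionals, and monotone convergence as $r\uparrow 1$ avoids the convolution machinery entirely and even handles a bounded measurable $\alpha$ directly (the paper proves $\alpha=\beta=1$ and appeals to the approximation scheme of \Cref{Prop:LIMINF-INEQUALITY} for the general case). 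Your argument for convergence of minimizers (uniform Poincar\'e via \Cref{PI-onX}, energy bound from testing with $(0,0)$, weak compactness, uniqueness of the limit minimizer from \Cref{minimizer-existence}) is also the intended one.

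The gap is in the $\Omega_2$-part. First, the detour is unnecessary and you misdiagnose the difficulty: the lack of compactness that forces the weak topology lives only in the $\Omega_1$ component. On $\Omega_2$ the uniform bound $\sup_s [u_2^s]_{W^{s,p}(\Omega_2)}<\infty$ together with the Bourgain--Brezis--Mironescu/Ponce compactness as $s\to 1$ makes $\{u_2^s\}$ strongly precompact in $L^p(\Omega_2)$; since $u_2^s\rightharpoonup u_2$ weakly, any strong subsequential limit is $u_2$, so in fact $u_2^s\to u_2$ strongly, $u_2\in W^{1,p}(\Omega_2)$, and the standard BBM liminf applies -- this is exactly the paper's argument. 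Second, your substitute duality argument contains a false step as stated: for a generic $v\in C^\infty(\overline{\Omega_2})$ the regional fractional $p$-Laplacian $L_s v$ does \emph{not} converge to the local operator strongly in $L^{p'}(\Omega_2)$, because near $\p\Omega_2$ the missing exterior mass produces a non-cancelling term of size comparable to $\kappa_{d,s,p}\,|\p_\nu v|\,\dist(\bx,\p\Omega_2)^{(p-1)-sp}$, which for $s$ close to $1$ is not even in $L^{p'}(\Omega_2)$ unless $\p_\nu v$ vanishes on the boundary; the $(1-s)$ normalization does not remove this boundary layer. Repairing this would require restricting to test functions with vanishing normal derivative (and then re-proving that the optimization over this restricted class still yields $\frac{1}{p}\int_{\Omega_2}\beta|\nabla u_2|^p$ and forces $u_2\in W^{1,p}(\Omega_2)$ when the liminf is finite), handling the boundary term in the integration by parts, and dealing with the fact that $\beta$ is only bounded measurable, so the claimed limit operator $-\nabla\cdot(c\beta|\nabla v|^{p-2}\nabla v)$ need not be an $L^{p'}$ function. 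Finally, the identification $(u_1,u_2)\in\mathfrak{X}_{1,\delta}$ needs slightly more than "continuity of the traces": as in the paper, fix $s_0>1/p$, use the uniform bounds in $\mathfrak{W}^{s_0,p}[\delta](\Omega_1)$ and $W^{s_0,p}(\Omega_2)$, and invoke the weak continuity / compactness of the trace maps (\Cref{rmk:StrongConvTrace}) to pass the transmission and Dirichlet conditions to the limit.
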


The paper is organized as follows: \Cref{sec:function-space} introduces the function space framework for the variational problems, as well as some basic embedding properties. Further properties of the function spaces are established in \Cref{sec:properties}, such as the density of smooth functions, trace and extension theorems, and Poincar\'e-type inequalities; properties of the necessary analytical tools are reviewed beforehand in \Cref{sec:convolution}. \Cref{sec:variational-problem} contains the proof of the well-posedness of the variational problems, and the $\Gamma$-convergence results for the functional sequence under the different asymptotic regimes are in \Cref{sec:variational-convergence}. We conclude by mentioning some generalizations of our program in \Cref{sec:extensions}. In the appendix we collect well-known properties of fractional and weighted Sobolev spaces, for which we obtain further qualitative clarifications necessary for our main results.

\section{Function spaces}\label{sec:function-space}
In this section, we present all necessary intermediate results in connection with function spaces that we will need in the proof of the main results stated in the previous section. In particular, as a nonstandard space, we pay special attention to the space $\mathfrak{W}^{s,p}[\delta](\Omega_1)$, its structural properties and connections with the classical spaces, the fractional and weighted Sobolev spaces. Additional properties, such as the density of smooth functions in the space and the existence of a well-defined trace operator, will be established in \Cref{sec:properties}.
For ease of notation we will denote $\Omega_1$ by $\mathcal{D}$ throughout this section. 

\subsection{The function space \texorpdfstring{$\mathfrak{W}^{s,p}[\delta](\mathcal{D})$}{ }}
We recall that for $\delta, s\in(0,1)$ and $1<p<\infty$ the function space $\mathfrak{W}^{s,p}[\delta](\mathcal{D})$ collects measurable functions in $\mathcal{D}$ with a finite norm given by $
\Vnorm{u}_{\mathfrak{W}^{s,p}[\delta](\mathcal{D})} = \left(\Vnorm{u}_{L^p(\mathcal{D})}^p + [u]_{\mathfrak{W}^{s,p}[\delta](\mathcal{D})}^p\right)^{1\over p}. 
$ 
We recall that $[u]_{\mathfrak{W}^{s,p}[\delta](\mathcal{D})}^p$ is given by 
\[
[u]_{\mathfrak{W}^{s,p}[\delta](\mathcal{D})}^p = \overline{C}_{d,p}\intdm{\mathcal{D}}{ \int_{B(\bx,\delta \sigma(\bx))}   \frac{\big| u(\bx)-u(\by) \big|^p}{ \delta^{d+p}  \sigma(\bx)^{d+sp} }  \, \rmd \by}{\bx}
\]
with $\sigma(\bx) = \dist(\bx, \partial \mathcal{D})$. Following an argument in  \cite{Du2022Fractional} we can show that $[\cdot]_{\mathfrak{W}^{s,p}[\delta](\mathcal{D})}$,  indeed, defines a seminorm. In particular, if $[u]_{\mathfrak{W}^{s,p}[\delta](\mathcal{D})} = 0$, then $u$ is constant in connected subsets of $\mathcal{D}.$ 
We may use the symmetrized kernel
\[
\gamma^{s, \delta}_{p, \mathcal{D}}(\bx, \by) =  \frac{\overline{C}_{d,p}}{2\delta^{d+p}}\left( \frac{\mathds{1}_{B(\mathbf{0},\delta \sigma(\bx))}(\bx-\by)}{\sigma(\bx)^{d+sp}}+   \frac{\mathds{1}_{B(\mathbf{0},\delta \sigma(\by))}(\bx-\by)}{\sigma(\by)^{d+sp}}\right)
\]
to rewrite the seminorm as 
\[
[u]_{\mathfrak{W}^{s,p}[\delta](\mathcal{D})}^p = \int_{\mathcal{D}}\int_{\mathcal{D}} \gamma^{s, \delta}_{p, \mathcal{D}}(\bx, \by)|u(\bx) - u(\by)|^{p} \rmd \by \rmd\bx.
\]
\begin{proposition}
	For $\delta, s\in(0,1)$ and $1<p<\infty,$ the space $\mathfrak{W}^{s,p}[\delta](\mathcal{D})$ is a separable and reflexive Banach space. For $p=2$, $\mathfrak{W}^{s,2}[\delta](\mathcal{D})$ is a Hilbert space with inner product given by, for $u, v\in \mathfrak{W}^{s,2}[\delta](\mathcal{D})$, 
	\[
	\langle u, v\rangle = \int_{\mathcal{D}} u(\bx)v(\bx) \rmd\bx + \int_{\mathcal{D}}\int_{\mathcal{D}} \gamma^{s, \delta}_{2, \mathcal{D}}(\bx, \by)(u(\bx) - u(\by))(v(\bx) - v(\by))\rmd\by \rmd\bx.  
	\]
\end{proposition}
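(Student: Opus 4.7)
The plan is to prove completeness directly and then deduce separability and reflexivity via a standard isometric embedding into a product $L^p$-space; the Hilbert structure for $p=2$ follows immediately from the polarization form of the seminorm.

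First I would verify that $\Vnorm{\cdot}_{\mathfrak{W}^{s,p}[\delta](\mathcal{D})}$ is genuinely a norm. Since $\gamma^{s,\delta}_{p,\mathcal{D}}$ is symmetric and nonnegative, the seminorm part can be recognized as the $L^p$-norm of $(u(\bx)-u(\by))$ with respect to the measure $\gamma^{s,\delta}_{p,\mathcal{D}}(\bx,\by)\,\rmd\by\,\rmd\bx$ on $\mathcal{D}\times\mathcal{D}$, so Minkowski's inequality gives the triangle inequality; homogeneity is clear. Adding $\Vnorm{u}_{L^p(\mathcal{D})}$ produces a norm in the usual way.

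Next I would establish completeness. Let $(u_n)\subset \mathfrak{W}^{s,p}[\delta](\mathcal{D})$ be Cauchy. Since the $L^p(\mathcal{D})$-part of the norm is controlled, $u_n\to u$ in $L^p(\mathcal{D})$ and, passing to a subsequence, a.e.\ in $\mathcal{D}$. Meanwhile, the maps $F_n(\bx,\by):=(u_n(\bx)-u_n(\by))\gamma^{s,\delta}_{p,\mathcal{D}}(\bx,\by)^{1/p}$ form a Cauchy sequence in $L^p(\mathcal{D}\times\mathcal{D})$, hence converge to some $F$. Along the a.e.\ subsequence, $F_n(\bx,\by)\to (u(\bx)-u(\by))\gamma^{s,\delta}_{p,\mathcal{D}}(\bx,\by)^{1/p}$ a.e., so by uniqueness of limits $F$ equals this pointwise difference a.e., giving $u\in \mathfrak{W}^{s,p}[\delta](\mathcal{D})$; convergence of $F_n$ in $L^p$ then yields $[u_n-u]_{\mathfrak{W}^{s,p}[\delta](\mathcal{D})}\to 0$, whence $u_n\to u$ in $\mathfrak{W}^{s,p}[\delta](\mathcal{D})$. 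The point where one must be slightly careful is ensuring that the product measure $\gamma^{s,\delta}_{p,\mathcal{D}}(\bx,\by)\rmd\bx\rmd\by$ is a bona fide $\sigma$-finite measure on $\mathcal{D}\times\mathcal{D}$, which it is because $\sigma(\bx)>0$ on $\mathcal{D}$ so the kernel is locally bounded away from the diagonal singularity on compact subsets of $\mathcal{D}\times\mathcal{D}$.

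For separability and reflexivity I would then use the isometry
\[
\Phi: \mathfrak{W}^{s,p}[\delta](\mathcal{D}) \to L^p(\mathcal{D})\oplus_p L^p\bigl(\mathcal{D}\times\mathcal{D};\gamma^{s,\delta}_{p,\mathcal{D}}(\bx,\by)\rmd\bx\rmd\by\bigr),
\]
defined by $\Phi(u)(\bx;\bx,\by)=(u(\bx),u(\bx)-u(\by))$, where the target carries the $\ell^p$-sum of norms. By construction $\Vnorm{\Phi(u)}=\Vnorm{u}_{\mathfrak{W}^{s,p}[\delta](\mathcal{D})}$. By step 2 the range of $\Phi$ is closed. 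Since $1<p<\infty$, $L^p$-spaces (with $\sigma$-finite measures) are separable and reflexive, hence so is the finite product. Closed subspaces of separable reflexive spaces are separable and reflexive (the latter by Kakutani's theorem), which transfers back to $\mathfrak{W}^{s,p}[\delta](\mathcal{D})$ via $\Phi$.

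Finally, for $p=2$, I would check that the stated bilinear form is symmetric, bilinear and satisfies $\langle u,u\rangle=\Vnorm{u}_{\mathfrak{W}^{s,2}[\delta](\mathcal{D})}^2$; positivity on nonzero elements follows from the corresponding property of the norm, which was observed to hold as $[u]_{\mathfrak{W}^{s,p}[\delta](\mathcal{D})}=0$ only forces $u$ to be locally constant and the $L^2$-part then forces $u\equiv 0$. Together with completeness from step 2, this makes $\mathfrak{W}^{s,2}[\delta](\mathcal{D})$ a Hilbert space. The main obstacle in the whole argument is the Fatou/a.e.-limit identification in the completeness step, since one must argue simultaneously at the level of $u$ and of the double-variable difference while keeping track of the $\bx$-dependent measure $\gamma^{s,\delta}_{p,\mathcal{D}}$; once this is secured, the remaining steps are soft functional-analytic consequences.
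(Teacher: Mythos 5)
Your proposal is correct, and its second half (separability and reflexivity) follows the same route as the paper: an isometric embedding of $\mathfrak{W}^{s,p}[\delta](\mathcal{D})$ into $L^p(\mathcal{D})\times L^p(\mathcal{D}\times\mathcal{D})$ with closed image, then the fact that closed subspaces of separable reflexive spaces inherit both properties. Two remarks on the differences. First, your completeness argument takes a genuinely different (and leaner) path: you identify the $L^p(\mathcal{D}\times\mathcal{D},\gamma^{s,\delta}_{p,\mathcal{D}}\,\rmd\bx\,\rmd\by)$-limit of the difference quotients through an a.e.\ convergent subsequence of $u_n$ in $\mathcal{D}$, which simultaneously gives $u\in\mathfrak{W}^{s,p}[\delta](\mathcal{D})$ and $[u_n-u]_{\mathfrak{W}^{s,p}[\delta](\mathcal{D})}\to 0$; the paper instead gets membership from Fatou and then proves seminorm convergence via the truncated duality-type map $J_\tau$ supported on $\mathcal{D}_\tau=\{\sigma>\tau\}$, combined with Fubini and dominated convergence and a double limit $n\to\infty$, $\tau\to 0$. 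Your route avoids that machinery entirely and is the standard argument for such kernel-weighted spaces; the paper's route has the side benefit of exhibiting the duality pairing explicitly, but is longer. Second, your embedding uses the signed difference $u(\bx)-u(\by)$ (with the weight absorbed into the measure), which is the correct choice to make the map linear; the paper writes the second component with $|u(\bx)-u(\by)|$, which as stated is not linear, so your formulation is in fact the cleaner one for invoking the closed-subspace argument. One small inaccuracy in your write-up, not affecting the proof: the kernel $\gamma^{s,\delta}_{p,\mathcal{D}}$ has no diagonal singularity (it is a truncated, boundary-weighted kernel, singular only as $\sigma\to 0$), so $\sigma$-finiteness of the weighted measure is even more immediate than you suggest.
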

\begin{proof}
	We first show the completeness of the space. Suppose that $\{u_n\}$ is a Cauchy sequence in $\mathfrak{W}^{s,p}[\delta](\mathcal{D}).$ Then there exists $u\in L^{p}(\mathcal{D})$ such that $u_n\to u$ in $L^{p}(\mathcal{D})$. By Fatou's lemma, $u\in \mathfrak{W}^{s,p}[\delta](\mathcal{D})$. To conclude the proof it suffices to show $[u_n-u]_{\mathfrak{W}^{s,p}[\delta](\mathcal{D})}\to 0$ as $n\to \infty.$  Now for  $\tau>0$ given, we introduce the mapping $J_\tau: \mathfrak{W}^{s,p}[\delta](\mathcal{D}) \to L^{p'}(\mathcal{D})$, with $1/p + 1/p' = 1,$ given by 
	\[
	J_\tau(u)(\bx) = -2 \int_{\mathcal{D}} \mathds{1}_{\{\mathcal{D}_\tau \times \mathcal{D}_\tau\}}(\bx,\by)\gamma^{s, \delta}_{2, \mathcal{D}}(\bx, \by)|u(\bx) - u(\by)|^{p-2} (u(\bx) - u(\by)) \rmd\by 
	\]
	where $\mathcal{D}_\tau = \{\bx\in \mathcal{D}: \sigma(\bx)>\tau\}$. For $u\in \mathfrak{W}^{s,p}[\delta](\mathcal{D}) $,  we then see that
	\begin{equation*}
		\begin{split}
			\lim_{\tau\to 0}\int_{\mathcal{D}} J_\tau(u)\cdot u \, \rmd\bx  &= \lim_{\tau \to 0}\int_{\mathcal{D}_\tau}\int_{\mathcal{D}_\tau} \gamma^{s, \delta}_{p, \mathcal{D}}(\bx, \by)|u(\bx) - u(\by)|^{p} \rmd\by \rmd\bx =[u]^{p}_{\mathfrak{W}^{s,p}[\delta](\mathcal{D})}, 
		\end{split}
	\end{equation*}
	where we used Fubini's theorem and the dominated convergence theorem.  We apply this to the sequence ${u_n- u_m}$. Since $\{u_n\}$ is a Cauchy sequence in $\mathfrak{W}^{s,p}[\delta](\mathcal{D})$, for every $\veps>0$ there exists $K>0$ such that for all $n,m\geq K$ it holds that $[u_n-u_m]_{\mathfrak{W}^{s,p}[\delta](\mathcal{D})} <\veps$.
	Then for any $\tau>0$, and $m, n\geq K$
	\[
	\int_{\mathcal{D}} J_\tau(u_n - u_m)\cdot (u_n - u_m) \rmd\bx \leq [u_n-u_m]_{\mathfrak{W}^{s,p}[\delta](\mathcal{D})} < \veps.
	\]
	Fix $m\geq K$ and $\tau$, and let $n\to \infty$; we then have 
	$\int_{\mathcal{D}} J_\tau(u - u_m)\cdot (u - u_m) \, \rmd\bx <\veps.$
	We now let $\tau \to 0$ to conclude that for any $m\geq K$
	\[
	[u-u_m]_{\mathfrak{W}^{s,p}[\delta](\mathcal{D})} = \lim_{\tau \to 0} \int_{\mathcal{D}} J_\tau(u - u_m)\cdot (u - u_m) \rmd\bx \leq \veps. 
	\]
	The separability and reflexivity follows from the fact that we can define an isometry $T: \mathfrak{W}^{s,p}[\delta](\mathcal{D}) \to L^{p}(\mathcal{D}) \times L^{p}(\mathcal{D}\times \mathcal{D})$ given by 
	\[
	T(u) = (u, U),\quad \text{$U(\bx, \by):=\left(\gamma^{s, \delta}_{p, \mathcal{D}}(\bx, \by)\right)^{1/p}|u(\bx)-u(\by)|$}
	\]
	and that $T[\mathfrak{W}^{s,p}[\delta](\mathcal{D})]$ is a (strongly) closed subspace of the separable and reflexive space $L^{p}(\mathcal{D}) \times L^{p}(\mathcal{D}\times \mathcal{D})$, see \cite[Propositions 3.20 and 3.25]{Brez11}.
\end{proof}

The following theorem quantifies the continuous embedding of one nonlocal space into another based on the values of $\delta$ and gives a precise comparison estimate between corresponding seminorms. The proof adapts the one given \cite[Lemma 6.2]{TiDu17} and \cite[Lemma 2.2]{Du2022Fractional} to our setting. The case when $s=1$ is proved in \cite{scott2024nonlocal}.
\begin{theorem}
	\label{thm:InvariantHorizon}
	Suppose that $s\in (0,1]$ and 
	$p>1$. Let  $0 < \delta_1 \leq \delta_2 < \frac{1}{3}$. Then the  spaces $ \mathfrak{W}^{s,p}[\delta_2](\mathcal{D})$ and $\mathfrak{W}^{s,p}[\delta_1](\mathcal{D})$ are the same. Moreover, for any $u\in \mathfrak{W}^{s,p}[\delta_1](\mathcal{D}) = \mathfrak{W}^{s,p}[\delta_2](\mathcal{D}),$
	\begin{equation*}
		\left( \frac{\delta_1}{\delta_2} \right)^{1+{d\over p}} [u]_{\mathfrak{W}^{s,p}[\delta_1](\mathcal{D})} \leq [u]_{\mathfrak{W}^{s,p}[\delta_2](\mathcal{D})} \leq \left(\frac{2}{1-\delta_2}\right)^{1+{d\over p}}  (1+\delta_2)^{s+{d\over p}} [u]_{\mathfrak{W}^{s,p}[\delta_1](\mathcal{D})}. 
	\end{equation*}

\end{theorem}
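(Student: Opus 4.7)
The plan is to prove the two inequalities separately, with the lower bound being immediate and the upper bound requiring a careful averaging argument. Throughout I write
\[
[u]^p_{\mathfrak{W}^{s,p}[\delta](\mathcal{D})} = \overline{C}_{d,p}\int_{\mathcal{D}}\int_{B(\bx,\delta\sigma(\bx))} \frac{|u(\bx)-u(\by)|^p}{\delta^{d+p}\sigma(\bx)^{d+sp}}\,\rmd\by\,\rmd\bx
\]
and work at the level of the $p$-th powers, taking the $p$-th root only at the end.

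\textbf{Lower bound.} Since $\delta_1 \leq \delta_2$, we have the inclusion $B(\bx,\delta_1\sigma(\bx))\subset B(\bx,\delta_2\sigma(\bx))$ for every $\bx\in\mathcal{D}$. Dropping to the smaller ball in the inner integral and factoring out the $\by$-independent $\delta^{-(d+p)}$ weight gives
\[
\frac{1}{\delta_1^{d+p}}\int_{B(\bx,\delta_1\sigma(\bx))} \frac{|u(\bx)-u(\by)|^p}{\sigma(\bx)^{d+sp}}\,\rmd\by \;\leq\; \Bigl(\frac{\delta_2}{\delta_1}\Bigr)^{d+p}\cdot \frac{1}{\delta_2^{d+p}}\int_{B(\bx,\delta_2\sigma(\bx))} \frac{|u(\bx)-u(\by)|^p}{\sigma(\bx)^{d+sp}}\,\rmd\by.
\]
Integrating in $\bx$ and taking $p$-th roots yields $(\delta_1/\delta_2)^{1+d/p}[u]_{\mathfrak{W}^{s,p}[\delta_1](\mathcal{D})}\leq [u]_{\mathfrak{W}^{s,p}[\delta_2](\mathcal{D})}$.

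\textbf{Upper bound.} The substantive direction is to estimate differences at the larger scale $\delta_2\sigma(\bx)$ by those at the smaller scale $\delta_1\sigma(\bx)$. Following the strategy of \cite[Lemma 6.2]{TiDu17}, \cite[Lemma 2.2]{Du2022Fractional}, and its $s=1$ analogue in \cite{scott2024nonlocal}, I will use an averaging argument with an intermediate point. Fix $\bx\in\mathcal{D}$ and $\by\in B(\bx,\delta_2\sigma(\bx))$. For any intermediate $\bz$ the elementary inequality
\[
|u(\bx)-u(\by)|^p \leq 2^{p-1}\bigl(|u(\bx)-u(\bz)|^p+|u(\bz)-u(\by)|^p\bigr)
\]
holds. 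I will select an averaging set $A(\bx,\by)\subset\mathcal{D}$, of measure of the order $(\delta_1\sigma(\bx))^d$, such that $\bz\in A(\bx,\by)$ implies $\bz$ lies in the $\delta_1\sigma(\bx)$-ball around $\bx$ and, simultaneously, $\by$ lies in the $\delta_1\sigma(\bz)$-ball around $\bz$ (possibly after rescaling). Averaging the pointwise bound over $A(\bx,\by)$ and inserting the result into the definition of $[u]^p_{\mathfrak{W}^{s,p}[\delta_2]}$, I then swap orders of integration via Fubini and perform the change of variables that replaces the pair $(\bx,\by)$ with a pair at scale $\delta_1\sigma$. The two factors in the upper bound constant have clear origins: the Lipschitz estimate $(1-\delta_2)\sigma(\bx)\leq \sigma(\by)\leq (1+\delta_2)\sigma(\bx)$ (valid whenever $|\bx-\by|\leq \delta_2\sigma(\bx)$) produces the $(1+\delta_2)^{s+d/p}$ factor when moving the weight $\sigma(\bx)^{-(d+sp)}$ to a weight centered at $\bz$ (or $\by$), while the doubling inequality and the loss $1-\delta_2$ in the denominator of the Jacobian of the change of variables account for $(2/(1-\delta_2))^{1+d/p}$. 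The constraint $\delta_2<1/3$ ensures that all intermediate points stay uniformly inside $\mathcal{D}$ and keeps the distance-function ratios bounded.

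\textbf{Main obstacle.} The main difficulty is that the advertised constant depends only on $\delta_2$ (and on $s,p,d$) and is independent of the ratio $\delta_1/\delta_2$. A naive one-step midpoint averaging succeeds only when $\delta_1\gtrsim \delta_2/2$; for $\delta_1\ll \delta_2$ one must exploit the $\delta^{-(d+p)}$ normalization of the seminorm, which makes the quantity scale-invariant and thereby absorbs the factor $(\delta_2/\delta_1)^d$ that would otherwise appear. Concretely, the change of variables that collapses the larger-scale $\by$-integration down to scale $\delta_1$ produces a Jacobian of order $(\delta_1/\delta_2)^d$ precisely cancelling the ratio, so that the final constant depends only on $\delta_2$. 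Designing the averaging set $A(\bx,\by)$ and verifying this cancellation, together with the $L^\infty$-comparability of $\sigma(\bx),\sigma(\by),\sigma(\bz)$ in the relevant regime, constitute the technical heart of the argument. Once the $p$-th power estimate is obtained, taking $p$-th roots yields the stated upper bound, and the equality of the two spaces follows at once.
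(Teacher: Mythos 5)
Your lower bound is exactly the paper's (the trivial inclusion of balls), but the upper bound as proposed has a genuine gap, and it sits precisely at the point you yourself flag. With a single intermediate point $\bz$ subject to $\bz\in B(\bx,\delta_1\sigma(\bx))$ and $\by\in B(\bz,\delta_1\sigma(\bz))$, the Lipschitz bound $\sigma(\bz)\le(1+\delta_1)\sigma(\bx)$ forces $|\bx-\by|\le\delta_1(2+\delta_1)\sigma(\bx)$; hence for $\delta_1\ll\delta_2$ your averaging set $A(\bx,\by)$ is empty for the bulk of pairs with $\delta_1(2+\delta_1)\sigma(\bx)<|\bx-\by|<\delta_2\sigma(\bx)$, so the pointwise splitting never applies where it is needed. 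The remedy you offer --- that the change of variables collapsing the $\by$-integration to scale $\delta_1$ produces a Jacobian of order $(\delta_1/\delta_2)^d$ cancelling the normalization --- does not address this: a change of variables rescales the measure and the $\delta^{-(d+p)}$ weight, but it cannot shorten the jump $|\bx-\by|$, which is the actual obstruction. Iterating your one-step (doubling) comparison does not save the argument either: each doubling costs a fixed factor strictly larger than one, so after the roughly $\log_2(\delta_2/\delta_1)$ doublings needed to pass from scale $\delta_2$ to scale $\delta_1$ the constant grows like a power of $\delta_2/\delta_1$, which is incompatible with the ratio-independent constant in the statement.

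The missing idea is the multi-point chaining used in the paper (following \cite{TiDu17,Du2022Fractional,scott2024nonlocal}): for $\bh\in B(\mathbf{0},\delta_2\sigma(\bx))$ telescope $u(\bx+\bh)-u(\bx)$ into $n$ increments of step $\bh/n$ and apply H\"older's inequality for sums, at the cost of $n^{p-1}$. After changing the outer variable to $\bx_i=\bx+\frac{i-1}{n}\bh$ (using $\frac{\sigma(\bx_i)}{1+\delta_2}\le\sigma(\bx)\le\frac{\sigma(\bx_i)}{1-\delta_2}$, which produces the factor $(1+\delta_2)^{d+sp}$ and uses $\delta_2<\tfrac13$ to keep $\bx_i\in\mathcal{D}$), all $n$ terms coincide, and the inner substitution $\bz=\bh/n$ contributes $n^{d}$ while turning the radius into $\mu_n\sigma(\by)$ with $\mu_n=\frac{\delta_2}{n(1-\delta_2)}$. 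The exact cancellation $n^{d+p}\mu_n^{d+p}\,\delta_2^{-(d+p)}=(1-\delta_2)^{-(d+p)}$ is what removes every trace of the ratio $\delta_2/\delta_1$; choosing $n$ with $\frac{\delta_2}{\delta_1(1-\delta_2)}<n<\frac{2\delta_2}{\delta_1(1-\delta_2)}$ gives $\delta_1/2<\mu_n<\delta_1$ and hence the remaining factor $2^{d+p}$, yielding the stated constant $\left(\frac{2}{1-\delta_2}\right)^{1+d/p}(1+\delta_2)^{s+d/p}$ after taking $p$-th roots. Without this chain of order $\delta_2/\delta_1$ intermediate points (or an equivalent mechanism), your outline cannot reach the claimed estimate.
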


\begin{proof}
The first inequality is trivial, so the proof is devoted to the second inequality. For the sake of brevity we omit the constant $\overline{C}_{d,p}$ in the calculations.
Let $n \in \bbN$.
To begin the estimate for $[u]_{\mathfrak{W}^{s,p}[\delta_2](\mathcal{D})}$, we apply the triangle inequality to the telescoping sum
\begin{equation*}
	|u(\bx+\bh) - u(\bx)| \leq \sum_{i = 1}^n \left| u \left( \bx + \frac{i}{n} \bh \right) - u \left( \bx + \frac{i-1}{n} \bh \right)\right|\,.
\end{equation*}
for $\bx \in \mathcal{D}$ and $\bh \in B(\mathbf{0}, \delta_2 \sigma(\bx))$. It then follows that $|\bx+\frac{i}{n}\bh - \bx| \leq |\bh| < \delta_2 \sigma(\bx)$, so thus $\bx +\frac{i}{n}\bh \in \mathcal{D}$ for $i = 0,1,\ldots,n$. Thus, setting $\bx_i := \bx + \frac{i-1}{n} \bh$ and using H\"older's inequality for sums
\begin{equation*}
	\begin{split}
		[u]_{ \mathfrak{W}^{s,p}[\delta_2](\mathcal{D}) }^p \leq n^{p-1} \sum_{i = 1}^n \int_{ \mathcal{D} } \int_{B(\mathbf{0},\delta_2 \sigma(\bx) ) } \frac{ |u(\bx_i+\frac{1}{n}\bh)-u(\bx_i)|^p }{ \delta_2^{d+p} \sigma(\bx)^{d+sp} } \, \rmd \bh \, \rmd \bx\,.
	\end{split}
\end{equation*}
Now, since $\sigma$ is Lipschitz with Lipschitz constant $\leq 1$,
\begin{equation*}
	\sigma(\bx_i) = \sigma(\bx + (i-1)\bh/n ) \leq |\bh| + \sigma(\bx) \leq (\delta_2 + 1)  \sigma(\bx)\,,
\end{equation*}
and
\begin{equation*}
	\sigma(\bx) \leq  |\bh| + \sigma(\bx_i) \leq \delta_2 \sigma(\bx) +  \sigma(\bx_i)\,,
\end{equation*}
so therefore
\begin{equation}\label{eq:VaryingLambda:SeminormComparison:Pf2}
	\frac{3}{4} \sigma(\bx_i)
	\leq \frac{\sigma(\bx_i)}{1+\delta_2 } \leq \sigma(\bx) \leq \frac{\sigma(\bx_i)}{1 - \delta_2 }
	\leq \frac{3}{2} \sigma(\bx_i)
\end{equation}
for all $\bx \in \mathcal{D}$.
Hence,	\begin{equation}\label{eq:VaryingLambda:SeminormComparison:Pf3}
	\begin{split}
		[u]_{ \mathfrak{W}^{s,p}[\delta_2](\mathcal{D}) }^p \leq n^{p-1} (1+\delta_2)^{d+sp} \sum_{i = 1}^n \int_{ \mathcal{D} } \int_{B(\mathbf{0},\frac{\delta_2}{1-\delta_2}  \sigma(\bx_i)) }
          \hspace{-.4cm} \frac{ |u(\bx_i+\frac{1}{n}\bh)-u(\bx_i)|^p }{ \delta_2^{d+p} \sigma(\bx_i)^{d+sp} } \, \rmd \bh \, \rmd \bx\,.
	\end{split}
\end{equation}
Now, for $\bx \in \mathcal{D}$ and $|\bh| \leq \frac{\delta_2}{1 - \delta_2} \sigma(\bx_i)$, we have
\begin{equation*}
	\sigma(\bx_i) \leq \sigma(\bx) + |\bh| \leq \sigma(\bx) + \frac{\delta_2}{1 - \delta_2} \sigma(\bx_i)\,.
\end{equation*}
Therefore $ \sigma(\bx_i) \leq \frac{ 1-\delta_2 }{1 - 2\delta_2 }  \sigma(\bx)$ and since $\delta_2 < \frac{1}{3}$ we can conclude that
\begin{equation*}
	|\bx-\bx_i| \leq |\bh| \leq \frac{\delta_2}{1-\delta_2} \sigma(\bx_i) \leq \frac{\delta_2}{1 - 2\delta_2} \sigma(\bx) < \sigma(\bx)\,,
\end{equation*}
i.e. $\bx_i \in \mathcal{D}$ for all $i = 1, \ldots, n$.
With this we can perform a change of variables in the outer integral; letting $\by = \bx_i = \bx + \frac{i-1}{n} \bh$ in \eqref{eq:VaryingLambda:SeminormComparison:Pf3} and using \eqref{eq:VaryingLambda:SeminormComparison:Pf2}, we get
\begin{equation*}
	\begin{split}
		[u]_{ \mathfrak{W}^{s,p}[\delta_2](\mathcal{D}) }^p 
		&\leq n^{p-1} (1+\delta_2)^{d+sp} \sum_{i = 1}^n \int_{ \mathcal{D} } \int_{B(\mathbf{0},\frac{\delta_2}{1-\delta_2} \sigma(\by) ) }   \hspace{-.2cm} \frac{ |u(\by+\frac{1}{n}\bh)-u(\by)|^p }{ \delta_2^{d+p} \sigma(\by)^{d+sp} } \, \rmd \bh \, \rmd \by \\
		&= n^{p} (1+\delta_2)^{d+sp} \int_{ \mathcal{D} } \int_{B(\mathbf{0},\frac{\delta_2}{1-\delta_2} \sigma(\by) ) } 
        \hspace{-.2cm}\frac{ |u(\by+\frac{1}{n}\bh)-u(\by)|^p }{ \delta_2^{d+p} \sigma(\by)^{d+sp} } \, \rmd \bh \, \rmd \by\,.
	\end{split}
\end{equation*}
Now perform a change of variables in the inner integral: let $\bz = \frac{\bh}{n}$, and set $\mu_n := \frac{\delta_2}{n(1-\delta_2)}$; we get
\begin{equation*}
	\begin{split}
		[u]_{ \mathfrak{W}^{s,p}[\delta_2](\mathcal{D}) }^p 
		&\leq n^{d+p} (1+\delta_2)^{d+sp} \int_{ \mathcal{D} } \int_{B(\mathbf{0},\frac{\delta_2}{1-\delta_2} \frac{ \sigma(\by) }{n}) } \frac{ |u(\by+\bz)-u(\by)|^p }{ \delta_2^{d+p} \sigma(\by)^{d+sp} } \, \rmd \bz \, \rmd \by \\
		&= n^{d+p} \mu_n^{d+p} \frac{(1+\delta_2)^{d+sp}}{\delta_2^{d+p}} \int_{ \mathcal{D} } \int_{B(\mathbf{0}, \mu_n \sigma(\by)) } \frac{ |u(\by+\bz)-u(\by)|^p }{ \mu_n^{d+p} \sigma(\by)^{d+sp} } \, \rmd \bz \, \rmd \by \\
		&= \frac{(1+\delta_2)^{d+sp}}{(1-\delta_2)^{d+p}} \int_{ \mathcal{D} } \int_{B(\mathbf{0}, \mu_n \sigma(\by)) } \frac{ |u(\by+\bz)-u(\by)|^p }{ \mu_n^{d+p} \sigma(\by)^{d+sp} } \, \rmd \bz \, \rmd \by \,.
	\end{split}
\end{equation*}
Now, fix $n \in \bbN$ such that
\begin{equation*}
	\frac{\delta_2}{\delta_1 ( 1 - \delta_2)} < n < \frac{2\delta_2}{\delta_1 ( 1 - \delta_2)} \,.
\end{equation*}
Therefore $\delta_1 / 2 < \mu_n < \delta_1$ and we have
\begin{equation*}
	\begin{split}
		[u]_{ \mathfrak{W}^{s,p}[\delta_2](\mathcal{D}) }^p 
		&\leq \frac{(1+\delta_2)^{d+sp} }{(1-\delta_2)^{d+p} } \int_{ \mathcal{D} } \int_{B(\mathbf{0},\delta_1 \sigma(\by) ) } \frac{ |u(\by+\bz)-u(\by)|^p }{ \mu_n^{d+p} \sigma(\by)^{d+sp} } \, \rmd \bz \, \rmd \by \\
		&\leq \left(\frac{2}{1-\delta_2}\right)^{d+p} (1+\delta_2)^{d+sp} \int_{ \mathcal{D} } \int_{B(\mathbf{0},\delta_1 \sigma(\by) } \frac{ |u(\by+\bz)-u(\by)|^p }{ \delta_1^{d+p} \sigma(\by)^{d+sp} } \, \rmd \bz \, \rmd \by\,,
	\end{split}
\end{equation*}
as desired.
\end{proof}
\subsection{Embeddings with fractional and weighted Sobolev spaces}
We first recall the observation 
that $W^{s,p}(\mathcal{D})\subset \mathfrak{W}^{s,p}[\delta](\mathcal{D})$ for any $0<s<1$, $p\geq 1$ and $\delta\in (0, 1]$ with the estimate of norms that for all $u\in W^{s,p}(\mathcal{D})$
\begin{equation}\label{frak-frac-estimate}
[u]_{\mathfrak{W}^{s,p}[\delta](\mathcal{D})} \leq C(d,s,p)
\delta^{s-1} [u]_{W^{s,p}(\mathcal{D})}.
\end{equation}
This follows from the fact that for all $\bx, \by \in \mathcal{D}$ with $|\bx-\by|\leq \delta \sigma(\bx)$, it holds that $\delta^{-d-p}|\sigma(\bx)|^{-d-sp} \leq \delta^{sp-p} |\bx-\by|^{-d-sp}.$
Similar estimates for the weighted Sobolev space $W^{1,p}(\cD; p-sp)$, introduced in \eqref{Def-weighted-space}, can be established, which we state and prove below.  
\begin{proposition}\label{lma:Embedding:Weighted:Pre}
Fix $p\in (1,\infty)$, $s \in (0,1]$ and a sufficiently small $\underline{\delta}_0 > 0$. Let $\cD \subset \mathbb{R}^{d}$ be a bounded domain.  Then for all $\delta<\underline{\delta}_0$ we have that  
\begin{equation*}
	[u]_{\mathfrak{W}^{s,p}[\delta](\cD)}^p \leq \frac{C(d, p)}{(1-\delta)^{p-sp+1}} [u]_{W^{1,p}(\cD;p-sp)}^p\,,
    \quad
     \forall\, u \in  C^{\infty}(\cD)\cap W^{1,p}(\cD;p-sp),
\end{equation*}
where $C(d,p) = \frac{\scH^{d-1}(\bbS^{d-1}) \overline{C}_{d,p} }{d+p}$. If $\cD$ is a Lipschitz domain, then the inequality holds for all $u\in C^\infty(\overline{\cD}) \cap W^{1,p}(\cD;p-sp).$
\end{proposition}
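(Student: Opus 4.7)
My plan is to pass from the nonlocal seminorm to the weighted gradient seminorm by writing $u(\by)-u(\bx)$ as a line integral of $\nabla u$ along the segment $[\bx,\by]$ and then using Fubini together with the $1$-Lipschitz property of $\sigma=\dist(\cdot,\partial\cD)$ to exchange the singular weight $\sigma(\bx)^{-(d+sp)}$ for the mild weight $\sigma(\bz)^{p-sp}$ at a ``midpoint'' variable $\bz$. Because $B(\bx,\delta\sigma(\bx))\subset\cD$, the entire segment $[\bx,\by]$ stays inside $\cD$, so interior smoothness of $u$ is sufficient to start the argument.

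\textbf{Key steps, in order.} First, for $u\in C^\infty(\cD)$ and $\by\in B(\bx,\delta\sigma(\bx))$, the fundamental theorem of calculus combined with Jensen's inequality yields
\[
|u(\by)-u(\bx)|^p \;\leq\; |\by-\bx|^p\int_0^1 |\nabla u(\bx+t(\by-\bx))|^p\,\rmd t.
\]
Substitute this into $[u]^p_{\mathfrak{W}^{s,p}[\delta](\cD)}$, pull the $t$-integral outside, and for each fixed $\bx,t$ perform the change of variables $\bz=\bx+t(\by-\bx)$ in the $\by$-integral; this produces Jacobian $t^{-d}$, the identity $|\by-\bx|^p=t^{-p}|\bz-\bx|^p$, and changes the ball of integration to $\bz\in B(\bx,t\delta\sigma(\bx))$. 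Next, apply Fubini to integrate over $\bx$ first for each fixed $\bz$: the condition $|\bx-\bz|<t\delta\sigma(\bx)$, together with $|\sigma(\bx)-\sigma(\bz)|\leq|\bx-\bz|$, confines $\bx$ to $B(\bz,\tfrac{t\delta}{1-t\delta}\sigma(\bz))$ and yields the pointwise comparability
$
(1+t\delta)^{-1}\sigma(\bz)\leq\sigma(\bx)\leq(1-t\delta)^{-1}\sigma(\bz).
$
Use the lower bound to estimate $\sigma(\bx)^{-(d+sp)}\leq(1+t\delta)^{d+sp}\sigma(\bz)^{-(d+sp)}$, and evaluate the inner $\bx$-integral in polar coordinates via $\int_{B(\bz,\rho)}|\bz-\bx|^p\,\rmd\bx=\tfrac{\scH^{d-1}(\bbS^{d-1})}{d+p}\rho^{d+p}$ with $\rho=\tfrac{t\delta}{1-t\delta}\sigma(\bz)$. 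The powers of $\sigma(\bz)$ collapse to $(d+p)-(d+sp)=p-sp$, and the $\bz$-integral becomes exactly $[u]^p_{W^{1,p}(\cD;p-sp)}$, multiplied by the constant $C(d,p)=\tfrac{\scH^{d-1}(\bbS^{d-1})\overline{C}_{d,p}}{d+p}$ and the one-dimensional factor $\int_0^1\tfrac{(1+t\delta)^{d+sp}}{(1-t\delta)^{d+p}}\,\rmd t$.

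\textbf{Main obstacle.} The delicate final step is to bound this remaining $t$-integral by $(1-\delta)^{-(p-sp+1)}$ with a constant depending only on $d$ and $p$. Since $t\delta\leq\delta<\underline{\delta}_0<\tfrac13$, one has $(1+t\delta)/(1-t\delta)\leq 2$, so
\[
\frac{(1+t\delta)^{d+sp}}{(1-t\delta)^{d+p}} \;\leq\; 2^{d+sp}(1-t\delta)^{-(p-sp)} \;\leq\; 2^{d+p}(1-\delta)^{-(p-sp)},
\]
and integration in $t\in[0,1]$ preserves this bound; since $(1-\delta)\leq 1$, the inequality $(1-\delta)^{-(p-sp)}\leq (1-\delta)^{-(p-sp+1)}$ absorbs the slack and the factor $2^{d+p}$ is folded into the constant, completing the proof. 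No case split on the sign of $p-sp$ is required because $s\leq 1$ forces $p-sp\geq 0$. The Lipschitz-domain version of the estimate is then immediate since $C^\infty(\overline{\cD})\subset C^\infty(\cD)$, so any $u\in C^\infty(\overline{\cD})\cap W^{1,p}(\cD;p-sp)$ is covered by the general bound just proved.
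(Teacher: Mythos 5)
Your argument is essentially sound and reaches the right structure, but it takes a different route from the paper at the key step and, as written, does not deliver the constant that the proposition actually asserts. Both proofs start identically (fundamental theorem of calculus plus Jensen, reducing to an integral of $|\grad u|^p$ along segments), but the paper then changes variables exactly via the boundary-localized map $\bszeta_{\bz}^{t\delta}(\bx)=\bx+t\delta\sigma(\bx)\bz$, invoking \cite[Lemma 3.2]{scott2024nonlocal} for its invertibility, the Jacobian bound $\det\grad\bszeta_{\bz}^{t\delta}\geq 1-t\delta$, and $\sigma(\bx)\leq \sigma(\bszeta_{\bz}^{t\delta}(\bx))/(1-t\delta)$; this pushforward is lossless and produces exactly the factor $(1-t\delta)^{-(p-sp+1)}$ and hence exactly $C(d,p)=\scH^{d-1}(\bbS^{d-1})\,\overline{C}_{d,p}/(d+p)$. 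You instead use Fubini in $(\bx,\bz)$, enlarge the $\bx$-section to the ball $B(\bz,\tfrac{t\delta}{1-t\delta}\sigma(\bz))$, and replace $\sigma(\bx)^{-(d+sp)}$ by its supremum $(1+t\delta)^{d+sp}\sigma(\bz)^{-(d+sp)}$ over that section. This is more elementary (no appeal to the external lemma on $\bszeta$), and every individual step of your computation is correct, including the $1$-Lipschitz comparability of $\sigma$, the polar-coordinate evaluation, and the exponent bookkeeping $(d+p)-(d+sp)=p-sp$; your observation that the Lipschitz-domain sentence is trivially contained in the interior-smooth case is also fine, since $C^\infty(\overline{\cD})\subset C^\infty(\cD)$.

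The one point where the proposal falls short of the statement as written is the constant: your ball-enlargement plus sup-of-weight estimate leaves the factor $\int_0^1 (1+t\delta)^{d+sp}(1-t\delta)^{-(d+p)}\,\rmd t$, which you bound by $2^{d+p}(1-\delta)^{-(p-sp+1)}$ and then "fold $2^{d+p}$ into the constant." But the proposition pins down $C(d,p)$ explicitly as $\scH^{d-1}(\bbS^{d-1})\,\overline{C}_{d,p}/(d+p)$, and your argument only yields $2^{d+p}$ times this value; the extra exponential-in-$d$ factor is genuinely produced by enlarging the integration region and taking the worst-case weight, and it cannot be removed within your scheme. For every downstream use in the paper (where only a $(d,p)$-dependent constant and the $(1-\delta)^{-(p-sp+1)}$ dependence matter) this is harmless, but to prove the statement with its stated constant you would need to keep the change of variables exact — i.e., transport the integral under $\bx\mapsto\bx+t\delta\sigma(\bx)\bz$ using the Jacobian and distance estimates, as the paper does — rather than passing to a containing ball.
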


\begin{proof}
It suffices to prove the inequality  for $u\in C^{\infty}(\cD)\cap W^{1,p}(\cD;p-sp).$ In the event $\cD$ is a Lipschitz domain, the validity of the inequality for all $u \in W^{1,p}(\cD;p-sp)$ follows from the density of $C^{\infty} (\overline{\cD})$ in $W^{1,p}(\cD;p-sp),$ see \Cref{thm:FxnSpProp:Weighted}.

Now using Taylor's integral formula, 
for any $\bz \in B(\mathbf{0},\delta \sigma(\bx))$ we have 
\begin{equation*}
	|u(\bx+\bz) - u(\bx)|^p \leq |\bz|^p \int_0^1 |\grad u(\bx+t\bz)|^p \, \rmd t\,,
\end{equation*}
and so
\begin{equation*}
	\begin{split}
		[u]_{\mathfrak{W}^{s,p}[\delta](\cD)}^p 
		&= \int_{\cD} \int_{ B(\mathbf{0},\delta \sigma(\bx)) }  \frac{ \overline{C}_{d,p} }{\delta^{d+p} \sigma(\bx)^{d+sp}} |u(\bx+\bz) - u(\bx)|^p \, \rmd \bz \, \rmd \bx \\
		&\leq \int_{\cD} \int_{ B(\mathbf{0},\delta \sigma(\bx)) } \frac{ \overline{C}_{d,p} |\bz|^p}{\delta^{d+p} \sigma(\bx)^{d+sp}}\int_0^1 |\grad u(\bx+t\bz)|^p \, \rmd t \, \rmd \bz \, \rmd \bx \\
		&= \overline{C}_{d,p} \int_{\cD} \int_{ B(\mathbf{0},1) } \sigma(\bx)^{p-sp} |\bz|^p \int_0^1 |\grad u(\bx+t \delta \sigma(\bx) \bz)|^p \, \rmd t \, \rmd \bz \, \rmd \bx\,.
	\end{split}
\end{equation*}
Now we make a change of variables. 
Define the function $\bszeta_{\bz}^{t\delta}(\bx) :=\bx+t \delta \sigma(\bx) \bz$. It is proved in \cite[Lemma 3.2]{scott2024nonlocal}  that for all $\delta\in (0,\underline{\delta}_0), $   
$\bszeta_{\bz}^{t\delta}$ is invertible on $\cD$ with $\bszeta_{\bz}^{t\delta}(\cD) \subset \cD$, $\mathrm{det}(\nabla \bszeta_{\bz}^{t\delta}(\bx))\ge 1-t\delta$, and $\sigma(\bx)\le \sigma(\bszeta_{\bz}^{t\delta}(\bx))/(1-t\delta)$; changing coordinates and using Tonelli's theorem give
\begin{equation*}
	\begin{split}
		[u]_{\mathfrak{W}^{s,p}[\delta](\cD)}^p 
		&\leq \int_0^1 \int_{ B(\mathbf{0},1) } \frac{ \overline{C}_{d,p} |\bz|^p}{(1-  t\delta)^{p-sp+1}} \int_{\cD} \sigma(\bx)^{p-sp}  |\grad u(\bx)|^p  \, \rmd \bx  \, \rmd \bz \, \rmd t \\
		&\leq \frac{ \overline{C}_{d,p} }{(1-  \delta)^{p-sp+1}}  \int_0^1 \int_{ B(\mathbf{0},1) } |\bz|^p [u]_{W^{1,p}(\cD;p-sp)}^p  \, \rmd \bz \, \rmd t \\
		&= \frac{C(d,p)}{(1-\delta)^{p-sp+1}} [u]_{W^{1,p}(\Omega;p-sp)}^p\,.
	\end{split}
\end{equation*}
This completes the proof.
\end{proof}

We remark that $\frac{C(d,p)}{(1-  \delta)^{p-sp+1}} \leq \frac{C(d,p)}{(1-  \underline{\delta}_0)^{p-sp+1}} $ for all $\delta<\underline{\delta}_0$. Thus the constant in the estimate can be made independent of $\delta$. 

We close this subsection with a result about the embedding of the weighted  space $W^{s,p}(\cD; p-sp)$ introduced in \eqref{Def-weighted-space} into the fractional Sobolev space $W^{s,p}(\cD)$. 
\begin{theorem}\label{thm:WeightedEst}
Let $p \in (1,\infty)$ and $s \in (0,1]$, and let $\cD \subset \bbR^d$ be a bounded Lipschitz domain.  Then there exists a constant $C$ depending only on $d$, $p$ and the Lipschitz constant of $\cD$ such that for all $u \in W^{1,p}(\cD;p-sp)$ 
\begin{equation*}
	\kappa_{d,s,p} \int_{\cD} \int_{\cD} {|u(\by) - u(\bx)|^{p}\over|\by-\bx|^{d + sp} }\rmd\by \rmd\bx\leq C \int_{\cD} (\sigma(\bx))^{p-sp} |\grad u(\bx)|^p \, \rmd \bx\,.
\end{equation*} 
\end{theorem}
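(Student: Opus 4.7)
The plan is to prove the inequality via a Whitney decomposition of $\cD$, combined with a chain-of-cubes argument to handle long-range interactions. The case $s=1$ is trivial since $\kappa_{d,1,p}=0$, so I assume $s\in(0,1)$. Write $\cD=\bigcup_{j}Q_j$ as a standard Whitney family of closed dyadic cubes with diameters $d_j\sim\dist(Q_j,\partial\cD)$, whose mild enlargements $Q_j^*$ have bounded overlap; in particular $\sigma(\bx)\sim d_j$ for $\bx\in Q_j^*$. I then split $[u]_{W^{s,p}(\cD)}^p$ into a sum over index pairs $(j,k)$, partitioned into a \emph{near} set $\mathcal{N}$ (where $Q_j^*\cap Q_k^*\neq\emptyset$) and a \emph{far} set $\mathcal{F}$ (on which $|\bx-\by|\geq c\max(d_j,d_k)$ whenever $\bx\in Q_j$, $\by\in Q_k$).

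For the near part, on each patch $A_{jk}:=Q_j^*\cup Q_k^*$ -- a Lipschitz domain of diameter $\sim d_j$ -- the classical embedding $W^{1,p}(A_{jk})\hookrightarrow W^{s,p}(A_{jk})$ combined with a scaling argument yields
\[
\kappa_{d,s,p}\,[u]_{W^{s,p}(A_{jk})}^p \;\leq\; C\, d_j^{p-sp}\int_{A_{jk}}|\grad u|^p\,\rmd\bx \;\leq\; C \int_{A_{jk}}\sigma^{p-sp}|\grad u|^p\,\rmd\bx,
\]
with a constant uniform in $s$ (the $(1-s)^{-1}$ blowup of the embedding constant is cancelled by the factor $\kappa_{d,s,p}=(p-sp)/A_{d,p}$). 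Summing over $\mathcal{N}$ and invoking the bounded overlap of the patches bounds the near contribution by $C\int_\cD\sigma^{p-sp}|\grad u|^p\,\rmd\bx$.

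For the far part, set $u_j:=|Q_j|^{-1}\int_{Q_j}u$ and decompose $|u(\by)-u(\bx)|^p \leq C\bigl(|u(\by)-u_k|^p+|u_j-u(\bx)|^p+|u_j-u_k|^p\bigr)$. The first two Poincar\'e-type terms are bounded cubewise using $\sum_{k:(j,k)\in\mathcal{F}}\int_{Q_k}|\bx-\by|^{-d-sp}\,\rmd\by\leq C d_j^{-sp}$ together with the local Poincar\'e inequality $\int_{Q_j}|u-u_j|^p\leq C d_j^{p}\int_{Q_j^*}|\grad u|^p$, yielding a total contribution $\leq C\sum_j d_j^{p-sp}\int_{Q_j^*}|\grad u|^p\leq C\int_\cD\sigma^{p-sp}|\grad u|^p\,\rmd\bx$. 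For $|u_j-u_k|^p$ I connect $Q_j$ to $Q_k$ by a minimal chain $Q_j=Q_{i_0},\ldots,Q_{i_N}=Q_k$ of pairwise adjacent Whitney cubes, telescope $u_j-u_k=\sum_\ell(u_{i_\ell}-u_{i_{\ell+1}})$, and bound each increment via $|u_{i_\ell}-u_{i_{\ell+1}}|\leq C d_{i_\ell}^{1-d/p}\Vnorm{\grad u}_{L^p(Q_{i_\ell}^*\cup Q_{i_{\ell+1}}^*)}$ (Poincar\'e on two adjacent cubes).

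The technical heart, and main obstacle, lies in this last step: after telescoping and exchanging the order of summation, one must count how many far-pair chains pass through a fixed intermediate Whitney cube $Q_m$ and verify that the resulting geometric series -- convergent precisely because $sp>0$ -- reduces to a constant multiple of $d_m^{p-sp}\int_{Q_m^*}|\grad u|^p$. This combinatorial argument is classical in the theory of weighted Sobolev embeddings (cf.\ Maz'ya, Triebel, Dyda), and once executed it bounds the far contribution by $C\int_\cD \sigma^{p-sp}|\grad u|^p\,\rmd\bx$, completing the proof. Finally, since the right-hand side is invariant under mollification, the estimate extends from smooth functions to all of $W^{1,p}(\cD;p-sp)$ by density (cf.\ the extension step used in \Cref{lma:Embedding:Weighted:Pre}).
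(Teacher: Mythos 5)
Your plan is genuinely different from the paper's proof, and it is worth saying how. The paper never decomposes the domain: it quotes the comparability theorem of \cite{dyda2006comparability} to replace $[u]_{W^{s,p}(\cD)}^p$ by the truncated form over $\{|\bx-\by|<\tfrac16\sigma(\bx)\}$, and then a fundamental-theorem-of-calculus bound along segments plus the change of variables $\bx\mapsto\bx+t\sigma(\bx)\bz/6$ finishes in a few lines; the normalization $\kappa_{d,s,p}\sim p(1-s)$ carried on the left cancels the $(1-s)^{-1}$ produced by $\int_{B(\mathbf{0},1)}|\bz|^{p-sp-d}\,\rmd\bz$, which is how the constant stays bounded as $s\to1$. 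Your ``near'' part is essentially that same computation localized to Whitney patches, while your ``far'' part is an attempt to reprove Dyda's comparability from scratch via chains of cubes. That is a legitimate, more self-contained route, but two things keep the proposal from being a proof.

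First, the step you yourself call the technical heart --- counting how many far-pair chains pass through a fixed cube $Q_m$ and summing the resulting series --- is precisely the content of the theorem the paper cites, and you do not carry it out; as written, the proposal therefore establishes nothing beyond what the paper obtains with a one-line citation, at much greater length. To close it you would need, for a Lipschitz domain, chains of length $\lesssim 1+\log\bigl(\dist(Q_j,Q_k)/\min(d_j,d_k)\bigr)$ through cubes whose sizes grow geometrically up to the scale $\dist(Q_j,Q_k)$, a H\"older step over the chain before exchanging sums, and the reduction to geometric series in that scale; either execute this or invoke a precise statement. Second, your uniformity-in-$s$ bookkeeping is wrong in the far part: the tail bound $\sum_{k:(j,k)\in\mathcal{F}}\int_{Q_k}|\bx-\by|^{-d-sp}\,\rmd\by\lesssim d_j^{-sp}$ actually costs $(sp)^{-1}d_j^{-sp}$, and the series you say converges ``because $sp>0$'' likewise contributes a factor $(sp)^{-1}$. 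This is not repairable: testing the inequality on a smooth bump supported at distance $\sim r$ from $\p\cD$, with $r$ chosen so that $(r/\diam(\cD))^{sp}=\tfrac12$, makes the left side $\gtrsim (1-s)(sp)^{-1}r^{d-sp}$ while the right side is $\asymp r^{d-sp}$, so the optimal constant grows like $(sp)^{-1}$ as $s\to0$. Hence your argument (like the paper's appeal to an ``$s$-independent'' comparability constant, which carries the same caveat) gives the stated bound uniformly only for $s$ bounded away from $0$ --- e.g.\ under $sp>1$, the only regime in which the theorem is actually used. Two minor points: at $s=1$ the displayed left side is $0\cdot\infty$ rather than ``trivially zero''; and the passage from smooth $u$ to general $u\in W^{1,p}(\cD;p-sp)$ should go through the density statement of \Cref{thm:FxnSpProp:Weighted} together with Fatou's lemma on the left, not ``invariance of the right-hand side under mollification,'' which is not literally true.
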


\begin{proof}
It suffices to show the inequality for $u \in C^{1}(\overline{\cD})$. By \cite{dyda2006comparability} there exists a constant $C = C(d,p,L)$, where $L$ is the Lipschitz constant of $\cD$, (the constant can be made independent of $s$) such that
\begin{equation*}
	[u]_{W^{s,p}(\cD)}^p \leq C \int_{\cD} \int_{B(\bx,\frac{1}{6} \sigma(\bx)) } \frac{ |u(\bx)-u(\by)|^p }{ |\bx-\by|^{d+sp} } \, \rmd \by \, \rmd \bx.
\end{equation*}
Then we have
\begin{equation*}
	\begin{split}
		[u]_{W^{s,p}(\cD)}^p &\leq C \int_{\cD} \int_{ B(\bx,\frac{1}{6} \sigma(\bx)) } \int_0^1 |\grad u(\bx + t(\by-\bx))|^p { |\bx-\by|^{-d-sp+p} } \, \rmd \by \, \rmd \bx \\
		&= C \int_{\cD} \int_{ B(0,1) } \int_0^1 |\grad u(\bx + t \sigma(\bx) \bz/6)|^p |\bz|^{-d-sp+p} \sigma(\bx)^{p-sp} \, \rmd t \, \rmd \bz \, \rmd \bx \\
		&= C \int_0^1 \int_{ B(0,1) }  \int_{\cD}  |\grad u(\bx + t \sigma(\bx) \bz/6)|^p \frac{ \sigma(\bx)^{p-sp} }{ |\bz|^{d+sp-p} } \, \rmd \bx  \, \rmd \bz \, \rmd t\,.
	\end{split}
\end{equation*}
Since $\sigma(\bx) \leq \sigma(\bx+t\sigma(\bx) \bz / 6) + t/6 \sigma(\bx)$, we have 
\begin{align*}
	&[u]_{W^{s,p}(\cD)}^p\\
	&\leq C (5/6)^{sp-p} \int_0^1 \int_{ B(0,1) }  \int_{\cD}  |\grad u(\bszeta({\bf x}, t, {\bf z}))|^p \sigma(\bszeta({\bf x}, t, {\bf z}))^{p-sp} \, \rmd \bx |\bz|^{-d-sp+p}  \, \rmd \bz \, \rmd t\,,
\end{align*}
where $\bszeta({\bf x}, t, {\bf z}) = \bx + t \sigma(\bx) \bz/6$. 
Changing coordinates in $\bx$, and then integrating out $\bz$ and $t$ gives the result.
\end{proof}

\section{Boundary-localized mollifier and convolution operator}\label{sec:convolution}

In this section, we review definitions and main properties of the {\it heterogeneous localization function, boundary-localized mollifier, and boundary-localized convolution operator} as developed in sections 3 and 4 of \cite{scott2024nonlocal} for studying the case $s=1$. Along the way, we present extensions of these results that will remain valid for any $s\in (0, 1]$. This analysis will be crucial in establishing properties of the nonlocal function spaces in the next section.

\subsection{Boundary-localized mollifier}\label{sec:blm}
To define the seminorm $[u]_{\mathfrak{W}^{s,p}[\delta](\mathcal{D})}$, the heterogeneous localization function used is $\delta \,\sigma(\bx)$.  In what follows we define a generalized heterogeneous localization function based on  generalized distance function $\eta : \overline{\mathcal{D}} \to [0,\infty)$ 
whose existence for any domain $\mathcal{D}$ is shown in \cite{Stein1970}. It has the following properties:
\begin{equation} \label{assump:Localization}
	\begin{aligned}
		& i) \, \text{ there exists a constant } \kappa_0 \geq 1 \text{ such that } \\
		&\qquad \frac{1}{\kappa_0} \dist(\bx,\p \mathcal{D}) \leq \eta(\bx) \leq \kappa_0 \dist(\bx,\p \mathcal{D}) \text{ for all } \bx \in \overline{\cD},\\
		& ii) \, \eta \in C^0(\overline{\cD}) \cap C^{\infty}(\mathcal{D}), \text{ and for each multi-index } \beta \in \bbN^d_0\,, \\
		&\qquad |D^\beta \eta(\bx)| \leq \kappa_{\beta} \dist(\bx,\p \mathcal{D})^{1-|\beta|}  \text{ for all } \bx \in \mathcal{D}, \text{ for constants } \kappa_{\beta} > 0.
	\end{aligned} \tag{\ensuremath{\rmA_{\eta}}}
\end{equation}
The constants $\kappa_\beta$ for all $|\beta| \geq 0$ can be chosen to depend only on $d$ and the derivatives of a standard mollifier given on $B(\mathbf{0},1)$; see \cite{Stein1970}. In particular, they are independent of $\cD$. Notice that $\eta(\bx) = \sigma(\bx)$ satisfies \eqref{assump:Localization} for $|\beta| \leq 1$.

To define a smooth boundary-localized dilation, we first introduce  a standard radial mollifier $\psi(|\bx|) \in C^{\infty}(\bbR^d)$, where $\psi: \bbR \to [0,\infty)$ is an even function that satisfies 
\begin{equation}  
\label{Assump:Kernel}
\left\{\begin{aligned}
	&\psi \in C^{\infty}(\bbR)\,,
  \quad [-c_\psi,c_{\psi}] \subset \supp \psi \Subset (-1,1) \text{ for fixed } c_{\psi} \in (0,1)\,,
	 \\  &	\quad \psi \geq 0\,,\quad \text{and}\quad
	\int_{\bbR^d} \psi(|\bx|) \, \rmd \bx = 1\,. 
    \end{aligned}\right.
\end{equation}
The boundary-localized dilation is then given by 
\begin{equation}\label{eq:OperatorKernelDef}
	\begin{split}
		\psi_{\delta}(\bx,\by) &:= \frac{1}{(\delta \eta(\bx))^{d}} {\psi} \left( \frac{|\by-\bx|}{\delta \eta(\bx)} \right) \,.
	\end{split}
\end{equation}
It then follows from \eqref{Assump:Kernel} that $\int_{\cD} \psi_\delta(\bx,\by) \, \rmd \by = 1$ for all $\bx \in \cD$ and for all $\delta < \underline{\delta}_0$, where the threshold $\underline{\delta}_0$ depends on $\kappa_0, \kappa_1$, and will be taken as 
\begin{equation}\label{bound-for-delta}
	\underline{\delta}_0 := \frac{1}{3} \min \left\{  1/\kappa_0 \,, 1/\kappa_1 \right\}
\end{equation} so that ${\delta}\eta(\bx) \leq \frac{1}{3}$ and $\delta|\grad \eta(\bx)| \leq \frac{1}{3}$ for any $\delta < \underline{\delta}_0$. The choice of this particular threshold is influenced by the estimate that we will establish in the sequel. 

The function $\psi_\delta(\bx, \by)$ can be viewed as a mollified version of 
$\frac{\mathds{1}_{B(\bx, \delta \sigma(\bx))} (\by)}{(\delta \sigma(\bx))^d}$ which is precisely a boundary-localized dilation of the unit characteristic function $\mathds{1}_{B_1(0)}(\bx)$ used in the seminorm of $\mathfrak{W}^{s,p}[\delta](\mathcal{D})$. 
It turns out we can use $\psi_\delta(\bx, \by)$ to define a new seminorm on $\mathfrak{W}^{s,p}[\delta](\mathcal{D})$ that will be  equivalent to the seminorm on $\mathfrak{W}^{s,p}[\delta](\mathcal{D})$. The following proposition establishes this equivalence.

\begin{proposition}\label{thm:EnergySpaceIndepOfKernel}
	For $p \in (1,\infty)$ and $s \in (0,1]$, there exist positive constants $C = C(d, p, \psi, \kappa_0)$ and $c = c(d, p, \psi, \kappa_0, c_\psi)$  
	such that the inequalities
	\begin{equation*}
		c\, [u]_{\mathfrak{W}^{s,p}[\delta](\mathcal{D})}^p \leq    \intdm{\mathcal{D}}{ \int_{\mathcal{D}} \psi_{\delta}(\bx,\by)  \frac{\big| u(\bx)-u(\by) \big|^p}{ \delta^{p} \eta(\bx)^{sp} }  \, \rmd \by}{\bx} \leq C [u]_{\mathfrak{W}^{s,p}[\delta](\mathcal{D})}^p \,
	\end{equation*}
	hold for all $u \in \mathfrak{W}^{s,p}[\delta](\mathcal{D})$.
\end{proposition}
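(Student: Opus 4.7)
The idea is to sandwich the smooth integrand on the left pointwise between constant multiples of the indicator-kernel integrands defining $[u]_{\mathfrak{W}^{s,p}[\delta'](\mathcal{D})}^{p}$ for two auxiliary horizons $\delta'$ comparable to $\delta$, and then invoke the horizon-invariance theorem \Cref{thm:InvariantHorizon} to replace each with $[u]_{\mathfrak{W}^{s,p}[\delta](\mathcal{D})}^{p}$. The crucial pointwise facts are: $\psi_{\delta}(\bx,\cdot)$ is supported in $B(\bx,\delta\eta(\bx))$ with $\psi_{\delta}\leq \Vnorm{\psi}_{\infty}(\delta\eta(\bx))^{-d}$ there; on the subball $B(\bx,c_{\psi}\delta\eta(\bx))$ one has $\psi_{\delta}\geq \psi_{\min}(\delta\eta(\bx))^{-d}$ with $\psi_{\min}:=\min_{[0,c_{\psi}]}\psi>0$ by \eqref{Assump:Kernel}; and the equivalence $\kappa_{0}^{-1}\sigma(\bx)\leq \eta(\bx)\leq \kappa_{0}\sigma(\bx)$ from \eqref{assump:Localization}.

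\textbf{Upper bound.} Combining the above with the inclusion $B(\bx,\delta\eta(\bx))\subseteq B(\bx,\kappa_{0}\delta\sigma(\bx))$ and the estimate $\eta(\bx)^{-(d+sp)}\leq \kappa_{0}^{d+sp}\sigma(\bx)^{-(d+sp)}$ yields the pointwise inequality
\[
\psi_{\delta}(\bx,\by)\frac{|u(\bx)-u(\by)|^{p}}{\delta^{p}\eta(\bx)^{sp}} \;\leq\; \Vnorm{\psi}_{\infty}\kappa_{0}^{d+sp}\,\frac{|u(\bx)-u(\by)|^{p}}{\delta^{d+p}\sigma(\bx)^{d+sp}}\,\mathds{1}_{B(\bx,\kappa_{0}\delta\sigma(\bx))}(\by).
\]
Integrating, the right-hand side equals (up to a constant depending only on $d,p,\psi,\kappa_{0}$) the seminorm $[u]_{\mathfrak{W}^{s,p}[\kappa_{0}\delta](\mathcal{D})}^{p}$. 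Because $\underline{\delta}_{0}=\tfrac{1}{3}\min\{1/\kappa_{0},1/\kappa_{1}\}$ guarantees $\kappa_{0}\delta<1/3$, \Cref{thm:InvariantHorizon} applies and gives $[u]_{\mathfrak{W}^{s,p}[\kappa_{0}\delta]}^{p}\leq C\,[u]_{\mathfrak{W}^{s,p}[\delta]}^{p}$, completing the upper bound.

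\textbf{Lower bound.} The same scheme runs in reverse. Using $\psi_{\delta}\geq \psi_{\min}(\delta\eta(\bx))^{-d}$ on the subball, together with $\eta(\bx)\leq \kappa_{0}\sigma(\bx)$ (hence $\eta(\bx)^{-(d+sp)}\geq \kappa_{0}^{-(d+sp)}\sigma(\bx)^{-(d+sp)}$) and $B(\bx,c_{\psi}\delta\eta(\bx))\supseteq B(\bx,c_{\psi}\delta\sigma(\bx)/\kappa_{0})$, the integrand on the left dominates a constant multiple of the integrand of $[u]_{\mathfrak{W}^{s,p}[c_{\psi}\delta/\kappa_{0}](\mathcal{D})}^{p}$. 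Since $c_{\psi}\delta/\kappa_{0}<\delta<1/3$, another application of \Cref{thm:InvariantHorizon}, now in the direction that lifts the smaller horizon back to $\delta$, produces $[u]_{\mathfrak{W}^{s,p}[\delta]}^{p}\leq C'[u]_{\mathfrak{W}^{s,p}[c_{\psi}\delta/\kappa_{0}]}^{p}$ with $C'=C(d,p,\kappa_{0},c_{\psi})$, which closes the lower bound.

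\textbf{Obstacle.} There is no essential analytic obstacle: once the two pointwise sandwich estimates are written down, the remainder is careful bookkeeping of the constants $\kappa_{0}$, $c_{\psi}$, $\Vnorm{\psi}_{\infty}$, $\psi_{\min}$. The only nontrivial input is the horizon-invariance theorem, and the main point to check is that the specific choice of $\underline{\delta}_{0}$ in \eqref{bound-for-delta} keeps all auxiliary horizons ($\kappa_{0}\delta$, $c_{\psi}\delta/\kappa_{0}$, and $\delta$) strictly below $1/3$, so that \Cref{thm:InvariantHorizon} is legitimately applicable in both directions.
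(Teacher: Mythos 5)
Your argument is correct and is essentially the paper's own proof: both sandwich the mollified kernel pointwise between indicator kernels at the comparable horizons $\kappa_0\delta$ and (a fixed fraction of) $c_\psi\delta/\kappa_0$ using $\kappa_0^{-1}\sigma\le\eta\le\kappa_0\sigma$ together with the upper/lower bounds on $\psi$, and then close both directions with \Cref{thm:InvariantHorizon}, noting that $\delta<\underline{\delta}_0$ keeps all auxiliary horizons below $1/3$. The only cosmetic difference is that the paper takes the lower bound of $\psi$ on the smaller ball of radius $c_\psi/2$ rather than $c_\psi$, which is slightly safer given that \eqref{Assump:Kernel} only asserts $[-c_\psi,c_\psi]\subset\supp\psi$.
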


\begin{proof}
	First, it follows from \eqref{assump:Localization} and \eqref{Assump:Kernel} that, there exists $C(\psi)>0$ such that 
	\begin{gather*}
		\frac{  \sigma(\bx) }{\kappa_0} \leq \eta(\bx) \leq \kappa_0  \sigma(\bx)\,, \qquad C(\psi)^{-1} \mathds{1}_{ B(\mathbf{0},{c_\psi\over 2}) }(\bx) \leq \psi(|\bx|) \leq C(\psi) \mathds{1}_{ B(\mathbf{0},1) }(\bx)\,.
	\end{gather*}
	Therefore,  from the assumptions on $\delta$ and $\kappa_0 \geq 1$ and \Cref{thm:InvariantHorizon}  we have that
	\begin{equation*}
		\begin{split}
			\int_{\mathcal{D}}\int_{\mathcal{D}} &\psi_{\delta}(\bx,\by)  \frac{\big| u(\bx)-u(\by) \big|^p}{ \delta^p \eta(\bx)^{sp} }  \, \rmd \by d{\bx}\\
			&\leq C(\psi) \kappa_0^{d+sp} \int_{\mathcal{D}} \int_{ \{|\by-\bx| < \kappa_0 \delta\,\sigma(\bx) \} } \frac{|u(\bx)-u(\by)|^p}{ \delta^{d+p} \sigma(\bx)^{d+sp} } \, \rmd \by \, \rmd \bx \\
			&\leq C(\psi,d,p,\kappa_0) [u]_{ \mathfrak{W}^{s,p}[\delta](\cD) }^p.
    \end{split}
	\end{equation*}
	Conversely, we also have  
	\begin{equation*}
		\begin{split}
			[u]_{ \mathfrak{W}^{s,p}[\delta](\mathcal{D}) }^p\,  &\leq  \frac{ \overline{C}_{d,p} }{C(\psi) } \int_{\mathcal{D}} \int_{ \{|\by-\bx| < \frac{c_\psi}{2\kappa_0} \delta \sigma(\bx) \} } \frac{|u(\bx)-u(\by)|^p}{ \kappa_0^{d+sp} \delta^{d+p} \sigma(\bx)^{d+sp} } \, \rmd \by \, \rmd \bx\\ & \leq \tilde{C} \int_{\mathcal{D}}\int_{\mathcal{D}} \psi_{\delta}(\bx,\by)  \frac{\big| u(\bx)-u(\by) \big|^p}{ \delta^p \eta(\bx)^{sp} }  \, \rmd \by\, \rmd \bx\,.
		\end{split}
	\end{equation*}
	We then take $c = {1\over \tilde{C}}.$
\end{proof}
Henceforth, we use both seminorms on $ \mathfrak{W}^{s,p}[\delta](\mathcal{D}) $ interchangeably.

As a composition of two differentiable functions, the boundary-localized dilation function $\psi_\delta(\bx, \by)$ is differentiable in $\bx$ and by direct computation we verify that  
\begin{equation*}
	\begin{split}
		\grad_{\bx} {\psi}_{\delta}(\bx,\by) &= {(\psi')_\delta(\bx,\by)\over \delta\eta(\bx)} \left( \frac{\bx-\by}{|\bx-\by|} - |\bx-\by| \frac{\grad \eta(\bx)}{\eta(\bx)} \right) - d \psi_{\delta}(\bx,\by)  \frac{ \grad \eta(\bx) }{\eta(\bx)}, 
	\end{split}
\end{equation*}
from which it follows from the properties of $\eta$ and the bounds of $\delta$  that  for some constant $C=C(\psi, d)$
\begin{equation}\label{eq:KernelDerivativeEstimate}
	|\grad _{\bx} \psi_{\delta}(\bx,\by)| \leq \frac{C}{\delta\eta(\bx) } \Big( \psi_{\delta}(\bx,\by) 
	+ (|\psi'|)_{\delta}(\bx,\by) \Big)\,,  \quad \forall \bx, \by \in\mathcal{D}.
\end{equation}
It is important to note that $\psi_\delta$ is not a symmetric function, i.e. $\psi_\delta(\bx, \by) \neq \psi_{\delta}(\by,\bx)$. For $\bx\in \mathcal{D},$ we  define 
\[
\Psi_{\delta}(\bx):= \int_{\mathcal{D}} \psi_{\delta}(\by,\bx) \rmd\by.     \]
Noting that  for $\bx, \by \in \mathcal{D}$ such that $|\bx-\by|\leq \delta \eta(\bx)$, we have 
\begin{equation}(1- \kappa_1 \delta ) \delta \eta(\bx) \leq\delta \eta(\by) \leq (1+\kappa_1 \delta ) \delta \eta(\bx),
	\label{eq:ComparabilityOfDistanceFxn2}
\end{equation}
and so we obtain that for all $\delta\leq \underline{\delta}_0$,
\begin{equation} \label{eq:KernelIntFunction:UpperBound:SqDist}
\|\Psi_\delta\|_{L^{\infty}(\mathcal{D})} \leq C
\end{equation}
with a bound $C$ depending only on $\psi$, $d$, and $\kappa_1$ (independent of $\delta$).   
Combining \eqref{eq:ComparabilityOfDistanceFxn2} and \eqref{eq:KernelDerivativeEstimate} we obtain that corresponding to $\alpha\in \mathbb{R}$, there exists $C = C(d, \psi, k_1, \alpha)$ such that for all $\delta \leq  \underline{\delta}_0$,
\begin{equation}\label{eq:KernelIntegralDerivativeEstimate}
	\int_{\mathcal{D}} |\delta \eta(\by)|^{\alpha} |\grad_{\bx} \psi_{\delta}(\bx,\by)| \, \rmd \by
	\leq \frac{C}{(\delta \eta(\bx))^{1-\alpha}}\,, \quad \forall \, \bx \in \mathcal{D}.
\end{equation}
Applying similar calculations to $\psi_{\delta}(\by,\bx)$, where $\bx, \by $ are interchanged, we obtain that for all $\bx, \by \in \mathcal{D}$ 
\begin{equation*}
	|\grad _{\bx} \psi_{\delta}(\by,\bx)| \leq \frac{1}{\delta \eta(\by) }  (|\psi'|)_{\delta}(\by,\bx) \,,  
\end{equation*} 
and that for any $\alpha \in \mathbb{R}$,  there exists $C = C(d,\psi,\kappa_1,\alpha)$  such that
\begin{equation*}
	\int_{\mathcal{D}} |\delta \eta(\by)|^\alpha |\grad_{\bx} \psi_{\delta}(\by,\bx) | \, \rmd \by \leq \frac{C}{(\delta \eta(\bx))^{1-\alpha}} \,, \quad \forall \, \bx \in \mathcal{D},
\end{equation*}
which follows from combining \eqref{eq:ComparabilityOfDistanceFxn2}, \eqref{eq:KernelDerivativeEstimate}, and \eqref{eq:KernelIntFunction:UpperBound:SqDist}. 
\subsection{Boundary-localized convolution operator}
For a boundary-localized mollifier, $\psi_\delta$, we define the boundary-localized convolution operator $K_{\delta}$ by 
\begin{equation*}
	K_{\delta}u (\bx) := \int_{\mathcal{D}} \psi_{\delta}(\bx,\by) u(\by) \, \rmd \by\,
    \quad \forall\, \bx \in \mathcal{D}.
\end{equation*}
It is clear from the properties of $\eta$ and $\psi$ that $K_{\delta} u \in C^{\infty}(\cD)$ for any $u \in L^1_{loc}(\cD)$. After a change of variables $\bz = {\bx-\by\over \delta \eta(\bx)}$, we may write, for $\bx\in \cD$, that
\begin{equation}\label{eq:conv-expression}
	K_{\delta}u (\bx)= \frac{1}{(\delta \eta(\bx))^{d}} \int_{\mathcal{D}}  \psi\left( \frac{|\by-\bx|}{\delta \eta(\bx)} \right) u(\by) \, \rmd \by = \int_{B(\mathbf{0},1)} \psi(|\bz|) u(\bx - \delta \eta(\bx) \bz) \, \rmd \bz.
\end{equation}
For $u\in C^0(\overline{\cD})$, the above formulation allows us to continuously extend $K_{\delta}u$ up to the boundary. Indeed, for any $\bx\in \partial \cD$ and any sequence $\bx_n\in \cD$ such that $\bx_n \to \bx$,  we have the limit 
\[ \lim_{n\to \infty}K_{\delta}u (\bx_n)=\int_{B(\mathbf{0},1)} \psi(|\bz|) u(\bx)\rmd\bz = u(\bx).
\]
Thus, for $u\in C^0(\overline{\cD})$ defining $K_{\delta}u (\bx) = u(\bx)$ for $\bx\in \partial \cD$ implies that $K_\delta u\in C^0(\overline{\cD})$. 
The following proposition summarizes this and a list of other important properties that are proved in \cite{scott2024nonlocal}.  The results are analogues of well-known results for the standard convolution operator. 

\begin{proposition}\label{continuity-of-Kdelta}
    For any $\delta < \underline{\delta}_0$, we have the following:
	\begin{enumerate}[\upshape 1)]
		\item For any $u \in L^1_{loc}(\cD)$, $K_{\delta} u \in C^{\infty}(\cD)$ with derivatives given by 
		\[
		D_{\bx}^{\ell}K_{\delta} u (\bx) = \intdm{\cD}{D^{\ell}_{\bx} \psi_{\delta}(\bx,\by) u(\by)}{\by}, \quad \ell\in \bbN_0^{d}.  
		\]
		If $u$ has compact support, so does $K_{\delta} u$.
		
		\item If $u \in C(\overline{\cD})$, so is $K_{\delta} u$,  and by definition $K_{\delta} u(\bx) = u (\bx)$,  $\forall \bx \in \p \cD$.
		
		\item For $1\leq p\leq \infty$ and $u\in L^{p}(\cD)$, we have $K_{\delta}u\in L^{p}(\cD)$ with the estimate 
		\[
		\|K_{\delta}u\|_{L^{p}(\cD)} \leq C_0 \|u\|_{L^{p}(\cD)}.
		\]
		
		\item For $1\leq p< \infty$ and $u\in W^{1, p}(\cD)$, we have $K_{\delta}u\in W^{1, p}(\cD)$ with the estimate 
		\[
		\|\nabla K_{\delta}u\|_{L^{p}(\cD)} \leq C_1 \|u\|_{W^{1, p}(\cD)}.
		\]
	\end{enumerate}
	The constants $C_0$ and $C_1$ depend only on $d, p, \psi$, and $\kappa_1$.  
\end{proposition}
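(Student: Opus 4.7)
My plan is to verify the four claims in order, leveraging the pointwise kernel estimates of \Cref{sec:blm} and the change-of-variables machinery for the map $\bx \mapsto \bx - \delta\eta(\bx)\bz$ used in the proof of \Cref{lma:Embedding:Weighted:Pre}. For \textbf{1)}, I would invoke differentiation under the integral sign. On each compact $\cK \Subset \cD$, $\inf_\cK \eta > 0$, and the same direct computation that produced \eqref{eq:KernelDerivativeEstimate} shows that every derivative $D_\bx^\ell \psi_\delta(\bx, \by)$ is bounded on $\cK \times \cD$ uniformly in $\by$. This justifies moving $D_\bx^\ell$ inside the integral and yields both $K_\delta u \in C^\infty(\cD)$ and the stated formula. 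Preservation of compact support follows from $\supp \psi_\delta(\bx,\cdot) \subset B(\bx,\delta\kappa_0 \sigma(\bx))$: for $\delta < \underline{\delta}_0$, $K_\delta u(\bx)$ vanishes whenever $\bx$ is sufficiently close to $\partial \cD$ or sufficiently far from $\supp u$. Claim \textbf{2)} is precisely the calculation displayed in the paragraph preceding the proposition: in the representation \eqref{eq:conv-expression}, $\eta(\bx) \to 0$ as $\bx \to \partial\cD$, and uniform continuity of $u$ on $\overline{\cD}$ combined with dominated convergence forces $K_\delta u(\bx) \to u(\bx_0)$ for every $\bx_0 \in \partial\cD$.

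For \textbf{3)}, start from \eqref{eq:conv-expression} and apply Jensen's inequality, using that $\psi(|\bz|)\,\rmd\bz$ is a probability measure on $B(\mathbf{0},1)$ by \eqref{Assump:Kernel}:
\[
|K_\delta u(\bx)|^p \leq \int_{B(\mathbf{0},1)} \psi(|\bz|)\,|u(\bx - \delta\eta(\bx)\bz)|^p\,\rmd\bz.
\]
Integrate in $\bx$, swap the order of integration by Tonelli, and for each fixed $\bz$ carry out the change of variables $\by = \bx - \delta\eta(\bx)\bz$. The Jacobian of this map is $\det(I - \delta\,\bz\otimes\nabla\eta(\bx)) = 1 - \delta\,\bz\cdot\nabla\eta(\bx)$, which is bounded below by $1 - \delta\kappa_1 \geq 2/3$ for $\delta < \underline{\delta}_0$ by \eqref{assump:Localization} and \eqref{bound-for-delta}; injectivity on $\cD$ and containment of the image in $\cD$ follow by an argument analogous to \cite[Lemma 3.2]{scott2024nonlocal}. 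This yields $\|K_\delta u\|_{L^p(\cD)}^p \leq (1-\delta\kappa_1)^{-1}\|u\|_{L^p(\cD)}^p$. The $p = \infty$ case is immediate from $\int\psi_\delta(\bx,\by)\,\rmd\by = 1$.

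For \textbf{4)}, first take $u \in C^\infty(\overline{\cD})$ so that differentiation under the integral in \eqref{eq:conv-expression} is classical. Setting $F(\bx,\bz) := \bx - \delta\eta(\bx)\bz$ and applying the chain rule gives
\[
\nabla K_\delta u(\bx) = \int_{B(\mathbf{0},1)} \psi(|\bz|) \left[\nabla u(F(\bx,\bz)) - \delta \,(\bz \cdot \nabla u(F(\bx,\bz)))\,\nabla\eta(\bx)\right] \rmd\bz.
\]
Both terms in brackets are of the form (bounded scalar coefficient) times $\nabla u \circ F$, with coefficient controlled by $1 + \delta\kappa_1 \leq 2$ uniformly in $\bx$ and $\delta < \underline{\delta}_0$. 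Applying Jensen and the same change of variables as in part \textbf{3)} yields $\|\nabla K_\delta u\|_{L^p(\cD)} \leq C\|\nabla u\|_{L^p(\cD)} \leq C\|u\|_{W^{1,p}(\cD)}$ with $C = C(d,p,\psi,\kappa_1)$. Extension to general $u \in W^{1,p}(\cD)$ uses density of $C^\infty(\overline{\cD})$ in $W^{1,p}(\cD)$ (valid since $\cD$ is Lipschitz) together with the $L^p$ continuity of $K_\delta$ established in part \textbf{3)}: approximating $u$ by $u_n \in C^\infty(\overline{\cD})$, the estimate on smooth functions shows that $\{\nabla K_\delta u_n\}$ is Cauchy in $L^p(\cD)$, and its limit is identified as the weak gradient of $K_\delta u$.

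The main technical input that cannot be bypassed is the change-of-variables estimate for the map $\bx \mapsto \bx - \delta\eta(\bx)\bz$. An alternative route through the kernel bound \eqref{eq:KernelIntegralDerivativeEstimate} would introduce a singular factor $1/(\delta\eta(\bx))$ in the estimate for $\nabla K_\delta u$, which could only be controlled by additionally exploiting the cancellation $\int \nabla_\bx \psi_\delta(\bx,\by)\,\rmd\by = 0$ and a Taylor-type argument for $u$; packaging this technicality into the change of variables gives the shortest proof.
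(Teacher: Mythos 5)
Your proof is correct. The paper itself gives no argument for this proposition — it is stated as a summary of properties proved in \cite{scott2024nonlocal} — and your route (Jensen's inequality followed by the change of variables $\bx \mapsto \bx - \delta\eta(\bx)\bz$, with Jacobian $\det(\bI-\delta\,\bz\otimes\nabla\eta(\bx)) \ge 1-\kappa_1\delta \ge 2/3$ for $\delta<\underline{\delta}_0$, and with injectivity and $\bszeta^{-\delta}_{\bz}(\cD)\subset\cD$ imported from \cite[Lemma 3.2]{scott2024nonlocal}) is exactly the technique the paper deploys for the closely analogous estimates in \Cref{rmk:contKfraktofrac} and \Cref{prop:conv-convergence-Sobolev-spaces}, so there is nothing substantively different to compare. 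One small point in part 1): to justify differentiating under the integral for $u\in L^1_{loc}(\cD)$ you need, in addition to the uniform bound on $D^{\ell}_{\bx}\psi_\delta$ over $V\times\cD$ for $V\Subset\cD$, the observation that $\supp\psi_\delta(\bx,\cdot)\subset B(\bx,\delta\kappa_0\sigma(\bx))$ stays inside a fixed compact subset of $\cD$ as $\bx$ varies over $V$, so that the dominating function is actually integrable against $|u|$; you invoke this support property only later for the compact-support claim, but it belongs in this step as well.
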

In connection with function spaces of interest in this paper, we can prove similar continuity results. We recall the weighted Sobolev space $W^{1,p}(\cD;p-sp)$ introduced previously in \eqref{Def-weighted-space} as 
\[
W^{1,p}(\cD;p-sp) = \left\{ u \in L^p(\cD) \, :\, [u]^{p}_{W^{1,p}(\cD,p-sp)}:= \int_{\cD} |\grad u(\bx)|^p \sigma(\bx)^{p-sp} \, \rmd \bx < \infty \right\}. 
\]

\begin{theorem}\label{thm:Convolution:DerivativeEstimate}
	Given $p \in [1,\infty)$ and $s \in (0,1]$. Then for any $u \in \mathfrak{W}^{s,p}[\delta](\cD)$, $K_{\delta} u\in W^{1,p}(\cD;p-sp)$. Moreover, there exists $C > 0$ depending only on $d$, $\psi$, $p$,  $\kappa_0$ and $\kappa_1$ such that 
	\begin{equation}\label{eq:ConvEst:Deriv}
		\begin{split}
			[ K_{\delta} u ]_{W^{1,p}(\cD;p-sp)}^p
			\leq
			C 
			[u]_{\mathfrak{W}^{s,p}[\delta](\cD)}^p\,, \qquad \forall u \in \mathfrak{W}^{s,p}[\delta](\cD).
		\end{split}
	\end{equation}
\end{theorem}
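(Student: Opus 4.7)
The plan is to differentiate under the integral, exploit the zero-mean property of $\grad_{\bx}\psi_{\delta}$ in $\by$, and then reduce to the $L^p$-type quantity controlled by Proposition~\ref{thm:EnergySpaceIndepOfKernel}.

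First, since $\int_{\cD}\psi_{\delta}(\bx,\by)\,\rmd\by=1$ for every $\bx\in\cD$ and every $\delta<\underline{\delta}_0$, we have $\int_{\cD}\grad_{\bx}\psi_{\delta}(\bx,\by)\,\rmd\by=0$, and differentiation under the integral sign (justified by item~1 of Proposition~\ref{continuity-of-Kdelta}) yields
\begin{equation*}
\grad K_{\delta}u(\bx)=\int_{\cD}\grad_{\bx}\psi_{\delta}(\bx,\by)\,(u(\by)-u(\bx))\,\rmd\by.
\end{equation*}
Writing the kernel-derivative bound \eqref{eq:KernelDerivativeEstimate} in the form
\begin{equation*}
|\grad_{\bx}\psi_{\delta}(\bx,\by)|\le \frac{C}{\delta\eta(\bx)}\Phi_{\delta}(\bx,\by),\qquad \Phi_{\delta}(\bx,\by):=\psi_{\delta}(\bx,\by)+(|\psi'|)_{\delta}(\bx,\by),
\end{equation*}
I note that $\Phi_{\delta}(\bx,\cdot)$ is supported in $B(\bx,\delta\eta(\bx))$ and $\int_{\cD}\Phi_{\delta}(\bx,\by)\,\rmd\by\le M$ uniformly in $\bx$ and $\delta<\underline{\delta}_0$, by the same bookkeeping used in \eqref{eq:KernelIntegralDerivativeEstimate} with $\alpha=0$.

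Second, I apply H\"older's inequality with respect to the finite measure $\Phi_{\delta}(\bx,\by)\,\rmd\by$ to obtain
\begin{equation*}
|\grad K_{\delta}u(\bx)|^p\le \frac{C^p M^{p/p'}}{(\delta\eta(\bx))^p}\int_{\cD}\Phi_{\delta}(\bx,\by)\,|u(\by)-u(\bx)|^p\,\rmd\by.
\end{equation*}
Multiplying by $\sigma(\bx)^{p-sp}$, using the comparability $\sigma(\bx)\le\kappa_0\eta(\bx)$ from \eqref{assump:Localization}, and collecting constants gives
\begin{equation*}
\sigma(\bx)^{p-sp}|\grad K_{\delta}u(\bx)|^p\le \frac{C}{\delta^{p}\eta(\bx)^{sp}}\int_{\cD}\Phi_{\delta}(\bx,\by)\,|u(\by)-u(\bx)|^p\,\rmd\by,
\end{equation*}
where $C$ depends only on $d,p,\psi,\kappa_0,\kappa_1$.

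Third, integrating in $\bx$ over $\cD$ and using Tonelli, it remains to control
\begin{equation*}
\int_{\cD}\int_{\cD}\frac{\Phi_{\delta}(\bx,\by)}{\delta^{p}\eta(\bx)^{sp}}|u(\by)-u(\bx)|^p\,\rmd\by\,\rmd\bx
\end{equation*}
by $[u]_{\mathfrak{W}^{s,p}[\delta](\cD)}^p$. Split $\Phi_{\delta}$ into its two pieces. The $\psi_{\delta}$ piece is bounded above by $[u]_{\mathfrak{W}^{s,p}[\delta](\cD)}^p$ (up to a multiplicative constant) directly by Proposition~\ref{thm:EnergySpaceIndepOfKernel}. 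For the $(|\psi'|)_{\delta}$ piece, I use the pointwise bound $(|\psi'|)_{\delta}(\bx,\by)\le\|\psi'\|_{L^\infty}\,\mathds{1}_{B(\bx,\delta\eta(\bx))}(\by)(\delta\eta(\bx))^{-d}$, combined with $\eta\sim\sigma$ from \eqref{assump:Localization} and Theorem~\ref{thm:InvariantHorizon}, to bound it by the seminorm in $\mathfrak{W}^{s,p}[c\delta](\cD)$ for a suitable $c$, and hence by $C[u]_{\mathfrak{W}^{s,p}[\delta](\cD)}^p$. Summing the two contributions yields \eqref{eq:ConvEst:Deriv}.

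The main technical obstacle is the handling of the $(|\psi'|)_\delta$ term, since $|\psi'|$ need not inherit the smoothness or strict-support properties assumed in \eqref{Assump:Kernel} for Proposition~\ref{thm:EnergySpaceIndepOfKernel}; however, the proof of that proposition only uses upper and lower pointwise comparisons with indicator functions of balls, which $|\psi'|$ does satisfy (with $|\psi'|\le C\mathds{1}_{[-1,1]}$). This, together with the horizon-comparison Theorem~\ref{thm:InvariantHorizon}, is what allows the reduction to $[u]_{\mathfrak{W}^{s,p}[\delta](\cD)}^p$. The $L^p$-bound on $K_\delta u$ itself is immediate from item~3 of Proposition~\ref{continuity-of-Kdelta}, so the seminorm estimate together with $u\in L^p(\cD)$ gives $K_\delta u\in W^{1,p}(\cD;p-sp)$ as claimed.
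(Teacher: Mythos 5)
Your proposal is correct and follows essentially the same route as the paper's proof: differentiate under the integral using the zero-mean property of $\grad_{\bx}\psi_{\delta}$, apply H\"older's inequality, invoke the kernel-derivative bound \eqref{eq:KernelDerivativeEstimate}, and reduce to the seminorm via \Cref{thm:EnergySpaceIndepOfKernel} (the paper merely interchanges the order of the H\"older step and the pointwise kernel bound, working with the measure $|\grad_{\bx}\psi_{\delta}(\bx,\by)|\,\rmd\by$ and \eqref{eq:KernelIntegralDerivativeEstimate} with $\alpha=1$ in place of your $\Phi_{\delta}$ mass bound). Your explicit treatment of the $(|\psi'|)_{\delta}$ contribution via the indicator bound and \Cref{thm:InvariantHorizon} spells out a step the paper leaves implicit in its appeal to \Cref{thm:EnergySpaceIndepOfKernel}, and it is sound since $\kappa_0\delta<1/3$ for $\delta<\underline{\delta}_0$.
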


\begin{proof}
	Fix $u \in \mathfrak{W}^{s,p}[\delta](\cD)$.  
	Since $\int_{\cD} \psi_\delta(\bx,\by) \, \rmd \by = 1$ for all $\bx \in \cD$, its $\bx$-gradient is zero, and so we have
	\begin{equation*}
		\begin{split}
			\grad K_{\delta}u (\bx) &= \intdm{\cD}{\grad_{\bx} \psi_{\delta}(\bx,\by) u(\by)}{\by}  = \intdm{\cD}{\grad_{\bx} \psi_{\delta}(\bx,\by) (u(\by)-u(\bx))}{\by}\,.
		\end{split}
	\end{equation*}
	Therefore, by H\"older's inequality 
	\begin{equation*}
		\begin{split}
			&\Vnorm{\eta^{1-s} \grad K_{\delta} u }_{L^p(\cD)}^p \\
			&\leq \int_{\cD}  \left( \delta\eta(\bx)  \intdm{\cD}{ |\grad_{\bx} \psi_{\delta}(\bx,\bz)|}{\bz}
			\right)^{p-1}
			\intdm{\cD}{\frac{|\grad_{\bx} \psi_{\delta}(\bx,\by)|}{\delta^{p-sp}  (\delta\eta(\bx))^{sp-1}} |u(\by)-u(\bx)|^p}{\by} \rmd \bx. 
		\end{split}
	\end{equation*}
	We use \eqref{eq:KernelIntegralDerivativeEstimate} with $\alpha = 1$
	to get
	\begin{equation}\label{eq:ConvEst:Deriv:Pf1}
		\begin{split}
			\Vnorm{\eta^{1-s}\grad K_{\delta} u}_{L^p(\cD)}^p &\leq C \int_{\cD} \intdm{\cD}{\frac{|\grad_{\bx} \psi_{\delta}(\bx,\by)|}{\delta^{p-sp}  (\delta\eta(\bx))^{sp-1} } |u(\by)-u(\bx)|^p}{\by}  \, \rmd \bx\,.
		\end{split}
	\end{equation}
	Using the inequality in \eqref{eq:KernelDerivativeEstimate} we have
	\begin{equation*}
		\frac{|\grad_{\bx} \psi_{\delta}(\bx,\by)|}{ (\delta\eta(\bx))^{sp-1}  } \leq C \frac{ (|\psi'|)_{\delta}(\bx,\by) + \psi_{\delta}(\bx,\by) }{  (\delta\eta(\bx))^{sp} }\,.
	\end{equation*}
	Inequality \eqref{eq:ConvEst:Deriv} now follows after using the above inequality in \eqref{eq:ConvEst:Deriv:Pf1} and then applying \Cref{thm:EnergySpaceIndepOfKernel} and the subsequent remark.
\end{proof}

\begin{corollary}\label{rmk:contKfraktofrac}
	Let $p\in (1, \infty),$   $s\in (0,1]$,  $\delta\in (0, \underline{\delta}_0)$ and $\cD \subset \bbR^d$ be a bounded Lipschitz domain. Then we have the following. 
	
	\begin{enumerate}[\upshape 1)]
		\item There exists a constant $C>0$ depending only on $d$, $p$, $\psi$, $\kappa_0$ and $\kappa_1$ such that
		\begin{equation*}
			[ K_{\delta} u ]_{W^{1,p}(\cD;p-sp)}
			\leq
			C
			[u]_{W^{1,p}(\cD;p-sp)}\,, \qquad \forall u \in W^{1,p}(\cD;p-sp),
		\end{equation*}
		
		\item There exists a constant $C>0$ depending only on $d$, $p$, $\psi$, $\kappa_0$, $\kappa_1$ and the Lipschitz constant of $\cD$ such that
		\begin{equation}\label{eq:ConvEst:Deriv:s:Opt}
			\begin{split}
				[ K_{\delta} u ]_{W^{s,p}(\cD)}^p
				\leq
				C [u]_{\mathfrak{W}^{s,p}[\delta](\cD)}^p\,, \qquad \forall u \in \mathfrak{W}^{s,p}[\delta](\cD).
			\end{split}
		\end{equation}
		
		\item There exists a constant $C>0$ depending only on $d$, $p$, $\psi$, and $\kappa_1$ such that
		\begin{equation}\label{eq:ConvEst1:Wsp}
		\Vnorm{K_{\delta} u}_{W^{s,p}(\cD)} \leq C \Vnorm{u}_{W^{s,p}(\cD)} \qquad \forall u \in W^{s,p}(\cD).
		\end{equation}
	\end{enumerate}
\end{corollary}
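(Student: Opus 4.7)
My plan is to handle items 1 and 2 by a quick composition of results already established, and to prove item 3 by a direct change-of-variables argument. For item 1, I would chain \Cref{thm:Convolution:DerivativeEstimate}, which yields $[K_\delta u]_{W^{1,p}(\cD;p-sp)}^p \leq C\,[u]_{\mathfrak{W}^{s,p}[\delta](\cD)}^p$, with \Cref{lma:Embedding:Weighted:Pre}, which on the Lipschitz domain $\cD$ extends to every $u \in W^{1,p}(\cD;p-sp)$ and gives $[u]_{\mathfrak{W}^{s,p}[\delta](\cD)}^p \leq C(d,p)(1-\delta)^{-(p-sp+1)}[u]_{W^{1,p}(\cD;p-sp)}^p$; since $\delta \leq \underline{\delta}_0 < 1$, the prefactor is controlled independently of $\delta$. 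For item 2, I would apply \Cref{thm:WeightedEst} to $v := K_\delta u$ to get $[K_\delta u]_{W^{s,p}(\cD)}^p \leq C\,[K_\delta u]_{W^{1,p}(\cD;p-sp)}^p$, and then invoke \Cref{thm:Convolution:DerivativeEstimate} to bound the right-hand side by $C\,[u]_{\mathfrak{W}^{s,p}[\delta](\cD)}^p$.

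For item 3, a naive composition of item 2 with \eqref{frak-frac-estimate} would produce an unwanted factor $\delta^{p(s-1)}$, so the plan is to prove the seminorm estimate directly. Using \eqref{eq:conv-expression}, set $\bszeta_\bz(\bx) := \bx - \delta\eta(\bx)\bz$ for $\bz \in B(\mathbf{0},1)$; then
\begin{equation*}
K_\delta u(\bx) - K_\delta u(\by) = \int_{B(\mathbf{0},1)} \psi(|\bz|)\bigl[u(\bszeta_\bz(\bx)) - u(\bszeta_\bz(\by))\bigr]\,\rmd\bz,
\end{equation*}
and since $\psi(|\bz|)\,\rmd\bz$ is a probability measure on $B(\mathbf{0},1)$, Jensen's inequality yields
\begin{equation*}
|K_\delta u(\bx) - K_\delta u(\by)|^p \leq \int_{B(\mathbf{0},1)} \psi(|\bz|)\,|u(\bszeta_\bz(\bx)) - u(\bszeta_\bz(\by))|^p\,\rmd\bz.
\end{equation*}
Using $|\grad\eta| \leq \kappa_1$ together with $\delta\kappa_1 \leq 1/3$, I would verify the comparability $\tfrac{2}{3}|\bx-\by| \leq |\bszeta_\bz(\bx) - \bszeta_\bz(\by)| \leq \tfrac{4}{3}|\bx-\by|$ and the uniform Jacobian bound $\det(I - \delta\,\bz \otimes \grad\eta(\bx)) = 1 - \delta\,\bz\cdot\grad\eta(\bx) \in [\tfrac{2}{3}, \tfrac{4}{3}]$; the choice of $\underline{\delta}_0$ also guarantees that $\bszeta_\bz$ maps $\cD$ injectively into $\cD$. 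Fubini and the change of variables $(\bx, \by) \mapsto (\bszeta_\bz(\bx), \bszeta_\bz(\by))$ then yield
\begin{equation*}
[K_\delta u]_{W^{s,p}(\cD)}^p \leq C \int_{B(\mathbf{0},1)} \psi(|\bz|)\,[u]_{W^{s,p}(\cD)}^p\,\rmd\bz = C\,[u]_{W^{s,p}(\cD)}^p,
\end{equation*}
with $C$ depending only on $d, p, \psi, \kappa_1$. Combining with the $L^p$-continuity in part 3 of \Cref{continuity-of-Kdelta} yields \eqref{eq:ConvEst1:Wsp}.

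The principal obstacle is the change-of-variables step in item 3: injectivity of $\bszeta_\bz$ on $\cD$ and the uniform lower bound on its Jacobian must be verified. Both follow from the smallness condition built into $\underline{\delta}_0$ and closely parallel \cite[Lemma 3.2]{scott2024nonlocal}, already invoked in \Cref{lma:Embedding:Weighted:Pre}, so no genuinely new analytical tools are required.
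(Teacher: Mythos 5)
Your proposal is correct and follows essentially the same route as the paper: items 1) and 2) by composing \Cref{thm:Convolution:DerivativeEstimate} with \Cref{cor:Embedding:Weighted} and \Cref{thm:WeightedEst}, and item 3) by the same direct Jensen-plus-change-of-variables argument (via $\bszeta^{-\delta}_{\bz}$ and the bounds of \cite[Lemma~3.2]{scott2024nonlocal}) that the paper writes out, yielding the same $\delta$-uniform constant. The only minor point is that the extension of \Cref{lma:Embedding:Weighted:Pre} to all of $W^{1,p}(\cD;p-sp)$ is exactly \Cref{cor:Embedding:Weighted}, which rests on the density of smooth functions, as in the paper.
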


\begin{proof}
	Item 1) follows from \Cref{thm:Convolution:DerivativeEstimate} and \Cref{cor:Embedding:Weighted}.
	
	Item 2) follows from the previous item and the embedding $W^{1,p}(\cD;p-sp) \subset W^{s,p}(\cD)$ which is stated in \Cref{thm:WeightedEst}.
	
	Item 3) can be proved by interpolation, since $W^{s,p}$ is obtained by interpolating $L^{p}(\cD)$ and $W^{1,p}(\cD)$ and $K_\delta$ is a bounded linear operator on $L^{p}(\cD)$ and $W^{1,p}(\cD)$ by \Cref{continuity-of-Kdelta}. We may also use a direct proof which is straightforward, using a change of variables. 
	To be precise, we can show that for $1\leq p< \infty$
	\[
	\|K_{\delta}u\|_{W^{s,p}(\cD)} \leq C(d,p,\delta,s,\kappa_1) \|u\|_{W^{s,p}(\cD)},\quad\forall u\in W^{s,p}(\cD),
	\]
	where $C(d,p,\delta,s,\kappa_1)=\frac{(1+\kappa_1\delta)^{d/p+s}}{(1-\kappa_1\delta)^{2/p}}$, which can be made uniform in $\delta\in (0,\underline{\delta}_0)$ and $s\in (0,1]$ by taking $C = \frac{(1+\kappa_1\underline{\delta}_0)^{d/p+1}}{(1-\kappa_1\underline{\delta}_0)^{2/p}}$. 
	Indeed, first note that 
		\[
		K_\delta u(\bx) - K_\delta u(\by) = \int_{B(\mathbf{0}, 1)} \psi(\bz) (u(\bx - \delta \eta(\bx) \bz) - u(\by - \delta \eta(\by) \bz)) \rmd \bz.
		\]
		Using the change of variables $\bx \mapsto \bszeta^{-\delta}_{\bz}(\bx):=\bx - \delta \eta(\bx) \bz$ and $\by\mapsto \bszeta^{-\delta}_{\bz}(\by)=\by - \delta \eta(\by) \bz$ and inequalities from \cite[Lemma~3.2]{scott2024nonlocal}, namely that $\mathrm{det}(\nabla \bszeta^{-\delta}_{\bz}(\bx))\ge 1-\kappa_1 \delta$ and $|\bszeta^{-\delta}_{\bz}(\bx)- \bszeta^{-\delta}_{\bz}(\by)|\le (1+\kappa_1\delta)|\bx-\by|$, one obtains
		\begin{align*}
			[K_{\delta}u]_{W^{s,p}(\cD)} ^p		&=\kappa_{d,s,p}\int_{\cD}\int_{\cD} \frac{\left|K_\delta u(\bx) - K_\delta u(\by)\right|^p}{|\bx - \by|^{d+sp}} \rmd \bx \rmd \by\\
			&\le \kappa_{d,s,p}\int_{B(\mathbf{0}, 1)} \psi(\bz) \int_{\cD}\int_{\cD}\frac{\left|u(\bx - \delta \eta(\bx) \bz) - u(\by - \delta \eta(\by) \bz)\right|^p}{|\bx - \by|^{d+sp}} \rmd \bx \rmd \by \rmd \bz\\
			&\le \kappa_{d,s,p}\frac{(1+\kappa_1\delta)^{d+sp}}{(1-\kappa_1\delta)^2} \int_{B(\mathbf{0}, 1)} \hspace{-.2cm}  \psi(\bz) \int_{\cD}\int_{\cD}\hspace{-.1cm} \frac{|u(\bszeta^{-\delta}_{\bz}(\bx)) - u(\bszeta^{-\delta}_{\bz}(\by))|^p}{|\bszeta^{-\delta}_{\bz}(\bx) - \bszeta^{-\delta}_{\bz}(\by)|^{d+sp}}\\
            &\quad\quad\quad \times \mathrm{det}(\nabla \bszeta^{-\delta}_{\bz}(\bx))\mathrm{det}(\nabla \bszeta^{-\delta}_{\bz}(\by))\rmd \bx \rmd \by \rmd \bz\\
			&\le \kappa_{d,s,p}\frac{(1+\kappa_1\delta)^{d+sp}}{(1-\kappa_1\delta)^2} \int_{B(\mathbf{0}, 1)} \hspace{-.2cm} \psi(\bz) \int_{\cD}\int_{\cD}\hspace{-.1cm} \frac{|u(\bx) - u(\by)|^p}{|\bx - \by|^{d+sp}} \rmd \bx \rmd \by \rmd \bz\\
			&\le \frac{(1+\kappa_1\delta)^{d+sp}}{(1-\kappa_1\delta)^2} [u]_{W^{s,p}(\cD)} ^p.
		\end{align*}
\end{proof}

\subsection{Behavior of \texorpdfstring{$K_\delta$}{Kdelta} with respect to \texorpdfstring{$\delta$}{delta}}
We begin with a summary of results that are proved in \cite{scott2024nonlocal} in connection with the localizing effect of $K_\delta$  as $\delta \to0$ in classical spaces.  
\begin{proposition}\label{prop:summary-conv-properties}
	Let $p \in (1,\infty)$. Then the following hold:
	\begin{enumerate}[\upshape 1)]
		\item If $u \in C^0(\overline{\cD})$, then $K_{\delta} u \to u$ uniformly on $\overline{\cD}$ as $\delta \to 0$. 
		
		\item If $u\in L^{p}(\cD),$ then $K_\delta u \to u$ in $L^{p}(\cD)$ as $\delta \to 0$.
		
		\item If $u\in W^{1,p}(\cD),$ then $K_\delta u \to u$ in  $W^{1,p}(\cD)$ as $\delta \to 0$.
	\end{enumerate}
\end{proposition}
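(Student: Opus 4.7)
The plan is to prove each item in sequence, each building on the previous and on the uniform boundedness estimates from \Cref{continuity-of-Kdelta}. The central tool throughout is the representation \eqref{eq:conv-expression},
\[
K_\delta u(\bx) = \int_{B(\mathbf{0},1)} \psi(|\bz|)\, u(\bx - \delta \eta(\bx) \bz)\, \rmd\bz, \qquad \bx \in \cD,
\]
together with $\int_{B(\mathbf{0},1)} \psi(|\bz|)\, \rmd\bz = 1$, which permits writing $K_\delta u(\bx) - u(\bx)$ as an integral of a difference of translates.

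For item 1), since $\eta$ is bounded on $\overline{\cD}$ and $|\bz| \leq 1$, the displacement satisfies $|\delta \eta(\bx)\bz| \leq \delta \Vnorm{\eta}_{L^\infty(\overline{\cD})}$ uniformly in $\bx$ and $\bz$. Uniform continuity of $u$ on the compact set $\overline{\cD}$ then yields $|K_\delta u(\bx) - u(\bx)| = o(1)$ uniformly in $\bx \in \cD$ as $\delta \to 0$; the extension $K_\delta u(\bx) = u(\bx)$ for $\bx \in \partial \cD$ handles the boundary.

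For item 2), I would run the standard three-$\varepsilon$ density argument. Since $\cD$ is bounded, $C(\overline{\cD}) \subset L^p(\cD)$ is dense. Given $u \in L^p(\cD)$ and $\varepsilon > 0$, choose $v \in C(\overline{\cD})$ with $\Vnorm{u-v}_{L^p(\cD)} < \varepsilon$ and decompose
\[
\Vnorm{K_\delta u - u}_{L^p(\cD)} \leq \Vnorm{K_\delta(u-v)}_{L^p(\cD)} + \Vnorm{K_\delta v - v}_{L^p(\cD)} + \Vnorm{v - u}_{L^p(\cD)}.
\]
The first and third terms are bounded by $(C_0+1)\varepsilon$ via item 3 of \Cref{continuity-of-Kdelta}; the middle term vanishes by item 1) combined with $|\cD| < \infty$.

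For item 3), the main obstacle is that $\nabla K_\delta u \neq K_\delta \nabla u$, since the mollifier $\psi_\delta(\bx,\by)$ depends on $\bx$ through $\eta(\bx)$. I would use the Meyers--Serrin density of $C^\infty(\cD) \cap W^{1,p}(\cD)$ in $W^{1,p}(\cD)$ together with the uniform $W^{1,p}$-bound from item 4 of \Cref{continuity-of-Kdelta} to reduce to $u \in C^\infty(\cD) \cap W^{1,p}(\cD)$. For such $u$, differentiating \eqref{eq:conv-expression} by the chain rule gives
\[
\partial_i K_\delta u(\bx) = K_\delta(\partial_i u)(\bx) - \delta\, \partial_i \eta(\bx) \int_{B(\mathbf{0},1)} \psi(|\bz|)\, (\bz \cdot \nabla u)(\bx - \delta \eta(\bx) \bz)\, \rmd\bz.
\]
By item 2) applied to $\partial_i u \in L^p(\cD)$, the principal term $K_\delta(\partial_i u)$ converges to $\partial_i u$ in $L^p(\cD)$. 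For the correction term I would use $|\bz| \leq 1$ and $|\partial_i \eta| \leq \kappa_1$ from \eqref{assump:Localization} to dominate it pointwise by $\delta \kappa_1 \cdot K_\delta(|\nabla u|)(\bx)$, whose $L^p$ norm is controlled by $C_0 \Vnorm{\nabla u}_{L^p(\cD)}$ via item 3 of \Cref{continuity-of-Kdelta}. Hence the correction is $O(\delta)$ in $L^p(\cD)$ and vanishes as $\delta \to 0$. Combined with item 2) applied to $u$ itself, this gives $K_\delta u \to u$ in $W^{1,p}(\cD)$.
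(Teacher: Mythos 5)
Your argument is correct, but note that the paper itself does not prove this proposition: it is stated as a summary of results established in \cite{scott2024nonlocal}, so there is no in-paper proof to match. Your reconstruction is a sound self-contained alternative. Items 1) and 2) are the standard modulus-of-continuity and three-$\varepsilon$ density arguments, made legitimate here by the representation \eqref{eq:conv-expression} (valid because $\delta\eta(\bx)\le\delta\kappa_0\sigma(\bx)<\sigma(\bx)$ keeps $\bx-\delta\eta(\bx)\bz$ inside $\cD$) and by the uniform-in-$\delta$ $L^p$ bound of \Cref{continuity-of-Kdelta}. The only genuinely nonstandard point is item 3), where $\nabla K_\delta u\neq K_\delta\nabla u$, and your identity
\[
\partial_i K_\delta u(\bx)=K_\delta(\partial_i u)(\bx)-\delta\,\partial_i\eta(\bx)\int_{B(\mathbf{0},1)}\psi(|\bz|)\,(\bz\cdot\nabla u)(\bx-\delta\eta(\bx)\bz)\,\rmd\bz
\]
is the right structural observation: the correction is pointwise dominated by $\delta\kappa_1\,K_\delta(|\nabla u|)(\bx)$ thanks to $|\nabla\eta|\le\kappa_1$ from \eqref{assump:Localization}, hence $O(\delta)$ in $L^p$, while the principal term converges by item 2). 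Two small points you should make explicit if writing this up: the differentiation under the integral sign is justified because for $\bx$ in a compact subset of $\cD$ the points $\bx-\delta\eta(\bx)\bz$ remain in a slightly larger compact subset of $\cD$ (alternatively, invoke \Cref{continuity-of-Kdelta} item 1 for smoothness of $K_\delta u$); and the reduction to smooth $u$ needs the uniform-in-$\delta$ operator bound on $W^{1,p}(\cD)$, which is exactly items 3 and 4 of \Cref{continuity-of-Kdelta} combined, so the density step closes. With these remarks your proof is complete and independent of the cited reference.
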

The same localizing effect can be established in the fractional and weighted Sobolev spaces with respect to their respective norms, as demonstrated below. 

\begin{proposition}
\label{prop:conv-convergence-Sobolev-spaces}
	Fix $p\in (1, \infty)$ and  $s\in (0, 1].$ The following hold:
\begin{equation}\label{eq:ConvergenceOfConv:H1}
\lim\limits_{\delta \to 0} \Vnorm{K_{\delta} u - u}_{W^{s,p}(\cD)} = 0, \quad\forall\,  u \in W^{s,p}(\cD).
	\end{equation}
\begin{equation}\label{eq:ConvergenceOfConv:H1Weighted}
\lim\limits_{\delta \to 0} \Vnorm{K_{\delta} u - u}_{W^{1,p}(\cD;p-sp)} = 0,\quad\forall\,  u \in W^{1,p}(\cD;p-sp). 
	\end{equation}
\end{proposition}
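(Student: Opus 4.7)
The plan is to prove both convergences by a standard three-epsilon argument, combining (i) the uniform-in-$\delta$ continuity of $K_\delta$ established in Corollary~\ref{rmk:contKfraktofrac}, (ii) density of smooth functions in each space, and (iii) direct convergence $K_\delta v \to v$ for smooth $v$, which reduces to classical cases already handled by Proposition~\ref{prop:summary-conv-properties}. The case $s=1$ in both statements is contained directly in Proposition~\ref{prop:summary-conv-properties}(3), so the focus is on $s \in (0,1)$.

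For \eqref{eq:ConvergenceOfConv:H1}, given $u \in W^{s,p}(\cD)$ and $\veps > 0$, I would approximate $u$ in the $W^{s,p}(\cD)$-norm by some $v \in C^\infty(\overline{\cD})$ (this density is standard for bounded Lipschitz $\cD$). Splitting
\[
\Vnorm{K_\delta u - u}_{W^{s,p}(\cD)} \leq \Vnorm{K_\delta(u-v)}_{W^{s,p}(\cD)} + \Vnorm{K_\delta v - v}_{W^{s,p}(\cD)} + \Vnorm{v - u}_{W^{s,p}(\cD)},
\]
the first and third terms are controlled by the uniform bound \eqref{eq:ConvEst1:Wsp} together with the choice of $v$. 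For the middle term, since $v \in W^{1,p}(\cD)$, Proposition~\ref{prop:summary-conv-properties}(3) gives $K_\delta v \to v$ in $W^{1,p}(\cD)$; the classical embedding $W^{1,p}(\cD) \hookrightarrow W^{s,p}(\cD)$ for bounded Lipschitz domains then upgrades this to $W^{s,p}$-convergence.

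For \eqref{eq:ConvergenceOfConv:H1Weighted} the structure is identical, now using the uniform bound from item~1) of Corollary~\ref{rmk:contKfraktofrac} and the density of $C^\infty(\overline{\cD})$ in $W^{1,p}(\cD;p-sp)$ provided by Theorem~\ref{thm:FxnSpProp:Weighted}. For smooth $v \in C^\infty(\overline{\cD})$, differentiating \eqref{eq:conv-expression} under the integral sign gives
\[
\nabla K_\delta v(\bx) = \int_{B(\mathbf{0},1)} \psi(|\bz|) \bigl( I - \delta\, \bz \otimes \nabla \eta(\bx) \bigr) \nabla v(\bx - \delta \eta(\bx) \bz) \, \rmd\bz,
\]
which converges to $\nabla v(\bx)$ uniformly on $\overline{\cD}$ as $\delta \to 0$. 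Since $p - sp > 0$ for $s \in (0,1)$, the weight $\sigma(\bx)^{p-sp}$ is bounded on the bounded domain $\cD$, so this uniform convergence, combined with the $L^p$-convergence $K_\delta v \to v$ from Proposition~\ref{prop:summary-conv-properties}(2), yields $K_\delta v \to v$ in $W^{1,p}(\cD;p-sp)$. The three-epsilon argument then closes.

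The principal subtlety is the density of $C^\infty(\overline{\cD})$ in the nonstandard weighted space $W^{1,p}(\cD;p-sp)$, which is deferred to Theorem~\ref{thm:FxnSpProp:Weighted}; the uniform operator bounds, the most technical ingredient, are already in hand via Corollary~\ref{rmk:contKfraktofrac}, so the present argument is largely a soft assembly of existing pieces.
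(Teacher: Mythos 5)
Your proposal is correct and follows essentially the same route as the paper: reduce, via density of $C^\infty(\overline{\cD})$ and the uniform-in-$\delta$ boundedness of $K_\delta$ from \Cref{rmk:contKfraktofrac}, to showing convergence for smooth functions, and for \eqref{eq:ConvergenceOfConv:H1Weighted} your computation of $\nabla K_\delta v$ and the Taylor-expansion estimate are exactly the paper's. The only (harmless) variation is in the smooth-function step of \eqref{eq:ConvergenceOfConv:H1}: the paper estimates the Gagliardo seminorm of $K_\delta v - v$ directly by a Taylor expansion, getting an $O(\delta)$ rate, whereas you invoke the $W^{1,p}(\cD)$-convergence from \Cref{prop:summary-conv-properties} together with the continuous embedding $W^{1,p}(\cD)\hookrightarrow W^{s,p}(\cD)$ for bounded Lipschitz domains; both arguments are valid.
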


\begin{proof} As the case $s=1$ has been covered by \Cref{prop:summary-conv-properties} item 3, we shall focus on $s\in (0, 1)$. We start by proving \eqref{eq:ConvergenceOfConv:H1}.
	Thanks to the density of smooth functions in $W^{s,p}(\cD)$ and the boundedness of $K_\delta$ on $W^{s,p}(\cD)$ proved in \Cref{rmk:contKfraktofrac}, it suffices to show that for $u\in C^\infty(\overline{\cD})$, 
	\[
	|K_\delta u - u|_{W^{s,p}(\cD)}\to 0, \quad \delta\to 0.
	\]
	By \eqref{eq:conv-expression}, for $\bx, \by\in \cD$, 
	\begin{equation*}
		\begin{aligned}
			&\ K_\delta u(\bx) - u(\bx) - (K_\delta u(\by) - u(\by)) \\ 
			=& \int_{B(\mathbf{0}, 1)} \psi(|\bz|)[u(\bx-\delta\eta(\bx)\bz)- u(\bx) - (u(\by-\delta\eta(\by)\bz)- u(\by))]\rmd\bz 
		\end{aligned}
	\end{equation*}
	Using Taylor expansion we have
	\begin{gather*}
		u(\bx)-u(\by) = \int_0^1 \nabla u(\by+t(\bx-\by)) \cdot (\bx-\by)\rmd t,\\
		u(\bx-\delta\eta(\bx)\bz) - u(\by - \delta \eta(\by) \bz) = \int_0^1 \nabla u(\by - \delta \eta(\by) \bz + t(\bx-\by)) \cdot (\bx - \by) \rmd t.
	\end{gather*}
	Thus, using the fact that $\eta(\bx) \le C(\cD, \kappa_0, \kappa_1)$ uniformly in $\bx\in \cD$, one derives 
	\begin{equation*}
		\begin{aligned}
			&\ \left|u(\bx-\delta\eta(\bx)\bz)- u(\bx) - (u(\by-\delta\eta(\by)\bz)- u(\by))\right| \\
			= & \left| \int_0^1 [\nabla u(\by+t(\bx - \by) -\delta \eta(\by)\bz) - \nabla u(\by + t(\bx - \by))] \cdot (\bx - \by)\rmd t\right|\\
			\le &\int_0^1 \int_0^1 \left| \nabla^2 u(\by + t(\bx -\by)-t'\eta(\by)\bz)\delta \eta(\by)\bz \right| \rmd t' |\bx-\by|\rmd t\\
			\le & C(\|u\|_{C^2(\cD)}, \kappa_0, \kappa_1, \cD)\delta |\bx - \by|.
		\end{aligned}
	\end{equation*}
	Hence by Jensens' inequality, as $\delta\to 0$,
	\begin{align*}
		& [K_\delta u - u]_{W^{s,p}(\cD)}
		\le  C(\|u\|_{C^2(\cD)}, \kappa_0, \kappa_1, \cD)\delta \left(\int_{\cD}\int_{\cD}\frac{1}{|\bx-\by|^{d+sp-p}}\rmd\by \rmd\bx\right)^{1/p}\to 0.
	\end{align*}
	Thus \eqref{eq:ConvergenceOfConv:H1} is established.
	
	To prove \eqref{eq:ConvergenceOfConv:H1Weighted}, thanks to the density result \Cref{thm:FxnSpProp:Weighted} and the boundedness of $K_\delta$ on $W^{s,p}(\cD; p-sp)$ proved in \Cref{rmk:contKfraktofrac}, it suffices to show for $u\in C^\infty(\overline{\cD})$, \[\int_{\cD} \left|\nabla K_\delta u(\bx)- \nabla u(\bx)\right|^p\sigma(\bx)^{p-sp}\rmd\bx \to 0 \quad \text{ as } \quad \delta\to 0.\]
	Denoting $\bszeta^{-\delta}_{\bz}(\bx):=\bx-\delta\eta(\bx)\bz$, 
	\[
	\nabla K_\delta u(\bx)=\int_{B(\mathbf{0}, 1)} \psi(\bz)\nabla_{\bx}[\bszeta^{-\delta}_{\bz}](\bx) \nabla u(\bszeta^{-\delta}_{\bz}(\bx))\rmd\bz.
	\]
	Let $\bA:=\nabla [\bszeta^{-\delta}_{\bz}](\bx)=\bI-\delta \bz\otimes \nabla\eta(\bx)$, where $\bI$ denotes the identity matrix and ``$\otimes$'' denotes the tensor product. Using that $\eta(\bx), |\nabla\eta(\bx)|\le C(\cD, \kappa_0, \kappa_1)$ for all $\bx\in \cD$, together with the Taylor expansion \[
	\nabla u(\bx-\delta\eta(\bx)\bz) = \nabla u(\bx)-\int_0^1 \nabla^2 u(\bx-t\delta \eta(\bx)\bz)\delta\eta(\bx)\bz \, \rmd t,
	\]  we obtain that
	\begin{align*}
		\left|\bA \nabla u(\bszeta^{-\delta}_{\bz}(\bx)) -\nabla u(\bx)\right|
		&= \left|\bA \left(\nabla u(\bszeta^{-\delta}_{\bz}(\bx)) - \nabla u(\bx)\right)+(\bA-\bI)\nabla u(\bx)\right|\\
		&\le C\int_0^1\left|\nabla^2 u(\bx-t\delta \eta(\bx)\bz)\delta\eta(\bx)\bz\right|\rmd t+C\delta|\bz||\nabla u(\bx)|\\
		&\le C(\|u\|_{C^2(\cD)}, \kappa_0, \kappa_1, \cD)\delta.
	\end{align*}
	Then by Jensen's inequality,
	\[
	\left|\nabla K_\delta u(\bx)- \nabla u(\bx)\right|^p\le
	\int_{B(\mathbf{0},1)}\psi(\bz)\left|\bA \nabla u(\bszeta^{-\delta}_{\bz}(\bx)) -\nabla u(\bx)\right|^p\rmd\bz\le C\delta^p.
	\]
	Note that $\sigma(\bx)$ is bounded since $|\cD| < \infty$, so we have 
	\[\int_{\cD} \left|\nabla K_\delta u(\bx)- \nabla u(\bx)\right|^p\sigma(\bx)^{p-sp} \rmd\bx \to 0 \quad \text{ as } \quad \delta\to 0.
	\]
	
\end{proof}

In the following proposition, we prove a rate for the convergence $K_\delta u\to u$ in $L^{p}(\cD)$ as $\delta\to 0$ for $u\in \mathfrak{W}^{s,p}[\delta](\cD)$.
\begin{proposition}\label{thm:diffuKu:Weighted}
	Let $p \in (1,\infty)$ and $s \in (0,1]$ be given, and let $\delta < \underline{\delta}_0$. Then there exists a constant $C$ depending only on $d$,  $p$, $\psi$, and $\kappa_0$ such that 
	\begin{equation}\label{eq:Kdeltaerror1:Weighted}
		\Vnorm{ (u - K_{\delta} u) \sigma^{-s} }_{L^p(\cD)} \leq C \delta [u]_{\mathfrak{W}^{s,p}[\delta](\cD)} \,, \qquad \forall u \in \mathfrak{W}^{s,p}[\delta](\cD).
	\end{equation}
	Consequently, there exists a constant $C$ depending only on $d$,  $p$, $\psi$, and $\kappa_0$ such that
\begin{equation}\label{eq:KdeltaError}
		\Vnorm{u - K_{\delta} u }_{L^p(\cD)} \leq 
		C \delta \diam(\cD)^s [u]_{\mathfrak{W}^{s,p}[\delta](\cD)}\,, \quad\forall u \in \mathfrak{W}^{s,p}[\delta](\cD).
	\end{equation}
\end{proposition}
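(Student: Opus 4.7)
The plan is to exploit the fact that $\psi_\delta(\bx,\cdot)$ is a probability density on $\cD$ to bound $u(\bx)-K_\delta u(\bx)$ pointwise by a quantity that integrates against the seminorm of $\mathfrak{W}^{s,p}[\delta](\cD)$.

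First, since $\int_\cD \psi_\delta(\bx,\by)\,\rmd\by = 1$ for all $\bx\in\cD$ and $\delta<\underline{\delta}_0$, I would write
\[
u(\bx) - K_{\delta}u(\bx) = \int_\cD \psi_\delta(\bx,\by)\bigl(u(\bx)-u(\by)\bigr)\,\rmd\by\,,
\]
and then apply Jensen's inequality with respect to the probability measure $\psi_\delta(\bx,\by)\,\rmd\by$ to obtain
\[
|u(\bx) - K_{\delta}u(\bx)|^p \le \int_\cD \psi_\delta(\bx,\by)\,|u(\bx)-u(\by)|^p\,\rmd\by\,.
\]

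Next, I would multiply both sides by $\sigma(\bx)^{-sp}$, integrate over $\cD$, and use Fubini together with the comparability $\sigma(\bx)\le \kappa_0\,\eta(\bx)$ from \eqref{assump:Localization}. This gives
\[
\int_\cD \sigma(\bx)^{-sp}|u(\bx)-K_\delta u(\bx)|^p\,\rmd\bx \le \kappa_0^{sp}\delta^p \int_\cD \int_\cD \psi_\delta(\bx,\by)\,\frac{|u(\bx)-u(\by)|^p}{\delta^p\,\eta(\bx)^{sp}}\,\rmd\by\,\rmd\bx\,.
\]
By \Cref{thm:EnergySpaceIndepOfKernel}, the right-hand side is bounded by $C\,\delta^p\,[u]_{\mathfrak{W}^{s,p}[\delta](\cD)}^p$ for a constant depending only on $d,p,\psi,\kappa_0$, yielding \eqref{eq:Kdeltaerror1:Weighted} after taking $p$-th roots.

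Finally, the unweighted estimate \eqref{eq:KdeltaError} follows immediately by noting that $\sigma(\bx)\le \diam(\cD)$, so that
\[
|u(\bx)-K_\delta u(\bx)|^p = \sigma(\bx)^{sp}\cdot \sigma(\bx)^{-sp}|u(\bx)-K_\delta u(\bx)|^p \le \diam(\cD)^{sp}\,\sigma(\bx)^{-sp}|u(\bx)-K_\delta u(\bx)|^p\,,
\]
and then invoking \eqref{eq:Kdeltaerror1:Weighted}. There is no real obstacle here; the only subtlety is the correct pairing of the weight $\sigma^{-sp}$ in the conclusion with the $\eta^{-sp}$ appearing in the equivalent seminorm from \Cref{thm:EnergySpaceIndepOfKernel}, which is handled cleanly by property i) of \eqref{assump:Localization}.
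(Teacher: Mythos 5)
Your argument is correct and matches the paper's proof essentially step for step: Jensen's (equivalently H\"older's) inequality with the normalized kernel $\psi_\delta(\bx,\cdot)$, the comparability of $\sigma$ and $\eta$ to pass to the weighted seminorm with $\eta$, the equivalence of seminorms from \Cref{thm:EnergySpaceIndepOfKernel}, and finally $\sigma(\bx)\le\diam(\cD)$ for the unweighted bound. One small nit: the inequality you actually need is $\eta(\bx)\le\kappa_0\,\sigma(\bx)$ (so that $\sigma(\bx)^{-sp}\le\kappa_0^{sp}\,\eta(\bx)^{-sp}$), not $\sigma(\bx)\le\kappa_0\,\eta(\bx)$, though both are contained in property i) of \eqref{assump:Localization}, so this is immaterial.
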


\begin{proof}
	We have by H\"older's inequality
	\begin{equation*}
		\Vnorm{(K_\delta u - u) \sigma^{-s} }_{L^p(\cD)}^p
			\leq \int_{\cD} \frac{1}{\sigma(\bx)^{sp}} \int_{\cD} \psi_\delta(\bx,\by) |u(\bx)-u(\by)|^p \, \rmd \by \, \rmd \bx.
	\end{equation*}
    Then by the properties of $\eta$ and \Cref{thm:EnergySpaceIndepOfKernel}
	\[
		\begin{aligned}
		\Vnorm{(K_\delta u - u) \sigma^{-s} }_{L^p(\cD)}^p &\leq \delta^{p} \kappa_0^{sp}\int_{\cD} \frac{1}{\delta^p \eta(\bx)^{sp}} \int_{\cD} \psi_\delta(\bx,\by) |u(\bx)-u(\by)|^p \, \rmd \by \, \rmd \bx \\
			&\leq C \delta^{p}[u]_{\mathfrak{W}^{s,p}[\delta](\cD)}^p\,.
		\end{aligned}
	\]
    This concludes the proof.
\end{proof}

\subsection{Behavior of \texorpdfstring{$K_\veps$}{Kepsilon} with respect to \texorpdfstring
{$\delta$}{delta}}

We close this section with a discussion of the boundary-localized convolution $K_\veps$ with bulk horizon parameter $\veps$ decoupled from the horizon parameter $\delta$ defining the nonlocal space $\mathfrak{W}^{s,p}[\delta](\cD)$.
The following technical lemma 
will be used in the proof of the density of smooth functions.

\begin{lemma} \label{lem:with-phiepsilon}
	Let $p \in (1,\infty)$, $s \in (0,1]$, and $\delta \in (0,\underline{\delta}_0)$. Let $\varrho \in [0,\diam(\cD))$. Then there exist 
	continuous functions $\theta : [0,\underline{\delta}_0) \to (0,1]$ and $\phi : [0,\underline{\delta}_0) \to [1,\infty)$ with $\theta(0) = \phi(0) = 1$ determined only from $d$, $p$, $s$, $\kappa_0$ and $\kappa_1$ such that the following holds: 
	For all $\veps \in (0,\underline{\delta}_0)$ and $u \in \mathfrak{W}^{s,p}[\delta](\cD)$,
\begin{equation}\label{eq:Comp:ConvolutionEstimate1}
		\begin{split}
			\int_{\cD \cap \{ \sigma(\bx) < \varrho \} } & \int_{B(\bx, \theta(\veps)\delta \sigma(\bx))}  \frac{ |K_\veps u(\bx) - K_\veps u(\by)|^p }{ (\theta(\veps)\delta)^{d+p} \sigma(\bx)^{d+sp} } \, \rmd \by \, \rmd \bx \\
			&\leq \phi(\veps) \int_{\cD \cap \{ \sigma(\bx) < \phi(\veps) \varrho \} } \int_{B(\bx, \delta \sigma(\bx))} \frac{ |u(\bx) - u(\by)|^p }{ \delta^{d+p} \sigma(\bx)^{d+sp} } \, \rmd \by \, \rmd \bx,
		\end{split}
	\end{equation}
	and
\begin{equation}\label{eq:Comp:ConvolutionEstimate2}
		\begin{split}
			\int_{\cD \cap \{ \sigma(\bx) > \varrho \} }  &\int_{B(\bx, \theta(\veps)\delta \sigma(\bx))} \frac{ |K_\veps u(\bx) - K_\veps u(\by)|^p }{ (\theta(\veps)\delta)^{d+p} \sigma(\bx)^{d+sp} } \, \rmd \by \, \rmd \bx \\
			&\leq \phi(\veps) \int_{\cD \cap \{ \sigma(\bx) > \frac{ \varrho}{\phi(\veps)} \} } \int_{B(\bx, \delta \sigma(\bx))} \frac{ |u(\bx) - u(\by)|^p }{ \delta^{d+p} \sigma(\bx)^{d+sp} } \, \rmd \by \, \rmd \bx.
		\end{split}
	\end{equation}
\end{lemma}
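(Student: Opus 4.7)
\bigskip
\noindent\textbf{Proof plan.} The strategy is to express $K_\veps u$ via the substitution formula \eqref{eq:conv-expression}, bring the difference $K_\veps u(\bx)-K_\veps u(\by)$ inside a $\bz$-integral, and then reduce to an estimate for $u$ by changing variables through the near-identity map
\[
\bszeta^{-\veps}_{\bz}(\bx) := \bx-\veps\eta(\bx)\bz, \qquad \bz \in B(\mathbf 0,1),
\]
for which \cite[Lemma~3.2]{scott2024nonlocal} gives invertibility on $\cD$ with $\bszeta^{-\veps}_{\bz}(\cD)\subset\cD$, $\det \nabla\bszeta^{-\veps}_{\bz}(\bx)\ge 1-\kappa_1\veps$, and the Lipschitz bound $|\bszeta^{-\veps}_{\bz}(\bx)-\bszeta^{-\veps}_{\bz}(\by)|\le(1+\kappa_1\veps)|\bx-\by|$.

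First I would write, for each $\bz\in B(\mathbf 0,1)$,
\[
K_\veps u(\bx)-K_\veps u(\by)=\int_{B(\mathbf 0,1)}\psi(|\bz|)\bigl[u(\bszeta^{-\veps}_{\bz}(\bx))-u(\bszeta^{-\veps}_{\bz}(\by))\bigr]\,\rmd\bz,
\]
and apply Jensen's inequality with the probability measure $\psi(|\bz|)\,\rmd\bz$ to get $|K_\veps u(\bx)-K_\veps u(\by)|^p$ bounded by the $\bz$-average of $|u(\bszeta^{-\veps}_{\bz}(\bx))-u(\bszeta^{-\veps}_{\bz}(\by))|^p$. Then Tonelli moves the $\bz$-integral outside and it suffices to prove the estimate for fixed $\bz$ (with a constant uniform in $\bz$), since $\int\psi = 1$.

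The next step is the change of variables $\bx\mapsto\bx':=\bszeta^{-\veps}_{\bz}(\bx)$ and $\by\mapsto\by':=\bszeta^{-\veps}_{\bz}(\by)$. I need three comparisons, all of which follow from the Lipschitz property of $\eta$ and assumption \eqref{assump:Localization}: for $\veps<\underline{\delta}_0$,
\[
\tfrac{\sigma(\bx')}{1+\veps\kappa_0}\le\sigma(\bx)\le\tfrac{\sigma(\bx')}{1-\veps\kappa_0}, \qquad |\by'-\bx'|\le(1+\veps\kappa_1)|\by-\bx|, \qquad \rmd\bx\le\tfrac{\rmd\bx'}{1-\veps\kappa_1},
\]
and similarly for $\by$. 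Combining these, the constraint $|\by-\bx|\le\theta(\veps)\delta\sigma(\bx)$ is promoted to $|\by'-\bx'|\le\delta\sigma(\bx')$ precisely when
\[
\theta(\veps)\le \frac{1-\veps\kappa_0}{1+\veps\kappa_1},
\]
so I take $\theta(\veps)$ equal to this ratio, which is continuous, $\le 1$, and satisfies $\theta(0)=1$. The weight $\sigma(\bx)^{-(d+sp)}$ gains a factor of $(1+\veps\kappa_0)^{d+sp}$, the Jacobians produce $(1-\veps\kappa_1)^{-2}$, and the mismatch between $(\theta\delta)^{d+p}$ and $\delta^{d+p}$ produces $\theta(\veps)^{-(d+p)}$. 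For the outer region constraint, the near-boundary strip $\{\sigma(\bx)<\varrho\}$ maps into $\{\sigma(\bx')<(1+\veps\kappa_0)\varrho\}$, and the interior region $\{\sigma(\bx)>\varrho\}$ maps into $\{\sigma(\bx')>(1-\veps\kappa_0)\varrho\}$. Defining
\[
\phi(\veps):=\max\!\left\{\,1+\veps\kappa_0,\;\frac{1}{1-\veps\kappa_0},\;\frac{(1+\veps\kappa_0)^{d+sp}}{(1-\veps\kappa_1)^2\,\theta(\veps)^{d+p}}\right\}
\]
absorbs every factor into a single continuous function with $\phi(0)=1$, and both \eqref{eq:Comp:ConvolutionEstimate1} and \eqref{eq:Comp:ConvolutionEstimate2} follow simultaneously.

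The only subtle point is ensuring that the transformed domain $\bszeta^{-\veps}_{\bz}(\cD)$ genuinely sits inside $\cD$ and that the pointwise comparisons above are valid uniformly in $\bz\in B(\mathbf 0,1)$; both are already guaranteed by the bound $\veps<\underline{\delta}_0$ in \eqref{bound-for-delta} together with \cite[Lemma~3.2]{scott2024nonlocal}. Everything else is bookkeeping of multiplicative constants, which is routine once $\theta$ and $\phi$ are fixed.
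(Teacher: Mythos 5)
Your proposal is correct and follows essentially the same route as the paper's proof: Jensen's inequality with the probability measure $\psi(|\bz|)\,\rmd\bz$, then the change of variables through the boundary-localized dilation map of \cite[Lemma~3.2]{scott2024nonlocal}, with the same choice $\theta(\veps)=\frac{1-\kappa_0\veps}{1+\kappa_1\veps}$ and a $\phi(\veps)$ that matches the paper's $\max\{\tilde\phi(\veps),\tfrac{1+\kappa_0\veps}{1-\kappa_0\veps}\}$ up to harmless adjustments (your use of $\bszeta^{-\veps}_{\bz}$ instead of $\bszeta^{\veps}_{\bz}$ is immaterial since $\psi$ is radial and the ball is symmetric). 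The bookkeeping of the weight, Jacobian, and domain-inclusion factors is exactly as in the paper, which likewise treats \eqref{eq:Comp:ConvolutionEstimate2} by the same argument.
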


\begin{proof}
	We write the proof of \eqref{eq:Comp:ConvolutionEstimate1}; the proof of \eqref{eq:Comp:ConvolutionEstimate2} is similar.
	Set $\theta(\veps) := \frac{1-\kappa_0 \veps}{1+ \kappa_1 \veps}$. For $\veps \in (0,\underline{\delta}_0)$ and $\bz \in B(\mathbf{0},1)$, define $\bszeta^\veps_\bz(\bx) := \bx + \veps \eta(\bx)\bz$. Then by Jensen's inequality
	\begin{equation*}
		|K_\veps u(\bx) - K_\veps u(\by)|^p \leq \int_{B(\mathbf{0},1)} \psi(|\bz|) |u(\bszeta^\veps_\bz(\bx)) - u(\bszeta^\veps_\bz(\by))|^p \, \rmd \bz.
	\end{equation*}
	Now, we apply the inequalities in \cite[Lemma 3.2]{scott2024nonlocal} to obtain
    
    \begingroup\makeatletter\def\f@size{9}\check@mathfonts
    \def\maketag@@@#1{\hbox{\m@th\large\normalfont#1}}%
        \begin{equation*}
			\begin{split}
				&\int_{\cD \cap \{ \sigma(\bx) < \varrho \} } \int_{B(\bx, \theta(\veps)\delta \sigma(\bx))} \frac{ |K_\veps u(\bx) - K_\veps u(\by)|^p }{ (\theta(\veps)\delta)^{d+p} \sigma(\bx)^{d+sp} }  \rmd \by \rmd \bx \\
				\leq& \tilde{\phi}(\veps)
				\int_{B(\mathbf{0},1)} \psi \left( |\bz| \right) \int_{\cD}  \int_{ \cD } \chi_\veps(\bx,\by,\bz) \frac{ \left|  u( \bszeta_{\bz}^\veps(\bx) ) - u( \bszeta_{\bz}^\veps(\by)  ) \right|^p }{\delta^{d+p} \sigma( \bszeta_{\bz}^\veps(\bx)  )^{d+sp}} \det \grad \bszeta_\bz^\veps(\bx) \det \grad \bszeta_\bz^\veps(\by) \, \rmd \by \, \rmd \bx \rmd \bz,
			\end{split}
	    \end{equation*}
    \endgroup
	where we denote
	\begin{equation*}
		\chi_{\veps}(\bx,\by,\bz) = \mathds{1}_{ \{ \sigma(\bszeta_\bz^\veps(\bx)) < (1 + \kappa_0 \veps) \varrho \} } 
		\mathds{1}_{  \{ |\bszeta_\bz^\veps(\by) - \bszeta_\bz^\veps(\bx)| \leq \delta \sigma( \bszeta_\bz^\veps(\bx) ) \} }
	\end{equation*}
	and the constant $\tilde{\phi}(\veps) :=  \frac{ (1+\kappa_0 \veps)^{d+sp} }{ \theta(\veps)^{d+p} (1-\kappa_1 \veps)^{2}}$.
	Now, apply the change of variables $\bar{\by} = \bszeta_{\bz}^\veps(\by) $ and $\bar{\bx} = \bszeta_{\bz}^\veps(\bx)$.
	Therefore we obtain 
	\begin{equation*}
		\begin{split}
			&\int_{\cD \cap \{ \sigma(\bx) < \varrho \} } \int_{B(\bx, \theta(\veps)\delta \sigma(\bx))} \frac{ |K_\veps u(\bx) - K_\veps u(\by)|^p }{ (\theta(\veps)\delta)^{d+p} \sigma(\bx)^{d+sp} } \, \rmd \by \, \rmd \bx \\
			&\leq \tilde{\phi}(\veps) \int_{B(\mathbf{0},1)} \psi \left( |\bz| \right) \int_{ 
				\cD \cap \{ \sigma(\bar{\bx}) < (1+\kappa_0 \veps) \varrho \} 
			} \int_{B(\bar{\bx}, \delta\sigma(\bar{\bx})) } \frac{ \left|  u(\bar{\bx}) - u(\bar{\by}) \right|^p }{\delta^{d+p} \sigma(\bar{\bx})^{d+sp}} \, \rmd \bar{\by} \, \rmd \bar{\bx} \, \rmd \bz\,.
		\end{split}
	\end{equation*} 
	We obtain \eqref{eq:Comp:ConvolutionEstimate1} by setting $\phi(\veps) := \max \{ \tilde{\phi}(\veps), \frac{1+\kappa_0 \veps}{1-\kappa_0 \veps} \}$. We conclude by noting that \eqref{eq:Comp:ConvolutionEstimate2} can be established by the same method of argument, with the same choice of $\phi(\veps)$.
\end{proof}

As a corollary, we obtain that $K_\veps$ is a bounded linear operator on the nonlocal space.

\begin{corollary}\label{cor:ConvEst:NonlocalSpace}
	Let $p \in (1,\infty)$, $s \in (0,1)$ and $\delta \in (0, \underline{\delta}_0)$. Then there exists a constant $C >0 $ depending only on $d, p,\underline{\delta}_0$, $\kappa_0$ and $\kappa_1$ such that for  all $\veps \in (0,\underline{\delta}_0)$,
	\begin{equation*}
		[K_\veps u]_{\mathfrak{W}^{s,p}[\delta](\cD)}  \leq C [u]_{\mathfrak{W}^{s,p}[\delta](\cD)},\quad \forall\, u \in \mathfrak{W}^{s,p}[\delta](\cD).
	\end{equation*}
\end{corollary}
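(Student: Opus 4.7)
The plan is to combine the two estimates in \Cref{lem:with-phiepsilon} to control the full seminorm of $K_\veps u$, but with the auxiliary horizon $\theta(\veps)\delta$, and then to exchange $\theta(\veps)\delta$ for $\delta$ via the horizon-invariance theorem \Cref{thm:InvariantHorizon}. The main obstacle is keeping the final constant uniform in both $\delta, \veps \in (0,\underline{\delta}_0)$, which amounts to verifying that the $\veps$-dependent factors $\theta(\veps)$ and $\phi(\veps)$ appearing in the lemma remain bounded away from $0$ and $\infty$, respectively, on the whole interval $(0,\underline{\delta}_0)$.

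First, I would fix any intermediate scale, for instance $\varrho = \tfrac{1}{2}\diam(\cD) \in (0,\diam(\cD))$, and apply the two conclusions \eqref{eq:Comp:ConvolutionEstimate1} and \eqref{eq:Comp:ConvolutionEstimate2} of \Cref{lem:with-phiepsilon} at this $\varrho$. Since $\cD = \{\sigma<\varrho\}\cup\{\sigma\ge\varrho\}$ up to a $\scH^{d-1}$-null set, adding the two inequalities and absorbing $\overline{C}_{d,p}$ yields
\begin{equation*}
[K_\veps u]_{\mathfrak{W}^{s,p}[\theta(\veps)\delta](\cD)}^{p} \leq 2\,\phi(\veps)\, [u]_{\mathfrak{W}^{s,p}[\delta](\cD)}^{p}.
\end{equation*}
Notice this already gives the desired type of bound, except on the left we have the seminorm at horizon $\theta(\veps)\delta$ rather than $\delta$.

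Next, because $\theta(\veps)\in(0,1]$ we have $\theta(\veps)\delta\leq \delta$, and both are bounded by $\underline{\delta}_0<\tfrac{1}{3}$, so \Cref{thm:InvariantHorizon} applies with $\delta_1=\theta(\veps)\delta$ and $\delta_2=\delta$ and gives
\begin{equation*}
[K_\veps u]_{\mathfrak{W}^{s,p}[\delta](\cD)} \leq \Bigl(\tfrac{2}{1-\delta}\Bigr)^{1+d/p}(1+\delta)^{s+d/p}\, [K_\veps u]_{\mathfrak{W}^{s,p}[\theta(\veps)\delta](\cD)}.
\end{equation*}
The prefactor is uniformly bounded by a constant $C' = C'(d,p,\underline{\delta}_0)$ since $\delta<\underline{\delta}_0<1/3$. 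Combining these two steps produces
\begin{equation*}
[K_\veps u]_{\mathfrak{W}^{s,p}[\delta](\cD)}^{p} \leq 2\,(C')^{p}\,\phi(\veps)\, [u]_{\mathfrak{W}^{s,p}[\delta](\cD)}^{p}.
\end{equation*}

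To finish, I only need to observe that $\phi$ is uniformly bounded on $(0,\underline{\delta}_0)$. From the proof of \Cref{lem:with-phiepsilon} we have the explicit expression $\phi(\veps)=\max\bigl\{\tilde{\phi}(\veps),\,\tfrac{1+\kappa_0\veps}{1-\kappa_0\veps}\bigr\}$ with $\tilde{\phi}(\veps)=(1+\kappa_0\veps)^{d+sp}\theta(\veps)^{-(d+p)}(1-\kappa_1\veps)^{-2}$ and $\theta(\veps)=(1-\kappa_0\veps)/(1+\kappa_1\veps)$. By the choice \eqref{bound-for-delta} of $\underline{\delta}_0$ we have $\kappa_0\veps,\kappa_1\veps\leq 1/3$ throughout $(0,\underline{\delta}_0)$, so all of these quantities are bounded above by a constant depending only on $d,p,\underline{\delta}_0,\kappa_0,\kappa_1$. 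Absorbing this bound into the constant yields the claimed estimate with $C$ depending only on the allowed parameters.
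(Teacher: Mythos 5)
Your argument is correct and follows essentially the same route as the paper: the paper simply applies \Cref{thm:InvariantHorizon} to pass from horizon $\delta$ to $\theta(\veps)\delta$ and then invokes \eqref{eq:Comp:ConvolutionEstimate2} once with $\varrho=0$, which already covers all of $\cD$, rather than splitting at $\varrho=\tfrac12\diam(\cD)$ and summing the two estimates as you do. Your extra step of verifying that $\theta(\veps)$ is bounded below and $\phi(\veps)$ bounded above on $(0,\underline{\delta}_0)$, via $\kappa_0\veps,\kappa_1\veps<1/3$, is exactly the uniformity the paper's constant tacitly relies on, so the proposal is fine.
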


\begin{proof}
	Noting that $ \lim\limits_{\veps \to 0} \theta(\veps) = 1$, we may 
	apply \Cref{thm:InvariantHorizon} and then \eqref{eq:Comp:ConvolutionEstimate2} with $\varrho = 0$ to conclude that 
	\begin{equation*}
		[K_\veps u]_{\mathfrak{W}^{s,p}[\delta](\cD)} \leq C [K_\veps  u]_{\mathfrak{W}^{s,p}[\theta(\veps)\delta](\cD)} \leq C [u]_{\mathfrak{W}^{s,p}[\delta](\cD)}
	\end{equation*}
    for all $u\in  \mathfrak{W}^{s,p}[\delta](\cD)$.
\end{proof}

\section{Properties of function spaces}
\label{sec:properties}
In addition to those 
presented in \Cref{sec:function-space}, additional
properties of the function spaces are established in this section with the help of the results in \Cref{sec:convolution}; these properties are essential for the analysis of the variational problem.
\subsection{Density of smooth functions in \texorpdfstring{$\mathfrak{W}^{s,p}[\delta](\mathcal{D})$}{the nonlocal space}}\label{sec:density}
The main focus of this subsection is to show that smooth functions are dense in $\mathfrak{W}^{s,p}[\delta](\mathcal{D})$. 
This gives us the option in subsequent discussions to demonstrate properties of $\mathfrak{W}^{s,p}[\delta](\mathcal{D})$ first for smooth functions, and then pass to the general case.

\begin{theorem}[Density of smooth functions]\label{thm:Density}
	Let $1 < p < \infty$, let $s \in (0,1]$, let $\mathcal{D} \subset \bbR^d$ be a bounded domain. Corresponding to any $u \in \mathfrak{W}^{s,p}[\delta](\mathcal{D})$,  there exists a sequence $\{u_n \} \subset C^{\infty}(\mathcal{D}) \cap \mathfrak{W}^{s,p}[\delta](\mathcal{D})$ such that $\Vnorm{u_n - u}_{\mathfrak{W}^{s,p}[\delta](\mathcal{D})} \to 0$ as $n \to \infty$. 
	In addition, if $\mathcal{D}$ is a Lipschitz domain then the sequence can be chosen to be in $C^{\infty}(\overline{\mathcal{D}})$.
\end{theorem}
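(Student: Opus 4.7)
The plan is to approximate any $u \in \mathfrak{W}^{s,p}[\delta](\mathcal{D})$ by the boundary-localized convolutions $u_n := K_{\veps_n} u$ for any $\veps_n \to 0$. By item 1 of \Cref{continuity-of-Kdelta}, each $u_n \in C^{\infty}(\mathcal{D})$, and by \Cref{cor:ConvEst:NonlocalSpace} the $u_n$ lie uniformly in $\mathfrak{W}^{s,p}[\delta](\mathcal{D})$ with $[u_n]_{\mathfrak{W}^{s,p}[\delta](\mathcal{D})} \leq C [u]_{\mathfrak{W}^{s,p}[\delta](\mathcal{D})}$. The $L^p$-convergence $\|u_n - u\|_{L^p(\mathcal{D})} \to 0$ is immediate from \Cref{thm:diffuKu:Weighted}. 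The nontrivial task is thus the seminorm convergence $[u_n - u]_{\mathfrak{W}^{s,p}[\delta](\mathcal{D})} \to 0$.

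To handle the seminorm, I would fix $\tau > 0$ and split the outer integral into a boundary strip $S_\varrho := \{\sigma < \varrho\}$ and interior $I_\varrho := \{\sigma \geq \varrho\}$ for a small parameter $\varrho$ chosen first. On $S_\varrho$, the contribution of $[u]_{\mathfrak{W}^{s,p}[\delta]}^p$ can be made less than $\tau^p$ by absolute continuity of the integral. \Cref{lem:with-phiepsilon}, applied with the rescaled horizon $\delta' := \delta/\theta(\veps)$ so that $\theta(\veps)\delta' = \delta$, together with the horizon-invariance estimate \Cref{thm:InvariantHorizon} to trade $\delta'$ for $\delta$, then bounds the strip contribution of $[u_n]_{\mathfrak{W}^{s,p}[\delta]}^p$ by a controlled multiple of the same small quantity, provided $\veps$ is chosen small enough that $\phi(\veps)\varrho < 2\varrho$. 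A triangle inequality combines these into a uniform $O(\tau)$ bound on the strip portion of $[u_n - u]_{\mathfrak{W}^{s,p}[\delta]}$. On $I_\varrho$, once $\veps \ll \varrho$ the convolution $K_\veps u|_{I_\varrho}$ depends only on values of $u$ in $I_{\varrho/2}$, and on $I_{\varrho/2}$ the weight $\sigma(\bx)^{-d-sp}$ is bounded and the $\mathfrak{W}^{s,p}[\delta]$ kernel is comparable to the standard fractional kernel, so $u|_{I_{\varrho/2}} \in W^{s,p}(I_{\varrho/2})$. \Cref{prop:conv-convergence-Sobolev-spaces} then gives $K_\veps u \to u$ in $W^{s,p}(I_{\varrho/2})$, which transfers to the interior portion of $[u_n - u]_{\mathfrak{W}^{s,p}[\delta]}^p \to 0$. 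Taking $\veps$ small (after $\varrho$) yields the full convergence.

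For the Lipschitz case, having obtained $u_n \in C^{\infty}(\mathcal{D}) \cap \mathfrak{W}^{s,p}[\delta](\mathcal{D})$, I would further smooth each $u_n$ up to $\partial\mathcal{D}$ via a standard boundary-chart construction: in each chart, extend $u_n$ across the Lipschitz boundary by reflection into a small neighborhood, convolve with a standard mollifier of sufficiently small radius, and patch via a partition of unity subordinate to a finite boundary cover plus an interior set. The seminorm perturbation induced by this extra smoothing can be made arbitrarily small (independent of $n$), and a diagonal argument selects an approximating sequence in $C^{\infty}(\overline{\mathcal{D}})$ converging to $u$ in $\mathfrak{W}^{s,p}[\delta](\mathcal{D})$.

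The main obstacle is the boundary strip estimate: the kernel weight $\sigma(\bx)^{-d-sp}$ degenerates as $\bx \to \partial\mathcal{D}$, and the crucial tool \Cref{lem:with-phiepsilon} naturally delivers an estimate at the mismatched horizon $\theta(\veps)\delta$. Restoring the correct horizon $\delta$ requires combining with \Cref{thm:InvariantHorizon} while keeping the localization to the strip intact; the bookkeeping in the ordering $\varrho \ll 1$ then $\veps \ll \varrho$ is what makes the constants line up. In the Lipschitz extension step, care is also needed to ensure local reflections do not inflate the seminorm uncontrollably, which is where the uniformity in $n$ of the perturbation must be justified.
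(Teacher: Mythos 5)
Your overall architecture (approximate by $K_{\veps}u$, get smoothness from \Cref{continuity-of-Kdelta}, the uniform seminorm bound from \Cref{cor:ConvEst:NonlocalSpace}, $L^p$ convergence from \Cref{thm:diffuKu:Weighted}, then split the seminorm into a boundary strip and an interior set) matches the paper's proof, and the strip estimate via \Cref{lem:with-phiepsilon} is essentially the paper's treatment of its term $\textrm{II}_{\veps,\varrho}$ (the paper avoids your need for a strip-localized horizon comparison by applying \Cref{thm:InvariantHorizon} to the full seminorm of $K_\veps u - u$ \emph{first}, at horizon $\theta(\veps)\delta$, and only then splitting; your ordering forces you to re-prove a localized variant of \Cref{thm:InvariantHorizon}, which the stated theorem does not give, though it is repairable).

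The genuine gap is in your interior step. You claim that on $I_{\varrho/2}=\{\sigma\geq\varrho/2\}$ the kernel $\mathds{1}_{B(\bx,\delta\sigma(\bx))}(\by)\,\delta^{-d-p}\sigma(\bx)^{-d-sp}$ is ``comparable to the standard fractional kernel,'' hence $u|_{I_{\varrho/2}}\in W^{s,p}(I_{\varrho/2})$. The comparison goes only one way: on $\{|\bx-\by|\leq\delta\sigma(\bx)\}$ one has $(\delta\sigma(\bx))^{-d-sp}\leq|\bx-\by|^{-d-sp}$, which yields the embedding $W^{s,p}(\cD)\subset\mathfrak{W}^{s,p}[\delta](\cD)$ (\eqref{frak-frac-estimate}) but not the reverse. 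Away from the boundary the $\mathfrak{W}^{s,p}[\delta]$ kernel is \emph{bounded} at the diagonal, so membership in $\mathfrak{W}^{s,p}[\delta](\cD)$ imposes no interior regularity beyond $L^p$ (the paper stresses exactly this point); e.g. the characteristic function of a ball compactly contained in $\cD$ lies in $\mathfrak{W}^{s,p}[\delta](\cD)$ but not in $W^{s,p}_{loc}$ when $sp\geq 1$. Consequently \Cref{prop:conv-convergence-Sobolev-spaces} cannot be invoked, and your interior argument collapses. The fix is simpler than what you attempt: since the kernel is bounded on $\{\sigma>\varrho\}$ by $C\,\delta^{-d-p}\varrho^{-d-sp}$ (up to the comparabilities in \eqref{eq:VaryingLambda:SeminormComparison:Pf2}), the interior contribution of $[K_\veps u-u]^p_{\mathfrak{W}^{s,p}[\theta(\veps)\delta]}$ is controlled by $C(\delta,\varrho)\,\Vnorm{K_\veps u-u}_{L^p(\cD)}^p\to 0$ for fixed $\varrho$, which is the paper's estimate of $\textrm{I}_{\veps,\varrho}$. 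Separately, your Lipschitz-boundary step asserts without proof that local reflection-plus-mollification perturbs the $\mathfrak{W}^{s,p}[\delta]$ seminorm by an amount small uniformly in $n$; since the weight blows up precisely where the reflection modifies the function, this needs the observation that $K_\veps u\in W^{s,p}(\cD)$ (\eqref{eq:ConvEst:Deriv:s:Opt}) together with \eqref{frak-frac-estimate} to control the $\mathfrak{W}$-error by a $W^{s,p}$-error, which is how the paper concludes.
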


\begin{corollary}\label{cor:Embedding:Weighted}
	\Cref{lma:Embedding:Weighted:Pre} holds for general $u \in W^{1,p}(\cD;p-sp)$.
\end{corollary}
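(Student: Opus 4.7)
The plan is to extend the inequality from the smooth subclass of $W^{1,p}(\cD;p-sp)$ to the entire space by a mollification argument using the boundary-localized convolution $K_\veps$. This detour is needed because the density of $C^\infty(\overline{\cD})$ in $W^{1,p}(\cD;p-sp)$ invoked in the second half of \Cref{lma:Embedding:Weighted:Pre} requires $\cD$ to be Lipschitz, whereas the corollary is stated without this restriction. Using $K_\veps$ sidesteps the issue, since it produces $C^\infty(\cD)$ regularizations on any bounded domain via the generalized distance function $\eta$.

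Concretely, I would fix $u \in W^{1,p}(\cD;p-sp)$ and set $u_\veps := K_\veps u$ for $\veps \in (0,\underline{\delta}_0)$. By \Cref{continuity-of-Kdelta}(1) and \Cref{rmk:contKfraktofrac}(1), $u_\veps \in C^\infty(\cD) \cap W^{1,p}(\cD;p-sp)$, so \Cref{lma:Embedding:Weighted:Pre} applies directly to $u_\veps$ and yields
\begin{equation*}
[u_\veps]_{\mathfrak{W}^{s,p}[\delta](\cD)}^p \leq \frac{C(d,p)}{(1-\delta)^{p-sp+1}} [u_\veps]_{W^{1,p}(\cD;p-sp)}^p.
\end{equation*}
By \Cref{prop:conv-convergence-Sobolev-spaces}, $u_\veps \to u$ strongly in $W^{1,p}(\cD;p-sp)$, so in particular $[u_\veps]_{W^{1,p}(\cD;p-sp)} \to [u]_{W^{1,p}(\cD;p-sp)}$ and $u_\veps \to u$ in $L^p(\cD)$. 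Passing to a subsequence $\veps_n \to 0$ gives pointwise a.e.\ convergence $u_{\veps_n} \to u$ on $\cD$, hence for a.e.\ pair $(\bx,\by) \in \cD \times \cD$ by Fubini. Applying Fatou's lemma to the nonnegative integrand defining the nonlocal seminorm then produces
\begin{equation*}
[u]_{\mathfrak{W}^{s,p}[\delta](\cD)}^p \leq \liminf_{n \to \infty} [u_{\veps_n}]_{\mathfrak{W}^{s,p}[\delta](\cD)}^p \leq \frac{C(d,p)}{(1-\delta)^{p-sp+1}} [u]_{W^{1,p}(\cD;p-sp)}^p,
\end{equation*}
preserving the exact constant of \Cref{lma:Embedding:Weighted:Pre}.

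There is no substantive obstacle once the ingredients from \Cref{sec:convolution} are in place; the only conceptual point is recognizing that the classical route via density of $C^\infty(\overline{\cD})$-functions is unavailable for general (non-Lipschitz) bounded domains, and that the interior mollification $K_\veps$ supplies an adequate substitute because it is simultaneously bounded on the weighted space (\Cref{rmk:contKfraktofrac}(1)) and convergent to the identity there (\Cref{prop:conv-convergence-Sobolev-spaces}). The combined effect is that both sides of the smooth-function inequality pass correctly to the limit, delivering the extension with the same explicit constant.
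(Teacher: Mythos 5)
Your limit passage itself (a.e.\ convergence of a subsequence, Fatou on the nonlocal seminorm, norm convergence on the weighted side) is sound, but the two facts you feed into it make the argument circular within this paper. The boundedness of $K_\veps$ on $W^{1,p}(\cD;p-sp)$ is item 1) of \Cref{rmk:contKfraktofrac}, which the paper proves by combining \Cref{thm:Convolution:DerivativeEstimate} with \Cref{cor:Embedding:Weighted} --- the very statement you are proving; and the convergence $K_\veps u \to u$ in $W^{1,p}(\cD;p-sp)$, i.e.\ \eqref{eq:ConvergenceOfConv:H1Weighted} of \Cref{prop:conv-convergence-Sobolev-spaces}, is obtained there by reducing to smooth functions via that same boundedness together with the density result \Cref{thm:FxnSpProp:Weighted}. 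So, as written, you assume what is to be shown. Your stated motivation also does not hold up: since \Cref{prop:conv-convergence-Sobolev-spaces} rests on the density of $C^\infty(\overline{\cD})$ in the weighted space, which the paper has only for Lipschitz domains, the $K_\veps$ detour does not in fact remove the Lipschitz hypothesis.

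The paper's own (implicit) proof is the plain density argument you were trying to avoid: for Lipschitz $\cD$ take $u_n \in C^\infty(\overline{\cD})$ with $u_n \to u$ in $W^{1,p}(\cD;p-sp)$ (\Cref{thm:FxnSpProp:Weighted}, item 1)), apply \Cref{lma:Embedding:Weighted:Pre} to each $u_n$, pass to the limit on the right by norm convergence and on the left exactly as in your Fatou step; this uses nothing from \Cref{sec:convolution}, which is essential because \Cref{rmk:contKfraktofrac} and everything built on it sit logically downstream of the corollary. If you want to keep the convolution route (for instance to treat a general bounded domain), you must prove the needed input directly: show via the change-of-variables inequalities of \cite[Lemma~3.2]{scott2024nonlocal}, as in the proofs of \Cref{thm:Convolution:DerivativeEstimate} and \Cref{prop:conv-convergence-Sobolev-spaces}, that $[K_\veps u]_{W^{1,p}(\cD;p-sp)} \leq C(\veps)\,[u]_{W^{1,p}(\cD;p-sp)}$ with $C(\veps) \to 1$ as $\veps \to 0$, rather than citing lemmas whose proofs invoke the corollary. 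Alternatively, for a general bounded domain, ordinary interior (Meyers--Serrin type) mollification already produces a sequence in $C^\infty(\cD) \cap W^{1,p}(\cD;p-sp)$ converging in the weighted norm, to which the first part of \Cref{lma:Embedding:Weighted:Pre} applies directly.
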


Typically, to demonstrate the density of functions within a given function space, one approximates a specific function by its convolution with compactly supported smooth functions, and then uses the compatibility of the convolution operator with translation-invariant kernels to verify the smallness of the difference. However, in the case of $ \mathfrak{W}^{s,p}[\delta](\mathcal{D})$ this is not feasible. The main reason is that while the kernel $\gamma^{s, \delta}_{p, \mathcal{D}}$ associated with the seminorm is symmetric, it lacks translation invariance. Noting that the kernel $\gamma^{s, \delta}_{p, \mathcal{D}}$ is singular near boundary points, in \cite{scott2024nonlocal} a boundary-localized convolution operator is used to facilitate the construction of approximate smooth functions in the case $s=1$. The same technique can be used (thanks to the results of the previous section) to prove the density of smooth functions in the case of general $0<s\leq 1$.


\begin{proof} Fix $\delta\in (0, \underline{\delta}_0)$. 
	Define, for $0 < \veps <  \underline{\delta}_0$, 
	the sequence
	\begin{equation*}
		v_{\veps}(\bx) := K_{\veps} u(\bx)\,.
	\end{equation*}
	By the properties of $\psi$ and $\eta$ this function is in $C^{\infty}(\cD)$. 
	Moreover, $K_\veps u \in \mathfrak{W}^{s,p}[\delta](\cD)$ by \Cref{cor:ConvEst:NonlocalSpace}, and $K_{\veps} u \to u$ in $L^p(\cD)$ as $\veps \to 0$ by \Cref{prop:summary-conv-properties}. 
	To conclude the first density result we just need to show that $[K_\veps u - u]_{\mathfrak{W}^{s,p}[\delta](\cD)} \to 0$ as $\veps \to 0$. 
	To this end, we first use \Cref{thm:InvariantHorizon} to obtain
	\begin{equation*}
		[K_\veps u - u]_{\mathfrak{W}^{s,p}[\delta](\cD)} \leq C(d,s,p,\underline{\delta}_0) [K_\veps u - u]_{\mathfrak{W}^{s,p}[\theta(\veps)\delta](\cD)}.
	\end{equation*}
	Now we split the integral defining the seminorm; for $\varrho \in (0,\diam(\cD))$, we write
	\begin{equation*}
		\begin{split}
			&[K_\veps u - u]_{\mathfrak{W}^{s,p}[\theta(\veps)\delta](\cD)}^p \\
			&= \int_{\cD \cap \{ \sigma(\bx) > \varrho \} } \int_{B(\bx, \theta(\veps)\delta \sigma(\bx))} \frac{ |K_\veps u(\bx) - K_\veps u(\by) - (u(\bx)-u(\by))|^p }{ (\theta(\veps)\delta)^{d+p} \sigma(\bx)^{d+sp} } \, \rmd \by \, \rmd \bx \\
			&\quad + \int_{\cD \cap \{ \sigma(\bx) < \varrho \} } \int_{B(\bx, \theta(\veps)\delta \sigma(\bx))} \frac{ |K_\veps u(\bx) - K_\veps u(\by) - (u(\bx)-u(\by))|^p }{ (\theta(\veps)\delta)^{d+p} \sigma(\bx)^{d+sp} } \, \rmd \by \, \rmd \bx \\
			&:= \textrm{I}_{\veps,\varrho} + \textrm{II}_{\veps,\varrho}.
		\end{split}
	\end{equation*}
	To estimate $\textrm{I}_{\veps,\varrho}$, we use the inequalities
	$$\mathds{1}_{ \{ |\bx-\by| < \theta(\veps) \delta \sigma(\bx) \} } \mathds{1}_{ \{ \sigma(\bx) > \varrho \} } 
	\leq \mathds{1}_{ \{ |\bx-\by| < \frac{ \theta(\veps) \delta }{ 1 - \theta(\veps) \delta } \sigma(\by) \} } \mathds{1}_{ \{ \sigma(\bx) > \varrho \} } \mathds{1}_{ \{ \sigma(\by) > (1- \theta(\veps) \delta) \varrho \} }  
	$$
	and  $\sigma(\bx) \leq (1+\theta(\veps) \delta)\sigma(\by)$ which hold on the domains of integration. With that, we obtain
	\begin{equation*}
		\textrm{I}_{\veps,\varrho} \leq \frac{C(d,s,p,\underline{\delta}_0)}{ \delta^{p} } \int_{\cD \cap  \{ \sigma(\bx) > (1-\theta(\veps) \delta) \varrho \} } \frac{|K_\veps u(\bx) - u(\bx)|^p}{ \sigma(\bx)^{sp} } \, \rmd \bx \leq \frac{C}{\delta^p} {
			\frac{ \vnorm{K_\veps u - u}_{L^p(\cD)}^p }{ (1-\underline{\delta}_0)^{sp} \varrho^{sp} }.
		}
	\end{equation*}
	To estimate $\textrm{II}_{\veps,\varrho}$, we use \eqref{eq:Comp:ConvolutionEstimate1} to get that for all $\veps>0$ sufficiently small 
	\begin{equation*}
		\begin{split}
			\textrm{II}_{\veps,\varrho} &\leq 2^{p-1}(1+\phi(\veps)) \int_{\cD \cap \{ \sigma(\bx) < \phi(\veps) \varrho \} } \int_{B(\bx, \delta \sigma(\bx))} \frac{ |u(\bx) - u(\by)|^p }{ \delta^{d+p} \sigma(\bx)^{d+sp} } \, \rmd \by \, \rmd \bx \\
			&\leq C(d,s,p,\underline{\delta}_0) \int_{\cD \cap \{ \sigma(\bx) < \phi(\underline{\delta}_0) \varrho \} } \int_{B(\bx, \delta \sigma(\bx))} \frac{ |u(\bx) - u(\by)|^p }{ \delta^{d+p} \sigma(\bx)^{d+sp} } \, \rmd \by \, \rmd \bx.
		\end{split}        
	\end{equation*}
	Therefore we can apply the dominated convergence theorem 
	to the right-hand side of this inequality, and get that $\lim\limits_{\varrho \to 0} \sup_{\veps \in (0,\underline{\delta}_0)}\textrm{II}_{\veps,\varrho}  = 0$.
	Hence, for arbitrary $\tau > 0$ there exists $\bar{\varrho} > 0$ such that 
	\begin{equation*}
		\sup_{\veps \in (0,\underline{\delta}_0)}\textrm{II}_{\veps,\bar{\varrho}} < \tau.
	\end{equation*}
	We now use this $\bar{\varrho}$ and let $\veps \to 0$ in the estimate of $\textrm{I}_{\veps,\bar{\varrho}}$ to obtain that $\limsup_{\veps \to 0} \textrm{I}_{\veps,\bar{\varrho}} = 0$. It then follows that 
	\begin{equation*}
		\limsup \limits_{\veps \to 0}[K_\veps u-u]_{\mathfrak{W}^{s,p}[\delta](\cD)}^p \leq \tau.
	\end{equation*}
	The first convergence result follows since $\tau > 0$ is arbitrary.

	Now suppose that $\cD$ is Lipschitz. 
	Define $K_{\veps} u$ just as above for $\veps < \delta$; note that $K_{\veps} u \in W^{s,p}(\cD)$ by \eqref{eq:ConvEst:Deriv:s:Opt}. 
	So we can regard $K_\veps u$ as belonging to $W^{s,p}(\bbR^d)$ by standard extension theorems for fractional Sobolev spaces.
	Let $\varphi$ be a standard mollifier, and define for $0 < \gamma \ll \veps$
	\begin{equation*}
		v_{\gamma,\veps}(x) = \varphi_{\gamma} \ast K_{\veps} u(x)\,.
	\end{equation*}
	Then $v_{\gamma,\veps} \in C^{\infty}(\overline{\cD})$. 
	Moreover, by \eqref{frak-frac-estimate}, we have 
	\begin{equation*}
		\lim\limits_{\gamma \to 0} \Vnorm{v_{\gamma,\veps} - K_{\veps} u}_{\mathfrak{W}^{s,p}[\delta](\cD)}^p \leq \lim\limits_{\gamma \to 0} C \delta^{s-1} \Vnorm{v_{\gamma,\veps} - K_{\veps} u}_{W^{s,p}(\cD)}^p = 0\,.
	\end{equation*}
	For each $n \in \bbN$, choose $\{\veps_n\}_n$ to be strictly decreasing sequence that satisfies $\vnorm{ K_{\veps_n} u - u }_{\mathfrak{W}^{s,p}[\delta](\cD)} < \frac{1}{2n}$. Then for each $n$, there exists $\gamma_n(\veps_n)$ depending on $\veps_n$ such that $\vnorm{ v_{\gamma_n,\veps_n} - K_{\veps_n} u }_{\mathfrak{W}^{s,p}[\delta](\cD)} < \frac{1}{2n}$. We can choose the sequence $\{\gamma_n\}_n$ to be strictly decreasing as well. Define $\{w_n= v_{\gamma_n,\veps_n} \}_n\subset C^{\infty}(\overline{\cD})$; we conclude with
	\begin{equation*}
		\begin{split}
			\vnorm{ w_n - u }_{\mathfrak{W}^{s,p}[\delta](\cD)}
			&\leq \vnorm{ w_n - K_{\veps_n} u }_{\mathfrak{W}^{s,p}[\delta](\cD)} + \vnorm{ K_{\veps_n} u - u }_{\mathfrak{W}^{s,p}[\delta](\cD)} < \frac{1}{n}.
		\end{split}
	\end{equation*}
\end{proof}

\subsection{Trace and extension theorems for \texorpdfstring{$\mathfrak{W}^{s,p}[\delta](\cD)$}{the nonlocal space}}\label{subsec:trace}
The papers \cite{TiDu17,Du2022Fractional,Foss2021} have established that for $sp>1$ and for Lipschitz domains $\cD$, functions that belong to the nonlocal function space $\mathfrak{W}^{s,p}[\delta](\cD)$ admit a trace that belongs to the space $W^{s-{1\over p} }(\partial \cD)$. Here we give an alternate proof of the result using the boundary-localized convolutions, the density of smooth functions and trace theorems of classical fractional and weighted Sobolev spaces.
{We do this for two reasons. First, the proof given here is shorter than those appearing in \cite{TiDu17,Du2022Fractional,Foss2021} and utilizes different tools; second, our main results require the dependence of the upper bound of the trace operator norm on parameters such $s$, $p$, and $\delta$ to be explicitly stated. Such details are not readily available in the existing work.} We state the trace theorem as follows.

\begin{theorem}\label{thm:TraceTheorem}
	Let $1 < p < \infty$ and $s \in (0,1]$ with $sp > 1$, and $\cD \subset \bbR^d$ be a Lipschitz domain.
	Let $T$ denote the trace operator, i.e. for $u \in C^{\infty}(\overline{\cD})$,
	$ T u = u \big|_{\p \cD}$.
	Then for each $\delta < \underline{\delta}_0$ the trace operator extends to a bounded linear operator $T : \mathfrak{W}^{s,p}[\delta](\cD) \to W^{s-1/p,p}(\p \cD)$. Moreover
	there exists $C = C(d,p,\cD)$ such that
	$$
	\Vnorm{Tu}_{W^{s-1/p,p}(\p \cD)} \leq \frac{C}{sp-1} \Vnorm{u}_{\mathfrak{W}^{s,p}[\delta](\cD)} ,\qquad\forall u \in \mathfrak{W}^{s,p}[\delta](\cD)\,.
	$$
\end{theorem}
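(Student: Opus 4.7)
The plan is to use the boundary-localized convolution $K_\delta$ as a bridge between the nonstandard space $\mathfrak{W}^{s,p}[\delta](\cD)$ and the classical fractional Sobolev space $W^{s,p}(\cD)$, thereby reducing the trace estimate to the classical fractional trace theorem on Lipschitz domains. First, by the density of $C^{\infty}(\overline{\cD})$ in $\mathfrak{W}^{s,p}[\delta](\cD)$ established in \Cref{thm:Density}, it suffices to prove the claimed inequality for $u \in C^{\infty}(\overline{\cD}) \cap \mathfrak{W}^{s,p}[\delta](\cD)$ and then extend by linearity and continuity. For such a smooth $u$, the key observation comes from item 2 of \Cref{continuity-of-Kdelta}: the convolution $K_\delta u$ belongs to $C(\overline{\cD})$ and satisfies $K_\delta u(\bx) = u(\bx)$ for every $\bx \in \partial \cD$. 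Consequently, $T(K_\delta u) = Tu$ on $\partial \cD$, and it is enough to bound $\Vnorm{T(K_\delta u)}_{W^{s-1/p,p}(\partial \cD)}$ in terms of $\Vnorm{u}_{\mathfrak{W}^{s,p}[\delta](\cD)}$.

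Second, I would control $\Vnorm{K_\delta u}_{W^{s,p}(\cD)}$ by $\Vnorm{u}_{\mathfrak{W}^{s,p}[\delta](\cD)}$ using the results of \Cref{sec:convolution}. The seminorm bound $[K_\delta u]_{W^{s,p}(\cD)}^p \leq C\,[u]_{\mathfrak{W}^{s,p}[\delta](\cD)}^p$ is precisely item 2 of \Cref{rmk:contKfraktofrac}, with a constant independent of $s$, $p$, and $\delta \in (0,\underline{\delta}_0)$; the $L^p$ bound $\Vnorm{K_\delta u}_{L^p(\cD)} \leq C \Vnorm{u}_{L^p(\cD)}$ is item 3 of \Cref{continuity-of-Kdelta}. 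Combining these yields
\begin{equation*}
\Vnorm{K_\delta u}_{W^{s,p}(\cD)} \leq C\, \Vnorm{u}_{\mathfrak{W}^{s,p}[\delta](\cD)}
\end{equation*}
with $C$ depending only on $d$, $p$, $\psi$, $\kappa_0$, $\kappa_1$, and the Lipschitz constant of $\cD$.

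Third, I would invoke the classical fractional trace theorem on the Lipschitz domain $\cD$, in the quantitative form
\begin{equation*}
\Vnorm{Tv}_{W^{s-1/p,p}(\partial \cD)} \leq \frac{C(d,p,\cD)}{sp-1} \Vnorm{v}_{W^{s,p}(\cD)}, \qquad v \in W^{s,p}(\cD),
\end{equation*}
and apply it to $v = K_\delta u$. Combining with the previous step and the identity $T(K_\delta u) = Tu$ for smooth $u$ gives the stated inequality on $C^{\infty}(\overline{\cD})$; the bounded extension to all of $\mathfrak{W}^{s,p}[\delta](\cD)$ then follows from \Cref{thm:Density} together with the completeness of $W^{s-1/p,p}(\partial \cD)$.

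The main obstacle I expect is tracking the sharp $1/(sp-1)$ dependence of the constant through the classical trace theorem. The mere existence of the trace as a bounded operator is standard, but the explicit blow-up of its norm as $sp \downarrow 1$ is needed here so that the estimate reduces correctly in subsequent limits; this would have to be either cited from the appendix or verified directly by a reflection/extension argument together with a careful rescaling near the boundary. All other ingredients are essentially direct applications of the tools already developed in \Cref{sec:convolution} and \Cref{sec:density}.
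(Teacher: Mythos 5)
Your proposal is correct and follows essentially the same route as the paper: bound $\Vnorm{K_\delta u}_{W^{s,p}(\cD)}$ by $\Vnorm{u}_{\mathfrak{W}^{s,p}[\delta](\cD)}$ via \Cref{rmk:contKfraktofrac} and \Cref{continuity-of-Kdelta}, apply the quantitative classical trace theorem (\Cref{thm:Trace:Fractional}, which supplies exactly the $1/(sp-1)$ constant you were worried about), use $TK_\delta u = Tu$ (the paper packages this as \Cref{lem:TraceOfConv}), and extend by density via \Cref{thm:Density}. The only cosmetic difference is that the paper proves the bound for general $u$ through $K_\delta u$ first and then defines $Tu$ as the limit of $Tu_n$ along a smooth approximating sequence, whereas you prove the estimate on smooth functions and extend by continuity; these are the same argument in different order.
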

Before giving the proof of the theorem, we state and prove a lemma that asserts that for $u$  in the fractional Sobolev space $W^{s,p}(\cD)$, the trace of $u$ and that of its boundary-localized convolution $K_\delta u$ agree. 
\begin{lemma}\label{lem:TraceOfConv}
	Let $1 < p < \infty$ and $s \in (0,1]$ with $sp > 1$ and let $\cD \subset \bbR^d$ be a Lipschitz domain, and suppose that $u \in W^{s,p}(\cD)$. If $T: W^{s,p}(\cD) \to W^{s-1/p,p}(\p \cD)$ is the classical trace operator, then 
	\begin{equation*}
		T K_{\delta} u = T u \quad \text{ in the sense of functions in } W^{s-1/p,p}(\p \cD)\,.
	\end{equation*}
\end{lemma}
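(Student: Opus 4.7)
The plan is to prove the identity first on smooth functions, where it follows directly from the pointwise coincidence of $K_\delta u$ and $u$ on $\partial \cD$, and then to extend it to all of $W^{s,p}(\cD)$ via density together with continuity of the relevant operators.

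First, suppose $u \in C^\infty(\overline{\cD})$. By Proposition~\ref{continuity-of-Kdelta} item 2, $K_\delta u \in C(\overline{\cD})$ with $K_\delta u(\bx) = u(\bx)$ for every $\bx \in \partial \cD$. Since $u, K_\delta u \in C(\overline{\cD}) \cap W^{s,p}(\cD)$, for continuous functions the classical trace coincides with the pointwise boundary restriction, hence $T K_\delta u = K_\delta u|_{\partial \cD} = u|_{\partial \cD} = Tu$ as elements of $W^{s-1/p,p}(\partial \cD)$.

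Next, let $u \in W^{s,p}(\cD)$ be arbitrary. Since $\cD$ is Lipschitz, by the standard density result for fractional Sobolev spaces there exists a sequence $\{u_n\} \subset C^\infty(\overline{\cD})$ with $u_n \to u$ in $W^{s,p}(\cD)$. The classical trace operator $T : W^{s,p}(\cD) \to W^{s-1/p,p}(\partial \cD)$ is bounded, so $T u_n \to T u$ in $W^{s-1/p,p}(\partial \cD)$. On the other hand, by \Cref{rmk:contKfraktofrac} item 3, $K_\delta$ is bounded on $W^{s,p}(\cD)$, so $K_\delta u_n \to K_\delta u$ in $W^{s,p}(\cD)$, and composing with $T$ gives $T K_\delta u_n \to T K_\delta u$ in $W^{s-1/p,p}(\partial \cD)$. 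From the smooth case we have $T K_\delta u_n = T u_n$ for every $n$, so passing to the limit yields $T K_\delta u = T u$, which is the desired equality.

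The argument is essentially a two-line density argument once the three inputs are recorded, so there is no genuine obstacle; the only subtle point to be careful about is that when we invoke $T K_\delta u_n = u_n|_{\partial \cD}$, both $K_\delta u_n$ and $u_n$ must belong to a space on which the classical trace is well-defined and agrees with the pointwise restriction, which is guaranteed by $u_n \in C^\infty(\overline{\cD})$ and $K_\delta u_n \in C(\overline{\cD}) \cap W^{s,p}(\cD)$ (the latter via \Cref{rmk:contKfraktofrac} item 3 applied to $u_n$).
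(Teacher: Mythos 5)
Your proposal is correct and follows essentially the same route as the paper: both prove the identity by exploiting that $K_\delta$ preserves boundary values of functions smooth up to $\overline{\cD}$ (\Cref{continuity-of-Kdelta} item 2), and then pass to general $u \in W^{s,p}(\cD)$ by density of $C^\infty(\overline{\cD})$, boundedness of $K_\delta$ on $W^{s,p}(\cD)$ (\eqref{eq:ConvEst1:Wsp}), and boundedness of the trace operator. The only cosmetic difference is that the paper runs a single quantitative $\tau$-approximation estimate while you phrase it as a sequential limit, and both arguments share the same implicit (standard) fact that the trace of a function in $C(\overline{\cD}) \cap W^{s,p}(\cD)$ agrees with its pointwise boundary restriction.
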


\begin{proof}
	Let $\tau > 0$, and let $\wt{u} \in C^{\infty}(\overline{\cD})$ with $\Vnorm{\wt{u}-u}_{W^{s,p}(\cD)} < \tau$. Then by \Cref{continuity-of-Kdelta}, we have  $K_\delta \wt{u} = \wt{u}$ on $\p \cD$. Thus by adding and subtracting $T K_{\delta}\wt{u} $ and $T\wt{u}$, and noting that  $\Vnorm{ T K_{\delta} \wt{u} - T\wt{u} }_{W^{s-1/p,p}(\p \cD)}=0$ we apply \Cref{rmk:contKfraktofrac}, \eqref{eq:ConvEst1:Wsp} to obtain that
	\begin{equation*}
		\begin{split}
			\Vnorm{ T K_{\delta} u - T u }_{W^{s-1/p,p}(\p \cD)} 
			&\leq \Vnorm{ TK_{\delta} \wt{u} - T K_\delta u }_{W^{s-1/p,p}(\p \cD)} + \Vnorm{ T\wt{u} - Tu }_{W^{s-1/p,p}(\p \cD)} \\
			&\leq C\Vnorm{  K_{\delta} \wt{u} - K_\delta u  }_{W^{s,p}(\cD)} + C\Vnorm{\wt{u} - u}_{W^{s,p}(\cD)} \\
			&\leq C \Vnorm{\wt{u}-u}_{W^{s,p}(\cD)} \leq C \tau\,.
		\end{split}
	\end{equation*}
	where $C$ is independent of $\delta.$ 
	Since $\tau$ is arbitrary, the theorem is proved.
\end{proof}
\begin{proof}[Proof of \Cref{thm:TraceTheorem}]
	
	Let $T$ be the classical trace operator on $W^{s,p}(\cD)$ as used in \Cref{lem:TraceOfConv}. Then by \eqref{eq:ConvEst:Deriv:s:Opt} in \Cref{rmk:contKfraktofrac}, we have that for any $u\in \mathfrak{W}^{s,p}[\delta](\cD)$, $K_\delta u\in W^{s,p}(\cD)$. Therefore, $T K_\delta u \in W^{s-1/p,p}(\p \cD)$ and the following estimate holds per \eqref{eq:ConvEst:Deriv:s:Opt} and \Cref{thm:Trace:Fractional}:  
	\begin{equation*}
		\Vnorm{T K_\delta u}_{W^{s-1/p,p}(\p \cD)} \leq \frac{C(d,p,\cD)}{sp-1} \Vnorm{K_\delta u}_{W^{s,p}(\cD)} \leq \frac{C(d,p,\cD)}{sp-1} \Vnorm{u}_{\mathfrak{W}^{s,p}[\delta](\cD)}\,.
	\end{equation*}
	Now let $\{ u_n \} \subset C^{\infty}(\overline{\cD})$ be a sequence converging to $u$ in $\mathfrak{W}^{s,p}[\delta](\cD)$. Then by \Cref{lem:TraceOfConv}, 
	\begin{equation*}
		\begin{split}
			\Vnorm{ T u_n - T u_m }_{W^{s-1/p,p}(\p \cD)} 
			&= \Vnorm{ T K_\delta u_n - T K_\delta u_m }_{W^{s-1/p,p}(\p \cD)} \\
			&\leq C \Vnorm{ u_n - u_m }_{\mathfrak{W}^{s,p}[\delta]( \cD)}\,.
		\end{split}
	\end{equation*}
	Therefore, $\{T u_n\}$ is Cauchy in  $W^{s-1/p,p}(\p \cD)$ and therefore has a limit. Define 
	\[Tu := \lim_{n\to \infty} T u_n \quad \text{in $W^{s-1/p,p}(\p \cD)$}. \]
	It is now clear from standard arguments that the trace operator is a well-defined bounded linear mapping from $\mathfrak{W}^{s,p}[\delta](\cD)$ to $W^{s-1/p,p}(\p \cD)$.
\end{proof}

Using the density of smooth functions in the nonlocal function space $\mathfrak{W}^{s,p}[\delta](\cD)$, and the boundedness of $K_\delta$ on $\mathfrak{W}^{s,p}[\delta](\cD)$ proved in \Cref{cor:ConvEst:NonlocalSpace},  the following result can be proved in the same way as \Cref{lem:TraceOfConv} for functions in $\mathfrak{W}^{s,p}[\delta](\cD)$.

\begin{corollary}\label{cor:Trace:NonlocalSpace}
	Let $1 < p < \infty$ and let $s\in (0,1]$ with $sp > 1$, and let $\cD \subset \bbR^d$ be a Lipschitz domain. Suppose that $u \in \mathfrak{W}^{s,p}[\delta](\cD)$. Then 
	\begin{equation*}
		T K_{\delta} u = T u \quad \text{ in the sense of functions in } W^{s-1/p,p}(\p \cD)\,.
	\end{equation*}
\end{corollary}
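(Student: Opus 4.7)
The plan is to mirror the argument used in \Cref{lem:TraceOfConv}, but with the density of smooth functions now taken in the nonlocal space rather than in $W^{s,p}(\cD)$. First I would invoke \Cref{thm:Density} to produce a sequence $\{\wt{u}_n\} \subset C^{\infty}(\overline{\cD})$ with $\wt{u}_n \to u$ in $\mathfrak{W}^{s,p}[\delta](\cD)$. For each such smooth function, item 2 of \Cref{continuity-of-Kdelta} gives $K_\delta \wt{u}_n = \wt{u}_n$ pointwise on $\p \cD$, and hence, since these are smooth functions, $T K_\delta \wt{u}_n = T \wt{u}_n$ as elements of $W^{s-1/p,p}(\p \cD)$.

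Next, I would take the limit in $n$ in the identity $TK_\delta \wt{u}_n = T\wt{u}_n$ in the topology of $W^{s-1/p,p}(\p \cD)$. The right-hand side converges to $Tu$ by the continuity of the trace operator established in \Cref{thm:TraceTheorem}. For the left-hand side, \Cref{cor:ConvEst:NonlocalSpace} gives the boundedness of $K_\delta$ on $\mathfrak{W}^{s,p}[\delta](\cD)$, so $K_\delta \wt{u}_n \to K_\delta u$ in $\mathfrak{W}^{s,p}[\delta](\cD)$, and applying the trace bound from \Cref{thm:TraceTheorem} once more yields $T K_\delta \wt{u}_n \to T K_\delta u$ in $W^{s-1/p,p}(\p \cD)$. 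Uniqueness of limits in $W^{s-1/p,p}(\p \cD)$ then forces $T K_\delta u = T u$.

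Quantitatively, one can encode the whole argument in a single estimate by adding and subtracting $T K_\delta \wt{u}_n$ and $T \wt{u}_n$:
\begin{equation*}
\begin{split}
\Vnorm{ T K_\delta u - T u }_{W^{s-1/p,p}(\p \cD)}
&\leq \Vnorm{ T K_\delta u - T K_\delta \wt{u}_n }_{W^{s-1/p,p}(\p \cD)} + \Vnorm{ T \wt{u}_n - T u }_{W^{s-1/p,p}(\p \cD)} \\
&\leq C\bigl( \Vnorm{ K_\delta u - K_\delta \wt{u}_n }_{\mathfrak{W}^{s,p}[\delta](\cD)} + \Vnorm{ \wt{u}_n - u }_{\mathfrak{W}^{s,p}[\delta](\cD)} \bigr) \\
&\leq C \Vnorm{ \wt{u}_n - u }_{\mathfrak{W}^{s,p}[\delta](\cD)},
\end{split}
\end{equation*}
where the constants depend on $d,p,\cD$ and the operator norm of $K_\delta$ on $\mathfrak{W}^{s,p}[\delta](\cD)$ but not on $n$. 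Letting $n \to \infty$ gives the conclusion.

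I do not anticipate a serious obstacle, since all of the needed ingredients have already been established in the preceding sections: density of $C^{\infty}(\overline{\cD})$ in $\mathfrak{W}^{s,p}[\delta](\cD)$ (\Cref{thm:Density}), boundedness of $T$ on $\mathfrak{W}^{s,p}[\delta](\cD)$ (\Cref{thm:TraceTheorem}), and boundedness of $K_\delta$ on $\mathfrak{W}^{s,p}[\delta](\cD)$ (\Cref{cor:ConvEst:NonlocalSpace}). The only mild subtlety is conceptual: on a generic $u \in \mathfrak{W}^{s,p}[\delta](\cD)$ one cannot directly assert $K_\delta u = u$ pointwise on $\p \cD$, since $u$ need not have a pointwise boundary value, so the identity must be interpreted and derived in the $W^{s-1/p,p}(\p \cD)$ sense via the smooth approximation above.
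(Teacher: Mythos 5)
Your argument is correct and follows essentially the same route the paper indicates: approximate $u$ by smooth functions via \Cref{thm:Density}, use \Cref{continuity-of-Kdelta} to get $TK_\delta \wt{u}_n = T\wt{u}_n$, and pass to the limit using the boundedness of $K_\delta$ on $\mathfrak{W}^{s,p}[\delta](\cD)$ (\Cref{cor:ConvEst:NonlocalSpace}) together with the trace bound of \Cref{thm:TraceTheorem}, exactly the adaptation of \Cref{lem:TraceOfConv} the paper has in mind.
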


A Lebesgue point property can also be proved for $Tu$ that relates the a.e. pointwise value of $Tu$ with that of $u$ in $\cD$ for all $u$ in $\mathfrak{W}^{s,p}[\delta](\cD)$.  Indeed, for all $u \in \mathfrak{W}^{s,p}[\delta](\cD)$, i.e.
\begin{equation*}
	Tu(\bx) = \lim\limits_{\veps \to 0} \fint_{B(\bx,\veps)} u(\by) \, \rmd \by\,, \qquad \text{ for } \scH^{(d-sp)_+}\text{-a.e. } \bx \in \p \cD\,.
\end{equation*}
This property is proved in \cite{Foss2021}, but a shorter proof is given in \cite{du2024weighted}.
\begin{remark}\label{rmk:StrongConvTrace}
	Observe that for $s\in (0, 1)$ and $p \in (1, \infty)$ such that $sp >1$, if $\{u_n\}$ is a weakly convergent sequence in $\mathfrak{W}^{s,p}[\delta](\cD)$ with limit point $u$, then $T u_n \rightharpoonup T u$ weakly in $W^{s-1/p,p}(\p \cD)$. Therefore, by the compact embedding theorem for fractional Sobolev spaces, it follows that $\vnorm{T u_n - Tu}_{L^p(\p \cD)} \to 0$ as $n \to \infty$.
	This same observation holds for weakly convergent sequences in the spaces $W^{s,p}(\cD)$ and $W^{1,p}(\cD;p-sp)$.
\end{remark}

We close this subsection with the following extension result whose proof follows from the extension theorems from $W^{s-1/p,p}(G)$  to both the fractional and the weighted Sobolev spaces ${W}^{s,p}(\cD) $ and $W^{1,p}(\cD;p-sp)$. The statement and sketch of the later results are given in the Appendix, \Cref{thm:FxnSpProp:Weighted}.

\begin{proposition}\label{lma:Extension}
	Let $\cD \subset \bbR^d$ be a Lipschitz domain, and let $G \subset \p \cD$ be a $\scH^{d-1}$-measurable closed set that is diffeomorphic to a finite union of the closures of $W^{s-1/p,p}$-extension domains.
	Then there exists a bounded linear operator $E : W^{s-1/p,p}(G) \to \mathfrak{W}^{s,p}[\delta](\cD)$, with $C = C(d,p,\cD,G) > 0$ 
	such that 
\begin{equation}\label{eq:ExtensionBd}
		\vnorm{Ev}_{ \mathfrak{W}^{s,p}[\delta](\cD) } \leq \frac{C}{s} \vnorm{v}_{W^{s-1/p,p}(G)}.
	\end{equation}
\end{proposition}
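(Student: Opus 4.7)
The approach is to factor the construction of $E$ through the weighted Sobolev space $W^{1,p}(\cD; p-sp)$, rather than through $W^{s,p}(\cD)$. The reason to prefer this route is that the direct extension into $W^{s,p}(\cD)$ followed by the embedding $W^{s,p}(\cD) \hookrightarrow \mathfrak{W}^{s,p}[\delta](\cD)$ in \eqref{frak-frac-estimate} incurs an unfavorable factor $\delta^{s-1}$ that degenerates in certain parameter regimes, whereas the embedding in \Cref{cor:Embedding:Weighted} from the weighted Sobolev space is uniform in $\delta \in (0,\underline{\delta}_0)$.

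The plan has two steps. First, I invoke the extension theorem stated in the appendix, \Cref{thm:FxnSpProp:Weighted}, which (under the hypothesis that $G$ is diffeomorphic to a finite union of closures of $W^{s-1/p,p}$-extension domains) supplies a bounded linear operator
$$\widetilde{E} : W^{s-1/p,p}(G) \to W^{1,p}(\cD; p-sp)$$
satisfying $T (\widetilde{E} v)\big|_G = v$ in the sense of traces, together with the quantitative bound
$$\|\widetilde{E} v\|_{W^{1,p}(\cD; p-sp)} \leq \frac{C}{s}\, \|v\|_{W^{s-1/p,p}(G)},$$
with $C = C(d,p,\cD,G)$ independent of $s$. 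This is where the $1/s$ factor in \eqref{eq:ExtensionBd} originates; it comes from the boundary-to-interior extension and is the sole source of $s$-dependence in our estimate.

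Second, defining $E := \widetilde{E}$, I apply \Cref{cor:Embedding:Weighted} to convert the weighted Sobolev bound into a bound in $\mathfrak{W}^{s,p}[\delta](\cD)$. Since the constant in the embedding $W^{1,p}(\cD; p-sp) \hookrightarrow \mathfrak{W}^{s,p}[\delta](\cD)$ can be taken uniform in $\delta \in (0,\underline{\delta}_0)$ (as noted in the remark after \Cref{lma:Embedding:Weighted:Pre}), one has
$$[E v]_{\mathfrak{W}^{s,p}[\delta](\cD)} \leq C\, [\widetilde{E} v]_{W^{1,p}(\cD; p-sp)}.$$
The $L^p$ part of the norm is controlled directly by $\|\widetilde{E} v\|_{L^p(\cD)} \leq \|\widetilde{E} v\|_{W^{1,p}(\cD; p-sp)}$. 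Chaining the two inequalities gives
$$\|E v\|_{\mathfrak{W}^{s,p}[\delta](\cD)} \leq C \|\widetilde{E}v\|_{W^{1,p}(\cD; p-sp)} \leq \frac{C}{s}\, \|v\|_{W^{s-1/p,p}(G)},$$
which is \eqref{eq:ExtensionBd}.

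The routine parts here are the embedding step and the $L^p$ bound; the only substantive obstacle is guaranteeing the existence of the weighted extension $\widetilde{E}$ with the explicit $1/s$ operator-norm dependence. I would expect this to be carried out in the appendix via a Poisson/harmonic-type extension on a tubular neighborhood of $G$ multiplied by a smooth cutoff localizing to $\cD$, with the $1/s$ scaling tracked through the integration of $\sigma(\bx)^{p-sp}|\nabla (\widetilde{E} v)|^p$ against the singular behavior of the Poisson extension as $\sigma \to 0$; alternatively, a Whitney-type decomposition of $\cD$ combined with the characterization of $W^{s-1/p,p}(G)$ via the $K$-method of interpolation would produce the same bound with the explicit $s$-dependence.
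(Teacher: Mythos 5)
Your proposal is correct and follows essentially the same route as the paper: both factor the extension through the weighted space via \Cref{thm:FxnSpProp:Weighted} item 3) (the sole source of the $1/s$ factor) and then use the $\delta$-uniform embedding $W^{1,p}(\cD;p-sp)\hookrightarrow \mathfrak{W}^{s,p}[\delta](\cD)$ of \Cref{lma:Embedding:Weighted:Pre}/\Cref{cor:Embedding:Weighted}. The only cosmetic difference is that the paper first argues for $v\in C^\infty(G)$ and then passes to general $v$ by density, whereas you apply \Cref{cor:Embedding:Weighted} directly to general elements of $W^{1,p}(\cD;p-sp)$, which is equally valid.
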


\begin{proof}
	First, let $v \in C^\infty(G)$. By \Cref{thm:FxnSpProp:Weighted} item 3), there exists an extension operator $E:W^{s-1/p,p}(G) \to W^{1,p}(\cD;p-sp)$ such that 
	\begin{equation*}
		\vnorm{ Ev }_{ W^{1,p}(\cD;p-sp) } \leq \frac{C}{s} \vnorm{v}_{W^{s-1/p,p}(G)}.
	\end{equation*}
	Then by \Cref{lma:Embedding:Weighted:Pre} we have
	\begin{equation*}
		\vnorm{ Ev }_{ \mathfrak{W}^{s,p}[\delta](\cD) } \leq \frac{C}{s} \vnorm{Ev}_{W^{1,p}(\cD;p-sp)}
		\leq \frac{C}{s} \vnorm{v}_{W^{s-1/p,p}(G)}.
	\end{equation*}
	This establishes the bound for $v \in C^\infty(G)$.
	Finally, we can make use of \Cref{thm:Density} to extend $E$ from $C^\infty(G)$ to a well-defined bounded linear operator on $W^{s-1/p,p}(G)$, and we can also conclude that the estimate \eqref{eq:ExtensionBd} holds for all $v \in W^{s-1/p,p}(G)$. 
\end{proof}

\subsection{\texorpdfstring{$L^{p}$}{Lp}-compactness criterion}
As discussed earlier, for a fixed $\delta>0$, all functions in $\mathfrak{W}^{s,p}[\delta](\cD)$ are only required to be $L^{p}_{loc}(\cD)$, but exhibit sufficient regularity on the boundary to have a well-defined trace. As such, the space $\mathfrak{W}^{s,p}[\delta](\cD)$ is not expected to be compactly contained in $L^{p}(\cD)$. However, as the kernel $\gamma^{s, \delta}_{p, \mathcal{D}}(\bx, \by)$ becomes more singular as $\delta \to 0$, one might expect compactness of classes of functions whose nonlocal seminorms remain finite.
In this subsection we make this rigorous and provide a compactness criterion for a sequence $\{ u_\delta\}\subset L^{p}(\cD)$ with bounded $\mathfrak{W}^{s,p}[\delta](\cD)$-norm.  We first start with a nonlocal characterization of a weighted Sobolev space in the spirit of \cite{bourgain2001}.  
\begin{proposition}\label{thm:LocalizationOfSeminorm}
	Let $ p \in (1, \infty)$, $s \in (0,1]$, $\{s_n\}_{n=1}^\infty \subset (0, 1]$ and $\{\delta_n\}_{n=1}^\infty\subset (0,\underline{\delta}_0)$ such that $s_n\to s$ and $\delta_n\to 0$ as $n\to \infty$. Then 
	if a sequence $\{u_n\}_n$ converges to $u$ in $C^2(\overline{V})$ for any $V \Subset \cD$ as $n \to \infty$, then
	\begin{equation*}
		\lim\limits_{n\to\infty} \overline{C}_{d,p} \int_{V} \int_{V \cap B(\bx,\delta_n \sigma(\bx))}  \frac{|u_n(\by)-u_n(\bx)|^p}{ \delta_n^{d+p} \sigma(\bx)^{d+s_n p} } \, \rmd \by \, \rmd \bx = \int_{V} |\grad u(\bx)|^p \sigma(\bx)^{p-sp} \, \rmd \bx\,.
	\end{equation*}
			If in addition, $ s_n\le s,\ \forall n$, then 
			\begin{equation*}
				\lim\limits_{n\to \infty} [u]_{\mathfrak{W}^{s_n,p}[\delta_n](\cD)}^p =
				\begin{cases}
					[u]_{W^{1,p}(\cD;p-sp)}^p\,, & 
					\text{if }  u \in W^{1,p}(\cD;p-sp) ,\\
					+ \infty, & \text{if } u \in L^p(\cD) \setminus W^{1,p}(\cD;p-sp).
				\end{cases}
			\end{equation*}    
			The previous statements hold if $s_n=s\in (0, 1]$ for all $n$.
		\end{proposition}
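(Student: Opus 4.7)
My plan is to establish the first assertion by a change of variables and Taylor expansion (using the uniform $C^2$ convergence on $V \Subset \cD$), to promote it to the second assertion for $u \in W^{1,p}(\cD;p-sp)$ via density of smooth functions combined with a convex-inequality trick, and to derive the blow-up case by a compactness argument using the convolution estimate \Cref{thm:Convolution:DerivativeEstimate}.

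For the first assertion, I would substitute $\by = \bx + \delta_n \sigma(\bx)\bz$ to rewrite the double integral over $\bz \in B(\mathbf{0},1)$ (intersected with $\{\bx + \delta_n \sigma(\bx)\bz \in V\}$) with integrand
\[
\overline{C}_{d,p} \frac{|u_n(\bx + \delta_n \sigma(\bx) \bz) - u_n(\bx)|^p}{\delta_n^p\, \sigma(\bx)^{s_n p}}.
\]
Since $V \Subset \cD$, $\sigma$ is pinched between positive constants on $V$, and the uniform $C^2(\overline{V})$ bound on $u_n$ lets me Taylor-expand to obtain pointwise convergence of the integrand to $\overline{C}_{d,p}|\grad u(\bx)\cdot\bz|^p \sigma(\bx)^{p-sp}$; the indicator restricting $\by \in V$ fails only on a set of measure $O(\delta_n)$ and contributes nothing in the limit. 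Dominated convergence and the identity $\overline{C}_{d,p} \int_{B(\mathbf{0},1)} |\be \cdot \bz|^p \rmd \bz = 1$ (which follows from $\overline{C}_{d,p} = (d+p)/A_{d,p}$) then yield the stated limit. Applied with $u_n \equiv u \in C^\infty(\overline{\cD})$, the same calculation extends to all of $\cD$ (because $B(\bx, \delta_n \sigma(\bx)) \subset \cD$ automatically), and the dominant $C\|u\|_{C^1}^p \sigma^{(1-s_n)p}$ is integrable on the bounded domain, producing $[u]_{\mathfrak{W}^{s_n,p}[\delta_n](\cD)}^p \to [u]_{W^{1,p}(\cD;p-sp)}^p$ for smooth $u$.

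For the second assertion with $u \in W^{1,p}(\cD;p-sp)$, the hypothesis $s_n \le s$ is used to produce the uniform dominant $\sigma^{p - s_n p} \le \max(1,\diam(\cD)^{sp})\, \sigma^{p-sp}$, which gives $[v]_{W^{1,p}(\cD;p-s_n p)}^p \to [v]_{W^{1,p}(\cD;p-sp)}^p$ for any $v$ in the weighted space. I then fix smooth approximations $u^k \in C^\infty(\overline{\cD})$ with $u^k \to u$ in $W^{1,p}(\cD;p-sp)$ (\Cref{thm:FxnSpProp:Weighted}). The upper bound follows from the seminorm triangle inequality combined with \Cref{lma:Embedding:Weighted:Pre},
\[
[u]_{\mathfrak{W}^{s_n,p}[\delta_n](\cD)} \le [u^k]_{\mathfrak{W}^{s_n,p}[\delta_n](\cD)} + C\,[u - u^k]_{W^{1,p}(\cD; p - s_n p)},
\]
by letting $n \to \infty$ (using the global smooth case above) followed by $k \to \infty$. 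For the matching lower bound I would apply the convex inequality $|a|^p \ge (1+\veps)^{-p}|a-b|^p - C(\veps)|b|^p$ with $a = u(\by)-u(\bx)$ and $b = (u-u^k)(\by)-(u-u^k)(\bx)$, integrate against the kernel defining $[\cdot]_{\mathfrak{W}^{s_n,p}[\delta_n](\cD)}^p$, and pass to the limit in the order $n \to \infty$, $k \to \infty$, $\veps \to 0$; the first term tends to $(1+\veps)^{-p}[u^k]_{W^{1,p}(\cD;p-sp)}^p$ by the smooth case, and the penalty term is controlled by $C(\veps)\,C\,[u-u^k]_{W^{1,p}(\cD;p-sp)}^p$ via \Cref{lma:Embedding:Weighted:Pre}.

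The blow-up dichotomy is then handled by contrapositive: if $\liminf [u]_{\mathfrak{W}^{s_n,p}[\delta_n](\cD)}^p =: M < \infty$ along a subsequence, \Cref{thm:Convolution:DerivativeEstimate} bounds $\{K_{\delta_n}u\}$ in $W^{1,p}(\cD;p-s_n p)$, and since $K_{\delta_n} u \to u$ in $L^p(\cD)$ by \Cref{prop:summary-conv-properties}, on each $V_k \Subset \cD$ (where the weight is bounded above and below) a subsequence of $\grad K_{\delta_n}u$ converges weakly to $\grad u$ in $L^p(V_k)$. Weak lower semicontinuity on $V_k$, the uniform convergence $\sigma^{(s_n-s)p}\to 1$ on $V_k$, and monotone convergence as $V_k \nearrow \cD$ yield $[u]_{W^{1,p}(\cD;p-sp)}^p \le C M < \infty$, contradicting $u \notin W^{1,p}(\cD;p-sp)$. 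The main obstacle is securing the \emph{tight} $\liminf$ inequality for $u \in W^{1,p}(\cD;p-sp)$: \Cref{thm:Convolution:DerivativeEstimate} alone produces only a suboptimal constant, so the convex-inequality trick is what reduces the approximation error to the easily controlled term $[u - u^k]_{W^{1,p}(\cD;p-sp)}$.
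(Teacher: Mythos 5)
Your proposal is correct and follows the same overall architecture as the paper's proof: the pointwise/smooth case by change of variables, Taylor expansion and dominated convergence (the paper simply cites the Ponce-type argument); the case $u \in W^{1,p}(\cD;p-sp)$ by density of $C^\infty(\overline{\cD})$ together with the uniform embedding of \Cref{lma:Embedding:Weighted:Pre}/\Cref{cor:Embedding:Weighted} and the weight comparison $\sigma^{p-s_np}\le C\sigma^{p-sp}$ coming from $s_n\le s$; and the blow-up dichotomy via \Cref{thm:Convolution:DerivativeEstimate} applied to $K_{\delta_n}u$ plus identification of the limit through the $L^p$ convergence $K_{\delta_n}u\to u$. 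Two of your steps deviate in mechanics, though both are valid. For the lower bound in the $W^{1,p}$ case you invoke the Young-type convexity inequality $|a|^p\ge (1+\veps)^{-p}|a-b|^p - C(\veps)|b|^p$; the paper gets both bounds at once from the plain seminorm triangle inequality $\bigl|[u]_{\mathfrak{W}^{s_n,p}[\delta_n](\cD)}-[u]_{W^{1,p}(\cD;p-sp)}\bigr|\le [u-\varphi]_{\mathfrak{W}^{s_n,p}[\delta_n](\cD)}+\bigl|[\varphi]_{\mathfrak{W}^{s_n,p}[\delta_n](\cD)}-[\varphi]_{W^{1,p}(\cD;p-sp)}\bigr|+[u-\varphi]_{W^{1,p}(\cD;p-sp)}$, which is simpler and avoids the extra parameter $\veps$. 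For the dichotomy, the paper fixes an auxiliary $\bar{s}<s$, uses the global compact embedding of $W^{1,p}(\cD;p-\bar{s}p)$ into $L^p(\cD)$ (\Cref{thm:FxnSpProp:Weighted} item 4) and \Cref{thm:diffuKu:Weighted}, then lets $\bar{s}\to s$ by Fatou; you instead localize to an exhaustion $V_k\Subset\cD$, exploit the locally uniform convergence $\sigma^{(s_n-s)p}\to 1$ and weak lower semicontinuity, and pass to the limit by monotone convergence, which avoids both the auxiliary exponent and the compactness theorem at the cost of the extraction/identification argument on each $V_k$. One cosmetic imprecision: the boundary strip of $V$ where the indicator $\mathds{1}_{V}(\by)$ is active need not have measure $O(\delta_n)$ for an arbitrary $V\Subset\cD$, but this does not matter since, as in your dominated-convergence argument, for each fixed $\bx\in V$ the indicator is eventually identically one and the integrand is uniformly bounded.
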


		\begin{proof}
			The proof of the first statement follows exactly the same steps as \cite[Proposition 4.1, Remarks 4.1 and 4.2]{ponce2004new}. The heterogeneous localization $\sigma(\bx)$ gives no additional difficulty.  The proof of the second statement in the case $u \in W^{1,p}(\cD;p-sp)$ follows exactly the same steps as \cite[Theorem 1.1]{ponce2004new}, 
			Indeed, for $u\in W^{1,p}(\cD;p-sp)$, by density for any $\veps >0$, there exists $\varphi\in C^\infty(\overline{\cD})$ such that $[u-\varphi]_{W^{1,p}(\cD;p-sp)}<\veps$. Then we have the estimate 
			\begin{align*}
				&\left|[u]_{\mathfrak{W}^{s_n,p}[\delta_n](\cD)} - [u]_{W^{1,p}(\cD;p-sp)} \right|\\
				& \qquad\leq  \left|[u]_{\mathfrak{W}^{s_n,p}[\delta_n](\cD)} - [\varphi]_{\mathfrak{W}^{s_n,p}[\delta_n](\cD)}\right|+\left|[\varphi]_{\mathfrak{W}^{s_n,p}[\delta_n](\cD)} - [\varphi]_{W^{1,p}(\cD;p-sp)}\right|\\
				& \qquad\qquad+\left| [\varphi]_{W^{1,p}(\cD;p-sp)} - [u]_{W^{1,p}(\cD;p-sp)}\right|\\
				&  \qquad \leq  [u-\varphi]_{\mathfrak{W}^{s_n,p}[\delta_n](\cD)}+\left|[\varphi]_{\mathfrak{W}^{s_n,p}[\delta_n](\cD)} - [\varphi]_{W^{1,p}(\cD;p-sp)}\right|+[u-\varphi]_{W^{1,p}(\cD;p-sp)}.
			\end{align*}
			But by \Cref{cor:Embedding:Weighted}, we have 
			\begin{align*}
				[u-\varphi]^{p}_{\mathfrak{W}^{s_n,p}[\delta_n](\cD)} &\leq  \frac{C(d)}{(1-\delta_n)^{p-s_n p+1}} [u-\varphi]_{W^{1,p}(\cD;p-s_n p)}^p\\
				&\leq \frac{C}{(1-\underline{\delta}_0^{p+1})}\int_{\cD}|\nabla (u-\varphi)(\bx)|^p \sigma(\bx)^{p-s p}\rmd\bx\\
				&= \frac{C}{(1-\underline{\delta}_0^{p+1})} [u-\varphi]^p_{W^{1, p}(\cD; p-sp)},
			\end{align*}
			since $s_n\leq s$ and $\sigma(\bx)^{sp-s_np}\le C$ uniformly in $n$ and $\bx$. 
			As a consequence, we have that 
			\begin{align*}
				\left|[u]_{\mathfrak{W}^{s_n,p}[\delta_n](\cD)} - [u]_{W^{1,p}(\cD;p-sp)} \right|
				&\leq  2C [u-\varphi]_{W^{1, p}(\cD; p-sp)} 
				\\
				&\qquad + \left|[\varphi]_{\mathfrak{W}^{s_n,p}[\delta_n](\cD)} - [\varphi]_{W^{1,p}(\cD;p-sp)}\right|.
			\end{align*}
			We now let $n\to \infty$ and use the first statement of this proposition to obtain that $\limsup_{n\to \infty} \left|[u]_{\mathfrak{W}^{s_n,p}[\delta_n](\cD)} - [u]_{W^{1,p}(\cD;p-sp)} \right| \leq 2C\veps$. The second statement in the theorem for the case $u \in W^{1,p}(\cD;p-sp)$ now follows since $\veps$ is arbitrary.
			
			To prove the remainder of the second statement, it suffices to show that for any $u \in L^p(\cD)$
			\begin{equation*}
				\liminf_{n\to\infty} [u]_{\mathfrak{W}^{s_n,p}[\delta_n](\cD)} < \infty \quad \Rightarrow \quad u \in W^{1,p}(\cD;p-sp);
			\end{equation*}
			we present a simple argument using the boundary-localized convolution. 
			Let $M := \liminf_{n\to\infty} [u]_{\mathfrak{W}^{s_n,p}[\delta_n](\cD)}$, and choose a subsequence $\{ [u]_{\mathfrak{W}^{s_{n'},p}[\delta_{n'}](\cD)} \}_{n'}$ that converges to $M$.
			Fix $\bar{s} \in (0,s)$. Then since $s_n\to s$, \Cref{thm:Convolution:DerivativeEstimate} implies that there exists $N \in \bbN$ depending on $\bar{s}$ such that for all $n'\geq N$ 
			\begin{equation}\label{eq:uniform-bound}
				[ K_{\delta_{n'}} u]_{W^{1,p}(\cD;p-\bar{s} p)}
				\leq C \,[ K_{\delta_{n'}} u]_{W^{1,p}(\cD;p-s_{n'} p)}
				\leq
				C 
				[u]_{\mathfrak{W}^{s_{n'},p}[\delta_{n'}](\cD)}.
			\end{equation} 
			Thus $\{ K_{\delta_{n'}} u\}_{{n'} \geq N}$ is uniformly bounded in $W^{1,p}(\cD;p-\bar{s}p)$.
			Hence, there exists a subsequence $\{K_{\delta_{n''}} \}_{n''}$ that converges weakly to a function $v$  in $W^{1,p}(\cD;p-\bar{s}p)$ and by \Cref{thm:FxnSpProp:Weighted} item 4) $K_{\delta_{n''}} u$ converges to $v$ strongly in $L^p(\cD)$ as $n'' \to \infty$. 
			But we know from \Cref{thm:diffuKu:Weighted} that $\vnorm{K_{\delta_{n''}}  u - u}_{L^p(\cD)} \leq C \delta_{n''}$, hence $K_{\delta_{n''}} u$ converges to $u$ strongly in $L^p(\cD)$ as $n \to \infty$. Thus $v = u$ in the sense of functions in $L^p(\cD)$. 
			Therefore, their weak derivatives coincide, and by \eqref{eq:uniform-bound}
			\begin{equation*}
				\begin{split}
					[u]_{W^{1,p}(\cD;p-\bar{s}p)} 
					= [v]_{W^{1,p}(\cD;p-\bar{s}p)} 
					&\leq \liminf_{n'' \to \infty} [K_{\delta_{n''}} u]_{W^{1,p}(\cD;p-\bar{s}p)} \\
					&\leq C \liminf_{n'' \to \infty} [u]_{\mathfrak{W}^{s_{n''},p}[\delta_{n''}](\cD)}
					= C M.
				\end{split}
			\end{equation*}
			The constants $C$ and $M$ are independent of $\bar{s}$; thus we conclude that $u\in W^{1,p}(\cD;p-sp)$ using Fatou's lemma by taking $\bar{s}\to s$ in the above inequality.
		\end{proof}
		The following theorem gives an $L^{p}$-compactness criterion in the spirit of \cite{bourgain2001,ponce2004new}. 
		\begin{theorem}\label{thm:Compactness}
			Let $p \in (1,\infty)$ and $s \in (0,1]$. Let $\{\delta_n \}_{n \in \bbN}$ be a sequence that converges to $0$, $\{s_n\}_{n\in\bbN}$ be a sequence in $(0,1]$ that converges to $s$, and let $\{ u_n \}_n \subset \mathfrak{W}^{s_n,p}[\delta_n](\cD)$ be a sequence such that for constants $B$ and $C$ independent of $n$ 
			\begin{equation*}
				\sup_{n} \vnorm{u_n}_{L^p(\cD)} \leq C < \infty \quad \text{ and } \quad \sup_{n } [u_n]_{\mathfrak{W}^{s_n,p}[\delta_n](\cD)} := B < \infty.
			\end{equation*}
			Then $\{ u_n \}_n$ is precompact in the strong topology of $L^p(\cD)$. Moreover, any limit point $u$ belongs to $W^{1,p}(\cD;p-sp)$, with $[u]_{W^{1,p}(\cD;p-sp)} \leq B$.

		\end{theorem}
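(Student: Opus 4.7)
The proof plan is to reduce the compactness problem to one in a weighted Sobolev space with fixed weight, by replacing each $u_n$ with its boundary-localized convolution $v_n := K_{\delta_n} u_n$, exploiting that $v_n$ is smooth and enjoys a weighted gradient estimate via \Cref{thm:Convolution:DerivativeEstimate}, while the error $u_n - v_n$ goes to zero in $L^p(\cD)$ thanks to \Cref{thm:diffuKu:Weighted}. Once a weighted Sobolev compactness result is applied to $\{v_n\}$, the compactness transfers to $\{u_n\}$.

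To set this up, I would first bound $\{v_n\}$ uniformly in a fixed weighted space. By \Cref{continuity-of-Kdelta} and \Cref{thm:Convolution:DerivativeEstimate}, we have $\|v_n\|_{L^p(\cD)}^p + [v_n]_{W^{1,p}(\cD; p - s_n p)}^p \leq C(C^p + B^p)$. Picking any $\bar{s} \in (0, s)$, eventually $s_n > \bar{s}$, and since $\sigma$ is bounded, $\sigma^{p-\bar{s}p} \leq \diam(\cD)^{(s_n-\bar{s})p} \sigma^{p-s_n p}$, so $\{v_n\}$ is uniformly bounded in the fixed weighted space $W^{1,p}(\cD; p-\bar{s}p)$. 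The compact embedding $W^{1,p}(\cD; p-\bar{s}p) \hookrightarrow\hookrightarrow L^p(\cD)$ from \Cref{thm:FxnSpProp:Weighted} then produces a subsequence with $v_{n_k} \to u$ strongly in $L^p(\cD)$. Since $\|u_n - v_n\|_{L^p(\cD)} \leq C \delta_n \diam(\cD)^{s_n} B \to 0$ by \Cref{thm:diffuKu:Weighted}, also $u_{n_k} \to u$ in $L^p(\cD)$, establishing precompactness.

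To identify the limit, reflexivity of $W^{1,p}(\cD; p-\bar{s}p)$ yields a further subsequence with $v_{n_k} \rightharpoonup u$ weakly in that space; weak lower semicontinuity followed by Fatou's lemma as $\bar{s} \nearrow s$ places $u$ in $W^{1,p}(\cD; p-sp)$.

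The main obstacle I expect is \emph{sharpness}: the convolution estimate in \Cref{thm:Convolution:DerivativeEstimate} carries a constant $C = C(d, p, \psi, \kappa_0, \kappa_1)$ strictly larger than $1$, so the weak lower-semicontinuity argument above only yields $[u]_{W^{1,p}(\cD; p-sp)} \leq C B$ rather than the stated $\leq B$. To upgrade the bound, I would adapt the approximation scheme from the proof of the second statement of \Cref{thm:LocalizationOfSeminorm}: once $u$ is known to lie in $W^{1,p}(\cD; p-sp)$, approximate $u$ by $\varphi \in C^\infty(\overline{\cD})$ in the weighted norm, split the seminorms $[u_n - \varphi]_{\mathfrak{W}^{s_n,p}[\delta_n](\cD)}$ versus $[\varphi]_{\mathfrak{W}^{s_n,p}[\delta_n](\cD)}$ as in that proof, apply the first statement of \Cref{thm:LocalizationOfSeminorm} (which converts nonlocal to weighted seminorms with constant exactly $1$ for smooth objects) to the $\varphi$-term, and use the $L^p$-convergence $u_{n_k}\to u$ together with weak lower semicontinuity only on the remainder. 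A diagonal procedure in $\varphi$ then delivers $[u]_{W^{1,p}(\cD; p-sp)} \leq \liminf_n [u_n]_{\mathfrak{W}^{s_n, p}[\delta_n](\cD)} \leq B$.
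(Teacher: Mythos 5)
Your first two steps are sound and essentially parallel the paper's: the paper also mollifies with $K_{\delta_n}$, uses \Cref{thm:diffuKu:Weighted} to control $u_n-K_{\delta_n}u_n$ in $L^p$, and gets compactness by pushing $K_{\delta_n}u_n$ into a fixed space with a compact embedding (the paper uses $W^{s',p}(\cD)\hookrightarrow\hookrightarrow L^p(\cD)$ via \eqref{eq:ConvEst:Deriv:s:Opt}, you use $W^{1,p}(\cD;p-\bar sp)\hookrightarrow\hookrightarrow L^p(\cD)$ via \Cref{thm:Convolution:DerivativeEstimate} and \Cref{thm:FxnSpProp:Weighted}; both work, and your weight comparison $\sigma^{p-\bar sp}\le \diam(\cD)^{(s_n-\bar s)p}\sigma^{p-s_np}$ is correct). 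The membership $u\in W^{1,p}(\cD;p-sp)$ with the non-sharp bound $CB$ also goes through.

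The gap is in your upgrade to $[u]_{W^{1,p}(\cD;p-sp)}\le B$. The splitting you want to borrow from the proof of the second statement of \Cref{thm:LocalizationOfSeminorm} works there only because the \emph{same fixed} function appears in both seminorms: the error term $[u-\varphi]_{\mathfrak{W}^{s_n,p}[\delta_n](\cD)}$ is small because it is dominated, via \Cref{cor:Embedding:Weighted}, by $[u-\varphi]_{W^{1,p}(\cD;p-sp)}$, which is small by the choice of $\varphi$. In your setting the error term is $[u_n-\varphi]_{\mathfrak{W}^{s_n,p}[\delta_n](\cD)}$, and you only know $u_n\to u$ in $L^p(\cD)$; $L^p$-closeness gives no control whatsoever on this nonlocal seminorm (it can be of size $[u_n]+[\varphi]\approx 2B$), and "weak lower semicontinuity on the remainder'' has no meaning here since the seminorm index $(s_n,\delta_n)$ changes with $n$ and the estimate would in any case run through the triangle inequality in the wrong direction: you would end up with $[u]_{W^{1,p}}\le B+2[\varphi]_{W^{1,p}}+o(1)$, which is vacuous. (Citing \Cref{Prop:LIMINF-INEQUALITY} instead would be circular, since its proof uses \Cref{thm:Compactness}.) The paper obtains the sharp constant by a different mechanism: it mollifies with a convolution parameter $\veps$ \emph{decoupled} from $\delta_n$, and uses \Cref{lem:with-phiepsilon}, whose constant is $\phi(\veps)\to 1$ rather than a fixed $C>1$, to bound the nonlocal energy of $K_\veps u_n$ on $\{\sigma>\varrho\}$ by $\phi(\veps)B^p$; since for fixed $\veps$ the $L^p$-convergence $u_n\to u$ upgrades to $C^2$-convergence of $K_\veps u_n\to K_\veps u$ on compact subsets, the first statement of \Cref{thm:LocalizationOfSeminorm} lets one pass $n\to\infty$, giving $\int_{\{\sigma>\varrho\}}|\grad K_\veps u|^p\sigma^{p-sp}\,\rmd\bx\le\phi(\veps)B^p$, after which $\varrho\to 0$ and then $\veps\to 0$ (using $\phi(\veps)\to1$ and \Cref{prop:conv-convergence-Sobolev-spaces}, \Cref{thm:diffuKu:Weighted}) yield $[u]_{W^{1,p}(\cD;p-sp)}\le B$. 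Some device of this kind, with constants tending to $1$, is needed; your proposed diagonal argument does not supply it.
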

		
		The proof for $s_n=s = 1$ is in \cite{scott2024nonlocal}; we write here a similar proof for general $s \in (0,1)$.

		\begin{proof}
			It suffices to show that a subsequence of $\{u_n\}$ is Cauchy in $L^p(\cD)$. 
			Choose $\psi$ to satisfy \eqref{Assump:Kernel}-\eqref{Assump:Kernel} and $\eta$ to satisfy \eqref{assump:Localization}, and define $K_{\delta_n} u$ accordingly.
			First, we use \eqref{eq:KdeltaError} to see that
			\begin{equation*}
				\Vnorm{ u_n -  K_{\delta_n} u_n }_{L^{p}(\cD)} \leq C\delta_n [u_n]_{\mathfrak{W}^{s_n,p}[\delta_n](\cD)} \leq C B\delta_n.
			\end{equation*}
			Using \Cref{cor:Trace:NonlocalSpace}
			\begin{equation*}
				\Vnorm{ K_{\delta_n} u_n }_{W^{s_n,p}(\cD)} \leq C \Vnorm{ u_n }_{\mathfrak{W}^{s_n,p}[\delta_n](\cD)}.
			\end{equation*}    
			Therefore, the sequence $\{ K_{\delta_n} u_n \}_{n \in \bbN}$ is bounded in $W^{s_n,p}(\cD)$. Fix any $s'\in (0, s)$ so that for $n$ large enough $s_n > s'$. Then $\{ K_{\delta_n} u_n \}_{n \in \bbN}$ is bounded in $W^{s',p}(\cD)$, 
			hence by the compact embedding of $W^{s',p}(\cD)$ into $L^p(\cD)$  it is precompact in $L^p(\cD)$, and so for a convergent subsequence $\{ K_{\delta_n} u_n \}$ in $L^p(\cD)$ (not relabeled) and for $n$, $m \in \bbN$
			\begin{equation*}
				\begin{split}
					\Vnorm{ u_n - u_m }_{L^p(\cD)} 
					&\leq \Vnorm{ K_{\delta_n} u_n - u_n }_{L^p(\cD)} \\
					&\qquad + \Vnorm{ K_{\delta_m} u_m - u_m }_{L^p(\cD)} + \Vnorm{ K_{\delta_n} u_n - K_{\delta_m} u_m }_{L^p(\cD)} \\
					&\leq C B( \delta_m + \delta_n) + \Vnorm{ K_{\delta_n} u_n - K_{\delta_m} u_m }_{L^p(\cD)} \to 0 \text{ as } m,n \to \infty\,.
				\end{split}
			\end{equation*}
			To see that any limit $u$
			belongs to $W^{1,p}(\cD;p-sp)$, we use an argument similar to the one used in the proof of \Cref{thm:Density}.
			Let $\veps \in (0,\underline{\delta}_0)$ and $0 < \varrho < \diam(\cD)$ be given.
			Applying the estimate \eqref{eq:Comp:ConvolutionEstimate2} from \Cref{lem:with-phiepsilon} with fixed $\varrho$ yields
			\begin{align*}
				& \overline{C}_{d,p} \int_{\cD \cap \{ \sigma(\bx) > \varrho  \} }\int_{\cD  \cap \{ \sigma(\bx) > \varrho  \} \cap B(\bx, \theta(\veps)\delta_n \sigma(\bx))} \frac{\left|K_\veps u_n(\bx)- K_\veps u_n(\by)\right|^p}{(\theta(\veps)\delta_n)^{d+ p}\sigma(\bx)^{d + s_n p}} \rmd\by \rmd\bx \\
				\le & \overline{C}_{d,p} \phi(\veps)\int_{\cD\cap \{\sigma(\bx) > \frac{\varrho}{\phi(\veps)}\}}\int_{B(\bx, \delta_n \sigma(\bx))} \frac{\left|u_n(\bx)- u_n(\by)\right|^p}{\delta_n^{d+p}\sigma(\bx)^{d+s_n p}}\rmd\by \rmd\bx\\
				\le & \overline{C}_{d,p} \phi(\veps)\int_{\cD}\int_{B(\bx, \delta_n \sigma(\bx))} \frac{\left|u_n(\bx)- u_n(\by)\right|^p}{\delta_n^{d+p}\sigma(\bx)^{d+s_n p}}\rmd\by \rmd\bx \le \phi(\veps)B^p.
			\end{align*}
			Now, for any fixed $\veps > 0$ the sequence $\{K_\veps u_n \}_{n}$ converges to $K_\veps u$ in $C^2( \{ \bx \in \cD \, : \, \sigma(\bx) \geq \varrho\} )$ as $n \to \infty$, since $\cD \cap \{ \sigma(\bx) > \varrho \} \subset \subset\cD$. Therefore we can use \Cref{thm:LocalizationOfSeminorm} when taking $n \to \infty$ in the previous inequality to get
			\begin{equation*}
				\int_{\cD \cap \{ \sigma(\bx) > \varrho \} } |\grad K_\veps u|^p \sigma(\bx)^{p-sp} \, \rmd \bx \leq \phi(\veps) B^p.
			\end{equation*}
			Letting $\varrho\to 0$ yields \begin{equation}\label{eq:cmp-estimate}
				\int_{\cD } |\grad K_\veps u|^p \sigma(\bx)^{p-sp} \, \rmd \bx \leq \phi(\veps) B^p.
			\end{equation}
			That estimate is uniform in $\veps$ for any $\veps \in (0, \underline{\delta}_0)$. 
			Thus by \Cref{thm:FxnSpProp:Weighted} $\{K_\veps u\}_{\veps}$ converges to a function $v$ weakly in $W^{1,p}(\cD;p-sp)$ and strongly in $L^p(\cD)$. But since $K_\veps u \to u$ strongly in $L^p(\cD)$ by \Cref{thm:diffuKu:Weighted}, it follows that $v = u$ as functions in $L^p(\cD)$, and moreover their weak derivatives coincide. Therefore $u \in W^{1,p}(\cD;p-sp)$. Finally, we can take $\veps\to 0$ in \eqref{eq:cmp-estimate} by using using \Cref{prop:conv-convergence-Sobolev-spaces}, and noting from \Cref{lem:with-phiepsilon} that $\phi(\veps)\to 1$ as $\veps \to 0$, we obtain
			$[u]_{W^{1,p}(\cD;p-sp)} \leq B$. 
		\end{proof}

\subsection{Poincar\'e-type inequalities} 
In this subsection we prove Poincar\'e-type inequalities that will be valid for all functions in the nonlocal space $\mathfrak{W}^{s,p}[\delta](\cD)$ satisfying certain boundary conditions. 
We start with a space that we will use to study a  Dirichlet boundary value problem.

For a bounded Lipschitz domain $\cD \subset \bbR^d$ and a closed set $\p \cD_D \subset \p \cD$ with $\scH^{d-1}(\p\cD_D) > 0$, we define the Banach space
\begin{equation*}
	\begin{split}
		\mathfrak{W}^{s,p}_{0,\p \cD_D}[\delta](\cD) :=  \text{ closure of } C^{\infty}_c(\overline{\cD} \setminus \p \cD_D) \text{ with respect to the norm } \Vnorm{\cdot}_{\mathfrak{W}^{s,p}[\delta](\cD)}.
	\end{split}
\end{equation*}
In the event that $\p\cD_D =\p \cD$, we write $\mathfrak{W}^{s,p}_{0}[\delta](\cD) := \mathfrak{W}^{s,p}_{0,\p \cD}[\delta](\cD)$. We may apply the weak continuity of the trace operator discussed in \Cref{rmk:StrongConvTrace} to prove that functions in $\mathfrak{W}^{s,p}_{0,\p \cD_D}[\delta](\cD)$ have zero trace on $\p \cD_D$.  
\begin{lemma}\label{cor:TraceZero}
	Let $1 < p < \infty$ and $s \in (0,1]$ with $sp > 1$. Then a function $u$ belongs to $\mathfrak{W}^{s,p}_{0,\p \cD_D}[\delta](\cD)$ if and only if $u \in \mathfrak{W}^{s,p}[\delta](\cD)$ and $T u = 0$ on $\p \cD_D$\,.
\end{lemma}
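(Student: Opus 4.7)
The statement is a two-sided characterization, so I would prove the implications separately.

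\emph{Forward direction.} Suppose $u\in\mathfrak{W}^{s,p}_{0,\p\cD_D}[\delta](\cD)$ and let $\{u_n\}\subset C^\infty_c(\overline{\cD}\setminus\p\cD_D)$ converge to $u$ in $\mathfrak{W}^{s,p}[\delta](\cD)$. Each $u_n$ vanishes in an open neighborhood of $\p\cD_D$, so $Tu_n\equiv 0$ on $\p\cD_D$. The continuity of the trace operator established in \Cref{thm:TraceTheorem} yields $Tu_n\to Tu$ in $W^{s-1/p,p}(\p\cD)$, hence $Tu=0$ on $\p\cD_D$.

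\emph{Reverse direction.} Given $u\in\mathfrak{W}^{s,p}[\delta](\cD)$ with $Tu=0$ on $\p\cD_D$, I plan a two-step construction of an approximating sequence. First, smoothing: by the argument in the proof of \Cref{thm:Density}, the boundary-localized convolution $K_\veps u\in C^\infty(\overline{\cD})$ converges to $u$ in $\mathfrak{W}^{s,p}[\delta](\cD)$ as $\veps\to 0$, and by \Cref{cor:Trace:NonlocalSpace}, $TK_\veps u = Tu = 0$ on $\p\cD_D$. Moreover \Cref{thm:Convolution:DerivativeEstimate} places $K_\veps u$ in $W^{1,p}(\cD;p-sp)$, so it enjoys Hardy-type decay toward $\p\cD_D$. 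Second, cutoff: choose $\chi_n\in C^\infty(\bbR^d)$ with $0\le\chi_n\le 1$, $\chi_n\equiv 0$ on the $1/(2n)$-tube about $\p\cD_D$, $\chi_n\equiv 1$ outside the $1/n$-tube, and $|\nabla\chi_n|\le Cn$. Then $w_{n,\veps}:=\chi_n K_\veps u$ is smooth on $\overline{\cD}$ with support bounded away from $\p\cD_D$, and a final mollification of radius $o(1/n)$ yields elements of $C^\infty_c(\overline{\cD}\setminus\p\cD_D)$.

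It remains to verify that $w_{n,\veps}\to K_\veps u$ in $\mathfrak{W}^{s,p}[\delta](\cD)$ as $n\to\infty$ for fixed $\veps$. The $L^p$ convergence is immediate. For the seminorm I would write $K_\veps u - w_{n,\veps}=(1-\chi_n)K_\veps u$ and split the integrand via the inequality
\begin{equation*}
|a(\bx)f(\bx)-a(\by)f(\by)|^p \le 2^{p-1}\bigl(|f(\bx)|^p|a(\bx)-a(\by)|^p + |a(\by)|^p|f(\bx)-f(\by)|^p\bigr),
\end{equation*}
with $a:=1-\chi_n$ and $f:=K_\veps u$. The ``$f$-difference'' piece is weighted by $|1-\chi_n(\by)|^p$ and tends to zero as $n\to\infty$ by dominated convergence applied to the finite seminorm $[K_\veps u]_{\mathfrak{W}^{s,p}[\delta](\cD)}$.

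\emph{Main obstacle.} The ``$a$-difference'' piece is the crux, since $|\nabla\chi_n|\sim n$ blows up. The plan is to absorb this blowup using the vanishing of $K_\veps u$ on $\p\cD_D$: since $K_\veps u\in C^1(\overline{\cD})$ and $K_\veps u|_{\p\cD_D}=0$, one has $|K_\veps u(\bx)|\le C\dist(\bx,\p\cD_D)\le Cn^{-1}$ on $\supp(1-\chi_n)$, while $|\chi_n(\bx)-\chi_n(\by)|\le Cn|\bx-\by|$. These two factors cancel, and the residual integral $\int_{\{\dist(\bx,\p\cD_D)<1/n\}}\cdots\,\rmd\bx$ tends to zero by absolute continuity. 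The delicate point is the ``corner'' set $\overline{\p\cD_D}\cap\overline{\p\cD\setminus\p\cD_D}$, where the weight $\sigma(\bx)^{-d-sp}$ in the seminorm kernel and $\dist(\bx,\p\cD_D)$ degenerate simultaneously; here I would invoke \Cref{lem:with-phiepsilon} with $\varrho\sim 1/n$ to transfer the seminorm computation of $w_{n,\veps}$ back onto a shrinking tubular neighborhood of $\p\cD_D$, where the seminorm of $K_\veps u$ is small. Once this Hardy-type balance is established, the proof is complete.
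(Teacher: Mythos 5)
Your forward direction coincides with the paper's (continuity of the trace from \Cref{thm:TraceTheorem} applied to the approximating sequence), and the first step of your reverse direction — smoothing with $K_{\veps}$, using \Cref{thm:Density}, \Cref{cor:Trace:NonlocalSpace} and \Cref{thm:Convolution:DerivativeEstimate} — is exactly how the paper begins. The divergence, and the gap, is in your cutoff step. You claim that $K_\veps u \in C^1(\overline{\cD})$ with $K_\veps u|_{\p \cD_D}=0$, hence the pointwise bound $|K_\veps u(\bx)|\le C\,\dist(\bx,\p\cD_D)$ on $\supp(1-\chi_n)$, and the whole "Hardy-type balance" against $|\nabla\chi_n|\sim n$ rests on this. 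That claim is unjustified: for a general $u\in\mathfrak{W}^{s,p}[\delta](\cD)$, which is merely $L^p$ away from the boundary, the boundary-localized convolution is smooth only in the open set $\cD$ (\Cref{continuity-of-Kdelta}), and because the mollification radius $\veps\eta(\bx)$ degenerates at $\p\cD$, its gradient is controlled only in the weighted integral sense of \Cref{thm:Convolution:DerivativeEstimate} ($\eta^{1-s}\nabla K_\veps u\in L^p$); $K_\veps u$ need not be Lipschitz, or even continuous, up to $\p\cD$, and its vanishing trace on $\p\cD_D$ is only a Sobolev-trace statement, not a pointwise one. So the cancellation $n\cdot n^{-1}$ you invoke is not available as written, and the subsequent dominated-convergence and $\varrho\sim 1/n$ arguments have nothing to feed on.

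What you do have after smoothing is $K_\veps u\in W^{1,p}(\cD;p-sp)$ with $T K_\veps u=0$ on $\p\cD_D$, and the correct replacement for your pointwise Hardy bound is either an integral Hardy-type inequality in this weighted space (delicate precisely at the set where $\p\cD_D$ meets the rest of $\p\cD$), or — as the paper does — to sidestep the cutoff entirely: by \Cref{thm:FxnSpProp:Weighted} item 5), $K_{\veps_n}u\in W^{1,p}_{0,\p\cD_D}(\cD;p-sp)$, so there exist $v_n\in C^\infty_c(\overline{\cD}\setminus\p\cD_D)$ with $\|v_n-K_{\veps_n}u\|_{W^{1,p}(\cD;p-sp)}<1/n$, and the embedding \Cref{cor:Embedding:Weighted} converts this into smallness in the $\mathfrak{W}^{s,p}[\delta]$-norm, giving $\|v_n-u\|_{\mathfrak{W}^{s,p}[\delta](\cD)}\le (1+C)/n$. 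If you want to keep your hands-on cutoff construction, you must carry it out in the weighted Sobolev space (where the needed Hardy inequality and the treatment of the interface set are exactly the content hidden in the appendix result) and only then pass back to the nonlocal norm via the embedding; as proposed, the argument does not close.
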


\begin{proof}
	The forward implication is clear thanks to continuity of the trace; we will provide a proof for the reverse implication. Assume that $T u = 0$ on $\p \cD_D$.
	Then, for each $n \in \bbN$ there exists $\veps_n > 0$ such that the boundary localized convolution $K_{\veps_n} u$ satisfies $\vnorm{K_{\veps_n} u - u}_{\mathfrak{W}^{s,p}[\delta](\Omega)} < \frac{1}{n}$.
	The choice of such $\veps_n$ is possible by the argument in the proof of \Cref{thm:Density}.
	Moreover, for each $n$, $K_{\veps_n} u$ satisfies $T K_{\veps_n} u = 0$ on $\p \cD_D$ by \Cref{cor:Trace:NonlocalSpace}. Hence $K_{\veps_n} u \in W^{1,p}_{0,\p \cD_D}(\Omega;p-sp)$ by \eqref{eq:ConvEst:Deriv}, \Cref{thm:InvariantHorizon}, and \Cref{thm:FxnSpProp:Weighted} item 5). Additionally by \Cref{thm:FxnSpProp:Weighted} item 5), for each $n \in \bbN$ there exists $v_n \in C^\infty_c(\overline{\cD} \setminus \p \cD_D)$ such that $\vnorm{ v_n - K_{\veps_n} u }_{W^{1,p}(\Omega;p-sp)} < \frac{1}{n}$. The sequence $\{v_n\}_n$ is the desired sequence; indeed, by \Cref{cor:Embedding:Weighted}
	\begin{equation*}
		\vnorm{v_n-u}_{\mathfrak{W}^{s,p}[\delta](\Omega)} \leq \vnorm{K_{\veps_n} u - u}_{\mathfrak{W}^{s,p}[\delta](\Omega)} + C \vnorm{v_n-K_{\veps_n} u}_{W^{1,p}(\Omega;p-sp)} \leq \frac{1+C}{n},
	\end{equation*}
	which completes the proof.
\end{proof}

The following Poincar\'e-type inequality is valid for functions in $\mathfrak{W}^{s,p}_{0,\p \cD_D}[\delta](\cD)$. 
\begin{theorem}\label{thm:PoincareDirichlet}
	Let $1 \leq p < \infty$ and $s \in (0,1]$ with $sp > 1$. Then there exists a constant $C_D=C_D(d,p,\cD,\kappa_0,\kappa_1)$
	such that for all $\delta < \underline{\delta}_0$ and all $s \in (0,1)$
	\begin{equation*}
		\Vnorm{u}_{L^p(\cD)} \leq C_D [u]_{\mathfrak{W}^{s,p}[\delta](\cD)}\,
		,\quad \forall u \in \mathfrak{W}^{s,p}_{0,\p \cD_D}[\delta](\cD)\,.
	\end{equation*}
\end{theorem}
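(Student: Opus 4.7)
The strategy is to split $u$ as $(u - K_\delta u) + K_\delta u$, where $K_\delta$ is the boundary-localized convolution from \Cref{sec:convolution}. The first piece is controlled by the quantitative error estimate of \Cref{thm:diffuKu:Weighted}, while the second is a smooth function belonging to a classical (weighted) Sobolev space, so one can apply a standard Poincar\'e inequality there. The zero-trace condition on $\p \cD_D$ is preserved by $K_\delta$ thanks to \Cref{cor:Trace:NonlocalSpace} together with \Cref{cor:TraceZero}.

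Concretely, I would begin from
\[
\Vnorm{u}_{L^p(\cD)} \leq \Vnorm{u - K_\delta u}_{L^p(\cD)} + \Vnorm{K_\delta u}_{L^p(\cD)} .
\]
For the first term, \eqref{eq:KdeltaError} gives directly
$\Vnorm{u - K_\delta u}_{L^p(\cD)} \leq C \delta \diam(\cD)^{s} [u]_{\mathfrak{W}^{s,p}[\delta](\cD)}$,
with $C$ depending only on $d, p, \psi, \kappa_0$; since $\delta < \underline{\delta}_0 < 1$ and $s \leq 1$, the prefactor is bounded by a constant depending only on $\cD$. For the second term, \Cref{cor:TraceZero} together with \Cref{cor:Trace:NonlocalSpace} shows that $T K_\delta u = Tu = 0$ on $\p \cD_D$, and by \Cref{thm:Convolution:DerivativeEstimate} we have $K_\delta u \in W^{1,p}(\cD; p-sp)$ with $[K_\delta u]_{W^{1,p}(\cD;p-sp)} \leq C [u]_{\mathfrak{W}^{s,p}[\delta](\cD)}$. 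Applying the classical weighted Poincar\'e inequality on $W^{1,p}(\cD; p-sp)$ for functions with vanishing trace on $\p \cD_D$ (collected in the appendix, \Cref{thm:FxnSpProp:Weighted}) yields
$\Vnorm{K_\delta u}_{L^p(\cD)} \leq C [K_\delta u]_{W^{1,p}(\cD;p-sp)} \leq C [u]_{\mathfrak{W}^{s,p}[\delta](\cD)}$.
Summing the two bounds gives the claim.

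The main obstacle is ensuring that the Poincar\'e constant for $W^{1,p}(\cD; p-sp)$ can be chosen independently of $s$ and $\delta$, since the weight exponent $p-sp$ varies with $s$. For $s \in (1/p, 1]$ the exponent lies in the bounded range $[0, p-1)$, where $\sigma^{p-sp}$ is a Muckenhoupt $A_p$ weight with uniformly bounded characteristic, so a single constant suffices. If the version of \Cref{thm:FxnSpProp:Weighted} cited in the appendix does not already record this uniformity, I would fall back on the embedding $W^{1,p}(\cD; p-sp) \subset W^{s,p}(\cD)$ from \Cref{thm:WeightedEst} and reduce to a Poincar\'e inequality on $W^{s,p}(\cD)$ for functions vanishing on $\p \cD_D$; a standard contradiction-compactness argument, using the compactness afforded by \Cref{thm:Compactness} and the weak continuity of the trace operator recorded in \Cref{rmk:StrongConvTrace}, then produces a constant uniform for $s$ bounded away from $1/p$, which is the range of interest since $sp > 1$.
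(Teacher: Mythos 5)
Your proposal is correct and follows essentially the same path as the paper's proof: decompose $u = K_\delta u + (u - K_\delta u)$, bound the error term by \Cref{thm:diffuKu:Weighted}, carry the zero trace over to $K_\delta u$ via \Cref{cor:Trace:NonlocalSpace} and \Cref{cor:TraceZero}, and apply a Poincar\'e inequality to the mollified function. The only (immaterial) difference is that you route $K_\delta u$ through $W^{1,p}(\cD;p-sp)$ and the weighted Poincar\'e inequality of \Cref{thm:FxnSpProp:Weighted}, whereas the paper puts $K_\delta u \in W^{s,p}_{0,\p\cD_D}(\cD)$ via \eqref{eq:ConvEst:Deriv:s:Opt} and applies the $s$-uniform fractional Poincar\'e inequality \Cref{thm:Poincare:Fractional}; since that appendix item is itself deduced from \Cref{thm:Poincare:Fractional} and \Cref{thm:WeightedEst}, the two chains use the same lemmas and already give a constant independent of $s$ and $\delta$, so your Muckenhoupt/compactness fallback is unnecessary (and its claimed uniformity as $s \downarrow 1/p$ would in fact be the delicate point).
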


\begin{proof}[Proof]
	Let $\psi$ satisfy \eqref{Assump:Kernel} and let $\eta$ satisfy \eqref{assump:Localization}. For any $u\in \mathfrak{W}^{s,p}_{0,\p \cD_D}[\delta](\cD)$, we have that $K_{\delta} u \in W^{s,p}(\cD)$, see \Cref{rmk:contKfraktofrac}.
	Moreover, by \Cref{cor:Trace:NonlocalSpace} and \Cref{cor:TraceZero} we have that $T K_{\delta} u = 0$ on $\p \cD_D$. 
	Hence $K_{\delta} u \in W^{s,p}_{0,\p \cD_D}(\cD)$, and we can apply the fractional Poincar\'e inequality \Cref{thm:Poincare:Fractional}:
	\begin{equation}\label{eq:PoincareDirichlet:Pf1}
		\Vnorm{ K_{\delta} u}_{L^p(\cD)} \leq C(d,p,\cD, \p \cD_D) [ K_{\delta} u]_{W^{s,p}(\cD)}\,.
	\end{equation}
	Then since 
	$[K_{\delta} u]_{W^{s,p}(\cD)} \leq C [u]_{\mathfrak{W}^{s,p}[\delta](\cD)}$, by \eqref{eq:ConvEst:Deriv:s:Opt}, we have that 
	\begin{equation*}
		\begin{split}
			\Vnorm{u}_{L^p(\cD)} \leq \Vnorm{ K_{\delta} u}_{L^p(\cD)} + \Vnorm{ u - K_{\delta} u}_{L^p(\cD)} \leq (C + C \delta) [u]_{\mathfrak{W}^{s,p}[\delta](\cD)}
		\end{split}
	\end{equation*}
	where the last inequality follows from \Cref{thm:diffuKu:Weighted}. 
\end{proof}

\section{Well-posedness of the variational problem involving coupled nonlocal and fractional energies}\label{sec:variational-problem}
In this section we will prove \Cref{thm:wellposedness} on the well-posedness of the variational problem of the minimization of a coupled energy. This energy consists of a nonlocal energy with a boundary-localized heterogeneous kernel on a part of a domain and a $p$-fractional energy on the remaining portion, subject to a transmission condition. We recall that  $\Omega \subset \bbR^d$ is taken to be a bounded  Lipschitz domain. Let $\Gamma \subset \bbR^d$ be a hypersurface that is locally the graph of a Lipschitz function, and such that $\Sigma := \Omega \cap \Gamma$ partitions $\Omega$ into two disjoint Lipschitz domains $\Omega_1$ and $\Omega_2$. 
With this set up we notice that $\Omega = \Omega_1 \cup \Omega_2 \cup \Sigma$. 

We recall again that for $1<p<\infty$,  $s, \delta \in (0, 1)$, and $\overline{C}_{d,p}$ and $\kappa_{d, s,p}$ given by \eqref{constant-A}, the energy functional we will be minimizing is 
\begin{equation}\label{energy-section3}
	\begin{split}
		\cE_{s,\delta}(u_1,u_2) &:= \frac{\overline{C}_{d,p}}{p} \int_{\Omega_1} \int_{ B(\bx,\delta \sigma(\bx)) } \alpha(\bx)\frac{|u_1(\bx)-u_1(\by)|^p}{ \delta^{d+p} \sigma(\bx)^{d+sp} } \, \rmd \by \, \rmd \bx \\
		&\qquad + \frac{\kappa_{d,s,p}}{p} \int_{\Omega_2} \int_{\Omega_2} \beta(\bx)\frac{|u_2(\bx)-u_2(\by)|^p}{|\bx-\by|^{d+sp}} \, \rmd \by \, \rmd \bx,
	\end{split}
\end{equation}
where $\sigma(\bx) = \text{dist}(\bx, \Omega_1).$ We notice that $\cE_{s,\delta}(u_1,u_2) <\infty$ if and only if $(u_1, u_2)\in \mathfrak{W}^{s,p}[\delta](\Omega_1) \times W^{s,p}(\Omega_2)$ where the space $\mathfrak{W}^{s,p}[\delta](\Omega_1)$ is studied in the previous sections. We restrict $\delta$ to be in $(0, \underline{\delta}_0)$, where $\underline{\delta}_0$ is given by \eqref{bound-for-delta} and $s$ and $p$ such that $sp > 1$ so as to apply the theory for traces developed in the previous section. In this regime of parameters both spaces $\mathfrak{W}^{s,p}[\delta](\Omega_1)$ and $ W^{s,p}(\Omega_2)$ have traces with $T_1$ and $T_2$ being the respective trace operators. The transmission condition we require any minimizer of \eqref{energy-section3} to satisfy is a matching of traces of $u_1$ and $u_2$ on the shared interface $\Sigma $ as given by \eqref{eq:TransmissionContinuity} which we recall as 
\begin{equation}\label{eq:TransmissionContinuity-sec3}
	(T_1 u_1 - T_2 u_2) \mathds{1}_{\Sigma} = 0 \quad \scH^{d-1}\text{-a.e. on } \Sigma, 
\end{equation}
a homogeneous Dirichlet boundary condition, namely $T_1u_{1}$ and $T_2u_{2}$ vanish on $\partial \Omega \cap \partial \Omega_1$  and $\partial \Omega \cap \partial \Omega_2$ respectively. We summarize the admissible class of functions for the variational problem of minimizing \eqref{energy-section3} 
\begin{equation}\label{Admissible}
	\mathfrak{X}_{s,\delta} = \left\{ (u_1, u_2) \in \mathfrak{W}^{s,p}[\delta](\Omega_1) \times W^{s,p}(\Omega_2) : \, 
	\begin{gathered}
		\eqref{eq:TransmissionContinuity-sec3} \text{ holds, and for $i=1, 2$ } \\
		T_i u_i = 0 \,\text{ a.e. on } \p \Omega_i \setminus \Sigma 
	\end{gathered}
	\right\}.
\end{equation}
For $s\in (0,1)$, the space $\mathfrak{X}_{s,0}$ can be considered as the limit space of $\mathfrak{X}_{s,\delta}$ as $\delta\to 0$ and is given by
\begin{equation}\label{Admissible-delta0}
	\mathfrak{X}_{s, 0}= \left\{\begin{gathered}(u_{1}, u_2)\in W^{1, p}(\Omega_1;p-sp)\times W^{s, p}(\Omega_2): \eqref{eq:TransmissionContinuity-sec3} \text{ holds, and }\\
		\text{for $i=1, 2$ }\,  T_iu_{i} = 0,\,\text{ a.e. on } \partial \Omega_i\setminus  \Sigma
	\end{gathered}\right\}.
\end{equation}
Finally when $s=1$ and $\delta=0$, 
the admissible class is given by
\begin{equation}\label{Admissible-s1}
	\left\{\begin{gathered}(u_{1}, u_2)\in W^{1, p}(\Omega_1)\times W^{1, p}(\Omega_2): \eqref{eq:TransmissionContinuity-sec3} \text{ holds, and for $i=1, 2$ }\\
		T_iu_{i} = 0,\,\text{ a.e. on } \partial \Omega_i\setminus  \Sigma
	\end{gathered}\right\}.
\end{equation}
Notice that if $(u_{1}, u_2)$ is in the class given in \eqref{Admissible-s1}, then $u = u_1
\mathds{1}_{\Omega_1}+ u_2
\mathds{1}_{\Omega_2}$ belongs to $W^{1,p}_0(\Omega)=\mathfrak{X}_{1,0}$, as defined in \Cref{item:c} of \Cref{Main-gammaconvergence}. This follows from the matching of the traces and the vanishing of each of the functions on $\partial \Omega$. Conversely, if $u\in W^{1,p}_0(\Omega),$  then $(u|_{\Omega_1}, u|_{\Omega_2})$ is in the class given in \eqref{Admissible-s1}. This creates a natural identification of $\mathfrak{X}_{1, 0}=W^{1,p}_0(\Omega)$ with the class given in \eqref{Admissible-s1}. With this identification, it is now clear that $\mathfrak{X}_{1, 0}$ can be thought of as the limit space of $\mathfrak{X}_{s, 0}$, as $s\to 1$.

Notice that $ \mathfrak{X}_{s,\delta}$ is a linear space. The following statement says it is also a Banach space with useful properties.
\begin{proposition}\label{prop:X-separable-reflexive}
	For $1<p<\infty$, $s\in (0, 1)$ such that $sp>1$, and $0\leq \delta <1$, we have that  
	$\mathfrak{X}_{s,\delta}$ is a closed subspace of $\mathfrak{W}^{s,p}[\delta](\Omega_1) \times W^{s,p}(\Omega_2).$ 
	As a consequence,  $\mathfrak{X}_{s,\delta}$ is 
	a separable reflexive Banach space with the norm 
	\begin{eqnarray*}
		\Vnorm{(u_1,u_2)}_{\mathfrak{X}_{s,\delta}}^p := \vnorm{(u_1, u_2)}_{L^p(\Omega_1)\times L^{p}(\Omega_2)}^p  +  [(u_1,u_2)]_{\mathfrak{X}_{s,\delta}}^p,
	\end{eqnarray*}
	where  $ [(u_1,u_2)]_{\mathfrak{X}_{s,\delta}}^p := [u_1]^{p}_{\mathfrak{W}^{s,p}[\delta](\Omega_1)}+[u_2]^{p}_{W^{s,p}(\Omega_2)}$, and for $\delta=0$ and $s>0$, we replace the space  $\mathfrak{W}^{s,p}[\delta](\Omega_1)$ by $W^{1, p}(\Omega_1;p-sp)$. 
\end{proposition}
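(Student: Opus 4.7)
The plan is to exhibit $\mathfrak{X}_{s,\delta}$ as a closed linear subspace of the product Banach space $\mathfrak{W}^{s,p}[\delta](\Omega_1) \times W^{s,p}(\Omega_2)$ (with the first factor replaced by $W^{1,p}(\Omega_1;p-sp)$ when $\delta=0$), and then invoke the standard fact that a closed subspace of a separable reflexive Banach space is itself separable and reflexive. Linearity of $\mathfrak{X}_{s,\delta}$ is immediate because the transmission condition \eqref{eq:TransmissionContinuity-sec3} and the homogeneous Dirichlet conditions on $\partial \Omega_i \setminus \Sigma$ are linear conditions on the trace operators.

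First I would record that the ambient product space is separable and reflexive. For $\delta>0$ this follows by combining the separability and reflexivity of $\mathfrak{W}^{s,p}[\delta](\Omega_1)$ (established in the first proposition of \Cref{sec:function-space}) with the classical fact that $W^{s,p}(\Omega_2)$ is separable and reflexive for $1<p<\infty$. For $\delta=0$ one instead uses the corresponding statement for $W^{1,p}(\Omega_1;p-sp)$ recorded in \Cref{thm:FxnSpProp:Weighted}. In both cases the product norm $\Vnorm{(u_1,u_2)}_{\mathfrak{X}_{s,\delta}}$ is equivalent to the canonical product norm.

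To show closedness, take any sequence $\{(u_1^n,u_2^n)\}_n \subset \mathfrak{X}_{s,\delta}$ converging in the product norm to some limit $(u_1,u_2)$. By \Cref{thm:TraceTheorem} (for $\delta>0$), or by the classical trace theorem for the weighted Sobolev space (\Cref{thm:FxnSpProp:Weighted}) combined with the standard trace theorem for $W^{s,p}(\Omega_2)$, the trace operators $T_1$ and $T_2$ are bounded and linear into $W^{s-1/p,p}(\partial\Omega_1)$ and $W^{s-1/p,p}(\partial\Omega_2)$ respectively. Hence $T_i u_i^n \to T_i u_i$ strongly in $W^{s-1/p,p}(\partial\Omega_i)$ and in particular in $L^p(\partial\Omega_i)$. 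Extracting a subsequence that converges $\scH^{d-1}$-a.e.\ on $\partial\Omega_i$, the pointwise constraints $T_i u_i^n = 0$ on $\partial\Omega_i\setminus\Sigma$ pass to the limit to give $T_i u_i = 0$ there, and the linear transmission constraint $(T_1 u_1^n - T_2 u_2^n)\mathds{1}_\Sigma = 0$ passes to the limit in the same way. Thus $(u_1,u_2)\in\mathfrak{X}_{s,\delta}$, proving closedness.

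The argument has no serious obstacle: once the trace operators are known to be continuous on the correct spaces (which is exactly the content of \Cref{thm:TraceTheorem} and \Cref{thm:FxnSpProp:Weighted}), the proof reduces to the standard closed-graph-style observation that preimages of closed sets under continuous linear maps are closed. The only point that requires a little care is treating the case $\delta=0$ in parallel with $\delta>0$, which is handled by uniformly quoting the appropriate trace theorem depending on which function space plays the role of the first factor.
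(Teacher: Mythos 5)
Your proposal is correct and follows essentially the same route as the paper: both arguments exhibit $\mathfrak{X}_{s,\delta}$ as a closed subspace of the separable reflexive product space by using the boundedness of the trace operators (\Cref{thm:TraceTheorem} for $\delta>0$, and the weighted-space trace theorem for $\delta=0$) to pass the transmission and Dirichlet conditions to the limit under strong convergence of traces. Your explicit treatment of the $\delta=0$ case simply spells out what the paper dismisses as "a similar argument."
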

\begin{proof}
	We prove the theorem for the case when $\delta\in (0,1)$. The remaining case follows from a similar argument. 
	It suffices to show that $\mathfrak{X}_{s,\delta}$ is a closed subspace of the separable reflexive Banach product space $\mathfrak{W}^{s,p}[\delta](\Omega_1) \times W^{s,p}(\Omega_2).$  To this end, suppose that the sequence $\{(u_1^{n}, u^{n}_2)\}_n \subset \mathfrak{X}_{s,\delta}$ converges to  $(u_1, u_2)$ in $\mathfrak{W}^{s,p}[\delta](\Omega_1) \times W^{s,p}(\Omega_2).$ It is then clear that $u_1^{n} \to u_1$ in $\mathfrak{W}^{s,p}[\delta](\Omega_1)$ and $u^{n}_2 \to u_2$ in $W^{s,p}(\Omega_2).$ It then follows from the trace theorem \Cref{thm:TraceTheorem} that $T_{1} u^{n}_{1} \to T_1u_1$ in $W^{s-{1\over p},p}(\p \Omega_1)$ and $T_{2} u^{n}_{2} \to T_2u_2$ in $W^{s-{1\over p},p}(\p \Omega_2)$. Since for each $n$,  $T_1u^{n}_1 = T_{2} u^{n}_{2}$  a.e. on $\Sigma= \p\Omega_1\cap \p\Omega_2\cap \Omega$ from the strong convergence we have $T_1u_1 = T_2u_2$ a.e. on $\Sigma$ as well. Similarly, since for each $n$, $T_i u^n_i = 0$ a.e. $\p\Omega_1\setminus \Sigma$, we have from the strong convergence of traces that the limit satisfies   $T_i u_i = 0$ a.e. $\p\Omega_i\setminus \Sigma$. We then conclude that $(u_1, u_2)\in \mathfrak{X}_{s,\delta}.$
\end{proof}
\begin{remark} Using the weak continuity of the trace operators on each of the domains, \Cref{rmk:StrongConvTrace},  the stronger result that $\mathfrak{X}_{s,\delta}$ is weakly closed in the product space $\mathfrak{W}^{s,p}[\delta](\Omega_1) \times W^{s,p}(\Omega_2)$ can be easily proved.   
\end{remark}
The above proposition makes it possible to apply the direct method of the calculus of variations to show the existence of a minimizer for $\cE_{s,\delta}(u_1,u_2)$ in $\mathfrak{X}_{s,\delta}.$ We note that since $\alpha(\bx)$ and $\beta(\bx)$ are bounded from below and above by positive constants, the seminorms $[(u_1, u_2)]_{\mathfrak{X}_{s, \delta}}$ and $\cE_{s,\delta}(u_1, u_2)$ are equivalent.

Next, we prove a Poincar\'e-type inequality for the space $\mathfrak{X}_{s,\delta}.$

\begin{lemma}[Poincar\'e-type inequality on $\mathfrak{X}_{s, \delta}$]\label{PI-onX}
	Suppose $p\in (1, \infty)$ and that $\underline{\delta}_0>0$ is as given \eqref{bound-for-delta}. Then there exists a constant $C = C(d,p,\Omega_1,\Omega_2,\kappa_0,\kappa_1,\underline{\delta}_0)>0$ such that for all $s\in (1/p, 1]$ and $\delta\in [0, \underline{\delta}_0)$ 
	\begin{equation*}
		\vnorm{(u_1, u_2)}_{L^p(\Omega_1) \times L^p(\Omega_2) }^p 
		\leq C \cE_{s,\delta}(u_1,u_2), \qquad \forall (u_1,u_2) \in \mathfrak{X}_{s, \delta}.
	\end{equation*}
\end{lemma}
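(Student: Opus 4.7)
We argue by contradiction. Suppose the inequality fails: then there exist sequences $s_n \in (1/p,1]$, $\delta_n \in [0,\underline{\delta}_0)$, and $(u_1^n,u_2^n) \in \mathfrak{X}_{s_n,\delta_n}$ with $\Vnorm{u_1^n}_{L^p(\Omega_1)}^p + \Vnorm{u_2^n}_{L^p(\Omega_2)}^p = 1$ yet $\cE_{s_n,\delta_n}(u_1^n,u_2^n) \to 0$. Since $\alpha$ and $\beta$ are bounded below by positive constants, both seminorms $[u_1^n]_{\mathfrak{W}^{s_n,p}[\delta_n](\Omega_1)}$ and $[u_2^n]_{W^{s_n,p}(\Omega_2)}$ tend to $0$. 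After extracting subsequences we may assume $s_n \to s^* \in [1/p,1]$ and $\delta_n \to \delta^* \in [0,\underline{\delta}_0]$.

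For $u_2^n$, the sequence is bounded in $W^{s_n,p}(\Omega_2)$ and hence, for $n$ large, in $W^{s^*/2,p}(\Omega_2)$; the compact embedding into $L^p(\Omega_2)$ together with the weak lower semicontinuity of the Gagliardo seminorm yields $u_2^n \to u_2^*$ strongly in $L^p(\Omega_2)$ for some $u_2^*$ that is constant on each connected component of $\Omega_2$. For $u_1^n$, direct compactness fails, since $\mathfrak{W}^{s,p}[\delta](\Omega_1)$ does \emph{not} embed compactly in $L^p(\Omega_1)$ at fixed $\delta > 0$; we bypass this via the boundary-localized convolution. By \Cref{rmk:contKfraktofrac}, $[K_{\delta_n} u_1^n]_{W^{s_n,p}(\Omega_1)} \le C [u_1^n]_{\mathfrak{W}^{s_n,p}[\delta_n]} \to 0$ with $C$ independent of $s_n,\delta_n$, and by \Cref{thm:diffuKu:Weighted}, $\Vnorm{K_{\delta_n} u_1^n - u_1^n}_{L^p(\Omega_1)} \le C\delta_n [u_1^n]_{\mathfrak{W}^{s_n,p}[\delta_n]} \to 0$. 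Thus $\{K_{\delta_n} u_1^n\}$ is bounded in $W^{s_n,p}(\Omega_1)$, admits a subsequence convergent in $L^p(\Omega_1)$, and combining with the $L^p$-closeness yields $u_1^n \to u_1^*$ strongly in $L^p(\Omega_1)$, with $u_1^*$ constant on each component of $\Omega_1$.

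Because each $u_i^*$ is locally constant, its corresponding seminorm vanishes, so in fact $u_i^n \to u_i^*$ in the \emph{full} norm of $\mathfrak{W}^{s_n,p}[\delta_n](\Omega_1)$, respectively $W^{s_n,p}(\Omega_2)$. Continuity of the trace operators (\Cref{thm:TraceTheorem} for $T_1$ and the classical fractional trace for $T_2$) then transports the Dirichlet conditions $T_i u_i^n = 0$ on $\partial \Omega_i \setminus \Sigma$ and the interface matching $T_1 u_1^n = T_2 u_2^n$ on $\Sigma$ to the limit. Since $\partial \Omega$ has positive $\scH^{d-1}$-measure, at least one of $\partial \Omega_i \setminus \Sigma$ does as well, forcing the limit constant to vanish on any component of that $\Omega_i$ touching this portion of the outer boundary; propagating the vanishing across $\Sigma$ via the trace matching (and chaining through components using the connectedness of $\Omega$) yields $u_1^* \equiv 0$ and $u_2^* \equiv 0$, contradicting the normalization. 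The principal technical obstacle is exactly the noncompactness just mentioned; the boundary-localized convolution $K_\delta$, which regularizes into $W^{s,p}(\Omega_1)$ at only an $O(\delta)$ $L^p$-penalty, is the essential device enabling the compactness argument, and the uniform (in $s,\delta$) bounds supplied by \Cref{rmk:contKfraktofrac} and \Cref{thm:diffuKu:Weighted} are what make the constant independent of $s$ and $\delta$.
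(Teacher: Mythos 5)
Your route (normalization, contradiction, compactness) is genuinely different from the paper's proof, which is direct: it bounds $\|u_2\|_{L^p}$ by the $s$-uniform fractional Poincar\'e inequality (\Cref{thm:Poincare:Fractional}), controls $T_1u_1=T_2u_2$ on a subset $G\subset\Sigma$ through the trace inequality, extends that trace into $\Omega_1$ via \Cref{lma:Extension}, and applies the zero-trace Poincar\'e inequality (\Cref{thm:PoincareDirichlet}) to $u_1-ET_1u_1$. The difficulty with your contradiction argument is that the parameters $s_n,\delta_n$ vary along the sequence, so every tool you invoke must be uniform in $(s,\delta)$, and two of your steps are not. First, the compactness of $\{u_2^n\}$ (and of $\{K_{\delta_n}u_1^n\}$) is justified by claiming boundedness in a fixed space $W^{s^*/2,p}$; because the seminorm carries the normalization $\kappa_{d,s,p}\sim(1-s)$, the comparison $[u]^p_{W^{s',p}}\le \tfrac{1-s'}{1-s_n}\,\mathrm{diam}^{(s_n-s')p}[u]^p_{W^{s_n,p}}$ degenerates when $s_n\to 1$, and knowing only $[u_2^n]_{W^{s_n,p}}\to 0$ does not give a bound in $W^{s',p}$. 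This regime must be handled by a Bourgain--Brezis--Mironescu/Ponce-type compactness theorem (or, for the first component, the paper's \Cref{thm:Compactness}), which your write-up does not invoke; as stated the step fails for $s_n\to1$.

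Second, and more seriously, the transfer of the boundary and transmission conditions to the limit rests on ``continuity of the trace operators,'' but the available trace bounds (\Cref{thm:TraceTheorem}, \Cref{thm:Trace:Fractional}) have operator norm of size $C/(sp-1)$ and target spaces $W^{s_n-1/p,p}(\p\Omega_i)$ whose order tends to zero. Since the statement only assumes $s\in(1/p,1]$, a contradiction sequence may have $s_n\downarrow 1/p$, and then $\|T_i(u_i^n-u_i^*)\|\le \tfrac{C}{s_np-1}\|u_i^n-u_i^*\|$ gives no information: you know the norms on the right tend to zero, but with no rate against the blow-up of the constant. The vanishing of $u_2^*$ on a component whose boundary meets $\p\Omega_2\setminus\Sigma$ can be rescued without traces by applying the $s$-uniform \Cref{thm:Poincare:Fractional} directly to $u_2^n$, but the interface-matching step $T_1u_1^*=T_2u_2^*$ on $\Sigma$ --- which is indispensable in the configuration $\Omega_1\Subset\Omega$, where $\p\Omega_1\setminus\Sigma=\emptyset$ and $u_1$ receives its only datum through $\Sigma$ --- still requires a uniform-in-$s$ control of traces that you have not supplied. (You also do not address the admissible case $\delta=0$, where $\mathfrak{X}_{s,0}$ uses the weighted space and the $K_{\delta_n}$ device is vacuous, though there \Cref{thm:WeightedEst} would substitute.) Until these uniformity issues are repaired, the proof has a genuine gap precisely in the parameter regimes $s_n\to 1/p$ and $s_n\to 1$ that the uniform constant is meant to cover.
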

\begin{proof}
	We prove this only for the case $\delta>0$, $s < 1$; the proof for either $\delta = 0$ or $s = 1$ is simpler.
	Let $(u_1,u_2) \in \mathfrak{X}_{s, \delta}$. Without loss of generality assume that $\scH^{d-1}(\p \Omega_2 \setminus \Sigma) > 0$.
	Then 
	$u_2\in W^{s,p}_{0,\p\Omega_2\backslash\Sigma}(\Omega_2)$, the space of functions in $W^{s,p}(\Omega_2)$ whose traces vanish on the closed set $\p\Omega_2\backslash\Sigma$. 
	Since $\scH^{d-1}(\p\Omega_2\backslash\Sigma)>0$, by the Poincar\'e inequality \Cref{thm:Poincare:Fractional} for $W^{s,p}_{0,\p\Omega_2\backslash\Sigma}(\Omega_2)$ we have 
	\begin{equation}\label{ineq3}
		\|u_2\|_{L^p(\Omega_2)}\le C [u_2]_{W^{s,p}(\Omega_2)}.
	\end{equation}
	Since \(\Omega_2\) is Lipschitz, using the trace inequality \Cref{thm:Trace:Fractional} for \(u_2\in W^{s,p}(\Omega_2)\) and \eqref{ineq3} yields
	\begin{equation}\label{eq:transmission:Poinc:pf1}
		\|T_2 u_2\|_{W^{s-1/p,p}(\p\Omega_2)}\le C [u_2]_{W^{s,p}(\Omega_2)}.
	\end{equation}
	
	Let $G \subset \Sigma$ be a $\scH^{d-1}$-measurable closed set in $\bbR^d$, with positive $\scH^{d-1}$-measure, that is diffeomorphic to a finite union of the closures of $W^{s-1/p,p}$-extension domains,
	and let $E : W^{s-1/p,p}(G) \to \mathfrak{W}^{s,p}[\delta](\Omega_1)$ 
	be the extension operator defined in \Cref{lma:Extension}. Then applying the extension theorem on $T_{1}u_1$ restricted to $G$ we have $ET_1u_1\in \mathfrak{W}^{s,p}[\delta](\Omega_1)$ and 
	\begin{equation}\label{eq:transmission:Poinc:pf2}
		\|ET_1 u_1\|_{\mathfrak{W}^{s,p}[\delta](\Omega_1)}
		\leq C \|T_1 u_1\|_{W^{s-1/p,p}(G)}.
	\end{equation}
	Since $(u_1,u_2) \in \mathfrak{X}_{s, \delta}$, we have 
	\begin{equation}\label{eq:transmission:Poinc:pf3}
		\begin{split}
			\|T_1 u_1\|_{W^{s-1/p,p}(G)} &= \|T_2 u_2\|_{W^{s-1/p,p}(G)} \leq C[u_2]_{W^{s,p}(\Omega_2)},
		\end{split}
	\end{equation}
	where we used \eqref{eq:transmission:Poinc:pf1} on the last line.
	Combining \eqref{eq:transmission:Poinc:pf2} with \eqref{eq:transmission:Poinc:pf3} 
	yields 
	\begin{equation}\label{eq:transmission:Poinc:pf4}
		\|ET_1 u_1\|_{\mathfrak{W}^{s,p}[\delta](\Omega_1)}\le C [u_2]_{W^{s,p}(\Omega_2)}.
	\end{equation}
	Since \(u_1-ET_1 u_1\in \mathfrak{W}_{0,G}^{s,p}[\delta](\Omega_1)\), we may apply the Poincar\'e-type inequality, \Cref{thm:PoincareDirichlet}, to conclude that there exists a constant $C>0$ such that
	\begin{equation}\label{eq:transmission:Poinc:pf5}
		\|u_1-ET_1 u_1\|_{L^p(\Omega_1)} \le C [u_1-ET_1 u_1]_{\mathfrak{W}^{s,p}[\delta](\Omega_1)}.
	\end{equation}
	Then by \eqref{eq:transmission:Poinc:pf5} and \eqref{eq:transmission:Poinc:pf4}
	\begin{equation*}
		\begin{aligned}
	\|u_1\|_{L^p(\Omega_1)}&\le \|u_1-ET_1 u_1\|_{L^p(\Omega_1)}+\|ET_1 u_1\|_{L^p(\Omega_1)}\\
			&\le C [u_1-ET_1 u_1]_{\mathfrak{W}^{s,p}[\delta](\Omega_1)}+\|ET_1 u_1\|_{L^p(\Omega_1)}\\
			&\le C \left([u_1]_{\mathfrak{W}^{s,p}[\delta](\Omega_1)}+\|ET_1 u_1\|_{\mathfrak{W}^{s,p}[\delta](\Omega_1)}\right)\\
			&\le C\left([u_1]_{\mathfrak{W}^{s,p}[\delta](\Omega_1)}+[u_2]_{W^{s,p}(\Omega_2)}\right).
		\end{aligned}
	\end{equation*}
	Combining the last inequality with \eqref{ineq3} yields
	\begin{equation*}
\|u_1\|_{L^p(\Omega_1)}+\|u_2\|_{L^p(\Omega_2)}\le C\left([u_1]_{\mathfrak{W}^{s,p}[\delta](\Omega_1)}+[u_2]_{W^{s,p}(\Omega_2)}\right) = C [(u_1, u_2)]_{\mathfrak{X}_{s, \delta}}.
	\end{equation*}
	Therefore, using the equivalence of $[(u_1, u_2)]_{\mathfrak{X}_{s, \delta}}$ and $\cE_{s,\delta}(u_1, u_2)$, we have  \[\vnorm{(u_1, u_2)}_{L^p(\Omega_1)\times L^p(\Omega_2)}^p \leq C \cE_{s,\delta}(u_1, u_2), \qquad \forall u \in \mathfrak{X}_{s, \delta}.\]
\end{proof}
We close this section by proving \Cref{thm:wellposedness} on the existence of a solution to the variational problem. 

\begin{proof}[Proof of \Cref{thm:wellposedness}]
	We apply the direct method of calculus of variations. 
    
    Given $\mathfrak{f}\in  \mathfrak{X}'_{s, \delta}$, we consider the energy $\cF_{s, \delta}: \mathfrak{X}_{s, \delta}\to \mathbb{R}$ given by 
	\[
	\cF_{s, \delta}(v_1,v_2) = \cE_{s,\delta}(v_1,v_2) - \langle\mathfrak{f}, (v_1, v_2)\rangle_{\mathfrak{X}'_{s, \delta}, \mathfrak{X}_{s, \delta}}.   
	\]
	We will show that $\cF_{s, \delta}$ is coercive and weakly lower semicontinuous. These two assertions follow from the Poincar\'e inequality proved in \Cref{PI-onX}. Indeed, using the inequality it follows that $\cE_{s,\delta}(v_1,v_2)$ now  serves as a norm on $\mathfrak{X}_{s, \delta}$ and from which is weakly lower semicontinuity of $\cF_{s, \delta}$ follows. 
	Moreover, for any $(v_1, v_2)$
	\begin{equation*}
		\begin{split}
			\cF_{s, \delta}(v_1,v_2) &\geq \cE_{s, \delta}(v_1,v_2) -\|\mathfrak{f}\|_{\mathfrak{X}_{s,p}'} \|(v_1, v_2)\|_{\mathfrak{X}_{s,p}}\\
			&\geq  \cE_{s, \delta}(v_1,v_2) -C \left(\cE_{s, \delta}(v_1,v_2)\right)^{1\over p},
		\end{split}
	\end{equation*}
	where is $C$ is the product of  $\|\mathfrak{f}\|_{\mathfrak{X}_{s,p}'}$ and the Poincar\'e constant from \Cref{PI-onX}. 
	As a consequence, $\cF_{s, \delta}(v_1,v_2) \to \infty$ whenever $\cE_{s, \delta}(v_1,v_2) \to \infty$. Finally, the uniqueness of the minimizer follows from the strict convexity of $|\cdot|^p$ when $p\in (1,\infty)$.
\end{proof}

\section{Convergence of parameterized variational problems}\label{sec:variational-convergence}
In this section we prove the second main result of the paper: \Cref{Main-gammaconvergence}.

\subsection{\texorpdfstring{$\Gamma$}{Gamma}-Convergence of parameterized energies}
Here we establish the $\Gamma$-convergence of the parameterized energies defined over the space of unconstrained functions in different convergence regimes of $s$ and $\delta$. We recall that without introducing additional notation, we assume that the energy ${\cE}_{s,\delta}$ are understood as maps 
${\cE}_{s,\delta}: L^{p}(\Omega_1)\times L^{p}(\Omega_2) \to \mathbb{R}\cup \{\infty\}$; that is, for $s\in (0, 1)$ and $\delta \in (0, 1)$ ${\cE}_{s,\delta}$
is equal to the expression \eqref{energy-section3} for $(u_1, u_2)\in \mathfrak{W}^{s,p}[\delta](\Omega_1) \times {W}^{s,p}(\Omega_2)$ and equal to $+\infty$ otherwise.

\begin{theorem}\label{Gamma-converge-unconstrained}
	We have the following $\Gamma$-convergence results:
	
	\begin{enumerate}[\upshape i)]
		\item \label{item:i} Fix $s\in (0, 1)$. Then as $\delta\to 0$ the  sequence of energies  ${\cE}_{s,\delta}$ $\Gamma$-converges   with respect to $L^{p}(\Omega_1)\times L^{p}(\Omega_2)$ to 
		${\cE}_{s, 0}$ 
		where ${\cE}_{s, 0}(u_1, u_2)$ is as given in \eqref{defn-E(s,0)} for $(u_1, u_2)\in W^{1,p}(\Omega_1, p-sp) \times {W}^{s,p}(\Omega_2)$ and $+\infty$ otherwise.
		
		\item \label{item:ii} Fix $\delta>0$. Then as $s\to 1$  the  sequence of energies  ${\cE}_{s,\delta}$ $\Gamma$-converges   with respect to $L^{p}(\Omega_1)\times L^{p}(\Omega_2)$ to 
		${\cE}_{1, \delta}$ where   ${\cE}_{1, \delta}(u_1, u_2)$ is as given in \eqref{defn-E(1,delta)} for $(u_1, u_2)\in \mathfrak{W}^{1,p}[\delta](\Omega_1) \times {W}^{1,p}(\Omega_2) $ and $+\infty$ otherwise.
		
		\item \label{item:iii} As $s\to 1,$ the sequence of energies ${\cE}_{s, 0}$, from \Cref{item:i},  $\Gamma$-converges with respect to $L^{p}(\Omega_1)\times L^{p}(\Omega_2)$ to 
		${\cE}_{1, 0}$ where
		\[
		{\cE}_{1, 0}(u_1, u_2) =  
		{1\over p}\left(\int_{\Omega_1} \alpha(\bx)|\nabla u_1 (\bx)|^{p} \rmd\bx +\int_{\Omega_2} \beta(\bx)|\nabla u_2 (\bx)|^{p} \rmd\bx \right)\]
		for $(u_1, u_2)\in W^{1,p}(\Omega_1)\times W^{1,p}(\Omega_2) $
		and $+\infty$ otherwise. 

		\item \label{item:iv} As $\delta\to 0$ the sequence of energies ${\cE}_{1, \delta}$ $\Gamma$-converges with respect to $L^{p}(\Omega_1)\times L^{p}(\Omega_2)$ to 
		${\cE}_{1, 0}$ as given in \Cref{item:iii}.
		
		\item \label{item:v} Finally, for any $(s_k, \delta_k) \to (1, 0)$ as $k\to \infty$, the sequence of energies ${\cE}_{s_k, \delta_k}$ $\Gamma$-converges with respect to $L^{p}(\Omega_1)\times L^{p}(\Omega_2)$ to 
		${\cE}_{1, 0}$ as given in \Cref{item:iii}.

	\end{enumerate}

\end{theorem}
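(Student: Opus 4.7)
My plan is to establish all five items via the standard two-part $\Gamma$-convergence scheme, treating the $\Omega_1$-contribution and the $\Omega_2$-contribution to $\cE_{s,\delta}$ separately since the energy decouples additively and each term depends on only some of the parameters. No transmission or boundary constraint enters at the level of \Cref{Gamma-converge-unconstrained}, so recovery sequences may be built freely on each subdomain and glued back together.

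For the liminf inequality I would proceed as follows. In Cases \ref{item:i}, \ref{item:iv}, \ref{item:v} (where $\delta_n \to 0$), given $u_1^n \to u_1$ strongly in $L^p(\Omega_1)$ with uniformly bounded nonlocal seminorm, the compactness criterion \Cref{thm:Compactness} directly delivers the limit $u_1$ in the target weighted or classical Sobolev space with $[u_1]_{W^{1,p}(\Omega_1;p-sp)} \leq \liminf [u_1^n]_{\mathfrak{W}^{s_n,p}[\delta_n](\Omega_1)}$. In Case \ref{item:ii} ($\delta$ fixed, $s_n \to 1$) the integration domain $B(\bx,\delta\sigma(\bx))$ is $s_n$-independent and the kernel $\sigma(\bx)^{-d-s_n p}$ converges pointwise to $\sigma(\bx)^{-d-p}$, so Fatou's lemma applied to an a.e.-convergent subsequence of $u_1^n$ yields the required inequality. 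For the $\Omega_2$-term when $s_n \to 1$ (Cases \ref{item:ii}, \ref{item:iii}, \ref{item:v}), the desired bound $\liminf \tfrac{\kappa_{d,s_n,p}}{p}[u_2^n]^p_{W^{s_n,p}(\Omega_2)} \geq \tfrac{1}{p}\int_{\Omega_2}\beta|\nabla u_2|^p$ is the $\Gamma$-liminf side of the Bourgain–Brezis–Mironescu result due to Ponce \cite{ponce2004new}. When $s$ is fixed (Cases \ref{item:i}, \ref{item:iv}) the $\Omega_2$-term is simply lower semicontinuous under $L^p$-convergence by Fatou. Summing the two pieces gives the full liminf in every case.

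For the recovery sequence in every case I would take the constant sequence $(u_1^n,u_2^n)\equiv (u_1,u_2)$ on the domain of the limit functional; the embedding \Cref{lma:Embedding:Weighted:Pre} together with the trivial inclusion $W^{1,p}(\Omega_2)\subset W^{s,p}(\Omega_2)$ ensures finite parameterized energy at every $n$. Convergence of the $\Omega_1$-term in Cases \ref{item:i}, \ref{item:iv}, \ref{item:v} follows from the second statement of \Cref{thm:LocalizationOfSeminorm} (whose hypothesis $s_n\leq s$ is satisfied since $s_n<1$); in Cases \ref{item:ii} and \ref{item:iii} it follows by dominated convergence after writing $\sigma^{-d-s_n p}=\sigma^{-d-p}\,\sigma^{p-s_n p}$ and exploiting the boundedness of $\sigma$ on $\Omega_1$. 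For the $\Omega_2$-term, when $s_n\to 1$ the convergence is the classical BBM identity applied to the fixed $u_2\in W^{1,p}(\Omega_2)$; otherwise the sequence is constant and the identity is trivial. Thus no separate density or diagonal extraction is needed at the recovery stage, which is a pleasant simplification given the variety of function spaces involved.

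The main obstacle I anticipate is Case \ref{item:v}, where $s_n\to 1$ and $\delta_n\to 0$ vary simultaneously and both limits must be coordinated. The liminf is not problematic because \Cref{thm:Compactness} is stated in exactly that joint-limit generality. The subtlety lies in confirming that the second statement of \Cref{thm:LocalizationOfSeminorm} still gives the sharp recovery value in the joint limit for general $u_1\in W^{1,p}(\Omega_1)$ (not merely smooth $u_1$); the argument in its proof already invokes a smooth-approximation step, so the same approximation can be re-used here, but care is required to ensure the error terms $[u-\varphi]_{\mathfrak{W}^{s_n,p}[\delta_n](\Omega_1)}$ are controlled uniformly via \Cref{cor:Embedding:Weighted}. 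Case \ref{item:ii} also has a minor technical subtlety: one must ensure an $s$-uniform integrable dominant for the kernel near $\partial\Omega_1$, and this is why the factorization $\sigma^{-d-s_n p}=\sigma^{-d-p}\sigma^{p-s_n p}$ is decisive. All remaining ingredients are already in place from \Cref{sec:function-space,sec:convolution,sec:properties}.
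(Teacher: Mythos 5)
Your overall architecture --- splitting the energy into its $\Omega_1$- and $\Omega_2$-contributions, proving liminf inequalities via compactness/Fatou/BBM and recovery via constant sequences together with \Cref{thm:LocalizationOfSeminorm} --- is the same as the paper's, but two liminf steps are genuinely missing. First, in \Cref{item:iii} the $\Omega_1$-contribution is the \emph{weighted local} energy $\int_{\Omega_1}\alpha(\bx)|\nabla u_1^s(\bx)|^p\sigma(\bx)^{p-sp}\,\rmd\bx$, and your plan covers the $\Omega_1$-term only in the regimes $\delta_n\to 0$ (via \Cref{thm:Compactness}) or $\delta$ fixed (via Fatou); neither applies here. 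Fatou is unavailable because the gradients $\nabla u_1^s$ of a merely $L^p$-convergent sequence need not converge pointwise. The paper has to work at exactly this point: the uniform weighted bound is upgraded to a uniform $W^{s,p}(\Omega_1)$ bound via \Cref{thm:WeightedEst}, BBM compactness then gives $u_1\in W^{1,p}(\Omega_1)$ and weak $W^{1,p}_{loc}$ convergence, and since $\sigma^{1-s}\to 1$ in $L^\infty$ on sets $A\Subset\Omega_1$ one gets $\sigma^{1-s}\nabla u_1^s\rightharpoonup\nabla u_1$ weakly in $L^p(A)$, whence lower semicontinuity on every such $A$; some argument of this kind must be supplied.

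Second, your liminf for the nonlocal $\Omega_1$-term in Cases \ref{item:i}), \ref{item:iv}) and \ref{item:v}) rests on \Cref{thm:Compactness}, which only yields the \emph{unweighted} inequality $[u_1]_{W^{1,p}(\Omega_1;p-sp)}\le\liminf[u_1^n]_{\mathfrak{W}^{s_n,p}[\delta_n](\Omega_1)}$. The $\Gamma$-limit carries the variable coefficient $\alpha(\bx)$, and a bounded measurable $\alpha$ cannot be pulled through that statement (you would only obtain the bound with $\alpha$ replaced by its infimum). This is precisely why the paper proves \Cref{Prop:LIMINF-INEQUALITY}: a localized version of the lsc estimate on arbitrary subsets $\cB\subset\cD$, combined with simple functions, superadditivity of the liminf, and monotone approximation of $\alpha$; the same device, together with the intermediate exponent comparison constant $C(s)$, is needed again in Case \ref{item:v}). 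A smaller slip: in Case \ref{item:iv}) the $\Omega_2$-term of $\cE_{1,\delta}$ is $\int_{\Omega_2}\beta|\nabla u_2|^p$, whose lower semicontinuity under $L^p$ convergence with bounded gradients follows from weak $W^{1,p}$ compactness and convexity, not from Fatou. The limsup/recovery half of your plan is correct and coincides with the paper's (constant sequences, \Cref{thm:LocalizationOfSeminorm}, dominated convergence, BBM), modulo the same routine insertion of the coefficients.
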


The theorem effectively says that the diagram \Cref{fig:variational_convergence_unconstrained} commutes. 
\begin{figure}
\centering
\begin{tikzpicture}[scale=0.75]
\tikzset{to/.style={->,>=stealth',line width=.8pt}}   
\node(v1) at (0,3.5) {\textcolor{red}{$\mathcal{E}_{s,\delta}$}};
\node (v2) at (5.5,3.5) {\textcolor{red}{$\mathcal{E}_{s,0}$}};
\node (v3) at (0,0) {\textcolor{red}{$\mathcal{E}_{1,\delta}$}};
\node (v4) at (5.5,0) {\textcolor{red}{$\mathcal{E}_{1,0}$}};
\draw[to] (v1.east) -- node[midway,above] {\footnotesize{\textcolor{blue}{ Case \ref{item:i}}}}  node[midway,below] {\footnotesize{\textcolor{blue}{$\delta\to0$}}}      
(v2.west);
\draw[to] (v1.south) -- node[midway,left] {\footnotesize{\textcolor{blue}{ Case \ref{item:ii}}}} node[midway,right] {\footnotesize{\textcolor{blue}{$s\to1$}}} (v3.north);
\draw[to] (v3.east) -- node[midway,below] {\footnotesize{\textcolor{blue}{ Case \ref{item:iv}}}} node[midway,above] {\footnotesize{\textcolor{blue}{$\delta\to0$}}} (v4.west);
\draw[to] (v2.south) -- node[midway,right] {\footnotesize{\textcolor{blue}{ Case \ref{item:iii}}}} node[midway,left] {\footnotesize{\textcolor{blue}{$s\to1$}}}(v4.north);
\draw[to] (v1.south east) to[out = 2, in = 180, looseness = 1.2] node[midway] {\footnotesize\hbox{\shortstack[l]{ {\textcolor{blue}{ $\;\;$Case \ref{item:v}}}\\ {\textcolor{blue}{($s\to1,\;\delta\to0$)}} }}} (v4.north west);
\end{tikzpicture}
\caption{Variational convergence of parameterized functionals}
\label{fig:variational_convergence_unconstrained}
\end{figure}
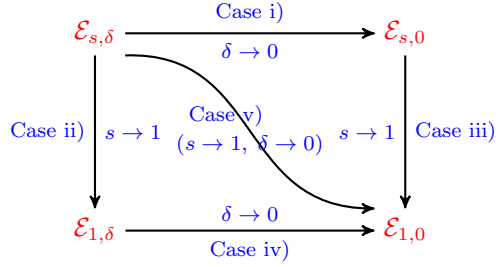

In the remaining part of this section, we prove each of the parts of the theorem separately. 
For each of the $\Gamma$-convergence results, we establish the corresponding liminf
and limsup inequalities. To that end, we prove the following inequality for the nonlocal energy.

\begin{proposition}\label{Prop:LIMINF-INEQUALITY}
	Let $s\in (0, 1]$ and $\alpha\in L^{\infty}(\cD)$ such that for some $\alpha_0>0,$ $\alpha_0 \leq\alpha(\bx) \leq {1\over \alpha_0},$ for all $\bx \in \cD$. Then if $\{u_{n}\}_n$ converges to $u$ in $L^p(\cD)$ and $\delta_n \to 0$ as $n\to\infty$,  then we have
\begin{equation*}
    \begin{aligned}
&		\int_{\cD} \alpha(\bx)|\nabla u(\bx)|^p \sigma(\bx)^{p-sp}\rmd\bx\\
& \qquad \le \liminf_{n\to \infty} \overline{C}_{d,p} \int_{\cD}\int_{B(\bx,\delta_n\sigma(\bx))} \alpha(\bx)\frac{|u_n(\bx)-u_n(\by)|^p}{\delta_n^{d+p}\sigma(\bx)^{d+sp}}\rmd\by \rmd\bx.
\end{aligned}
	\end{equation*}
\end{proposition}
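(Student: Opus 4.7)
The plan is to establish this as a $\Gamma$-liminf inequality via the boundary-localized convolution together with \Cref{thm:LocalizationOfSeminorm}. I would assume $L := \liminf_n \overline{C}_{d,p}\int_\cD\int_{B(\bx,\delta_n\sigma(\bx))} \alpha(\bx)\frac{|u_n(\bx)-u_n(\by)|^p}{\delta_n^{d+p}\sigma(\bx)^{d+sp}}\,\rmd\by\,\rmd\bx$ is finite (else the inequality is trivial) and pass to a subsequence (not relabeled) realizing this liminf. The lower bound $\alpha \geq \alpha_0$ then yields $\sup_n [u_n]_{\mathfrak{W}^{s,p}[\delta_n](\cD)}^p \leq L/\alpha_0 + o(1)$, so \Cref{thm:Compactness} ensures $u \in W^{1,p}(\cD;p-sp)$; in particular the left-hand side of the desired inequality is finite.

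For the quantitative estimate, given $\veps \in (0,\underline{\delta}_0)$ and $\varrho \in (0,\diam(\cD))$ sufficiently small, I would set $V_\varrho := \cD \cap \{\sigma > \varrho\}$ and regularize via $K_\veps u_n$. Since $u_n \to u$ in $L^p(\cD)$ and $\psi_\veps$ is smooth with support localized via $\eta$, $K_\veps u_n \to K_\veps u$ in $C^2(\overline{V_\varrho})$. Applying \Cref{thm:LocalizationOfSeminorm} to $\{K_\veps u_n\}_n$ with the effective horizon $\theta(\veps)\delta_n \to 0$ (carrying the bounded weight $\alpha$ through by dominated convergence, since the pointwise convergence of the inner integrand is uniform in $\bx \in V_\varrho$) gives
\[
\lim_n \overline{C}_{d,p}\int_{V_\varrho}\int_{V_\varrho \cap B(\bx,\theta(\veps)\delta_n\sigma(\bx))} \alpha(\bx)\frac{|K_\veps u_n(\bx)-K_\veps u_n(\by)|^p}{(\theta(\veps)\delta_n)^{d+p}\sigma(\bx)^{d+sp}}\,\rmd\by\,\rmd\bx = \int_{V_\varrho}\alpha(\bx)|\nabla K_\veps u|^p\sigma^{p-sp}\,\rmd\bx.
\]
I would then invoke an $\alpha$-weighted analogue of \Cref{lem:with-phiepsilon} to bound the $n$-th term on the left above by $(\phi(\veps) + \omega(\veps))$ times the original nonlocal energy of $u_n$, with $\omega(\veps)\to 0$ as $\veps \to 0$. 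Passing to the liminf in $n$ yields
\[
\int_{V_\varrho}\alpha(\bx)|\nabla K_\veps u|^p\sigma^{p-sp}\,\rmd\bx \leq (\phi(\veps) + \omega(\veps))\, L.
\]

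To conclude, I would let $\varrho\to 0$ by monotone convergence and then $\veps \to 0$. \Cref{prop:conv-convergence-Sobolev-spaces} gives $K_\veps u \to u$ in $W^{1,p}(\cD;p-sp)$, so since $\alpha \in L^\infty$, the weighted-$L^p$ triangle inequality yields $\int_\cD \alpha |\nabla K_\veps u|^p\sigma^{p-sp}\,\rmd\bx \to \int_\cD \alpha |\nabla u|^p\sigma^{p-sp}\,\rmd\bx$. Combined with $\phi(\veps)\to 1$ and $\omega(\veps)\to 0$, this proves the inequality.

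The principal obstacle will be establishing the $\alpha$-weighted analogue of \Cref{lem:with-phiepsilon}. Its proof pushes the integrand forward under the near-identity diffeomorphism $\bszeta^\veps_\bz(\bx) = \bx + \veps\eta(\bx)\bz$, which transforms $\alpha(\bx)$ into $\alpha((\bszeta^\veps_\bz)^{-1}(\bar\bx))$; for continuous $\alpha$ this converges uniformly on compacts to $\alpha(\bar\bx)$, producing the error term $\omega(\veps)$. For the merely $L^\infty$ weight in the statement, I would first prove the inequality for continuous approximations $\alpha_\tau$ with $\alpha_0 \leq \alpha_\tau \leq \alpha$ (obtained via Lusin's theorem and truncation), which yields $\int_\cD \alpha_\tau|\nabla u|^p\sigma^{p-sp}\,\rmd\bx \leq L_\tau \leq L$ (using $\alpha_\tau \leq \alpha$ in the second bound), and then let $\tau\to 0$ by dominated convergence with dominant $(1/\alpha_0)|\nabla u|^p\sigma^{p-sp} \in L^1(\cD)$.
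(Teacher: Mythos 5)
The overall skeleton of your argument — finiteness of the liminf plus \Cref{thm:Compactness} to get $u\in W^{1,p}(\cD;p-sp)$, regularization by $K_\veps$, a $\phi(\veps)$-comparison in the spirit of \Cref{lem:with-phiepsilon}, the localization of the seminorm as in \Cref{thm:LocalizationOfSeminorm}, and finally $\varrho\to0$, $\veps\to0$ using \Cref{prop:conv-convergence-Sobolev-spaces} — is essentially the machinery the paper also uses, and those steps are sound up to routine repairs (for instance, the ``uniform convergence of the inner integrand on $V_\varrho$'' is not literally true for $\bx$ near $\p V_\varrho$, where the balls $B(\bx,\delta_n\sigma(\bx))$ leave $V_\varrho$; this is fixable by working on $V_{\varrho'}$ with $\varrho'>\varrho$ or by an integrated argument). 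The genuine gap is the last step, the reduction of a merely measurable weight $\alpha$ to a continuous one.

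You require continuous $\alpha_\tau$ with $\alpha_0\le\alpha_\tau\le\alpha$ a.e.\ and $\alpha_\tau\to\alpha$ a.e., and propose to get them ``via Lusin's theorem and truncation.'' This cannot work: truncating the Lusin function from above by $\alpha$ destroys continuity, while truncating by constants does not enforce $\alpha_\tau\le\alpha$, and in fact no such family exists in general. If $\alpha=\alpha_0$ on a dense open set of small measure and $\alpha=1/\alpha_0$ elsewhere, any continuous $g\le\alpha$ a.e.\ satisfies $g\le\alpha_0$ on that dense open set and hence, by continuity, everywhere; so every continuous minorant is $\le\alpha_0$ and the family $\alpha_\tau$ can never converge to $\alpha$. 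Nor can you drop the constraint $\alpha_\tau\le\alpha$ and absorb the error: the excess $(\alpha_\tau-\alpha)^+\le\alpha_0^{-1}\mathds{1}_{E_\tau}$ is supported on a small set $E_\tau$, but the nonlocal energy densities of $u_n$ are only bounded, not equi-integrable in $n$, so their mass may concentrate on $E_\tau$ and the resulting error is not small uniformly in $n$. The paper sidesteps this by approximating $\alpha$ \emph{from below by simple functions}: it proves the inequality first for indicator weights $\mathds{1}_{\cB}$ (where the change of variables in the $\phi(\veps)$-comparison merely enlarges $\cB$ by a thin layer whose contribution vanishes since $u\in W^{1,p}(\cD;p-sp)$), extends to simple functions by superadditivity of the liminf, and then passes to general bounded $\alpha\ge0$ by monotone approximation and Fatou. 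From-below approximation by simple functions is always available, which is precisely what fails for continuous functions; replacing your Lusin step by this scheme turns your proposal into a correct proof, and then your weighted analogue of \Cref{lem:with-phiepsilon} is needed only for indicator weights, where no modulus of continuity of the weight enters.
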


\begin{proof}
	We may assume that the right-hand side of the inequality is finite. Thus, we can apply \Cref{thm:Compactness} to conclude that $u \in W^{1,p}(\cD; p-sp).$
	We prove the inequality in several steps. 
	
	Step 1)  Assume that $\alpha(\bx) = \mathds{1}_{\cB}(\bx)$ where $\cB\subset \cD$ is arbitrary. 
	For $\veps \in (0,1/\kappa_0)$ define the set $\cB^{\veps}:=\{\bx\in \cB:\dist(\bx,\p\cB)> \veps\kappa_0 \diam(\cD) \}$.
	Then using an argument similar to the proof of \Cref{lem:with-phiepsilon}, we can prove the analog of \eqref{eq:Comp:ConvolutionEstimate2} for  $\cB$ with $\varrho$ arbitrary:	\begin{equation*}
		\begin{split}
			\int_{\cB^{\veps} \cap \{ \sigma(\bx) > \varrho \} }  &\int_{B(\bx, \theta(\veps)\delta \sigma(\bx))} \frac{ |K_\veps u(\bx) - K_\veps u(\by)|^p }{ (\theta(\veps)\delta)^{d+p} \sigma(\bx)^{d+sp} } \, \rmd \by \, \rmd \bx \\
			&  \leq \phi(\veps) \int_{\cB \cap \{ \sigma(\bx) > \frac{ \varrho}{\phi(\veps)} \} } \int_{B(\bx, \delta \sigma(\bx))} \frac{ |u(\bx) - u(\by)|^p }{ \delta^{d+p} \sigma(\bx)^{d+sp} } \, \rmd \by \, \rmd \bx.
		\end{split}
	\end{equation*}
	With this, one can argue similarly as in the proof of \Cref{thm:Compactness} to obtain
	\begin{equation}\label{eq:LIMINF-INEQUALITY:Pf1}
		\begin{split}
			&\int_{\cB^\veps} |\grad K_\veps u|^p \sigma(\bx)^{p-sp} \, \rmd \bx \\
			\leq& \overline{C}_{d,p} \liminf_{n\to\infty} \int_{\cB^\veps}\int_{B(\bx,\delta_n\theta(\veps) \sigma(\bx))} \frac{|K_\veps u_n(\bx)-K_\veps u_n(\by)|^p}{(\theta(\veps)\delta_n)^{d+p}\sigma(\bx)^{d+sp}}\rmd\by \rmd\bx\\
			\leq& \overline{C}_{d,p} \phi(\veps) \liminf_{n\to\infty} \int_{\cB}\int_{B(\bx,\delta_n\sigma(\bx))} \frac{|u_n(\bx)-u_n(\by)|^p}{\delta_n^{d+p}\sigma(\bx)^{d+sp}}\rmd\by \rmd\bx.
		\end{split}
	\end{equation}
	
	Now, for $\bz \in B(\mathbf{0},1)$ define $\bszeta_\bz^\veps(\bx) = \bx + \veps \sigma(\bx) \bz$, and we note using Lipschitz continuity of $\dist(\cdot, \p \cB)$ that
	\begin{equation*}
		\mathds{1}_{ \cB }(\bx) \mathds{1}_{ \{ \dist(\bx, \p \cB) < \veps \kappa_0 \diam(\cD) \} } (\bx) 
		\leq \mathds{1}_{\cD} (\bx) \mathds{1}_{ \{ \dist( \bszeta_\bz^\veps(\bx), \p \cB) < 2 \veps \kappa_0 \diam(\cD)  \}  } (\bx).
	\end{equation*}
	Thus, we can use \cite[Lemma 3.2]{scott2024nonlocal} and a change of variables to get
	\begin{equation}\label{eq:LIMINF-INEQUALITY:Pf2}
		\begin{split}
			&\int_{\cB \backslash \cB^\veps} |\nabla K_\veps u(\bx)|^p \sigma(\bx)^{p-sp}\rmd\bx \\
			\leq& \int_{\cB \backslash \cB^\veps} \int_{B(\mathbf{0},1)} \frac{ \psi(|\bz|) (1+\kappa_1 \veps)^p}{ (1-\kappa_0 \veps)^{p-sp} } |\grad u(\bx + \veps \sigma(\bx) \bz)|^p  \sigma(\bx + \veps \sigma(\bx) \bz)^{p-sp} \, \rmd \bz \, \rmd \bx \\
			\le& \tilde{\phi}(\veps) \int_{\cB_{2\veps}} |\nabla u(\bx)|^p \sigma(\bx)^{p-sp}\rmd\bx
		\end{split}
	\end{equation}
	for any $\cB \subset \cD$, where $\tilde{\phi}(\veps):=\frac{(1+\kappa_1 \veps)^p}{(1-\kappa_1 \veps) (1-\kappa_0 \veps)^{p-sp} }$ and $\cB_{2\veps}:=\{\bx\in \cD : \dist(\bx,\p\cB)<2\veps \kappa_0 \diam(\cD) \}$. 
	Combining \eqref{eq:LIMINF-INEQUALITY:Pf1}-\eqref{eq:LIMINF-INEQUALITY:Pf2} and 
	letting $\veps\to 0$ yields 
	\begin{equation*}
		\int_{\cB } |\grad u(\bx)|^p \sigma(\bx)^{p-sp} \, \rmd \bx \leq \overline{C}_{d,p}  \liminf_{n\to\infty} \int_{\cB}\int_{B(\bx,\delta_n\sigma(\bx))} \frac{|u_n(\bx)-u_n(\by)|^p}{\delta_n^{d+p}\sigma(\bx)^{d+sp}}\rmd\by \rmd\bx,
	\end{equation*}
	where we used $\tilde{\phi}(\veps) \int_{ \cB_{2\veps}} |\nabla u(\bx)|^p \sigma(\bx)^{p-sp}\rmd\bx \to 0$ as $\varepsilon \to 0$ since $u\in W^{1,p}(\cD; p-sp)$. 
	
	Step 2) Assume that $\alpha(\bx)$ is a step function and $ \alpha(\bx)= \sum_{i=1}^{\ell} \alpha_i \mathds{1}_{\cB_{i}}(\bx)$, where for all $i=1, 2, \cdots, \ell$, $\alpha_i$ are positive constants and $\cB_{i} \subset \cD$. In this case, apply Step 1) for each $i=1, 2, \cdots, \ell$  to conclude that 
	\begin{align*}
		&\int_{\cD } \alpha(\bx)|\grad  u(\bx)|^p \sigma(\bx)^{p-sp} \, \rmd \bx  \\
		=& \sum_{i=1}^{\ell} \int_{\cB_i } \alpha_i |\grad  u(\bx)|^p \sigma(\bx)^{p-sp} \, \rmd \bx \\
		\leq& \overline{C}_{d,p} \sum_{i=1}^{\ell}\liminf_{n\to\infty} \int_{\cB_i} \int_{B(\bx,\delta_n\sigma(\bx))} \alpha_i\frac{|u_n(\bx)-u_n(\by)|^p}{\delta_n^{d+p}\sigma(\bx)^{d+sp}}\rmd\by \rmd\bx \\
		\leq& \overline{C}_{d,p} \liminf_{n\to\infty} \int_{\cD} \int_{B(\bx,\delta_n\sigma(\bx))} \alpha(\bx)\frac{|u_n(\bx)-u_n(\by)|^{p}}{\delta_n^{p+d}\sigma(\bx)^{d+sp}}\rmd\by \rmd\bx,
	\end{align*} 
	where we have used the superadditivity of the liminf operation. 
	
	Step 3) For $0\leq \alpha(\bx) \in L^{\infty}(\cD)$, we can choose a nondecreasing sequence of  nonnegative simple functions $a_{k}(\bx)$ such that $a_{k} \to \alpha$ in $L^{\infty}(\cD)$ and $0\leq a_k \leq a_{k+1} \leq \alpha$ for all $k$ and $x\in \cD$. We apply Step 2) for each $a_k$ and then monotonicity of the sequence to obtain that 
	\begin{align*}
		&\int_{\cD } a_k(\bx) |\grad  u(\bx)|^p \sigma(\bx)^{p-sp} \, \rmd \bx\\
		\leq& \overline{C}_{d,p} \liminf_{n\to\infty} \int_{\cD}\int_{B(\bx,\delta_n\sigma(\bx))} a_k(\bx) \frac{|u_n(\bx)-u_n(\by)|^p}{\delta_n^{d+p}\sigma(\bx)^{d+sp}}\rmd\by \rmd\bx \\
		\leq& \overline{C}_{d,p} \liminf_{n\to\infty} \int_{\cD}\int_{B(\bx,\delta_n\sigma(\bx))} \alpha(\bx) \frac{|u_n(\bx)-u_n(\by)|^p}{\delta_n^{d+p}\sigma(\bx)^{d+sp}}\rmd\by \rmd\bx.
	\end{align*}
	We then take the limit as $k\to \infty$ and apply Fatou's lemma to complete the proof of the proposition.
\end{proof}

\begin{proof}[Proof of \Cref{item:i} of \Cref{Gamma-converge-unconstrained}]
We fix $s\in (0, 1)$.

{\it Liminf inequality:}
Suppose we have $(u_1, u_2)\in L^{p}(\Omega_1)\times L^{p}(\Omega_2)$ and $(u^{\delta}_1, u^{\delta}_2) \in L^{p}(\Omega_1)\times L^{p}(\Omega_2)$ such that $(u^{\delta}_1, u^{\delta}_2)\to (u_1, u_2)$ in $L^{p}(\Omega_1)\times L^{p}(\Omega_2).$
Without loss of generality, we assume that $\sup_{\delta\in (0,\underline{\delta}_0)}{\cE}_{s, \delta}(u^{\delta}_1, u^{\delta}_2) < \infty$. This implies that 
\[\sup_{\delta\in (0,\underline{\delta}_0)}[u_1^\delta]_{\mathfrak{W}^{s,p}[\delta](\Omega_1)} <\infty \,\text{ and }\, \sup_{\delta\in (0,\underline{\delta}_0)}[u_2^{\delta}]_{W^{s,p}(\Omega_2)} < \infty.\]
Choose a subsequence $u^{\delta_k}_1$ such that $\displaystyle \lim_{ k\to \infty}[u_1^{\delta_k}]^p_{\mathfrak{W}^{s,p}[\delta_k](\Omega_1)} = \liminf_{\delta\to 0} [u_1^\delta]^p_{\mathfrak{W}^{s,p}[\delta](\Omega_1)}. $
By assumption, we have that 
$u_1^{\delta_k}\to u_1$ in $L^{p}(\Omega_1)$, as $k\to \infty$, with the limiting function $u_{1}\in W^{1, p}(\Omega_1, p-sp)$. Moreover, applying \Cref{Prop:LIMINF-INEQUALITY} 
\[
\begin{aligned} &
\int_{\Omega_1} \alpha(\bx)|\nabla u_1(\bx)|^p \sigma(\bx)^{p-sp} \, \rmd\bx\\
& \qquad\le 
\liminf_{k\to \infty} \overline{C}_{d,p} \int_{\Omega_1} \int_{B(\bx,\delta_k\sigma(\bx))} \alpha(\bx)\frac{|u_1^{\delta_k}(\bx)-u_1^{\delta_k}(\by)|^p}{\delta_k^{d+p}\sigma(\bx)^{d+sp}}\, \rmd\by \, \rmd\bx. 
\end{aligned}
\]
Similarly, by the standard compactness embedding of the fractional Sobolev spaces into $L^{p}$ spaces, 
$u_{2}^{\delta_k} \to u_{2}$ in $L^{p}(\Omega_2)$ with $u_{2}\in W^{s,p}(\Omega)$ as $k\to \infty$, and by Fatou's lemma 
\[
\int_{\Omega_2} \int_{\Omega_2}\beta(\bx) \frac{|u_2(\bx)-u_2(\by)|^p}{|\bx-\by|^{d+sp}} \, \rmd \by \, \rmd \bx\leq \liminf_{k\to \infty}\int_{\Omega_2} \int_{\Omega_2}\beta(\bx) \frac{|u^{\delta_k}_2(\bx)-u^{\delta_k}_2(\by)|^p}{|\bx-\by|^{d+sp}} \, \rmd \by \, \rmd \bx.
\]
Combining the above inequalities we get the desired liminf inequality 
\begin{equation*}
\begin{split}
	\liminf_{\delta\to 0} \cE_{s, \delta}(u_{1}^\delta, u_2^\delta) &\geq 
	\cE_{s, 0}(u_1, u_2).
\end{split}
\end{equation*}

{\it Limsup inequality:}
Let $(u_1, u_2)\in L^{p}(\Omega_1) \times L^{p}(\Omega_2)$. If $(u_1, u_{2})\in L^{p}(\Omega_1) \times L^{p}(\Omega_2) $ but not in $ W^{1,p}(\Omega_1, p-sp) \times {W}^{s,p}(\Omega_2) $, then since $\cE_{s, 0}(u_1, u_2) = \infty$ there is nothing to prove.
If $(u_1, u_2)\in W^{1,p}(\Omega_1, p-sp) \times {W}^{s,p}(\Omega_2),$ we can take the constant sequence $(u_1^\delta, u_2^{\delta}) = (u_1, u_2)$ and use 
\Cref{thm:LocalizationOfSeminorm} 
\[
\begin{gathered}
\lim_{\delta\to 0} [u_1]^p_{\mathfrak{W}^{s,p}[\delta](\Omega_1)} = [u_{1}]^p_{W^{1, p}(\Omega_1, p-sp)} 
\end{gathered}
\]
to obtain the limsup inequality.  

\end{proof}
\begin{proof}[Proof of \Cref{item:ii} of \Cref{Gamma-converge-unconstrained}]
We fix $\delta >0.$

{\it Liminf inequality:}
Let $(u_{1}^{s},u_{2}^{s}) \to (u_{1}, u_{2})$ in $L^{p}(\Omega_{1})\times L^{p}(\Omega_2)$. We may assume that 
$\displaystyle \liminf_{s\to 1} {\cE}_{s, \delta}(u_{1}^{s}, u_{2}^{s}) < \infty.$
Up to a subsequence we have that  $\displaystyle\sup_{s\to 1}[u_{1}^{s}]_{\mathfrak{W}^{s,p}[\delta](\Omega_1)} < \infty$ and $\displaystyle \sup_{s\to  1}[u_{2}^{s}]_{W^{s,p}(\Omega_2)} < \infty. $

Now since $u_1^{s}\to u_1$ a.e. in $\Omega_1$ as $s\to 1,$ we have that for almost all $(x, y)\in \Omega_1\times \Omega_1$, we  have 
\[
\frac{|u^s_1(\bx)-u^s_1(\by)|^p}{ \delta^{p} \sigma(\bx)^{sp} } \to \frac{|u_1(\bx)-u_1(\by)|^p}{ \delta^{p} \sigma(\bx)^{p} } \,\,\text{as $s\to 1$. }
\]
It then follows from Fatou's lemma that 
\begin{align*}
\int_{\Omega_1} \int_{B(\bx,\delta\sigma(\bx))}&\alpha(\bx)\frac{|u_1(\bx)-u_1(\by)|^p}{ \delta^{d+p} \sigma(\bx)^{d+p} } \rmd \by \rmd\bx\\
&\le \liminf_{s\to 1} \int_{\Omega_1} \int_{B(\bx,\delta_k\sigma(\bx))} \alpha(\bx)\frac{|u_1^{s}(\bx)-u_1^{s}(\by)|^p}{\delta_k^{d+p} \sigma(\bx)^{d+sp}} \rmd\by \rmd\bx.
\end{align*}
For the sequence $ \{u_2^{s}\}$ we may apply the result of \cite{Munoz2021} -- which generalizes that of \cite{bourgain2001,ponce2004} -- to conclude that 
\[
{1\over p} \int_{\Omega_2} \beta(\bx) |\nabla u_2(\bx)|^{p} \rmd\bx \leq \liminf_{s\to 1}\frac{\kappa_{d,s,p}}{p} \int_{\Omega_2} \int_{\Omega_2}\beta(\bx) \frac{|u^{s}_2(\bx)-u^{s}_2(\by)|^p}{|\bx-\by|^{d+sp}} \, \rmd \by \, \rmd \bx.
\]
Combining the above inequalities we get the the liminf inequality.

{\it Limsup inequality:} For any $(u_1, u_2)\in  \mathfrak{W}^{1, p}[\delta](\Omega_1)\times W^{1, p}(\Omega_2)$, we may take the constant sequence $(u_1^{s},u_{2}^{s}) = (u|_{\Omega_1}, u|_{\Omega_2}) = (u_1, u_2).$ 
For $(u_1, u_2)\notin \mathfrak{W}^{1, p}[\delta](\Omega_1)\times W^{1, p}(\Omega_2)$, there is nothing to show since ${\cE}_{1, \delta}(u_{1}^{s}, u_{1}^{s}) = \infty$.

\end{proof}
\begin{proof}[Proof of \Cref{item:iii} of \Cref{Gamma-converge-unconstrained}]
As before we will establish the liminf and the limsup inequalities.

{\it Liminf inequality:}
Let $(u_{1}^{s},u_{2}^{s}) \to (u_{1}, u_{2})$ in $L^{p}(\Omega_{1})\times L^{p}(\Omega_2)$ as $s\to 1$. We may assume that 
$\liminf_{s\to 1} {\cE}_{s, 0}(u_{1}^{s}, u_{2}^{s}) < \infty.$
Up to a subsequence we have that  $\sup_{s\in(0, 1)}[u_{1}^{s}]_{W^{1,p}(\Omega_1; p-sp)} < \infty$ and $\sup_{s\in (0,1)}[u_{2}^{s}]_{W^{s,p}(\Omega_2)} < \infty. $
As in the liminf inequality for Case \ref{item:ii}, for the sequence $ \{u_2^{s}\}$ we may apply the result of  \cite{Munoz2021} (see also \cite{bourgain2001,ponce2004}) to conclude that 
\[
{1\over p} \int_{\Omega_2} \beta(\bx) |\nabla u_2(\bx)|^{p} \rmd\bx \leq \liminf_{s\to 1}\frac{\kappa_{d,s,p}}{p} \int_{\Omega_2} \int_{\Omega_2}\beta(\bx) \frac{|u^{s}_2(\bx)-u^{s}_2(\by)|^p}{|\bx-\by|^{d+sp}} \, \rmd \by \, \rmd \bx. 
\]
As for the other functional sequence, we get $\sup_{s < 1} [u_{1}^{s}]_{W^{s, p}(\Omega_1)} <\infty$ by applying \Cref{thm:WeightedEst}. By the compactness results of \cite{bourgain2001,ponce2004}, we have $u_{1} \in W^{1,p}(\Omega_1)$. Also, $u_{1}^{s}$ weakly converges to $u_1$ in $W^{1, p}_{loc}(\Omega_1).$
To complete the proof, it suffices to prove that 
\[
{1\over p} \int_{\Omega_1} \alpha(\bx) |\nabla u_1(\bx)|^{p} \rmd\bx \leq \liminf_{s\to 1}\frac{1}{p} \int_{\Omega_1} \alpha(\bx) |\nabla u^{s}_1|^{p} \sigma(\bx)^{p-sp} \rmd \bx. 
\] 

To that end, we apply the same incremental argument as in the proof of \Cref{Prop:LIMINF-INEQUALITY} to treat the case of general $\alpha(\bx)$, and we only need to show that for any $V \subset \Omega_1$ we have 
\[
{1\over p} \int_{V}  |\nabla u_1(\bx)|^{p} \rmd\bx \leq \liminf_{s\to 1}\frac{1}{p} \int_{V}  |\nabla u^{s}_1|^{p} \sigma(\bx)^{p-sp} \rmd \bx. \]
Since $u_1 \in W^{1, p}(\Omega_1)$, this inequality can in turn be proved if we establish 
\begin{equation}\label{eq:partc:liminfPf1}
\int_{A}  |\nabla u_1(\bx)|^{p} \rmd\bx \leq \liminf_{s\to 1} \int_{V} |\nabla u^{s}_1|^{p} \sigma(\bx)^{p-sp} \rmd \bx,
\end{equation}
for any $A\Subset V$.
Since $\grad u_1^s \rightharpoonup \grad u_1$ weakly in $L^p(A)$, and since $\sigma(\bx)^{1-s} \to 1$ strongly in $L^\infty(V)$, it follows that $\grad u_1^s \sigma^{1-s} \rightharpoonup \grad u_1$ weakly in $L^p(A)$. Therefore
\begin{equation*}
\int_{A}  |\nabla u_1(\bx)|^{p} \rmd\bx 
\leq \liminf_{s\to 1} \int_{A} |\nabla u^{s}_1|^{p} \sigma(\bx)^{p-sp} \rmd \bx
\leq \liminf_{s\to 1} \int_{V} |\nabla u^{s}_1|^{p} \sigma(\bx)^{p-sp} \rmd \bx,
\end{equation*}
and \eqref{eq:partc:liminfPf1} is established.

{\it Limsup inequality:} For any $(u_1, u_2)\in  W^{1, p}(\Omega_1)\times W^{1, p}(\Omega_2)$, we may take the constant sequence $(u_1^{s},u_{2}^{s}) = (u|_{\Omega_1}, u|_{\Omega_2}) = (u_1, u_2)$.
For $(u_1, u_2)\notin  W^{1, p}(\Omega_1)\times W^{1, p}(\Omega_2)$, there is nothing to prove since  ${\cE}_{1, 0}(u_{1}, u_{1}) = \infty$. 

\end{proof}

\begin{proof}[Proof of \Cref{item:iv} of \Cref{Gamma-converge-unconstrained}]
	The $\Gamma$-convergence in \Cref{item:iv} follows by applying the $\Gamma$-convergence argument of $u\mapsto [u]_{\mathfrak{W}^{1, p}[\delta](\Omega_1)}$ proved in \cite{scott2024nonlocal}. 
\end{proof}

\begin{proof}[Proof of \Cref{item:v} of \Cref{Gamma-converge-unconstrained}]
As before we will establish the liminf and the
limsup inequalities.

{\it Liminf inequality:} We apply \Cref{thm:Compactness} and the remark after it to finish the proof. Assume we have $(u_1, u_2)\in L^p(\Omega_1)\times L^p(\Omega_2)$ and $(u_1^{s_k,\delta_k}, u_2^{s_k,\delta_k})\to (u_1, u_2)$ in $L^p(\Omega_1)\times L^p(\Omega_2)$ as $k\to\infty$, we show that 
\[{\cE}_{1,0}(u_1,u_2)\le \liminf_{k\to\infty} {\cE}_{s_k, \delta_k} (u_1^{s_k,\delta_k}, u_2^{s_k,\delta_k}).\]
Without loss of generality, suppose the right-hand side is finite. Then up to a subsequence (not relabeled) we have $\sup_{k}{\cE}_{s_k, \delta_k} (u_1^{s_k,\delta_k}, u_2^{s_k,\delta_k}) <\infty$. Then 
\[\sup_k [u_1^{s_k,\delta_k}]_{\mathfrak{W}^{s_k,p}[\delta_k](\Omega_1)}<\infty \text{ and }  \sup_k [u_2^{s_k,\delta_k}]_{W^{s_k,p}(\Omega_2)}<\infty. \] 
For the sequence $ \{u_2^{s_k,\delta_k}\}_k$, as in the proof of \Cref{item:ii}, we may apply the result of  \cite{Munoz2021} (see also \cite{bourgain2001,ponce2004}) to conclude that 
\[
{1\over p} \int_{\Omega_2} \beta(\bx) |\nabla u_2(\bx)|^{p} dx \leq \liminf_{k\to\infty}\frac{\kappa_{d,s_k,p}}{p} \int_{\Omega_2} \int_{\Omega_2}\beta(\bx) \frac{|u^{s_k,\delta_k}_2(\bx)-u^{s_k,\delta_k}_2(\by)|^p}{|\bx-\by|^{d+s_kp}} \, \rmd \by \, \rmd \bx.
\]
For the sequence $ \{u_1^{s_k,\delta_k}\}_k$, we first note that
for any fixed $0 < s < 1$ there exists $K \in \bbN$ such that $s < s_k$ for all $k \geq K$. We have the estimate 

\begin{equation*}
	\begin{split}
		&\int_{\Omega_1}\int_{B(\bx, \delta_k\sigma(\bx))}\alpha(\bx) \frac{|u^{s_k,\delta_k}_1(\bx)-u^{s_k,\delta_k}_1(\by)|^p}{\delta_k^{d+p}\sigma(\bx)^{d+sp}}\rmd\by \rmd\bx \\
		&\le C(s)\int_{\Omega_1}\int_{B(\bx, \delta_k\sigma(\bx))}\alpha(\bx) \frac{|u^{s_k,\delta_k}_1(\bx)-u^{s_k,\delta_k}_1(\by)|^p}{\delta_k^{d+p}\sigma(\bx)^{d+s_k p}}\rmd\by \rmd\bx,
	\end{split}
\end{equation*}
where $C(s) = \max\{1, \text{diam}(\Omega_1)^{(1-s)p}\}$. Then we apply \Cref{Prop:LIMINF-INEQUALITY} to the left-hand side of the above inequality to obtain
\begin{equation}
	\begin{split}
		&\int_{\Omega_1}\alpha(\bx)|\nabla u_1(\bx)|^p\sigma(\bx)^{p-sp}\rmd\bx \\
		\leq& C(s) \overline{C}_{d,p} \liminf_{k \to \infty}         \int_{\Omega_1}\int_{B(\bx, \delta_k\sigma(\bx))}\alpha(\bx) \frac{|u^{s_k,\delta_k}_1(\bx)-u^{s_k,\delta_k}_1(\by)|^p}{\delta_k^{d+p}\sigma(\bx)^{d+s_k p}}\rmd\by \rmd\bx.
	\end{split}
\end{equation}
Since $C(s)\to 1$ as $s\to 1$, by Fatou's lemma, letting $s\to 1$ yields
\begingroup\makeatletter\def\f@size{9}\check@mathfonts
\def\maketag@@@#1{\hbox{\m@th\large\normalfont#1}}%
    \begin{equation*}
        \int_{\Omega_1}\alpha(\bx)|\nabla u_1(\bx)|^p \rmd\bx\le \liminf_{k\to\infty} \overline{C}_{d,p} \int_{\Omega_1}\int_{B(\bx, \delta_k\sigma(\bx))}\alpha(\bx) \frac{|u^{s_k,\delta_k}_1(\bx)-u^{s_k,\delta_k}_1(\by)|^p}{\delta_k^{d+p}\sigma(\bx)^{d+s_k p}}\rmd\by \rmd\bx.
    \end{equation*}
\endgroup

{\it Limsup inequality:} Let $(u_1, u_2)\in L^{p}(\Omega_1)\times L^{p}(\Omega_2)$. If $(u_1, u_2)\notin W^{1,p}(\Omega_1)\times W^{1,p}(\Omega_2)$, then there is nothing to prove. If $(u_1, u_2)\in W^{1,p}(\Omega_1)\times W^{1,p}(\Omega_2),$ then we take the constant sequence and apply  \Cref{thm:LocalizationOfSeminorm} with both $s\to 1$ and $\delta \to 0$. 
\end{proof}

\subsection{Convergence of minimizers: Proof of \texorpdfstring{\Cref{Main-gammaconvergence}}{Theorem 1.2}}

For any $\delta\in (0, \underline{\delta}_0)$, $s\in (0, 1)$, $p\in (1, \infty)$ such that $sp>1$, we recall that the spaces $\mathfrak{X}_{s,\delta}$ defined in \eqref{SPACES} and in \Cref{Main-gammaconvergence}. Given a linear functional $\mathfrak{f}\in \mathfrak{X}'_{s, \delta}$ we also recall the parametrized functionals    \[\cF_{s, \delta}(v_1,v_2) = \cE_{s,\delta}(v_1,v_2) - \langle\mathfrak{f}, (v_1, v_2)\rangle_{\mathfrak{X}'_{s, \delta}, \mathfrak{X}_{s, \delta}}\]
for $(u_1, u_2)\in \mathfrak{X}_{s,\delta}$ and $\infty $ otherwise. The proof of the $\Gamma$-convergence in various convergence regimes follows essentially the same tracks as the proof of \Cref{Gamma-converge-unconstrained}. We only need to ensure that the limiting functions belong to the function spaces that give the proper transmission conditions. The proof of convergence we give below emphasizes this without repeating many of the arguments used in the proof of \Cref{Gamma-converge-unconstrained}. For \Cref{Main-gammaconvergence}, we take  $\mathfrak{f} =  (f_1, f_2)\in L^{p'}(\Omega_1)\times L^{p'}(\Omega_2)\subset \mathfrak{X}'_{s, \delta}$, where $p'$ is the H\"older conjugate of $p.$
\begin{proof}[Proof of \Cref{Main-gammaconvergence} Case \ref{item:a}]
	We begin by fixing $s\in (0, 1)$, and consider the energy functional $\cF_{s,\delta}$ for $\delta\in (0, \underline{\delta}_0)$, and $\underline{\delta}_0$ as given in \eqref{bound-for-delta}. 
	To show the $\Gamma$- convergence of  $\cF_{s,\delta}$ to with respect to $L^{p}(\Omega_1)\times L^{p}(\Omega_2)$ to 
	$\cF_{s, 0}$, where we recall as defined in \eqref{defn-E(s,0)} that 
	\[
	\cF_{s, 0}(u_1, u_2) =  
	[u_1]^{p}_{W^{1,p}(\Omega_1, p-sp)} + [u_2]^{p}_{W^{s,p}(\Omega_2)} - \int_{\Omega_1} f_{1}(\bx) u_1(\bx) \rmd\bx - \int_{\Omega_2} f_{2}(\bx) u_{2}(\bx) \rmd\bx
	\]
	for $(u_1, u_2)\in W^{1,p}(\Omega_1, p-sp) \times {W}^{s,p}(\Omega_2) $ and $\infty$ otherwise. 
	we prove both the liminf and limsup inequalities.
	
	\textit{Liminf inequality:}
	Suppose we have $(u_1, u_2)\in L^{p}(\Omega_1)\times L^{p}(\Omega_2)$ and $(u^{\delta}_1, u^{\delta}_2) \in L^{p}(\Omega_1)\times L^{p}(\Omega_2)$ such that $(u^{\delta}_1, u^{\delta}_2)\to (u_1, u_2)$ in $L^{p}(\Omega_1)\times L^{p}(\Omega_2).$
	Without loss of generality, we assume that $\sup_{\delta\in (0,\underline{\delta}_0)}\cF_{s, \delta}(u^{\delta}_1, u^{\delta}_2) < \infty$. Then $(u^\delta_1, u^{\delta}_2)\in \mathfrak{X}_{s,\delta}$ and that 
	\[\sup_{\delta\in (0,\underline{\delta}_0)}[u_1^\delta]_{\mathfrak{W}^{s,p}[\delta](\Omega_1)} <\infty \text{ and } \sup_{\delta\in (0,\underline{\delta}_0)}[u_2^{\delta}]_{W^{s,p}(\Omega_2)} < \infty.\]
	Now if we show that $(u_1, u_2)\in \mathfrak{X}_{s,0}$, then applying a similar argument as in the proof of \Cref{item:i} of \Cref{Gamma-converge-unconstrained}, we can show that
	\[
	\cF_{s,0}(u_1, u_2) \leq \liminf_{\delta\to 0} \cF_{s, \delta}(u^\delta_{1}, u^{\delta}_{2}). 
	\]

To show that $(u_1, u_2)\in \mathfrak{X}_{s,0}$, we need to establish that $u_1$ and $u_2$ have a matching trace on $\Sigma$, and a vanishing trace on $\partial \Omega_1\setminus \Sigma$ and $\partial \Omega_2\setminus \Sigma$,  respectively.  The sequence ${u^{\delta}_2}$ converges weakly in $W^{s,p}(\Omega_2)$ to $u_{2}$ and since $T_{1}u^{\delta}_2 = 0$,  we may apply the continuity of the trace operator with respect to the weak topology of the spaces to conclude $u_2$ has vanishing trace on $\partial \Omega_2\setminus \Sigma$. 
To show the matching trace of $u_1$ on $\Sigma$ and that $T_1 u_1 = 0$ on $\partial \Omega_1\setminus \Sigma$,  we first note by \Cref{thm:Convolution:DerivativeEstimate}  that since $u^{\delta}_1\in \mathfrak{W}^{s,p}[\delta](\Omega_1),$ $K_{\delta} u^{\delta}_{1}\in W^{1, p}(\Omega_1, p-sp)$ with the estimate 
\[
[K_{\delta} u^{\delta}_{1}]_{W^{1, p}(\Omega_1, p-sp)} \leq C [u_1^\delta]_{\mathfrak{W}^{s,p}[\delta](\Omega_1)}. 
\]
where $C$ is independent of $\delta$. 
Therefore, $K_{\delta} u^{\delta}_{1}$ is a bounded sequence in $W^{1, p}(\Omega_1, p-sp)$ and weakly converges in $W^{1, p}(\Omega_1, p-sp)$ and strongly in $L^{p}(\Omega_1)$. Since $K_{\delta} u^{\delta}_{1}$ and $ u^{\delta}_{1}$ have the same $L^{p}$ limit, which we can show using \Cref{thm:diffuKu:Weighted}, we must have that the limit is $u_1$ and it is in $ W^{1, p}(\Omega_1, p-sp).$ By \Cref{cor:Trace:NonlocalSpace}, we also have 
that 
$
T_{1} K_{\delta} u^\delta_1 = T_{1} u^\delta_1
$ almost everywhere on $\partial \Omega_1$. Thus $T_{1} K_{\delta} u^\delta_1 = 0$ on $\partial \Omega_1\setminus \Sigma$, and therefore the weak limit $u_1$ has vanishing trace on $\partial \Omega_1\setminus \Sigma$ as well. Moreover, it follows using \Cref{rmk:StrongConvTrace} that 
\[
(T_{1}u_1-T_2u_2)|_{\Sigma} = \lim_{\delta\to 0} (T_{1} K_{\delta} u_{1}^{\delta} - T_{2} u^{\delta}_{2})|_{\Sigma} =  \lim_{\delta\to 0} (T_{1} u_{1}^{\delta} - T_{2} u^{\delta}_{2})|_{\Sigma} = 0,
\]
verifying the matching of traces on $\Sigma.$

\textit{Limsup inequality:} The proof is similar to the proof of \Cref{item:i} of \Cref{Gamma-converge-unconstrained}, using a constant sequence. 
\end{proof}

\begin{proof}[Proof of \Cref{Main-gammaconvergence} Case \ref{item:b}]

Fix $\delta\in (0, \underline{\delta}_0)$. We demonstrate the $\Gamma$- convergence of $\cF_{s,\delta}$ with respect to $L^{p}(\Omega_1)\times L^{p}(\Omega_2)$ to $\cF_{1,\delta}$ as $s\to 1$ where we recall that 
\[
\cF_{1,\delta} = \cE_{1, \delta}(u_1, u_2)
- \int_{\Omega_1} f_{1}(\bx) u_1(\bx) \rmd\bx - \int_{\Omega_2} f_{2}(\bx) u_{2}(\bx) \rmd\bx
\]
for $(u_1, u_2)\in \mathfrak{X}_{1, \delta}$ and $\infty$ otherwise. 

\textit{Liminf inequality:}
Suppose that we have $(u_1, u_2)\in L^{p}(\Omega_1)\times L^{p}(\Omega_2)$ and $(u^{s}_1, u^{s}_2) \in L^{p}(\Omega_1)\times L^{p}(\Omega_2)$ such that $(u^{s}_1, u^{s}_2)\to (u_1, u_2)$ in $L^{p}(\Omega_1)\times L^{p}(\Omega_2),$ as $s\to 1$. 
We may assume that $\liminf_{s\to 1} \cF_{s,\delta}(u^s_1, u^s_2) < \infty$. Then we may assume that we have
\[
\sup_{s\in (0,1)}[u_1^s]_{\mathfrak{W}^{s,p}[\delta](\Omega_1)} <\infty \text{ and } \sup_{s\in (0,1)}[u_2^{s}]_{W^{s,p}(\Omega_2)} < \infty.
\]
Again, if we show that $(u_1, u_{2})\in \mathfrak{X}_{1, \delta}$, then applying a similar argument as in the proof
of \Cref{item:ii} of \Cref{Gamma-converge-unconstrained}, we can show that
\[
\cF_{1, \delta}(u_1, u_{2})\leq \liminf_{s\to 1} \cF_{s,\delta}(u_1, u_2). 
\]
To show that $(u_1, u_2)\in \mathfrak{X}_{1, \delta}$, we need to establish that $u_1$ and $u_2$ have a matching trace on $\Sigma$ and a vanishing trace on $\partial \Omega_1\setminus \Sigma$ and $\partial \Omega_2\setminus \Sigma$,  respectively.  Fix $s_0 > {1\over p}$. Then for any $s_0 < s< 1$,
\[
\sup_{s_0< s< 1} [u_1^{s}]_{\mathfrak{W}^{s_0,p}[\delta](\Omega_1)} \leq C(s_0) \sup_{s_0< s< 1}[u^{s}_1]_{\mathfrak{W}^{s,p}[\delta](\Omega_1)} \leq C. 
\]
Therefore, $u_1^{s}$ converges weakly in $\mathfrak{W}^{s_0,p}[\delta](\Omega_1)$ to $u_1$ as $s\to 1$. Using the weak continuity of the trace operator (see \Cref{rmk:StrongConvTrace}), we see that $T_1u_1^{s} \to T_1u_1$ strongly in $L^{p}(\partial \Omega_1)$. Thus, 
$T_{1}u_1 = 0$ on $\partial \Omega_1 \setminus \Sigma.$ We can make similar argument to conclude that $u^s_2$ converges weakly in $W^{s_0, p}(\Omega_2)$ to $u_2$, and so 
$T_2u_2^{s} \to T_2u_2$ strongly in $L^{p}(\partial \Omega_2)$. Thus, 
$T_{2}u_2 = 0$ on $\partial \Omega_2 \setminus \Sigma.$ Moreover, 
\[
(T_{1}u_1-T_2u_2)|_{\Sigma}  =  \lim_{\delta\to 0} (T_{1} u_{1}^{\delta} - T_{2} u^{\delta}_{2})|_{\Sigma} = 0
\]

\textit{Limsup inequality:} The proof is similar to the proof of \Cref{item:ii} of \Cref{Gamma-converge-unconstrained},
using a constant sequence.

\end{proof}


Next, we establish the $\Gamma$-convergence of $\cE_{s,0}$ to $\cE_{1,0}$ where 
\[
\cE_{1,0}(u) = {1\over p}\int_{\Omega} \Lambda(\bx)|\nabla u(\bx)|^{p} \rmd\bx \quad \text{for $u\in W^{1,p}(\Omega)$}
\]
and $\infty $ otherwise,  where $\Lambda(\bx) = \alpha(\bx) \mathds{1}_{\Omega_1}(\bx) + \beta(\bx)\mathds{1}_{\Omega_2}(\bx)$.  
We begin with a compactness result for sequences of functions in $\mathfrak{X}_{s,0}$, defined in \eqref{Admissible-delta0}. 

\begin{lemma}\label{thm:Case2:Compactness}
Let a sequence $\{(u_1^s,u_2^s)\}_s \in \mathfrak{X}_{s,0}$ satisfy $\sup_{ s \in (0,1) } \cE_{s,0}(u_1^s,u_2^s)  < \infty$. Then $\{(u_1^s,u_2^s)\}_s$ is precompact in $L^p(\Omega_1) \times L^p(\Omega_2)$. Moreover, any limit point $(u_1,u_2)$ belongs to $\mathfrak{X}_{1,0}$. 
\end{lemma}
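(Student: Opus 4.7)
The plan is to extract a subsequence of $\{(u_1^s, u_2^s)\}_s$ that converges strongly in $L^p(\Omega_1)\times L^p(\Omega_2)$ and to verify the limit lies in $\mathfrak{X}_{1,0} = W^{1,p}_0(\Omega)$. To begin, I would apply the transmission Poincar\'e inequality \Cref{PI-onX} (valid at $\delta = 0$ and uniform for $s$ bounded away from $1/p$) to obtain
\[
\|(u_1^s, u_2^s)\|_{L^p(\Omega_1)\times L^p(\Omega_2)}^p \le C\,\cE_{s,0}(u_1^s,u_2^s) \le C'.
\]

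For precompactness in $L^p$, the key observation is that both components can be fed into the Bourgain--Brezis--Mironescu type compactness theorem of \cite{bourgain2001,ponce2004new,Munoz2021}: a sequence $\{u_n\}$ bounded in $L^p$ satisfying $\sup_n (1-s_n)\int\!\!\int |u_n(\bx)-u_n(\by)|^p |\bx-\by|^{-(d+s_n p)}\rmd\bx\,\rmd\by < \infty$ is precompact in $L^p$ and has all limit points in $W^{1,p}$. For $u_2^s$ this applies directly to the bound $\kappa_{d,s,p}[u_2^s]^p_{W^{s,p}(\Omega_2)}\le C$ (noting $\kappa_{d,s,p}\propto 1-s$). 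For $u_1^s$, I would first apply \Cref{thm:WeightedEst}, whose constant is $s$-independent, to convert the weighted gradient bound into the same form:
\[
\kappa_{d,s,p}[u_1^s]^p_{W^{s,p}(\Omega_1)} \le C\int_{\Omega_1}|\nabla u_1^s|^p \sigma^{p-sp}\rmd\bx \le C'.
\]
A diagonal extraction then produces a single subsequence along which $u_i^s\to u_i$ in $L^p(\Omega_i)$ with $u_i \in W^{1,p}(\Omega_i)$ for $i=1,2$.

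The final task is to show $(u_1, u_2) \in \mathfrak{X}_{1,0}$, i.e., that $u := u_1\mathds{1}_{\Omega_1} + u_2\mathds{1}_{\Omega_2} \in W^{1,p}_0(\Omega)$, by passing to the limit in the three trace conditions $T_1 u_1^s = 0$ on $\partial\Omega_1\setminus\Sigma$, $T_2 u_2^s = 0$ on $\partial\Omega_2\setminus\Sigma$, and $T_1 u_1^s = T_2 u_2^s$ on $\Sigma$. On the $\Omega_1$-side I would fix any $s_0\in (1/p, 1)$; applying \Cref{thm:WeightedEst} at level $s_0$, together with the elementary comparison $\sigma^{p-s_0 p}\le C(s_0)(1+\sigma^{p-sp})$, produces the uniform bound $[u_1^s]_{W^{s_0,p}(\Omega_1)}\le C(s_0)$ for $s \ge s_0$. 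Hence by \Cref{rmk:StrongConvTrace} a further subsequence satisfies $T_1 u_1^s \to T_1 u_1$ strongly in $L^p(\partial\Omega_1)$, so the first condition and the $\Omega_1$-side of the third pass to the limit. On the $\Omega_2$-side the analogous uniform $W^{s_0,p}$-bound is unavailable because $\kappa_{d,s,p}\to 0$; to obtain $T_2 u_2 = 0$ on $\partial\Omega_2\setminus\Sigma$ I would zero-extend $u_2^s$ across $\partial\Omega_2\setminus\Sigma$ into a slightly enlarged Lipschitz domain $\widetilde\Omega_2$. Since $sp>1$, the extension lies in $W^{s,p}(\widetilde\Omega_2)$ with $s$-uniform seminorm bound, and the BBM/Ponce theorem applied on $\widetilde\Omega_2$ identifies its $L^p$-limit as the zero extension of $u_2$, forcing $T_2 u_2 = 0$ on $\partial\Omega_2\setminus\Sigma$. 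The matching identity $T_1 u_1 = T_2 u_2$ on $\Sigma$ is obtained in the same spirit: forming $\bar u^s := u_1^s\mathds{1}_{\Omega_1} + u_2^s\mathds{1}_{\Omega_2}$ on $\Omega$ and then zero-extending across $\partial\Omega$ produces a sequence on $\bbR^d$ whose $(1-s)$-scaled $W^{s,p}$-seminorms are uniformly bounded (via the matching-trace gluing estimate on $\Sigma$ combined with a Hardy-type inequality near $\partial\Omega$), and the BBM/Ponce $L^p$-limit lies in $W^{1,p}(\bbR^d)$, placing $u$ in $W^{1,p}_0(\Omega)$. The main obstacle throughout is that the natural function spaces depend on $s$, which blocks a direct appeal to weak compactness in a fixed Sobolev space on $\Omega_2$; the weighted-to-fractional bridge of \Cref{thm:WeightedEst} on $\Omega_1$ and the $s$-uniform zero-extension device on $\Omega_2$ are precisely what allow us to sidestep this difficulty.
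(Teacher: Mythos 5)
Your first half (the uniform bounds from \Cref{PI-onX}, the bridge \Cref{thm:WeightedEst} to convert the weighted gradient bound on $\Omega_1$ into a normalized $W^{s,p}(\Omega_1)$ bound, and the BBM/Ponce compactness applied to both components) is exactly the paper's argument and is fine. On the $\Omega_1$ trace side your plan is also essentially the paper's, but the comparison you invoke, $\sigma^{p-s_0p}\le C(s_0)(1+\sigma^{p-sp})$, does not by itself yield the uniform $W^{s_0,p}(\Omega_1)$ bound: it leaves the unweighted term $\int_{\Omega_1}|\nabla u_1^s|^p\,\rmd\bx$, which is not controlled by $\cE_{s,0}(u_1^s,u_2^s)$. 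What you need (and what suffices) is the pointwise inequality $\sigma^{p-s_0p}=\sigma^{p-sp}\,\sigma^{(s-s_0)p}\le\max\{1,\diam(\Omega_1)^p\}\,\sigma^{p-sp}$ for $s\ge s_0$, which gives a uniform bound in the fixed space $W^{1,p}(\Omega_1;p-s_0p)$, and then either \Cref{thm:WeightedEst} or directly \Cref{rmk:StrongConvTrace} (which covers weighted spaces) gives strong $L^p(\p\Omega_1)$ convergence of $T_1u_1^s$.

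The genuine gap is on the $\Omega_2$ side and in the $\Sigma$-matching. Your zero extension across $\p\Omega_2\setminus\Sigma$ and your global gluing of $u_1^s\mathds{1}_{\Omega_1}+u_2^s\mathds{1}_{\Omega_2}$ each rest on quantitative estimates that must hold uniformly in $s$ as $s\to1$ with the $(1-s)$-normalization, and you only assert them: (a) a fractional Hardy-type inequality for functions whose trace vanishes only on the portion $\p\Omega_2\setminus\Sigma$, needed to bound the cross term $\kappa_{d,s,p}\int_{\Omega_2}|u_2^s(\bx)|^p\int_{\widetilde\Omega_2\setminus\Omega_2}|\bx-\by|^{-d-sp}\,\rmd\by\,\rmd\bx$ (note also that near the junction of $\Sigma$ with $\p\Omega$ the distance to $\widetilde\Omega_2\setminus\Omega_2$ is not obviously comparable to the distance to $\p\Omega_2\setminus\Sigma$); and (b) a matching-trace gluing bound for $(1-s)\int_{\Omega_1}\int_{\Omega_2}|u_1^s(\bx)-u_2^s(\by)|^p|\bx-\by|^{-d-sp}\,\rmd\by\,\rmd\bx$. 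These are precisely where the work of your route lies, and the standard proofs go through trace/extension machinery whose constants degrade as $s\to1$ unless tracked carefully; neither estimate is available in the paper. Moreover your premise that a uniform bound in a fixed fractional space on $\Omega_2$ is ``unavailable'' is not correct: for one fixed $\bar s\in(1/p,1)$ one has $\|u_2^s\|_{W^{\bar s,p}(\Omega_2)}\le C(\bar s)\big(\|u_2^s\|_{L^p(\Omega_2)}+[u_2^s]_{W^{s,p}(\Omega_2)}\big)$ uniformly for $s\ge\bar s$, i.e.\ the continuous embedding $W^{s,p}(\Omega_2)\hookrightarrow W^{\bar s,p}(\Omega_2)$ with the normalized seminorms (the paper uses this embedding, e.g.\ in the proof of \Cref{Convergence-minimizer}). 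With that, $u_2^s\rightharpoonup u_2$ weakly in $W^{\bar s,p}(\Omega_2)$, \Cref{rmk:StrongConvTrace} gives $T_2u_2^s\to T_2u_2$ strongly in $L^p(\p\Omega_2)$, and all three boundary/interface conditions pass to the limit; since $u_1\in W^{1,p}(\Omega_1)$ and $u_2\in W^{1,p}(\Omega_2)$ with matching traces on $\Sigma$ and zero traces on the rest, the classical (local) gluing already gives $u\in W^{1,p}_0(\Omega)$, with no need for any uniform-in-$s$ fractional extension or gluing across $\Sigma$. This lighter route is exactly the one the paper takes.
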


\begin{proof}
By assumption and applying \Cref{PI-onX}, we have that 
\begin{equation*}
	\vnorm{u_1^s}_{W^{1,p}(\Omega_1;p-sp)}^p + \vnorm{u_2^s}_{W^{s,p}(\Omega_2)}^p \leq C \cE_{s,0}(u_1^s,u_2^s),
\end{equation*}
where the constant $C$ is uniform in $s$.
Then by \Cref{thm:WeightedEst} we have 
\begin{equation*}
	\vnorm{u_1^s}_{W^{s,p}(\Omega_1)}^p + \vnorm{u_2^s}_{W^{s,p}(\Omega_2)}^p \leq C \cE_{s,0}(u_1^s,u_2^s).
\end{equation*}
Therefore, we obtain from \cite{ponce2004,bourgain2001} that $\{u_i^s\}_s$ is precompact in $L^p(\Omega_i)$, $i = 1,2$, and any limit point $u_i$ belongs to $W^{1,p}(\Omega_i)$.  
Notice that, for $\bar{s}<1$ with $\bar{s}p>1$
\[
u_{1}^{s}\rightharpoonup u_{1} \quad \text{weakly in $W^{1, p}(\Omega; p-\bar{s} p)$ \,\, and\,\, }u_{2}^{s}\rightharpoonup u_{2} \quad \text{weakly in $W^{\bar{s}, p}(\Omega)$.}
\]  We use \Cref{rmk:StrongConvTrace} to conclude that $T_1 u_1 = 0$ on $\p \Omega_1 \setminus \Sigma$, $T_2 u_2 = 0$ on $\p \Omega_2 \setminus \Sigma$, and 
\[
\vnorm{ (T_{1} u_{1} - T_2 u_{2})|_{\Sigma} }_{L^p(\p \Omega)} = \lim_{s\to 1} \vnorm{ (T_{1} u^{s}_{1} - T_2 u^{s}_{2})|_{\Sigma} }_{L^p(\p \Omega)} = 0.
\]
From this, we conclude that $u(\bx):= u_{1}|_{\Omega_1}(\bx) + u_{2}|_{\Omega_2}(\bx) \in W^{1,p}_0(\Omega)$, and therefore $(u_{1}, u_2)\in \mathfrak{X}_{1,0}.$
\end{proof}

\begin{proof}[Proof of \Cref{Main-gammaconvergence} Case \ref{item:c}]
As before we will prove the liminf and the limsup inequalities. 

\textit{Liminf inequality:}
Let $(u_{1}^{s},u_{2}^{s}) \to (u_{1}, u_{2})$ in $L^{p}(\Omega_{1})\times L^{p}(\Omega_2)$ as $s\to 1$. We may assume that 
$\liminf_{s\to 1} \cE_{s, 0}(u_{1}^{s}, u_{2}^{s}) < \infty.$
Applying \Cref{thm:Case2:Compactness}, we have that the limit $(u_{1}, u_{2})$ can be identified with $u = u_{1}|_{\Omega_1} +  u_{2}|_{\Omega_2} \in \mathfrak{X}_{1,0}$. An argument similar to that in the proof of  part \ref{item:iii} of \Cref{Gamma-converge-unconstrained} shows that 
\begin{equation*}
	\begin{split}
		{1\over p}\int_{\Omega}\Lambda(\bx)|\nabla u(\bx)|^{p} \rmd\bx
		&\leq \liminf_{s\to 1}\cE_{s, 0}(u_{1}^{s}, u_{1}^{s}).
	\end{split}
\end{equation*}

\textit{Limsup inequality:} For any $u\in W_0^{1, p}(\Omega)$, we may take the constant sequence $(u_1^{s},u_{2}^{s}) = (u|_{\Omega_1}, u|_{\Omega_2}).$ Note that $u_1^{s}$ and $u_2^{s}$ will have a matching trace on the interface. The necessary inequality follows from the nonlocal characterization of Sobolev spaces, \cite{bourgain2001, ponce2004}.   
\end{proof}

\begin{proof}[Proof of \Cref{Main-gammaconvergence} Case \ref{item:d}]

Let $(u_{1}^{\delta},u_{2}^{\delta}) \to (u_{1}, u_{2})$ in $L^{p}(\Omega_{1})\times L^{p}(\Omega_2)$ as $\delta\to 0$. We may assume that $\liminf_{\delta\to0}\cF_{1,\delta}(u^\delta_1, u^\delta_2) < \infty$. Thus, $(u_{1}^{\delta},u_{2}^{\delta}) \in \cX_{1, \delta}$ and $\sup_{0<\delta<1}[u^{\delta}_1]_{\mathfrak{W}^{1, p}[\delta](\Omega_1)}<\infty$ and $u_{2}^{\delta}$ is bounded in $W^{1, p}(\Omega)$. 

The  $\Gamma$-convergence will follow, using arguments similar to the proof of \Cref{item:iv} of \Cref{Gamma-converge-unconstrained}, if we show that $(u_1, u_2)\in \mathfrak{X}_{1,0}$. To that end, applying \Cref{thm:Convolution:DerivativeEstimate}, we see that $K_{\delta} u_1^{\delta}$ is a bounded sequence in $ W^{1,p}(\Omega_1)$ that has the same trace as $u_1^{\delta}$, by \Cref{cor:Trace:NonlocalSpace}. Now $K_\delta u^{\delta}$ converges weakly in $W^{1, p}(\Omega_1)$ and strongly in $L^{p}(\Omega_1)$ to $u_1$. By the weak continuity of the trace operator (see \Cref{rmk:StrongConvTrace}), we have that $T_1u_1=0$ on $\partial \Omega_1\setminus \Sigma.$ A similar argument applies to the sequence $u^{s}_2$ to obtain that  $T_2 u_2=0$ on $\partial \Omega_2\setminus \Sigma$.  Moreover, 
\[
(T_{1}u_1-T_2u_2)|_{\Sigma} = \lim_{\delta\to 0} (T_{1} K_{\delta} u_{1}^{\delta} - T_{2} u^{\delta}_{2})|_{\Sigma} =  \lim_{\delta\to 0} (T_{1} u_{1}^{\delta} - T_{2} u^{\delta}_{2})|_{\Sigma} = 0.
\]
\end{proof}

\begin{proof}[Proof of \Cref{Main-gammaconvergence} Case \ref{item:e}]

Assume we have $(u_1, u_2)\in L^p(\Omega_1)\times L^p(\Omega_2)$ and $(u_1^{s_k,\delta_k}, u_2^{s_k,\delta_k})\to (u_1, u_2)$ in $L^p(\Omega_1)\times L^p(\Omega_2)$ as $k\to\infty$ where $s_k\to 1$ and $\delta_k\to 0$. It suffices to work on the case when  
$ \liminf_{k\to\infty} {\cE}_{s_k, \delta_k} (u_1^{s_k,\delta_k}, u_2^{s_k,\delta_k})<\infty$. Without relabeling a subsequence, it follows that $(u_1^{s_k,\delta_k}, u_2^{s_k,\delta_k})\in \mathfrak{X}_{s_k, \delta_k}$. If we show that $(u_1, u_2)\in \mathfrak{X}_{1, 0}$, then the remaining portion of the argument follows as in the proof of Case \ref{item:v} of \Cref{Gamma-converge-unconstrained}. To this end, fix $s_0\in (1/p, 1)$. Then there exists $k_0\geq 1$ such that $s_k\in (s_0, 1)$ and 
\[
\sup_{k\geq k_0} [u_{1}^{s_k, \delta_k}]_{\mathfrak{W}^{s_0, p}[\delta_k](\Omega_1)} \leq C(s_0)  \sup_{k} [u_{1}^{s_k, \delta_k}]_{\mathfrak{W}^{s_k, p}[\delta_k](\Omega_1)}\leq C<\infty.
\]
From \Cref{thm:Compactness} and \Cref{Prop:LIMINF-INEQUALITY}, it follows that $\{u_{1}^{s_k, \delta_k}\}_k$ converges weakly in $W^{1, p}(\Omega_1, p-s_0p)$ and strongly in $L^p(\Omega_1)$ to $u_1$. Thus $u_1$ is in $W^{1, p}(\Omega_1, p-s_0p)$ for any $s_0>1/p$, and therefore  $u_1\in W^{1, p}(\Omega_1)$. Moreover, by weak continuity of the trace operator (see \Cref{rmk:StrongConvTrace}), we have that $T_1 u^{s_k, \delta_k}_1 \to T_1u_1$ strongly in $L^{p}(\partial \Omega_1)$, and so $T_1 u_1 = 0$ on $\partial \Omega_1\setminus \Sigma$. Similar argument can also be  applied for the sequence $\{u^{s_k, \delta_k}_2\}_k$ to conclude that $u_2\in W^{1, p}(\Omega_2)$ with $T_2 u^{s_k, \delta_k}_2 \to T_2u_2$ strongly in $L^{p}(\partial \Omega_2)$. Thus, $T_2u_2=0$ on $\partial \Omega_2\setminus \Sigma$ and that 
\[
\vnorm{ (T_{1} u_{1} - T_2 u_{2})|_{\Sigma} }_{L^p(\p \Omega)} = \lim_{s\to 1} \vnorm{ (T_{1} u^{s}_{1} - T_2 u^{s}_{2})|_{\Sigma} }_{L^p(\p \Omega)} = 0.
\]
From this, we conclude that $u(\bx):= u_{1}|_{\Omega_1}(\bx) + u_{2}|_{\Omega_2}(\bx) \in W^{1,p}_0(\Omega)$, and therefore $(u_{1}, u_2)\in \mathfrak{X}_{1,0}.$
\end{proof}






In fact, the limit cases of variational problems in \Cref{Main-gammaconvergence} are well-posed. An application of the Poincar\'e inequality \Cref{PI-onX} in the cases $\delta = 0$ and/or $s = 1$ give the following:

\begin{proposition}\label{minimizer-existence}
Under the assumptions of \Cref{Main-gammaconvergence}, the limiting energy functionals $\cF_{s,0}$, $\cF_{1,\delta}$, and $\cF_{1,0}$
have unique minimizers. 
\end{proposition}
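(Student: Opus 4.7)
The plan is to apply the direct method of the calculus of variations to each of the three limit functionals, essentially mirroring the proof of \Cref{thm:wellposedness}. The first step is to observe that the admissible classes $\mathfrak{X}_{s,0}$, $\mathfrak{X}_{1,\delta}$, and $\mathfrak{X}_{1,0}$ are all separable reflexive Banach spaces: this is covered by \Cref{prop:X-separable-reflexive}, whose statement includes the limit cases $\delta = 0$ (with $\mathfrak{W}^{s,p}[\delta](\Omega_1)$ replaced by $W^{1,p}(\Omega_1;p-sp)$) and $s = 1$, while $\mathfrak{X}_{1,0} = W^{1,p}_0(\Omega)$ is classical.

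Next, the coercivity bound is supplied by \Cref{PI-onX}, whose statement holds uniformly for $s \in (1/p,1]$ and $\delta \in [0,\underline{\delta}_0)$ and so applies directly to each of the three limit energies, yielding $\Vnorm{(v_1,v_2)}_{L^p\times L^p}^p \leq C\,\cE(v_1,v_2)$. Since the datum $\mathfrak{f}\in L^{p'}(\Omega_1)\times L^{p'}(\Omega_2)$ extends to a bounded linear form on each admissible class, this gives
\begin{equation*}
\cF(v_1,v_2) \geq \cE(v_1,v_2) - C\Vnorm{\mathfrak{f}}\,\cE(v_1,v_2)^{1/p},
\end{equation*}
so any minimizing sequence is bounded in the respective reflexive space and admits a weakly convergent subsequence.

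The remaining analytic ingredient is weak lower semicontinuity of $\cE$. The fractional seminorm $[\cdot]_{W^{s,p}(\Omega_2)}^p$ and the nonlocal seminorm $[\cdot]_{\mathfrak{W}^{1,p}[\delta](\Omega_1)}^p$ are symmetric double integrals against nonnegative kernels, and so are weakly l.s.c.\ via Fatou's lemma applied along an a.e.\ convergent subsequence. The local and weighted gradient terms $\int \beta|\nabla u_2|^p$, $\int\Lambda|\nabla u|^p$, and $\int\alpha|\nabla u_1|^p\sigma^{p-sp}$ have integrands convex in the gradient with nonnegative weights, so they are weakly l.s.c.\ on $W^{1,p}(\Omega_2)$, $W^{1,p}_0(\Omega)$, and $W^{1,p}(\Omega_1;p-sp)$ respectively by the standard Tonelli--Serrin theorem. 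The linear part $-\langle\mathfrak{f},(v_1,v_2)\rangle$ is weakly continuous. Uniqueness then follows from the strict convexity of $t \mapsto |t|^p$ for $p \in (1,\infty)$ together with $\alpha,\beta > 0$: two distinct minimizers would yield at their midpoint a strictly smaller value of $\cE$, contradicting minimality.

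The one point requiring a bit of vigilance, which I expect to be the main (if modest) obstacle, is checking that the weak limit actually lies in the admissible class, i.e.\ that it still satisfies the transmission identity $(T_1 u_1 - T_2 u_2)|_\Sigma = 0$ and the vanishing trace condition on $\partial\Omega_i\setminus\Sigma$. As in the proof of \Cref{prop:X-separable-reflexive}, this is achieved by combining the weak continuity of the trace operators on each component space (the trace theorems for $W^{s,p}$, $W^{1,p}(\Omega_1;p-sp)$, and $\mathfrak{W}^{1,p}[\delta](\Omega_1)$, the last being \Cref{thm:TraceTheorem}) with the compact embedding of $W^{s-1/p,p}(\partial \Omega_i)$ into $L^p(\partial\Omega_i)$ (\Cref{rmk:StrongConvTrace}), which promotes weak convergence of the traces to strong $L^p$ convergence and thereby preserves the pointwise almost-everywhere identities defining the admissible class.
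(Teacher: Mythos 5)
Your proposal is correct and follows essentially the same route as the paper, which proves this proposition simply by noting that the Poincar\'e inequality of \Cref{PI-onX} (stated uniformly for $s\in(1/p,1]$, $\delta\in[0,\underline{\delta}_0)$) lets one rerun the direct-method argument of \Cref{thm:wellposedness} in the limit cases; your fleshing out of reflexivity, weak lower semicontinuity, weak closedness of the admissible class via \Cref{rmk:StrongConvTrace}, and uniqueness by strict convexity matches the paper's intended argument.
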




We conclude this section with the proof of the convergence of minimizers of $\cF_{s,\delta}$.

		


\begin{proof}[Proof of \Cref{Convergence-minimizer}]
Since the existence and uniqueness of minimizers of functionals have been shown in \Cref{thm:wellposedness} and \Cref{minimizer-existence}, by \cite[Corollary~7.24]{dalmaso1993introduction} it remains to show that $\cF_{s,\delta}$ is equicoercive for Case \ref{item:a}, Case \ref{item:c}, Case \ref{item:d}, and Case \ref{item:e}, which are consequences of Poincar\'e inequalities for $\mathfrak{X}_{s,\delta}$ with constants independent of $s$ and $\delta$ and compactness results for $\mathfrak{X}_{s,\delta}$ in these parameter convergence regimes. First, \Cref{PI-onX} ensures the Poincar\'e constants are independent of $s$ and $\delta$ in all cases. Second, the compactness result for $\mathfrak{W}^{s,p}[\delta](\Omega_1)$ as $\delta \to 0$ and $s\to 1$ or $s$ fixed is established in \Cref{thm:Compactness} which is required for Case \ref{item:a}, Case \ref{item:d} and Case \ref{item:e}. In Case \ref{item:c}, the analogous result is the compact embedding of $W^{1,p}(\Omega_1, p-sp)$ into $L^p(\Omega_1)$ given by \Cref{thm:FxnSpProp:Weighted} item 4). To complete the compactness results for $\mathfrak{X}_{s,\delta}$, we need the well-known compact embedding of $W^{s,p}(\Omega_2)$ into $L^p(\Omega_2)$ when $s\in (0, 1]$ is fixed for Case \ref{item:a} and Case \ref{item:d}, and the compactness result for $W^{s,p}(\Omega_2)$ when $s\to 1$ for Case \ref{item:c} and Case \ref{item:e}, which is a corollary of the former compact embedding with fixed $s$ and the continuous embedding $W^{s,p}(\Omega_2)\hookrightarrow W^{s',p}(\Omega_2)$ for $s>s'$. This completes the proof.
\end{proof}

	\begin{proof}[Proof of Proposition \ref{Convergence-minimizer-weakLp}] Case \ref{item:b}'.    
    We prove the the result for the special case when $\alpha(\bx) = \beta(\bx)=1$  by establishing   convergence of minimizers of $\cF_{s, \delta}$ to a minimizer of $\cF_{1,\delta}$ in the weak $L^{p}(\Omega_1)\times L^{p}(\Omega_2)$ topology, which will be shown as a consequence of the $\Gamma$-convergence of  $\cF_{s,\delta}$ to $\cF_{1,\delta}$ in the same weak topology. The general case for positive coefficients $\alpha(\bx)$ and $\beta(\bx)$ can be shown using similar techniques as in the proof of \Cref{Prop:LIMINF-INEQUALITY}.
		As before we establish the liminf and the limsup inequalities with respect to the weak topology. Since we are considering the energy for a fixed $\delta,$ below we suppress the dependence of the functions on $\delta$.
		
		{\it Liminf inequality:} Suppose that $(u_1^{s}, u_{2}^{s})$ converges to $(u_1, u_2)$ weakly in $L^{p}(\Omega_1) \times L^{p}(\Omega_2)$.
        Without loss of generality we may assume that 
		\[\sup_{0<s<1} \mathcal{F}_{s,\delta}(u_1^{s},u_{2}^{s}) < \infty.\]
        We then claim that $(u_1, u_2)\in \mathfrak{X}_{1,\delta}$ and that 
        \[
        \mathcal{F}_{1,\delta}(u_1,u_{2})\leq \lim_{s\to 1}\mathcal{F}_{s,\delta}(u_1^{s},u_{2}^{s}). 
        \]
		Now, it  follows from the boundedness of $\mathcal{F}_{s,\delta}(u_1^{s},u_{2}^{s})$ in $s$ that  
		\[
		\sup_{s}[u_{1}^{s}]_{\mathfrak{W}^{s,p}[\delta](\Omega_1)}<\infty,\quad \sup_{s}[u_{2}^{s}]_{W^{s,p}(\Omega_2)} < \infty.
		\]
		Using the super-additive property of the liminf, it suffices to show the liminf inequality for each of the parts of the energy.  
		Now, since $u_1^s$ converges weakly in $L^p$ to $u_1$  as $s\to 1$, then for any $\epsilon>0$ sufficiently small, $K_\varepsilon u_1^s (\bx)$ converges pointwise to $K_\varepsilon u_1 (\bx)$ as $s\to 1$ (notice $\psi_\varepsilon(\bx,\cdot) \in L^{p'}(\Omega_1)$ for any $\bx \in \Omega_1$). Then by Fatou's lemma 
		\[
		[K_\varepsilon u_1]^p_{\mathfrak{W}^{1,p}[\theta(\varepsilon)\delta](\Omega_1)} \leq \liminf_{s\to 1} [K_\varepsilon u_1^s]^p_{\mathfrak{W}^{s,p}[\theta(\varepsilon)\delta](\Omega_1)}, 
		\]
		where $\theta(\varepsilon)$ is as in \Cref{lem:with-phiepsilon}. Applying  \eqref{eq:Comp:ConvolutionEstimate2} in \Cref{lem:with-phiepsilon} with $\varrho =0$, the right hand side of the above inequality can be estimated as 
		\[
		\liminf_{s\to 1} [K_\varepsilon u_1^s]^p_{\mathfrak{W}^{s,p}[\theta(\varepsilon)\delta](\Omega_1)} \leq \phi(\varepsilon)\liminf_{s\to 1}[u_{1}^{s}]_{\mathfrak{W}^{s,p}[\delta](\Omega_1)}.
		\]
		Now
		$K_\varepsilon u_1$ converges to $u_1$ in $L^p(\Omega)$ as $\varepsilon\to 0$, and therefore a.e. (up to a subsequence). Applying Fatou's lemma again we obtain that \[[u_1]^p_{\mathfrak{W}^{1,p}[\delta](\Omega_1)}\leq \liminf_{\varepsilon\to0}
		[K_\varepsilon u_1]^p_{\mathfrak{W}^{1,p}[\theta(\varepsilon)\delta](\Omega_1)} \leq
		\liminf_{\varepsilon\to0} \phi(\varepsilon)\liminf_{s\to 1} [u_1^s]^p_{\mathfrak{W}^{s,p}[\delta](\Omega_1)}. \] Noting that $\phi(\varepsilon)\to 1$ as $\varepsilon \to 0$, we get the liminf inequality of the nonlocal energy after multiplying both sides by the constant ${1\over p}$. For the liminf inequality for the fractional part of the parametrized energy, choose a subsequence $u_2^{s}$ (not labeled) so that $\liminf_{s\to 1}[u_{2}^{s}]_{W^{s,p}(\Omega_2)}=\lim_{s\to 1}[u_{2}^{s}]_{W^{s,p}(\Omega_2)}$. Then from the boundedness of $[u_{2}^{s}]_{W^{s,p}(\Omega_2)}$ in $s$, we can select a $L^{p}$-strongly converging subsequence by \cite{ponce2004,bourgain2001}. Moreover, the limit belongs to $W^{1, p}(\Omega_2)$ and, by uniqueness of the limit, must be $u_{2}$. Moreover, applying  \cite{ponce2004,bourgain2001} again, we have 
		\[
		[u_2]_{W^{1,p}(\Omega_2)} \leq \liminf_{s\to 1}[u_2^{s}]_{W^{s,p}(\Omega_2)}.
		\]
		To show that $(u_1, u_2)\in \mathfrak{X}_{1, \delta}$, we essentially repeat the argument used in the proof of Case \ref{item:b}.  We need to establish that $u_1$ and $u_2$ have a matching trace on $\Sigma$ and a vanishing trace on $\partial \Omega_1\setminus \Sigma$ and $\partial \Omega_2\setminus \Sigma$,  respectively.  Fix $s_0 > {1\over p}$. Then for any $s_0 < s< 1$,
\[
\sup_{s_0< s< 1} [u_1^{s}]_{\mathfrak{W}^{s_0,p}[\delta](\Omega_1)} \leq C(s_0) \sup_{s_0< s< 1}[u^{s}_1]_{\mathfrak{W}^{s,p}[\delta](\Omega_1)} \leq C. 
\]
Therefore, up to a subsequence $u_1^{s}$ converges weakly in $\mathfrak{W}^{s_0,p}[\delta](\Omega_1)$ to the same limit, $u_1$, as $s\to 1$. Using the weak continuity of the trace operator (see \Cref{rmk:StrongConvTrace}), we see that $T_1u_1^{s} \to T_1u_1$ strongly in $L^{p}(\partial \Omega_1)$. Thus, 
$T_{1}u_1 = 0$ on $\partial \Omega_1 \setminus \Sigma.$ We can make similar argument to conclude that up to a subsequence $u^s_2$ converges weakly in $W^{s_0, p}(\Omega_2)$ to $u_2$, and so 
$T_2u_2^{s} \to T_2u_2$ strongly in $L^{p}(\partial \Omega_2)$. Thus, 
$T_{2}u_2 = 0$ on $\partial \Omega_2 \setminus \Sigma.$ Moreover, 
\[
(T_{1}u_1-T_2u_2)|_{\Sigma}  =  \lim_{\delta\to 0} (T_{1} u_{1}^{\delta} - T_{2} u^{\delta}_{2})|_{\Sigma} = 0.
\]

		{\it Limsup inequality}.  
        Taking the constant sequence $(u_1^{s}, u_{2}^{s}) = (u_1, u_2)\in \mathfrak{X}_{1,\delta}$,
         the result follows.
         
		Finally, the convergence of the sequence of minimizers with respect to the weak $L^p$-topology follows from the $\Gamma$-convergence result. 
	\end{proof}

\section{Extensions}\label{sec:extensions}
We note that the framework and techniques developed in earlier sections can be extended in various directions.
For example, we may consider coupled problems for the same energy \cref{energy} but with a different set of feasible spaces. Meanwhile, we may also consider different variants of the energy.

On the latter, it is trivial to see that the earlier results hold if we replace the energy $\cE_{s,\delta}(u_1,u_2)$  in \cref{energy} by a more general $\hat{\cE}_{s,\delta}(u_1,u_2)$ defined as
\begin{equation*}
\begin{split}
\hat{\cE}_{s,\delta}(u_1,u_2) &:=     
\frac{\overline{C}_{d,p}}{p}
\int_{\Omega_1} \int_{ B(\bx,\delta \sigma(\bx)) } \frac{\alpha(\bx) \sigma(\bx)^{p-sp}}{ (\delta \sigma(\bx))^d }
\rho_1\left(
\frac{|u_1(\bx)-u_1(\by)|}{ \delta \sigma(\bx) }\right)
\, \rmd \by \, \rmd \bx \\
&\qquad + \frac{ \kappa_{d,s,p} }{p} \int_{\Omega_2} \int_{\Omega_2}
\frac{\beta(\bx)}{|\bx-\by|^{d+sp-p}} \rho_2\left(
\frac{ |u_2(\bx)-u_2(\by)|}{|\bx-\by|} \right) \, \rmd \by \, \rmd \bx
\end{split}
\end{equation*}
where $\rho_1$ and $\rho_2$ represent two energy potentials satisfied, for example, 
$$c_1r^p\leq \rho_i(r)\leq c_2 r^p,  \quad \forall\, r\in \mathbb{R}_{+},\; i=1,2 $$
for some positive constants $c_1$ and $c_2$, independent of $s$ and $\delta$.

The admissible function space may also take a more general form. For instance, we may consider a more general transmission condition than 
\eqref{eq:TransmissionContinuity} that imposes certain continuity or jump conditions on
a portion of the interfaces, together with possibly inhomogeneous Dirichlet data on the boundary. To give a specific example, we introduce three closed hypersurfaces $\{\Sigma_i\}_{i=0,1,2}$ with nonzero measure on the interface and the boundary, 
$$\Sigma_0 \subset \Sigma, \quad
\Sigma_i \subset \partial\Omega_i\setminus \Sigma_0, \; i=1,2.$$
Then we define, for $g_i\in  W^{s-{1/p}, p}(\Sigma_i)$, $i=0,1,2$,
\begin{equation}\label{eq:Transmissionjump}
\left\{\begin{aligned}
& (T_1 u_1 - T_2 u_2) \mathds{1}_{\Sigma_0} = g_0 \quad \scH^{d-1}\text{-a.e. on } \Sigma_0, \\
& T_i u_i 
\mathds{1}_{\Sigma_i} = g_i \quad \scH^{d-1}\text{-a.e. on } \Sigma_i,\; i=1,2. 
\end{aligned}
\right.
\end{equation}
A more general admission set may then be defined as
\begin{equation*}
\hat{\mathfrak{X}}_{s,\delta} := \left\{ (u_1, u_2) \in \mathfrak{W}^{s,p}[\delta](\Omega_1) \times W^{s,p}(\Omega_2) \text{ with } \eqref{eq:Transmissionjump} \text{ satisfied}.
\right\}
\end{equation*}
with the generalized variational problem  
\begin{equation*}
\begin{array}{rl}
(u_1,u_2) & = \arg\min_{
	(v_1,v_2) \in \hat{\mathfrak{X}}_{s,\delta}} \tilde{\cF}_{s,\delta}(v_1,v_2), \\
& \text{ where } \hat{\cF}_{s, \delta}(v_1,v_2) := \hat{\cE}_{s,\delta}(v_1,v_2) - \langle \hat{\mathfrak{f}}_\delta, (v_1, v_2)\rangle_{\hat{\mathfrak{X}}'_{s, \delta}, \hat{\mathfrak{X}}_{s, \delta}}
\end{array}
\end{equation*}
$\hat{\mathfrak{X}}_{s, \delta,0}$ is the same as
$\mathfrak{X}_{s, \delta}$
but with $g_i=0$, for $i=0,1,2$. Also,  the duality pairing 
$\langle\cdot, \cdot\rangle_{\hat{\mathfrak{X}}'_{s, \delta,0}, \hat{\mathfrak{X}}_{s, \delta,0}}  $  is
on  $\hat{\mathfrak{X}}_{s, \delta,0}$ and $\hat{\mathfrak{X}}'_{s, \delta,0}$, and $ \hat{\mathfrak{f}}_\delta\in \hat{\mathfrak{X}}'_{s, \delta,0}$ forms a sequence that is uniformly bounded in 
$\hat{\mathfrak{X}}'_{s, \delta,0}$ and converges to a distribution $f$ in the dual space of the local limit of $\hat{\mathfrak{X}}_{s, \delta,0}$.

Note that if $\Sigma_0=\emptyset$, then the two problems on $\Omega_1$ and $\Omega_2$ respectively become completely decoupled. Our interest is thus in the case with a nontrivial $\Sigma_0$. 
Furthermore, we could replace the {\em hard constraint} $(T_1 u_1 - T_2 u_2) \mathds{1}_{\Sigma_0} = g_0$ 
on $\Sigma_0$ introduced in 
\eqref{eq:Transmissionjump}
by a {\em soft} one, enforced via a penalty term in the energy, that is,
\begin{equation*}
\tilde{\cE}_{s,\delta}(u_1,u_2) := 
\hat{\cE}_{s,\delta}(u_1,u_2)+ 
\frac{1}{\varepsilon^2}
\|(T_1 u_1 - T_2 u_2) - g_0\|_{ W^{s-{1/p}, p}(\Sigma_0)}^p.
\end{equation*}
The problem associated with the hard constraint (that is, the jump condition on $\Sigma_0$) can be recovered in the limit as $\veps\to 0$. 

The examples stated above can all be analyzed in the same fashion as the case studied in earlier sections. 

\section*{Acknowledgment} {The project was initiated at a SQuaRE at the American Institute for Mathematics (AIM). The authors thank AIM for providing a supportive and mathematically
enriching environment.}

\bibliographystyle{siamplain}
\bibliography{refs}
\appendix

\section{Fractional Sobolev spaces}
Although the results in this section are already known quantitatively, we collect the statements here in a way to highlight certain aspects, e.g. the explicit dependence of constants as well as the specification of parameters to best suit our setting.

\subsection{Trace inequality}

For $1<p<\infty$ and $s\in (0, 1)$ with $sp>1,$ and a bounded Lipschitz domain $\cD$, the identification of the trace space of $W^{s,p}(\cD)$ by $W^{s-1/p,p}(\p \cD)$ is well known,  see \cite[Theorem 9.39]{Leoni2023} for the existence of the trace as a bounded linear operator $T : W^{s,p}(\cD) \to W^{s-1/p,p}(\p \cD)$ as well as the existence of an extension operator from $W^{s-1/p,p}(\p \cD)$  to $W^{s,p}(\cD)$. For our purpose, we would like to identify the dependence of the boundedness constant as a function of $d$, $s$, $p$ and the domain $\cD$. 
The following theorem restates a part of \cite[Theorem 9.39]{Leoni2023} while clarifying the dependence of the constant on $s$. 

\begin{theorem}\label{thm:Trace:Fractional}
	For $1<p<\infty$ and $s\in (0, 1)$ with $sp>1,$ and a bounded Lipschitz domain $\cD$, there exists a constant $C>0$ depending only on $d$, $p$, and $\cD$ such that
	\begin{equation*}
		\vnorm{T u}_{ W^{s-1/p,p}(\p \cD) } \leq \frac{C}{ sp-1 } \vnorm{u}_{ W^{s,p}(\cD) }, \qquad \forall u \in W^{s,p}(\cD).
	\end{equation*}
\end{theorem}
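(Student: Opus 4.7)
The plan is to follow the standard Gagliardo-type argument for the half-space trace inequality and then use a partition of unity to transfer it to the Lipschitz domain, with particular attention to extracting the $\frac{1}{sp-1}$ factor explicitly. First, by the density of $C^{\infty}(\overline{\cD})$ in $W^{s,p}(\cD)$ and the fact that the pointwise restriction to $\p \cD$ is well-defined for smooth functions, it suffices to prove the estimate for $u \in C^{\infty}(\overline{\cD})$ and then extend $T$ by continuity. Next, I would use a finite open cover $\{U_i\}_{i=1}^{N}$ of $\p \cD$ together with a subordinate partition of unity and Lipschitz flattening maps that send each $U_i \cap \cD$ onto a half-ball $B^+ \subset \bbR^d_+$ and $U_i \cap \p\cD$ onto $B^+ \cap \{x_d = 0\}$. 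Lipschitz changes of coordinates distort both seminorms only by a multiplicative constant depending on $d$, $p$, and the Lipschitz character of $\cD$ (so on $\cD$) but not on $s$, so the problem reduces to proving the analogous inequality on the half-space for compactly supported smooth functions.

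On $\bbR^d_+$, the heart of the argument is an averaging trick. Given $x',y' \in \bbR^{d-1}$, set $r := |x'-y'|$ and choose the averaging point $z_0 := \bigl( (x'+y')/2,\, r\bigr)$. For any $z$ in the ball $B := B(z_0, r/4) \subset \bbR^d_+$ one has $|z-(x',0)|, |z-(y',0)| \in [c\,r, C\,r]$ with constants depending only on $d$. Writing $u(x',0)-u(y',0) = [u(x',0)-u(z)] - [u(y',0)-u(z)]$ and averaging the $p$-th power over $z \in B$,
\begin{equation*}
  |u(x',0)-u(y',0)|^p \leq \frac{2^{p-1}}{|B|} \int_{B}\bigl( |u(x',0)-u(z)|^p + |u(z)-u(y',0)|^p\bigr)\, \rmd z .
\end{equation*}
I would then multiply by $|x'-y'|^{-(d+sp-2)}$, integrate over $(x',y') \in \bbR^{d-1} \times \bbR^{d-1}$, and invoke Tonelli's theorem. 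By a change of variables in the outer two integrations (for fixed $x'$, integrate first in $(y',z)$ and re-parameterize by $z \in \bbR^d_+$, accounting for the Jacobian $\sim r^{d-1}$ from the sphere $|y'-x'| = r$ in $\bbR^{d-1}$ and the ball of radius $r$ around $z_0$), this reduces to an estimate of the form
\begin{equation*}
  \int_{\bbR^{d-1}} \hspace{-.2cm} \int_{\bbR^{d-1}} \hspace{-.2cm} \frac{|u(x',0)-u(y',0)|^p}{|x'-y'|^{d+sp-2}}\, \rmd x'\, \rmd y' \leq C \int_{\bbR^{d-1}} \hspace{-.2cm}  \int_{\bbR^d_+} \frac{|u(x',0)-u(z)|^p}{|(x',0)-z|^{d+sp-1}}\, \rmd z\, \rmd x' ,
\end{equation*}
with a constant $C = C(d,p)$.

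To bound the right-hand side by $[u]_{W^{s,p}(\bbR^d_+)}^p$, I would re-apply the same averaging idea in reverse, writing $|u(x',0) - u(z)|^p \leq 2^{p-1}(|u(x',0) - u(\xi)|^p + |u(\xi) - u(z)|^p)$ and averaging $\xi$ over a ball of radius comparable to $z_d$ inside $\bbR^d_+$ that lies at distance $\sim z_d$ from both $(x',0)$ and $z$. After one more change of variables this yields
\begin{equation*}
  \int_{\bbR^{d-1}} \int_{\bbR^d_+} \frac{|u(x',0)-u(z)|^p}{|(x',0)-z|^{d+sp-1}}\,\rmd z \, \rmd x'
  \leq C \biggl( \int_0^{\infty} t^{sp-2}\,\rmd t\biggr)_{\!\!\text{truncated}} [u]_{W^{s,p}(\bbR^d_+)}^p .
\end{equation*}
The singular factor $\frac{1}{sp-1}$ appears precisely here, from evaluating $\int_0^{R_0} t^{sp-2}\, \rmd t = R_0^{sp-1}/(sp-1)$ with $R_0$ controlled by $\diam \cD$ after the partition-of-unity reduction.

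The hard part will be organizing the change of variables on the intermediate integral so that the $t^{sp-2}$ integration emerges cleanly, and verifying that the truncation scale is uniformly bounded (so $R_0^{sp-1} \leq C(d,p,\cD)$ for all $s$ with $sp > 1$). All other multiplicative constants accumulated through the partition of unity, Lipschitz flattenings, and the two averaging steps depend only on $d$, $p$, and $\cD$, so the net estimate is the claimed one. To convert the seminorm bound to a full-norm bound one absorbs the $L^p$ part in the standard way, and the theorem follows.
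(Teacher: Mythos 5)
The reduction to the half-space and your first averaging step are fine, but the proof has a genuine gap exactly where the theorem's content lies: the passage from the mixed boundary--interior quantity $\int_{\bbR^{d-1}}\int_{\bbR^d_+}|u(x',0)-u(z)|^p\,|(x',0)-z|^{-(d+sp-1)}\,\rmd z\,\rmd x'$ to the interior seminorm. As described, your second averaging is circular and geometrically inconsistent: a ball ``at distance $\sim z_d$ from both $(x',0)$ and $z$'' does not exist when $z_d \ll |(x',0)-z|$, and in any case the term $|u(x',0)-u(\xi)|^p$ it produces is again a boundary-to-interior difference of the same type, so a single averaging cannot close the estimate. What is actually needed is either a dyadic chaining of averages descending to the boundary point or a one-dimensional fractional Hardy inequality in the normal variable; in either mechanism the singular factor enters at the level of the $p$-th power as $(sp-1)^{-p}$ (from summing a geometric series with ratio $2^{-(sp-1)/p}$, equivalently from the sharp Hardy constant), not as the single factor $\frac{1}{sp-1}$ you attribute to the elementary integral $\int_0^{R_0}t^{sp-2}\,\rmd t$. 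So the claimed location and power of the blow-up is unjustified, and the step you label ``the hard part'' is precisely the one left unproved. This is also where the paper's proof differs: it does not redo the Gagliardo argument at all, but cites Leoni's proof and supplies only the missing quantitative ingredient, namely the sharp one-dimensional fractional Hardy inequality of Bogdan--Dyda/Frank/Loss--Sloane, whose constant $D_{s,p}$ is bounded below by $\bigl(\frac{sp-1}{p}\bigr)^p\kappa_{1,s,p}^{-1}$ via a convexity estimate; the $\frac{C}{sp-1}$ in the statement comes from exactly that bound.

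A second, related omission: the paper's seminorm $[\,\cdot\,]_{W^{s,p}(\cD)}$ carries the normalization $\kappa_{d,s,p}\simeq (1-s)$, and the constant $\frac{C}{sp-1}$ must be uniform as $s\to 1^-$. Your estimates are phrased for un-normalized Gagliardo expressions with constants $C(d,p)$; transporting them to the paper's normalized seminorm costs an uncontrolled factor $(1-s)^{-1/p}$ unless the argument also gains a factor $(1-s)$ at the $p$-th-power level (as the normalized Hardy inequality used in the paper does). You would need to exhibit where that factor arises in your averaging scheme (typically from an integral of the type $\int_{|h|\le\rho}|h|^{-d-sp+p}\,\rmd h \simeq \rho^{(1-s)p}/((1-s)p)$), or treat $s$ near $1$ separately; as written, the proposal does not establish the stated uniform dependence on $s$.
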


\begin{proof}
    The proof of this theorem is exactly the same as the proof of \cite[Theorem 9.39]{Leoni2023} after accounting for the constant $\kappa_{d,s,p}$ used in the definition of the seminorm for $W^{s,p}(\cD)$. The strategy of proof is to first establish the result for the half-space $\mathbb{R}^{d}_{+}$, and then use a partition of unity argument to complete the proof for bounded domains with Lipschitz boundary. The boundedness of the trace operator on $W^{s,p}(\mathbb{R}^{d}_{+})$ critically depends on a one-dimensional fractional Hardy inequality. The Hardy inequality used in \cite[Corollary 1.91]{Leoni2023} does not explicitly specify the dependence of the constant on $s$, $p$, and $d$ and this lack of specificity propagates throughout the proof of \cite[Theorem 9.39]{Leoni2023}. Since our interest is to quantify the dependence of the constant on the parameters we instead use a sharp fractional Hardy-type inequality proved in \cite{Bogdon-Dyda2011, Frank2010,LOSS20101369}. We select the version in \cite[Theorem 2.6]{LOSS20101369}; for any $0\leq a<b\leq \infty$ and $f\in C^{\infty}_c((a, b))$, we have 
     \[
    \int_{a}^{b}\int_{a}^{b} \frac{|f(x)-f(x)|^{p}}{|x-y|^{1 + sp}}d x \geq D_{s,p} \int_{a}^{b} \frac{|f(x)|^{p}}{\left(\min\{(x-a), (b-x)\}\right)^{sp}} d x
    \]
    where the constant $D_{s,p}$ is sharp and is given by the formula
    \[
    D_{s,p} = 2 \int_{0}^{1}\frac{|1-r^{{sp-1\over p}}|^{p}}{(1-r)^{1 + sp}} d r.  
    \]
    Using the convexity of the function $r\mapsto 1-r^{{sp-1\over p}}$ on $(0,1)$, we have 
    \[
    1-r^{{sp-1\over p}} \geq {sp-1\over p}(1-r), 
    \]
    and therefore
    \[
     D_{s,p} \geq 2\left({sp-1\over p}\right)^{p} \int_{0}^{1}\frac{(1-r)^p}{(1-r)^{1+sp}} dr = 2\left({sp-1\over p}\right)^{p} {1\over p(1-s)} = \left({sp-1\over p}\right)^{p} {1\over \kappa_{1, s, p}},
    \]
    where $\kappa_{1, s, p}$ is as in \eqref{constant-A}. Rewriting the fractional Hardy inequality in terms of the seminorm we are using in this paper, we obtain that 
    \[
    [f]_{W^{s, p}((a,b))}^{p} \geq \left({sp-1\over p}\right)^{p} \int_{a}^b\frac{|f(x)|^{p}}{\left(\min\{(x-a), (b-x)\}\right)^{sp}} d x.
    \]
    Notice in particular, as $s\to 1$, using \cite{bourgain2001}, the inequality is consistent with the well-known Hardy's inequality for classical Sobolev functions. 

    Now that we have established explicit constants in this inequality, the remaining part of the proof follows the argument used in the proof of \cite[Theorem 9.39]{Leoni2023}. We omit the details.  
\end{proof}

\subsection{Poincar\'e inequality}

The goal of this subsection is to prove the following Poincar\'e inequality with constant independent of $s$.

\begin{theorem}\label{thm:Poincare:Fractional}
	Let $1 \leq p < \infty$ and $s \in (0,1)$ with $sp > 1$, and let $\cD \subset \bbR^d$ be a bounded Lipschitz domain. Let $\p \cD_D \subset \p \cD$ be a closed set such that $\scH^{d-1}(\p \cD_D) > 0$. Then there exists a constant $C$ depending only on $d$, $p$, $\cD$ and $\p \cD_D$ such that
	\begin{equation*}
		\vnorm{u}_{L^p(\cD)} \leq C [u]_{W^{s,p}(\cD)}, \qquad \forall u \in W^{s,p}_{0,\p \cD_D}(\cD).
	\end{equation*}
\end{theorem}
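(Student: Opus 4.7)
The plan is a compactness-plus-contradiction argument that tracks the $s$-dependence of all constants through the normalization $\kappa_{d,s,p}$.

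Suppose the claim fails uniformly in $s$: there exist $s_n \in (1/p,1)$ and $u_n \in W^{s_n,p}_{0,\p \cD_D}(\cD)$ with $\Vnorm{u_n}_{L^p(\cD)} = 1$ but $[u_n]_{W^{s_n,p}(\cD)} \to 0$. Pass to a subsequence along which $s_n \to s^\ast \in [1/p,1]$.

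In the generic regime $s^\ast \in (1/p,1)$, I fix any $\bar s \in (1/p, s^\ast)$ so that $\bar s < s_n$ for $n$ large. The pointwise inequality $|\bx-\by|^{-d-\bar sp} \le \diam(\cD)^{(s_n-\bar s)p}\,|\bx-\by|^{-d-s_np}$ on $\cD \times \cD$, combined with the identity $\kappa_{d,\bar s,p}/\kappa_{d,s_n,p} = (1-\bar s)/(1-s_n)$, yields
\begin{equation*}
[u_n]^p_{W^{\bar s,p}(\cD)} \le \frac{1-\bar s}{1-s_n}\,\diam(\cD)^{(s_n-\bar s)p}\,[u_n]^p_{W^{s_n,p}(\cD)} \longrightarrow 0.
\end{equation*}
The compact embedding $W^{\bar s,p}(\cD) \hookrightarrow L^p(\cD)$ then extracts (a further subsequence) $u_n \to u$ in $L^p(\cD)$ with $\Vnorm{u}_{L^p(\cD)} = 1$, while lower semicontinuity of $[\cdot]_{W^{\bar s,p}(\cD)}$ forces $[u]_{W^{\bar s,p}(\cD)} = 0$, so $u$ is constant on each connected component of $\cD$. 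Weak continuity of the classical $W^{\bar s,p}$-trace (\Cref{thm:Trace:Fractional}, valid since $\bar s p > 1$), combined with $T u_n = 0$ on $\p \cD_D$, yields $Tu = 0$ on $\p \cD_D$; since $\scH^{d-1}(\p \cD_D) > 0$, this forces $u \equiv 0$, contradicting $\Vnorm{u}_{L^p(\cD)} = 1$. The case $s^\ast = 1$ is handled similarly by invoking the Bourgain--Brezis--Mironescu/Ponce limit already used in Case \ref{item:iii} of \Cref{Gamma-converge-unconstrained}: a uniform bound on $[u_n]_{W^{s_n,p}}$ with $s_n \to 1$ gives precompactness in $L^p(\cD)$ and a limit $u \in W^{1,p}(\cD)$ with $\Vnorm{\grad u}_{L^p(\cD)} = 0$, after which the classical $W^{1,p}$-trace argument closes the case.

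The principal obstacle is the boundary regime $s^\ast = 1/p$: any fixed $\bar s > 1/p$ eventually exceeds $s_n$, so the comparison above breaks down, and the sharp fractional Hardy constant near $\p \cD_D$ scales like $(s_np - 1)^p$ and degenerates as $s_n p \to 1^+$. The plan to overcome this is to choose $\bar s_n \searrow 1/p$ slowly enough that the comparison still yields $[u_n]_{W^{\bar s_n,p}} \to 0$, and then to invoke the sharp Frank--Loss--Bogdan--Dyda Hardy estimate recalled in the proof of \Cref{thm:Trace:Fractional} --- which in normalized form reads $\kappa_{d,s,p}\,D_{s,p} \ge (sp-1)^p/p^p$ --- combined with $\sigma \le \diam(\cD)$ to produce a bound of the form $\Vnorm{u_n}_{L^p(\cD)}^p \le C\,(\bar s_n p - 1)^{-p}\,\diam(\cD)^{\bar s_n p}\,[u_n]^p_{W^{\bar s_n,p}(\cD)}$. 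Balancing the rate $\bar s_n \to 1/p$ against the decay of $[u_n]_{W^{s_n,p}}$ so that the right-hand side still vanishes is the delicate quantitative step that completes the proof.
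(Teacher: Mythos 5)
Your contradiction scheme works in the interior regime and is a genuinely different route from the paper's, but it does not prove the theorem: the case $s_np\to 1^+$ is left open, and the plan you sketch for it cannot be completed. In the contradiction setup you have no control whatsoever over the relative rates at which $s_np-1\to 0$ and $[u_n]_{W^{s_n,p}(\cD)}\to 0$; the negation of the statement could just as well produce, say, $s_np-1=2^{-2^n}$ together with $[u_n]_{W^{s_n,p}(\cD)}=1/n$, and then for every admissible choice $\bar s_n\le s_n$ the Hardy factor $(\bar s_np-1)^{-p}\ge (s_np-1)^{-p}$ swamps the decay of the seminorms, so no rate of $\bar s_n\searrow 1/p$ makes your proposed right-hand side vanish. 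There is also no compactness left to exploit in that regime: the embedding, trace and Hardy constants you would invoke all degenerate exactly as $sp\downarrow 1$, so the ``delicate balancing'' you defer is not a technical remainder but the entire content of the uniformity claim --- the argument becomes circular precisely where the theorem is hard. A further unaddressed point is geometric: the sharp one-dimensional Hardy inequality recalled in the proof of \Cref{thm:Trace:Fractional} is for functions vanishing at both endpoints of an interval (compactly supported test functions), whereas your $u_n$ vanish only on the portion $\p \cD_D$ of a $(d-1)$-dimensional boundary; you give no mechanism (Fubini over segments meeting $\p \cD_D$, a localized Hardy estimate plus propagation into the bulk, or a capacity argument) by which it would control $\vnorm{u_n}_{L^p(\cD)}$ on all of $\cD$. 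By contrast, your $s^\ast=1$ case, though glossed, is completable: apply the $s$-uniform trace bound of \Cref{thm:Trace:Fractional} to $u_n-c$ (here $C/(s_np-1)$ stays bounded) to get $Tu_n\to c$ in $L^p(\p\cD)$ and hence $c=0$; merely invoking ``the classical $W^{1,p}$-trace argument'' on the limit is not enough, since you must first transfer the vanishing-trace condition to the limit.

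This is exactly where the paper's proof diverges from yours: it is quantitative rather than compactness-based. It builds an $s$-uniform Poincar\'e inequality on cubes (\Cref{thm:BBM:PoincareOnCube}, with the $\kappa_{d,s,p}$ normalization absorbing the $s$-dependence), upgrades it by chaining to Lipschitz domains (\Cref{lma:FractionalPoincare:Improved}, \Cref{thm:FractionalPoincare:Neumann}), and then converts the boundary condition into a lower bound on a relative fractional capacity in terms of $\scH^{d-1}(\p\cD_D)$ (\Cref{lma:Capacity}), tested against a function built from $u$ itself. That machinery is what produces a constant with the claimed dependence, including near $sp=1$, and your argument would have to reproduce an estimate of that quantitative strength in the $s^\ast=1/p$ case --- at which point the compactness scaffolding is doing no work. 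If you only need the inequality for $s$ in a compact subinterval of $(1/p,1]$, your approach (with the $s^\ast=1$ case finished as above) suffices, but that is a strictly weaker statement than the theorem as stated.
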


The proof relies on several results collected from various sources.
From \cite[Fact on page 4]{bourgain2002limiting}, along with the technique used to prove \cite[(4.3)]{mingione2003singular} but with the unit cube in place of the unit ball for $s\in (0, 1/2)$, we have the following Poincar\'e inequality (without boundary conditions) on a cube with the desired scaling on the constant:

\begin{theorem}\label{thm:BBM:PoincareOnCube}
	Let $1 \leq p < \infty$, let $s \in (0,1)$, and let $Q = [0,1]^d \subset \bbR^d$ be the unit cube. Then there exists a constant $C$ depending only on $d$ and $p$ such that
	\begin{equation*}
		\int_Q |u(\bx) - u_Q |^p \, \rmd \bx \leq C [u]_{W^{s,p}(Q)}^p, \qquad \forall u \in W^{s,p}(Q),
	\end{equation*}
	where $u_Q := \fint_Q u(\bx) \, \rmd \bx$.
\end{theorem}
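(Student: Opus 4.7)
The goal is to establish a Poincaré-type inequality on the unit cube $Q$ with a constant that depends only on $d$ and $p$, and crucially is independent of $s \in (0,1)$. The key observation is that the normalizing factor $\kappa_{d,s,p} = p(1-s)/A_{d,p}$ built into the seminorm $[u]_{W^{s,p}(Q)}$ vanishes as $s \to 1$ and precisely balances the near-diagonal singularity of the fractional kernel, so the uniform-in-$s$ bound is natural and consistent with the classical Poincaré inequality in the limit $s\to 1$.

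My plan is to follow the strategy implicit in \cite{bourgain2002limiting} (the ``Fact on page 4'') adapted to the unit cube as in the argument leading to (4.3) of \cite{mingione2003singular}. First, I would start from the Jensen-type bound
\begin{equation*}
\int_Q |u(\bx) - u_Q|^p \, \rmd \bx \leq \int_Q \fint_Q |u(\bx) - u(\by)|^p \, \rmd \by \, \rmd \bx,
\end{equation*}
which produces an average of $L^p$ differences but without the singular kernel. Next, by density I may restrict to $u \in C^\infty(\overline{Q})$ and introduce a dyadic/telescoping decomposition linking any two points $\bx,\by \in Q$ through a sequence of intermediate cubic (or ball) averages $u_{B_k(\bx,\by)}$ at scales comparable to $|\bx-\by|\, 2^{-k}$. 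Estimating each successive difference $|u_{B_k} - u_{B_{k+1}}|^p$ by a local Gagliardo integral with kernel $|\cdot|^{-d-sp}$, and summing a geometric series in $k$, yields pointwise control of $|u(\bx)-u(\by)|^p$ by a weighted average of the integrand $|u(\bw)-u(\bz)|^p|\bw-\bz|^{-d-sp}$ over pairs $(\bw,\bz)$ near the segment from $\bx$ to $\by$.

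The main obstacle is making the resulting constant uniform in $s$. The telescoping produces a series whose summand involves $2^{k(d+sp)}$ balanced against $2^{-kp}$ from a gradient-like term, and the convergence rate depends on $s$. The decisive point is that, after integrating against the factor $\kappa_{d,s,p} = p(1-s)/A_{d,p}$ built into the definition of $[u]_{W^{s,p}(Q)}$, the divergent behavior as $s \to 1$ is cancelled, while the behavior as $s \to 0$ is tame because the cube has bounded diameter $\sqrt{d}$ and the near-origin integrability is already ensured by $\kappa_{d,s,p}$. This step --- verifying that the constant remains bounded uniformly over $s \in (0,1)$ --- is essentially the content of the BBM lemma we are invoking.

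Having established the inequality for smooth functions with the desired uniform constant, I would conclude by density: smooth functions are dense in $W^{s,p}(Q)$ (as $Q$ is Lipschitz), and both sides of the inequality are continuous with respect to the $W^{s,p}(Q)$-norm; in particular $u \mapsto u_Q$ is a bounded linear functional on $L^p(Q)$, so the inequality passes to the full space. This yields the statement with $C = C(d,p)$.
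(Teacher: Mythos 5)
Your overall structure --- reduce to smooth functions, then invoke the Bourgain--Brezis--Mironescu uniform estimate for the delicate regime --- is the same route the paper takes; the paper does not give a self-contained proof either, but simply combines the ``Fact on page 4'' of \cite{bourgain2002limiting} (for $s$ near $1$) with the elementary Mingione-type argument on the cube (for $s\in(0,1/2)$). To the extent that you, too, are ultimately citing BBM for the hard case, the approaches agree. However, as written your sketch misrepresents where the difficulty lies and how it is resolved, and if one tries to read it as an actual proof there is a gap at the decisive step.

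The gap is this: a telescoping/chaining argument through dyadic averages, of the kind you describe, yields at best an inequality of the form
\begin{equation*}
\int_Q |u(\bx)-u_Q|^p\,\rmd\bx \;\le\; C(d,p)\int_Q\int_Q \frac{|u(\bx)-u(\by)|^p}{|\bx-\by|^{d+sp}}\,\rmd\by\,\rmd\bx,
\end{equation*}
with $C(d,p)$ bounded \emph{below} away from zero uniformly in $s$. But the theorem, once you unwind the normalization $\kappa_{d,s,p}=\tfrac{p(1-s)}{A_{d,p}}$ hidden in $[u]_{W^{s,p}(Q)}^p$, requires the strictly stronger statement that the constant in front of the \emph{un-normalized} Gagliardo integral can be taken $O(1-s)$ as $s\to1$. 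That extra factor $(1-s)$ does not emerge from summing the geometric series in the chaining --- the series constants degenerate as $s\to0$, not as $s\to1$, and nothing in the telescoping ``integrates against $\kappa_{d,s,p}$'' to produce a gain. Recovering the $(1-s)$ factor is exactly the nontrivial content of the BBM fact (proved there by a different averaging argument), so your plan is circular at that point unless you openly cite it, as the paper does. Conversely, you overcomplicate the easy regime: for $s\le 1/2$ no chaining is needed at all, since $|\bx-\by|\le\sqrt{d}$ on $Q$ gives $|\bx-\by|^{-d-sp}\ge d^{-(d+p)/2}$, and your first Jensen inequality combined with $\kappa_{d,s,p}\ge \tfrac{p}{2A_{d,p}}$ already finishes the proof with a constant depending only on $d$ and $p$. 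I would recommend restructuring the argument as this explicit two-regime split, with the $s$-near-$1$ case attributed to \cite{bourgain2002limiting} rather than to the telescoping.
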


From this inequality, we can obtain the following improved Poincar\'e inequality:
\begin{lemma}\label{lma:FractionalPoincare:Improved}
	Let $1 \leq p < \infty$, let $s \in (0,1)$, and let $Q \subset \bbR^d$ be a cube with side length denoted $\ell(Q)$. Then there exists a constant $C$ depending only on $d$ and $p$ such that
	\begin{equation*}
		\int_{Q} |u(\bx) - u_Q|^p \, \rmd \bx \leq C \ell(Q)^{sp} \kappa_{d,s,p} \int_Q \int_{Q \cap B(\bx,\ell(Q)/8)} \frac{|u(\bx)-u(\by)|^p}{|\bx-\by|^{d+sp}} \, \rmd \by \, \rmd \bx,
	\end{equation*}
	for all $u \in W^{s,p}(Q)$.
\end{lemma}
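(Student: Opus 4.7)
The plan is to derive the improved inequality from \Cref{thm:BBM:PoincareOnCube} via a partition-and-chaining argument. By the rescaling $u(\cdot) \mapsto u(\ell(Q)\,\cdot)$ we may assume $\ell(Q)=1$. Fix $N\in\bbN$ depending only on $d$ with $2\sqrt d/N<1/8$, and partition $Q$ into $N^d$ sub-cubes $\{Q_i\}$ of side $1/N$. Pick a distinguished sub-cube $Q_*$. Since $u_Q$ minimizes $c\mapsto \int_Q |u-c|^p\,\rmd\bx$, it suffices to bound $\int_Q |u-u_{Q_*}|^p\,\rmd\bx$, which by the triangle inequality splits as
\[
\int_Q |u-u_{Q_*}|^p \rmd\bx \le 2^{p-1}\sum_i \int_{Q_i}|u-u_{Q_i}|^p\rmd\bx + 2^{p-1}\sum_i |Q_i|\,|u_{Q_i}-u_{Q_*}|^p.
\]

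The first (local) sum is handled by applying the scaled form of \Cref{thm:BBM:PoincareOnCube} to each $Q_i$ and summing. Because $\mathrm{diam}(Q_i)=\sqrt d/N<1/8$, for any $\bx\in Q_i$ we have $Q_i\subset B(\bx,1/8)$, so this sum is bounded by $CN^{-sp}\kappa_{d,s,p}$ times the truncated double integral appearing on the right-hand side of the target inequality.

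For the second sum I will use a chaining argument. For each $i$, pick an axis-aligned chain $Q_*=Q_{i_0},Q_{i_1},\dots,Q_{i_k}=Q_i$ with $k\le dN$ in which consecutive sub-cubes share a face. For any adjacent pair $(Q_j,Q_{j'})$, enclose both in a cube $\widetilde Q\subset Q$ of side $2/N$; then scaled \Cref{thm:BBM:PoincareOnCube} together with the elementary estimate $|u_{Q_j}-u_{\widetilde Q}|^p\le 2^d \fint_{\widetilde Q}|u-u_{\widetilde Q}|^p\,\rmd\bz$ (and likewise for $Q_{j'}$) yields
\[
|Q_j|\,|u_{Q_j}-u_{Q_{j'}}|^p \le C\kappa_{d,s,p} N^{-sp}\int_{\widetilde Q}\int_{\widetilde Q}\frac{|u(\bx)-u(\by)|^p}{|\bx-\by|^{d+sp}}\,\rmd\by\,\rmd\bx,
\]
and $\mathrm{diam}(\widetilde Q)=2\sqrt d/N<1/8$ again confines the integration to pairs with $|\bx-\by|<1/8$. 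Applying the telescoping estimate $|u_{Q_i}-u_{Q_*}|^p\le (dN)^{p-1}\sum_m |u_{Q_{i_m}}-u_{Q_{i_{m+1}}}|^p$ and summing over $i$, each adjacent pair appears in at most $C(d)$ of the chains, which bounds the second sum by $C(d,p)\kappa_{d,s,p}$ times the truncated double integral on $Q$. Combining the two pieces and undoing the scaling gives the lemma.

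The main technical hurdle is the chain construction and the counting bound for how many chains traverse a given adjacent pair: it must be done so that the resulting constant depends only on $d,p$ (and in particular is independent of $s$ and $\ell(Q)$), since otherwise the factor $\kappa_{d,s,p}$ gained from \Cref{thm:BBM:PoincareOnCube} would be polluted by an $s$-dependent multiplicity.
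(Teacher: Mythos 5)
Your proposal is correct and follows essentially the same route as the paper: both partition $Q$ into finitely many (only $d$-dependent in number) congruent subcubes so small that any two face-adjacent ones fit in a cube of doubled side length still lying in $B(\bx,\ell(Q)/8)$, apply the unscaled Poincar\'e inequality of \Cref{thm:BBM:PoincareOnCube} on those enclosing cubes, and then chain/telescope averages across adjacent subcubes, the only cosmetic difference being that the paper chains all subcubes along a single face-sharing enumeration while you use axis-aligned chains from a distinguished subcube with a (trivially $C(d)$-bounded, since there are only $C(d)$ chains) multiplicity count. The one slip is the claim that $u_Q$ minimizes $c\mapsto\int_Q|u-c|^p\,\rmd\bx$, which is false for $p\neq 2$; replace it with the standard quasi-minimality estimate $\int_Q|u-u_Q|^p\,\rmd\bx\leq 2^{p}\int_Q|u-c|^p\,\rmd\bx$ for any constant $c$, and the rest of your argument goes through unchanged.
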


\begin{proof}
    The proof is very similar to that of \cite[Lemma 2.2]{hurri2013fractional}.
	After scaling and translation we may assume that $Q$ is the unit cube in $\bbR^d$. For $k \in \bbN$, subdivide $Q$ into $2^{kd}$ congruent subcubes; denote this collection by $\mathcal{W}_k$. Choose $k$ large enough so that, for all $Q_0 \in \mathcal{W}_{k-1}$, $Q_0 \subset B(\bx,1/8)$ for all $\bx \in Q_0$.
	For cubes $Q_i$, $Q_j \in \mathcal{W}_k$, denote $G_{ij} = Q_i \cup Q_j$ whenever $Q_i$ and $Q_j$ share a common face, and the empty set otherwise; note that the case $i=j$ is included here.
	
	Now fix $G_{ij} = G$, and define $\wt{Q} \subset Q$ to be a cube with side length $2 \ell(Q_i)$ that contains $G$. Then by choice of $k$, it still holds that $\wt{Q} \subset B(\bx,1/8)$ for all $\bx \in \wt{Q}$.
	Then H\"older's and Minkowski's inequalities imply
	\begin{equation*}
		\vnorm{u-u_G}_{L^p(G)} 
		\leq (1+|G|^{-1/p}) \vnorm{u - u_{\wt{Q}} }_{L^p(G)} 
		\leq C(d,p) \vnorm{u - u_{\wt{Q}} }_{L^p(\wt{Q})}.
	\end{equation*}
	Therefore, by \Cref{thm:BBM:PoincareOnCube} applied to $\wt{Q}$
	\begin{equation}\label{eq:FractionalPoincare:Improved:Pf1}
		\begin{split}
			\vnorm{u-u_G}_{L^p(G)}^p &\leq C(d,p) \ell( \wt{Q} )^{sp} [u]_{W^{s,p}(\wt{Q})}^p \\
			&\leq C(d,p) \kappa_{d,s,p} \int_Q \int_{Q \cap B(\bx,1/8) } \frac{|u(\bx)-u(\by)|^p}{|\bx-\by|^{d+sp}} \, \rmd \by \, \rmd \bx.
		\end{split}
	\end{equation}
	
	Now, write $\mathcal{W}_k = \{ Q_1, Q_2, \ldots, Q_{2^{kd}} \}$; we can adopt an enumeration such that $Q_i$ and $Q_{i+1}$ share a common face for all $i \in \{1, \ldots, 2^{kd} -1 \}$; denote $Q_i \cup Q_{i+1} = G_i$. Then by Minkowski's and H\"older's inequalities
	\begin{equation*}
		\begin{split}
			\vnorm{u- u_Q}_{L^p(Q)} 
			&\leq (1+|Q|^{-1/p}) \vnorm{u - u_{Q_1}}_{L^p(Q)} \\
			&\leq C(d,p) \sum_{j=1}^{2^{kd}} \left( \vnorm{u- u_{Q_j}}_{L^p(Q_j)} + |Q_j|^{1/p} |u_{Q_j} - u_{Q_1}| \right) \\
			&\leq C(d,p) \sum_{j=1}^{2^{kd}} \left( \vnorm{u- u_{Q_j}}_{L^p(Q_j)} + |Q_j|^{1/p} \sum_{i=1}^{j-1} |u_{Q_{i+1}} - u_{Q_i}| \right).
		\end{split}
	\end{equation*}
	By \eqref{eq:FractionalPoincare:Improved:Pf1} we need only estimate the terms in the second sum. For each $i$, we have by H\"older's inequality
	\begin{equation*}
		\begin{split}
			|u_{Q_{i+1}} - u_{Q_i}|
			&\leq |u_{Q_{i+1}} - u_{G_i}| + |u_{G_i} - u_{Q_i}| \\
			&\leq C(d) \fint_{G_i} |u(\bx) - u_{G_i}| \, \rmd \bx \leq \frac{C(d)}{|G_i|^{1/p}} \vnorm{u - u_{G_i}}_{L^p(G_i)}.
		\end{split}
	\end{equation*}
	Applying \eqref{eq:FractionalPoincare:Improved:Pf1} completes the proof.
\end{proof}

From this, one can follow the same method in the proof of \cite[Theorem 3.1]{hurri2013fractional}, using \Cref{lma:FractionalPoincare:Improved} where appropriate, to obtain the following Poincar\'e inequality on more general domains. The point of departure from the proof in that paper is the tracking of the scaling in $s$ on the Poincar\'e constant.

\begin{theorem}\label{thm:FractionalPoincare:Neumann}
	Let $1 \leq p < \infty$, let $s \in (0,1)$, and let $\cD \subset \bbR^d$ be a bounded Lipschitz domain. Then there exists a constant $C$ depending only on $d$, $p$, and the Lipschitz constant of $\cD$ such that
	\begin{equation*}
		\int_\cD |u(\bx) - u_\cD |^p \, \rmd \bx \leq C \diam(\cD)^{sp} [u]_{W^{s,p}(\cD)}^p, \qquad \forall u \in W^{s,p}(\cD),
	\end{equation*}
	where $u_\cD := \fint_\cD u(\bx) \, \rmd \bx$.
\end{theorem}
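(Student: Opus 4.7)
The strategy is a Whitney decomposition together with a Boman-type chain argument, following \cite[Theorem 3.1]{hurri2013fractional}, but with the local input replaced by the improved local Poincar\'e inequality from \Cref{lma:FractionalPoincare:Improved}. This is what makes the final constant independent of $s$, apart from the expected scaling factor $\diam(\cD)^{sp}$.

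Since $\cD$ is a bounded Lipschitz domain, it is a John domain and admits a Whitney decomposition $\cD=\bigcup_i Q_i$ into dyadic cubes with pairwise disjoint interiors and $\ell(Q_i)\sim \dist(Q_i,\p\cD)$. Classical Whitney/John-domain theory provides a distinguished ``central'' cube $Q_0$ and, for every $i$, a chain $Q_i=Q_{i,0},Q_{i,1},\dots,Q_{i,k_i}=Q_0$ of Whitney cubes in which consecutive cubes share an $(d-1)$-face, have comparable side lengths, and satisfy the Boman bounded-overlap property (each fixed Whitney cube appears in only boundedly many chains), with constants depending only on $d$ and the Lipschitz constant of $\p\cD$. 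On each cube $Q$ of the decomposition, \Cref{lma:FractionalPoincare:Improved} yields
\begin{equation*}
\Vnorm{u-u_Q}_{L^p(Q)}^p \le C\,\ell(Q)^{sp}\,\kappa_{d,s,p}\int_Q\int_{Q\cap B(\bx,\ell(Q)/8)}\frac{|u(\bx)-u(\by)|^p}{|\bx-\by|^{d+sp}}\,\rmd\by\,\rmd\bx,
\end{equation*}
which is the only place an $s$-dependent factor enters and yields $\ell(Q)^{sp}\le \diam(\cD)^{sp}$ uniformly.

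To assemble the global estimate, write $u-u_{Q_0}=(u-u_{Q_i})+(u_{Q_i}-u_{Q_0})$ on $Q_i$. Telescoping along the chain for $Q_i$, together with H\"older's inequality and the comparability of consecutive cube sizes, gives
\begin{equation*}
|u_{Q_i}-u_{Q_0}|^p \le C \sum_{j=0}^{k_i-1}|Q_{i,j}|^{-1}\,\Vnorm{u-u_{Q_{i,j}}}_{L^p(Q_{i,j})}^p.
\end{equation*}
Raising to the $p$-th power and summing over $i$, the Boman bounded-overlap of the chains collapses the resulting double sum into a single sum over all Whitney cubes of the local bounds provided by \Cref{lma:FractionalPoincare:Improved}. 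Because the domains of integration $\{(\bx,\by)\in Q\times Q:|\bx-\by|<\ell(Q)/8\}$ sit inside $\cD\times\cD$ with bounded overlap (a standard Whitney fact), this single sum is dominated by $C\diam(\cD)^{sp}\,[u]_{W^{s,p}(\cD)}^p$. Finally, the elementary estimate $\Vnorm{u-u_\cD}_{L^p(\cD)}\le 2\Vnorm{u-u_{Q_0}}_{L^p(\cD)}$ converts the bound from $u_{Q_0}$ to $u_\cD$.

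\textbf{Main obstacle.} The key delicate point is the bookkeeping in the chain summation: one must verify that the Boman overlap of chains and the overlap of the local domains $B(\bx,\ell(Q)/8)$ contribute only constants depending on $d$ and the Lipschitz constant of $\cD$, not on $s$. The former is classical for Lipschitz (John) domains; the latter is precisely the reason \Cref{lma:FractionalPoincare:Improved}, with its inner integration restricted to a ball of radius $\tfrac{1}{8}\ell(Q)$, is invoked in place of the plain Poincar\'e on cubes. This restriction makes the pieces from different Whitney cubes disjoint enough that summing reassembles into the global seminorm $[u]_{W^{s,p}(\cD)}$ with an $s$-independent multiplicative constant.
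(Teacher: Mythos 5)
Your proposal is correct and follows essentially the same route as the paper, which simply invokes the chain-argument proof of \cite[Theorem 3.1]{hurri2013fractional} with \Cref{lma:FractionalPoincare:Improved} substituted as the local input so that the only $s$-dependence enters through the factor $\ell(Q)^{sp}\le \diam(\cD)^{sp}$. Your write-up is in fact more explicit than the paper's (which leaves the Whitney/Boman bookkeeping to the cited reference), and the points you flag as delicate are exactly the ones the paper defers to \cite{hurri2013fractional}.
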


To obtain a Poincar\'e inequality for Sobolev functions that vanish on a portion of the boundary, we introduce the notion of a fractional relative capacity inspired by \cite{dyda2023fractional}. For a bounded Lipschitz domain $\cD$, we consider a closed set $\p \cD_D \subset \p \cD$ that satisfies $\scH^{d-1}(\p \cD_D) \in (0,\infty)$. Let $1 \leq p < \infty$, $0 < s < 1$, and let $A\subset \bbR^d $ be any set and  let $B = B(\bx_0,R) \subset \bbR^d$ be any Euclidean ball such that $\cD \cap B \subset A$. Then we define the capacity
\begin{equation*}
	\mathrm{cap}_{s,p}(\p \cD_D, B, A) := \inf \left\{ [\varphi]_{W^{s,p}(A)}^p \, : \, \varphi \in C^0(\overline{\cD}), \quad
	\begin{gathered}
		\varphi \geq 1 \text{ on } \p \cD_D \cap \overline{B}, \\
		\varphi = 0 \text{ on } \overline{\cD} \setminus 2B
	\end{gathered} 
	\right\}.
\end{equation*}
Although the techniques involving the capacity could give sharper results (such as the Poincar\'e inequalities for functions vanishing on sets of more general Hausdorff dimension) we choose not to pursue those topics here for the sake of brevity, and instead refer the interested reader to \cite{dyda2023fractional}.

Next we denote the outer Hausdorff measure of a set $A$ as
\begin{equation*}
    \scH^{d-1}_R(A) := \inf \left\{ \sum_{j=1}^\infty \frac{2^{1-d}  \pi^{(d-1)/2}}{ \Gamma(d/2 + 1)} \diam(V_j)^{d-1} \, : \, A \subset \bigcup_{j=1}^\infty V_j, \diam(V_j) \leq R \right\},
\end{equation*}
so that
\begin{equation*}
    \scH^{d-1}(A) = \lim\limits_{R \to 0} \scH^{d-1}_R(A) = \sup_{R > 0} \scH^{d-1}_R(A).
\end{equation*}

\begin{lemma}\label{lma:Capacity}
	Let $sp > 1$, and let $\bx_0 \in \p \cD_D$ and fix $R \in (0,\diam(\cD)/4)$. Then for $B=B(\bx_0,R)$
	\begin{equation*}
		\scH^{d-1}_{15R}(\p \cD_D) \leq C R^{sp-1} \mathrm{cap}_{s,p}(\p \cD_D,B,\cD \cap 3B),
	\end{equation*}
	where the constant $C$ depends only on $d$, $p$, and the Lipschitz constant of $\cD$.
\end{lemma}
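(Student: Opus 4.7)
My strategy follows the standard potential-theoretic template: test against an arbitrary admissible $\varphi$ in the capacity, use the fact that $\varphi \geq 1$ on $\p \cD_D \cap \overline{B}$ to convert an $L^p$-trace bound into a Hausdorff-content bound, and then take the infimum. Concretely, fix any $\varphi \in C^0(\overline{\cD})$ admissible for $\mathrm{cap}_{s,p}(\p\cD_D,B,\cD \cap 3B)$, which without loss of generality lies in $W^{s,p}(\cD \cap 3B)$. Because $sp>1$ and $\cD \cap 3B$ is Lipschitz (with constants depending only on $d$ and the Lipschitz constant $L$ of $\cD$, provided $R < \diam(\cD)/4$), the trace $T\varphi$ on $\p \cD \cap \overline{B}$ is well defined. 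Since $\varphi$ is continuous and $\varphi \geq 1$ on $\p \cD_D \cap \overline{B}$, we have $T\varphi \geq 1$ $\scH^{d-1}$-a.e.\ on $\p \cD_D \cap \overline{B}$.

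The core estimate I need is the scaled trace inequality
\begin{equation*}
\|T\varphi\|^p_{L^p(\p \cD \cap \overline{B})} \leq C\, R^{sp-1}\,[\varphi]^p_{W^{s,p}(\cD \cap 3B)},
\end{equation*}
with $C = C(d,p,L)$. I would obtain this by flattening $\p \cD$ locally using a Lipschitz chart centered at $\bx_0$, so that $\cD \cap 3B$ maps bilipschitzly (with constants depending only on $L$) onto a Lipschitz domain comparable to a half-ball of radius $\approx R$. On this half-ball, I then dilate via $v(\bx) := \varphi(R\bx)$ and compute that $\|v\|^p_{L^p(\text{flat bdry})} = R^{-(d-1)} \|\varphi\|^p_{L^p}$ while $[v]^p_{W^{s,p}} = R^{sp-d}[\varphi]^p_{W^{s,p}}$. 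Applying the unit-scale trace bound from \Cref{thm:Trace:Fractional} to $v$ and unwinding the scaling yields the factor $R^{sp-d+d-1} = R^{sp-1}$.

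Combining these ingredients, since $\p \cD$ is a Lipschitz hypersurface of finite $\scH^{d-1}$-measure locally, the Hausdorff content is controlled by the surface measure, so
\begin{equation*}
\scH^{d-1}_{15R}(\p\cD_D \cap \overline{B}) \leq \scH^{d-1}(\p\cD_D \cap \overline{B}) \leq \int_{\p\cD_D \cap \overline{B}} |T\varphi|^p\, \rmd\scH^{d-1} \leq C R^{sp-1}[\varphi]^p_{W^{s,p}(\cD \cap 3B)}.
\end{equation*}
Taking the infimum over admissible $\varphi$ gives the claim (understanding $\scH^{d-1}_{15R}(\p \cD_D)$ as the content localized to $\overline{B}$, which is the only part of $\p\cD_D$ seen by the right-hand side capacity). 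The main obstacle is the second paragraph: obtaining the scaled trace inequality with a constant independent of $R$ and with the sharp exponent $R^{sp-1}$. This requires that the Lipschitz chart for $\p \cD$ at $\bx_0$ be chosen with constants independent of the scale $R < \diam(\cD)/4$, which is valid because $\cD$ is uniformly Lipschitz, and then a careful tracking of the $R$-scaling of both the $L^p$ norm on the flat boundary patch and the fractional seminorm on the half-ball.
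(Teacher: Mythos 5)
Your route is genuinely different from the paper's: the paper never invokes a trace theorem, but instead chains averages of $\varphi$ over dyadically shrinking balls $B(\by,2^{1-j}R)\cap\cD$ centered at points $\by$ of $\p \cD_D$, applies the fractional Poincar\'e inequality of \Cref{thm:FractionalPoincare:Neumann} on each ball, extracts one good ball per point by a pigeonhole argument, and concludes with a Vitali covering that sums the pairwise-disjoint seminorm contributions. Your proposal, however, has a genuine gap at its central step. The claimed inequality $\|T\varphi\|^p_{L^p(\p\cD\cap\overline{B})}\leq C R^{sp-1}[\varphi]^p_{W^{s,p}(\cD\cap 3B)}$ cannot follow from \Cref{thm:Trace:Fractional} by flattening and dilation alone: that theorem controls the trace by the \emph{full} $W^{s,p}$ norm, and your own scaling computation then yields only
$\|T\varphi\|^p_{L^p(\p\cD\cap\overline{B})}\leq C\bigl(R^{-1}\|\varphi\|^p_{L^p(\cD\cap 3B)}+R^{sp-1}[\varphi]^p_{W^{s,p}(\cD\cap 3B)}\bigr)$;
a seminorm-only bound is false for general $\varphi$ (constants violate it). To absorb the lower-order term you must use the admissibility constraint $\varphi=0$ on $\overline{\cD}\setminus 2B$ — which your argument never invokes — together with a scaled Poincar\'e inequality on $\cD\cap 3B$ for functions vanishing on $(\cD\cap 3B)\setminus 2B$, a set of measure $\gtrsim R^d$ by the Lipschitz character of $\cD$ and $R<\diam(\cD)/4$; this gives $\|\varphi\|^p_{L^p(\cD\cap 3B)}\leq C R^{sp}[\varphi]^p_{W^{s,p}(\cD\cap 3B)}$ and would repair the step.

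Two further mismatches with the statement. First, your argument only bounds the localized content $\scH^{d-1}_{15R}(\p\cD_D\cap\overline{B})$, and you propose to reinterpret the lemma accordingly; as stated, and as used in Step 2 of the proof of \Cref{thm:Poincare:Fractional} (where $R$ is chosen so that $\scH^{d-1}(\p\cD_D)\leq 2\scH^{d-1}_{15R}(\p\cD_D)$ for the full set), the lemma concerns all of $\p\cD_D$, which the paper reaches because its Vitali balls cover $\p\cD_D$ itself. Second, importing \Cref{thm:Trace:Fractional} carries its factor $C/(sp-1)$ into your estimate, so even after the repair you would obtain the lemma only with an $s$-dependent constant degenerating as $sp\to 1^{+}$, which is weaker than the stated $C=C(d,p,L)$ and runs against the paper's explicit goal of tracking constants uniformly in the parameters.
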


\begin{proof}
	Let $\varphi \in C^0(\overline{\cD})$ with $\varphi = 1$ on $B \cap \p \cD_D$ and $\varphi = 0$ on $\overline{\cD} \setminus 2 B$; by considering $\max \{ 0, \min\{\varphi, 1\} \}$ we can assume without loss of generality that $0 \leq \varphi \leq 1$ on $\cD$. Define $R_0 = 3R$ and $B_0 := B(\bx_0,R_0) \cap \cD$. Fix $\by \in \p \cD_D$, and for $j \in \bbN$ define $R_j = 2^{1-j} R$ and $B_j := B(\by,R_j) \cap \cD$. Then since $\varphi$ is continuous, by Lebesgue differentiation we have
	\begin{equation*}
		\begin{split}
			|\varphi(\by) - \varphi_{B_0}| 
			&= \lim\limits_{k \to \infty} | \varphi_{B_k} - \varphi_{B_0}| \\
			&= \lim\limits_{k \to \infty} \left| \sum_{j=0}^{k-1} \varphi_{B_{j+1}} - \varphi_{B_j} \right| \\
			&\leq \sum_{j=0}^\infty |\varphi_{B_{j+1}} - \varphi_{B_j}| \\
			&\leq \sum_{j=0}^\infty \fint_{B_{j+1}} |\varphi(\bx) - \varphi_{B_j} | \, \rmd \bx \\
			&\leq C(d) \sum_{j=0}^\infty \left( \fint_{B_{j}} |\varphi(\bx) - \varphi_{B_j} |^p \, \rmd \bx \right)^{1/p}.
		\end{split}
	\end{equation*}
	Then by \Cref{thm:FractionalPoincare:Neumann}
	\begin{equation}\label{eq:CapacityLma:Pf1}
		|\varphi(\by) - \varphi_{B_0}| \leq C(d,p) \sum_{j=0}^\infty R_j^{s-d/p} [\varphi]_{W^{s,p}(B_j)}. 
	\end{equation}
	
	Now, since $\cD$ is a Lipschitz domain
	\begin{equation*}
		0 \leq \fint_{B_0} \varphi(\by) \, \rmd \by \leq \frac{1}{|B_0|} \int_{2B} \varphi(\by) \, \rmd \by \leq \frac{|2B|}{|B_0|} = \frac{|\cD \cap 2B|}{ |\cD \cap 3 B| }:= c < 1,
	\end{equation*}
	where $c$ depends only on $d$ and the Lipschitz constant of $\cD$, which we denote by $L$. Since $\by \in \p \cD_D$, we have
	\begin{equation*}
		|\varphi(\by) - \varphi_{B_0}| \geq 1 - c > 0.
	\end{equation*}
	Now, since $sp > 1$ we have by \eqref{eq:CapacityLma:Pf1}
	\begin{equation*}
		\sum_{j=0}^\infty 2^{-j(sp-1)/p} \leq C(p) \frac{ |\varphi(\by) - \varphi_{B_0}| }{1-c} \leq C(d,p,L) \sum_{j=0}^\infty r_j^{s-d/p} [\varphi]_{W^{s,p}(B_j)}.
	\end{equation*}
	Therefore, there exists $j$ depending on $\by$ such that
	\begin{equation*}
		2^{-j(sp-1)} \leq C(d,p,L)  R_j^{sp-d} [\varphi]_{W^{s,p}(B_j)}^p,
	\end{equation*}
	that is,
	\begin{equation*}
		R_j^{d-1} \leq C(d,p,L) R^{sp-1} [\varphi]_{W^{s,p}(B_j)}^p.
	\end{equation*}
	Relabeling $R_j = R_{\by}$ and $B_j = B_{\by}$, we have that
	\begin{equation*}
		R_{\by}^{d-1} \leq C(d,p,L) R^{sp-1} [\varphi]_{W^{s,p}(B_{\by})}^p.
	\end{equation*}
	The sets $\{ B_{\by} \}_{\by \in \p \cD_D}$ form an open cover of $\p \cD_D$.
	By the Vitali covering lemma, we obtain a countable collection $\{ \by_k \}_{k=1}^\infty$ with associated pairwise disjoint sets $B_{\by_k} \subset B_0$ such that $\p \cD_D \subset \cup_{k=1}^\infty 5 B_{\by_k}$. Therefore
	\begin{equation*}
		\begin{split}
			\scH^{d-1}_{15R}(\p \cD_D) \leq C(d) \sum_{k = 1}^\infty |B_{\by_k}|^{(d-1)/d} 
			&\leq C(d,p,L) R^{sp-1} \sum_{k=1}^\infty [\varphi]_{W^{s,p}(B_{\by_k})}^p \\
			&\leq C(d,p,L) R^{sp-1} [\varphi]_{W^{s,p}(B_0)}^p.
		\end{split}
	\end{equation*}
	The result follows by taking the infimum over $\varphi$ on the right-hand side of the inequality.
\end{proof}

\begin{proof}[Proof of \Cref{thm:Poincare:Fractional}]
	It suffices to prove the inequality for any nontrivial $u \in C^1_c(\overline{\cD} \setminus \p \cD_D)$.
	
	Step 1: Let us choose $R \in (0,\diam(\cD)/4)$ sufficiently small so that $\scH^{d-1}(\p \cD_D) \leq 2 \scH^{d-1}_{15R} (\p \cD_D)$. Let $\bx_0 \in \p \cD_D$. We first show that
	\begin{equation}\label{eq:FractionalPoincare:Final:Pf1}
		\vnorm{u}_{L^p(\cD)}^p   \mathrm{cap}_{s,p}(\p \cD_D,B,\cD) \leq C(d,p,\cD)[u]_{W^{s,p}(\cD)}^p,
	\end{equation}
	where $B = B(\bx_0,R)$. To this end, define $\eta(\bx) := \max \{0, 1- \dist(\bx,B)/R\}$. Then $\eta$ has Lipschitz constant $1/R$. Setting $\bar{u} = \left( \fint_\cD |u(\bx)|^p \, \rmd \bx \right)^{1/p}$, we define
	\begin{equation*}
		\varphi(\bx) := (1 - u(\bx)/\bar{u}) \eta(\bx);
	\end{equation*}
	then $\varphi \in C^0(\overline{\cD})$, $\varphi = 1$ on $\p \cD_D \cap \overline{B}$, and $\varphi = 0$ on $\overline{\cD} \setminus 2B$. Thus,
	\begin{equation*}
		\mathrm{cap}_{s,p}(\p \cD_D, B, \cD) \leq \frac{\kappa_{d,s,p}}{ \bar{u}^p } \int_\cD \int_\cD \frac{ |\eta(\bx) (\bar{u} - u(\bx)) - \eta(\by) (\bar{u} - u(\by))|^p  }{ |\bx-\by|^{d+sp} } \, \rmd \by \, \rmd \bx,
	\end{equation*}
	and therefore
	\begin{equation*}
		\begin{split}
			&\mathrm{cap}_{s,p}(\p \cD_D, B, \cD) \fint_\cD |u(\bx)|^p \, \rmd \bx \\
			\leq& C(p) \kappa_{d,s,p} \int_\cD \int_\cD \frac{ |\eta(\bx) -\eta(\by)|^p |u(\bx)-\bar{u}|^p }{ |\bx-\by|^{d+sp} } \, \rmd \by \, \rmd \bx \\
			&\quad + C(p) \kappa_{d,s,p} \int_\cD \int_\cD \frac{ |\eta(\by)|^p |u(\bx)-u(\by)|^p }{ |\bx-\by|^{d+sp} } \, \rmd \by \, \rmd \bx = I + II.
		\end{split}
	\end{equation*}
	Using the Lipschitz constant of $\eta$ and the dependence of $\kappa_{d,s,p}$ on $s$, we have
	\begin{equation*}
		\begin{split}
			I &\leq \frac{C(d,p)}{R^p} \int_{\cD} |u(\bx) - \bar{u}|^p \int_{3B} \frac{(1-s)}{|\by-\bx|^{d+sp-p}} \, \rmd \by \, \rmd \bx \\
			&\leq C(d,p) R^{-sp} \int_{\cD} |u(\bx) - \bar{u} |^p \, \rmd \bx.
		\end{split}
	\end{equation*}
	Since
	\begin{equation*}
		\begin{split}
			\vnorm{u-\bar{u}}_{L^p(\cD)} 
			&\leq \vnorm{u- u_\cD}_{L^p(\cD)} + |\cD|^{1/p} |u_{\cD} - \bar{u}| \\
			&= \vnorm{u- u_\cD}_{L^p(\cD)} + \big| \vnorm{u_{\cD}}_{L^p(\cD)} - \vnorm{u}_{L^p(\cD)} \big| \\
			&\leq 2 \vnorm{u- u_\cD}_{L^p(\cD)},
		\end{split}
	\end{equation*}
	it follows from \Cref{thm:FractionalPoincare:Neumann} that
	\begin{equation*}
		I \leq C(d,p,\cD) [u]_{W^{s,p}(\cD)}^p.
	\end{equation*}
	The same bound also holds for $II$ since $|\eta| \leq 1$, and so \eqref{eq:FractionalPoincare:Final:Pf1} is established.
	
	Step 2: we prove the theorem. By the monotonicity of the capacity, we have from \eqref{eq:FractionalPoincare:Final:Pf1}
	\begin{equation*}
		\vnorm{u}_{L^p(\cD)}^p \leq \frac{C(d,p,\cD)}{ \mathrm{cap}_{s,p}(\p \cD_D,B,\cD \cap 3B)} [u]_{W^{s,p}(\cD)}^p.
	\end{equation*}
	Then by \Cref{lma:Capacity} and our choice of $R$
	\begin{equation*}
		\vnorm{u}_{L^p(\cD)}^p \leq \frac{C(d,p,\cD) R^{sp-1}}{ \scH^{d-1}(\p \cD_D) } [u]_{W^{s,p}(\cD)}^p \leq C(d,p,\cD,\p \cD_D) [u]_{W^{s,p}(\cD)}^p.
	\end{equation*}
\end{proof}

Observe that the dependence of the constant on the size of $\cD$ can be sharpened if one considers local Poincar\'e inequalities on sets of the form $\cD \cap B$; see \cite{dyda2023fractional} for details.

\section{Weighted Sobolev spaces}

We begin first by stating some relations between weighted Sobolev spaces. For $s \in (0,1]$ and $1 < p < \infty$, define the space $\wt{W}^{1,p}(\cD;p-sp)$ as the class of all measurable functions $u : \cD \to \bbR^d$ such that its norm is finite, i.e.
	\begin{equation}
		\vnorm{u}_{\wt{W}^{1,p}(\cD;p-sp)}^p := \int_{\cD} \sigma(\bx)^{p-sp} |u(\bx)|^p \, \rmd \bx + \int_\cD \sigma(\bx)^{p-sp} |\grad u(\bx)|^p \, \rmd \bx < \infty.
	\end{equation}
Since $p - sp \geq 0$, it follows that $ W^{1,p}(\cD;p-sp) \subset \wt{W}^{1,p}(\cD;p-sp)$, with
	\begin{equation}\label{eq:appendix:weightedSobolevIneq}
		\vnorm{u}_{\wt{W}^{1,p}(\cD;p-sp)} \leq C(s,p,\cD) \vnorm{u}_{W^{1,p}(\cD;p-sp)}, \qquad \forall u \in W^{1,p}(\cD;p-sp).
	\end{equation}
As a consequence, we can obtain the following properties of functions in $W^{1,p}(\cD;p-sp)$:

\begin{theorem}\label{thm:FxnSpProp:Weighted}
	Let $1 < p < \infty$ and $s \in (0,1]$. Let $\cD$ be a bounded Lipschitz domain. The following hold:
	
	\begin{enumerate}[1)]
		\item $C^\infty(\overline{\cD})$ is dense in $W^{1,p}(\cD;p-sp)$.
		
		\item Let $sp > 1$. Let $T$ denote the trace operator on $\p \cD$. Then $T$ is a bounded linear operator from $W^{1,p}(\cD;p-sp)$ to $W^{s-1/p,p}(\p \cD)$.
		
		\item 
		Let $sp > 1$, and let $G \subset \p \cD$ be a $\scH^{d-1}$-measurable closed set that is diffeomorphic to a finite union of the closures of $W^{s-1/p,p}$-extension domains.  
		Then there exists a bounded linear extension operator $E : W^{s-1/p,p}(G) \to W^{1,p}(\cD;p-sp)$, whose operator norm does not exceed $C/s$ for a fixed constant $C = C(d,p,\cD,G) > 0$.
		
		\item Let a sequence $\{u_n\}_n \subset W^{1,p}(\cD;p-sp)$ satisfy $\sup_{n \in \bbN} \vnorm{u_n}_{W^{1,p}(\cD;p-sp)} \leq B$. Then $\{u_n\}_n$ is precompact in $L^p(\cD)$.
		
		\item Let $sp > 1$. For $G$ as in item 3), $u \in W^{1,p}_{0,G}(\cD;p-sp)$ if and only if $u \in W^{1,p}(\cD;p-sp)$ and $T u = 0$ $\scH^{d-1}$-a.e. on $G$.
		
		\item Let $sp > 1$ and let $\p \cD_D \subset \p \cD$ with $\scH^{d-1}(\p \cD_D) > 0$. Then there exists a constant $C > 0$ depending only on $d$, $p$, $\cD$, and $\p \cD_D$ such that
		\begin{equation*}
			\vnorm{u}_{L^p(\cD)} \leq C [u]_{W^{1,p}(\cD;p-sp)}, \qquad \forall u \in W^{1,p}_{0,\p \cD_D}(\cD;p-sp).
		\end{equation*}
	\end{enumerate}

\end{theorem}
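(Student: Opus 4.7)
The overall strategy is to exploit two features of the weight $w(\bx) = \sigma(\bx)^{p-sp}$: since $s \in (0,1]$ one has $p-sp \in [0,p)$, so $w$ is a power-type distance weight in the Muckenhoupt $A_p$ class and also satisfies the Hardy-type comparison $\vnorm{\cdot}_{\wt{W}^{1,p}(\cD;p-sp)} \leq C\vnorm{\cdot}_{W^{1,p}(\cD;p-sp)}$ recorded in \eqref{eq:appendix:weightedSobolevIneq}. This places us squarely in the well-developed framework of \cite{kufner1980weighted,Fabes-Kenig-Serapioni,gol2009weighted,gurka1988continuous}. Items \textbf{1)}, \textbf{4)} and \textbf{5)} then follow from standard $A_p$-weighted Sobolev theory applied on the Lipschitz domain $\cD$: density via truncation and mollification adapted to the distance weight; compactness via the chain $W^{1,p}(\cD;p-sp) \hookrightarrow W^{s,p}(\cD) \hookrightarrow\hookrightarrow L^p(\cD)$, where the first embedding is \Cref{thm:WeightedEst} and the second is the Rellich--Kondrachov theorem for fractional Sobolev spaces; and the zero-trace characterization via cutoff functions $\chi_{\eps}(\bx)$ vanishing in a neighborhood of $G$, whose gradient blowup rate $1/\dist(\bx,G)$ is controlled by the Hardy inequality available in $\wt{W}^{1,p}(\cD;p-sp)$ (using that $u\in W^{1,p}(\cD;p-sp)$ with $Tu=0$ on $G$ has $|u(\bx)| \lesssim \dist(\bx,G)^{s-1/p}$ in an averaged sense).

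For items \textbf{2)} and \textbf{6)} the plan is a direct reduction to already-established fractional results. By \Cref{thm:WeightedEst} we have $[u]_{W^{s,p}(\cD)} \leq C [u]_{W^{1,p}(\cD;p-sp)}$ with $C$ independent of $s$; composing with the classical trace bound of \Cref{thm:Trace:Fractional} yields item \textbf{2)} (the factor $1/(sp-1)$ is absorbed into $C(d,p,\cD)$ once one notes that $sp>1$ can be required to stay away from 1 in applications, or simply carried forward as in item \textbf{3)}). For item \textbf{6)}, once item \textbf{5)} is established the function $u \in W^{1,p}_{0,\p\cD_D}(\cD;p-sp)$ has $Tu = 0$ on $\p\cD_D$ in the sense of $W^{s,p}(\cD)$, so \Cref{thm:Poincare:Fractional} applies to give $\vnorm{u}_{L^p(\cD)} \leq C [u]_{W^{s,p}(\cD)} \leq C[u]_{W^{1,p}(\cD;p-sp)}$.

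The main obstacle is item \textbf{3)}, the extension operator with the explicit $C/s$ dependence on the operator norm. The plan is as follows. By the diffeomorphism hypothesis and a partition of unity one may reduce to the half-space model $\cD = \bbR^d_+$ with $G = \bbR^{d-1}\times\{0\}$ and $\sigma(\bx',t) = t$. Given $v \in W^{s-1/p,p}(\bbR^{d-1})$, set
\begin{equation*}
Ev(\bx',t) := \int_{\bbR^{d-1}} \varphi_{t}(\bx'-\by') v(\by') \, \rmd\by',
\end{equation*}
where $\varphi_t(\by') = t^{-(d-1)} \varphi(\by'/t)$ is a standard mollification at scale $t$, cut off smoothly for $t$ large. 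A direct computation using Minkowski's inequality together with the Fubini representation of the $W^{s-1/p,p}$-seminorm gives
\begin{equation*}
\int_0^{\infty} \int_{\bbR^{d-1}} t^{p-sp} |\grad Ev(\bx',t)|^p \, \rmd\bx' \, \rmd t \;\leq\; \frac{C(d,p,\varphi)}{s} \, [v]_{W^{s-1/p,p}(\bbR^{d-1})}^p;
\end{equation*}
the $1/s$ factor appears because the $t$-direction derivative produces the integral $\int_0^\infty t^{-sp} \, \rmd(\ldots)$ whose normalization in $s$ is exactly $1/s$, and the tangential derivatives are controlled analogously. Boundedness into $L^p(\bbR^d_+; t^{p-sp})$ and consistency $T(Ev) = v$ are standard. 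Transferring via the partition of unity and localization back to $\cD$ and $G$, and then extending to all of $W^{s-1/p,p}(G)$ by density (item \textbf{1)} applied to traces, i.e.\ density of $C^\infty(G)$), yields the claim. The hardest subtlety will be verifying that the $1/s$ scaling is preserved under the bi-Lipschitz change of coordinates flattening $\p\cD$ and through the gluing, which we handle by noting that all norms involved are defined through integrals with respect to the weight $t^{p-sp}$ whose transformation only introduces multiplicative constants depending on the Lipschitz constants of the flattening maps.
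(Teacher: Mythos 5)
Most of your outline tracks the paper's proof or gives acceptable variants of it. Items 1), 4) are essentially the paper's arguments (mollification density adapted to the distance weight, following Kufner; compactness via \Cref{thm:WeightedEst} composed with the fractional Rellich--Kondrachov embedding). Item 2) is a legitimate alternative route: the paper instead quotes Nekvinda's characterization of $W^{s-1/p,p}(\p\cD)$ as the trace space of the weighted space, while your composition of \Cref{thm:WeightedEst} with \Cref{thm:Trace:Fractional} also gives boundedness, since the statement does not demand uniformity of the constant in $s$. For item 6), rather than routing through item 5) (whose geometric hypotheses a general closed set $\p\cD_D$ of positive measure need not satisfy, and whose conclusion concerns $W^{1,p}_{0,G}$ rather than the $W^{s,p}$-closure appearing in \Cref{thm:Poincare:Fractional}), the cleaner and intended argument is direct: a defining approximating sequence in $C^\infty_c(\overline\cD\setminus\p\cD_D)$ converging in the weighted norm also converges in $W^{s,p}(\cD)$ by \Cref{thm:WeightedEst}, so $u$ lies in the $W^{s,p}$-closure and the fractional Poincar\'e inequality applies; this is exactly how the paper concludes. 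Item 5) as you sketch it (cutoffs near $G$ controlled by a Hardy inequality and an averaged decay $\dist(\cdot,G)^{s-1/p}$) asserts, without proof, precisely the estimates that constitute the result; the paper instead flattens the boundary, uses a partition of unity, and cites the half-space case from Triebel. Either fill in the Hardy estimate relative to $G$ with the weight $\sigma^{p-sp}$ or cite a source.

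The genuine gap is in item 3). Your half-space formula $Ev(\bx',t)=\varphi_t\ast v(\bx')$ requires the datum $v$ to be defined on the whole hyperplane, but $v$ is given only on $G$, a proper closed subset of $\p\cD$; flattening $\p\cD$ maps $G$ onto a proper subset of $\bbR^{d-1}$, not onto $\bbR^{d-1}$, so "reduce to the half-space model with $G=\bbR^{d-1}\times\{0\}$" skips the essential first step: extending $v$ from $G$ to all of $\p\cD$ (equivalently, from the flattened image of $G$ to all of $\bbR^{d-1}$) in $W^{s-1/p,p}$. This is exactly where the hypothesis that $G$ is diffeomorphic to a finite union of closures of $W^{s-1/p,p}$-extension domains enters, and it is exactly where the $C/s$ in the operator norm originates: the paper writes $E=E_3\circ E_1$ with $E_1:W^{s-1/p,p}(G)\to W^{s-1/p,p}(\p\cD)$ of norm at most $C/s$ (fractional Sobolev extension), while the half-space/Nekvinda-type lifting into the weighted space contributes only a factor of order $\frac{C}{(p-sp+1)(d+sp-2)}$, which is bounded for $sp>1$, $d\ge 2$. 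Your attribution of the $1/s$ to the vertical integral is therefore also off: integrating $t^{-sp-(d-1)}$ over $t\ge|\bx'-\by'|$ produces $1/(sp+d-2)$, not $1/s$ (the discrepancy is harmless as an upper bound since $sp>1$ forces $1/s<p$, but it signals that your construction as written never uses, and never supplies, the extension off $G$). To repair the argument, insert the boundary extension $W^{s-1/p,p}(G)\to W^{s-1/p,p}(\p\cD)$ as the first stage of $E$, then apply your mollification-in-$t$ lifting to the extended datum, and only then flatten and glue with a partition of unity.
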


\begin{proof}
	1) can be proved using a ``pull-and-convolve'' strategy, i.e. a translation and mollification approximation. The argument in \cite[Theorem 7.2]{kufner1980weighted}, used to prove density of $C^\infty(\overline{\cD})$ in $\wt{W}^{1,p}(\cD;p-sp)$, can be adapted to prove the same result for $W^{1,p}(\cD;p-sp)$.
	
	To prove item 2), first let $u \in C^\infty(\overline{\cD})$. Then \cite{nekvinda1993characterization} shows that
	\begin{equation*}
		\vnorm{T u}_{ W^{s-1/p,p}(\p \cD) }^p \leq C \int_{\cD} \eta(\bx)^{p-sp} |u(\bx)|^p \, \rmd \bx + C [u]_{ W^{1,p}(\cD;p-sp) }^p.
	\end{equation*}
	By \eqref{eq:appendix:weightedSobolevIneq} we can estimate 
	\begin{equation*}
		\vnorm{T u}_{ W^{s-1/p,p}(\p \cD) }^p \leq C \int_{\cD} |u(\bx)|^p \, \rmd \bx + C [u]_{ W^{1,p}(\cD;p-sp) }^p.
	\end{equation*}
	Item 2) then follows by using item 1).
	
	To show item 3), we note that there exists a bounded linear extension operator $E_1 : W^{s-1/p,p}(G) \to W^{s-1/p,p}(\p \cD)$, with operator norm bounded from above by $C(d,p,\cD,G)/s$; see for example \cite{Nezza2012Hitchhikers} for extending functions in fractional Sobolev spaces.
    Next, one can use the argument of \cite[Lemma 3.1]{nekvinda1993characterization} to obtain a bounded linear extension operator $E_2 : W^{s-1/p,p}(\bbR^{d-1}) \to W^{1,p}(\bbR^{d-1} \times (0,\infty) ; p-sp)$, with operator norm bounded from above by $\frac{C(d,p)}{(p-sp+1)(d+sp-2)}$. With the operator $E_2$, an argument involving the flattening of the boundary and a partition of unity (see for instance the proof of \cite[Theorem 2.10]{kim2007trace}) leads to the existence of a bounded linear extension operator $E_3 : W^{s-1/p,p}(\p \cD)\to W^{1,p}(\cD;p-sp) $ whose operator norm does not exceed $\frac{C(d,p,\cD)}{(p-sp+1)(d+sp-2)}$.
    Then $E=E_3 \circ E_1$ is the desired extension operator.

	To see item 4), we use \Cref{thm:WeightedEst} to get $\sup_n \vnorm{u_n}_{W^{s,p}(\cD)} < \infty$. 
	The result then follows from \cite[Theorem 7.1]{Nezza2012Hitchhikers}.
	
	The proof of item 5) uses a strategy similar to that of item 3). The forward implication is straightforward; the reverse implication in the case that $G$ is a half-space is contained in \cite[Section 2.9.2, Theorem 1, b)]{Triebel1995Interpolation}. The proof in the case of general domain $G$ follows from this case via a flattening of the boundary and a partition of unity argument.
    
	Item 6) follows from \Cref{thm:Poincare:Fractional} and \Cref{thm:WeightedEst}.
	
\end{proof}

\end{document}